\newcommand{\sgn}{\mathop{\mathrm{sign}}}
\numberwithin{equation}{section}
\newcommand{\beq}{\begin{equation}}
\newcommand{\bEq}{\end{equation}}
\newcommand{\al}{\alpha}
\newcommand{\ee}{\end{equation}}
\newcommand{\e}{{\varepsilon}}
\newcommand{\fa}{{\mathfrak a}}
\renewcommand{\cal}{\mathcal}
\newcommand{\wh}{\widehat}
\newcommand{\wt}{\widetilde}
\newcommand{\ii}{\mathrm{i}} %\newcommand{\mi}{\mathrm{i}}
\newcommand{\dd}{\mathrm{d}}
\newcommand{\id}{\mspace{2mu}\mathrm{i}\mspace{-0.6mu}\mathrm{d}} %identity map
\renewcommand{\epsilon}{\varepsilon}
\renewcommand{\leq}{\leqslant}
\renewcommand{\geq}{\geqslant}
\renewcommand{\le}{\leq}
\renewcommand{\ge}{\geq}
\newcommand{\E}{\mathbb{E}}
\newcommand{\R}{\mathbb{R}}
\newcommand{\C}{\mathbb{C}}
\newcommand{\N}{\mathbb{N}}
\DeclareMathOperator{\supp}{supp}
\DeclareMathOperator{\re}{Re}
\DeclareMathOperator{\im}{Im}
\DeclareMathOperator{\OO}{O}
\DeclareMathOperator{\oo}{o}
\DeclareMathOperator{\bbN}{\mathbb{N}}
\DeclareMathOperator{\bbP}{\mathbb{P}}
\begin{document}
%
% paper title
% Titles are generally capitalized except for words such as a, an, and, as,
% at, but, by, for, in, nor, of, on, or, the, to and up, which are usually
% not capitalized unless they are the first or last word of the title.
% Linebreaks \\ can be used within to get better formatting as desired.
% Do not put math or special symbols in the title.
\title{Tracy-Widom distribution for heterogeneous Gram matrices with applications in signal detection}

%Testing the number of signals corrupted by high dimensional heteroscedastic noise 

% author names and IEEE memberships
% note positions of commas and nonbreaking spaces ( ~ ) LaTeX will not break
% a structure at a ~ so this keeps an author's name from being broken across
% two lines.
% use \thanks{} to gain access to the first footnote area
% a separate \thanks must be used for each paragraph as LaTeX2e's \thanks
% was not built to handle multiple paragraphs
%

%\author{Xiucai Ding \thanks{Department of Statistics, University of California, Davis. } and Fan Yang \thanks{Department of Statistics, University of Pennsylvania.} }% <-this % stops 

\author{Xiucai~Ding and Fan~Yang% <-this % stops a space
	\thanks{X. Ding is with the Department of Statistics, University of California, Davis (e-mail: xcading@ucdavis.edu). F. Yang is with the Department of Statistics and Data Science, University of Pennsylvania (e-mail: fyang75@wharton.upenn.edu).}% <-this % stops a space
}

\maketitle

\begin{abstract}
Detection of the number of signals corrupted by high-dimensional noise is a fundamental problem in signal processing and statistics. This paper focuses on a general setting where the high-dimensional noise has an unknown complicated heterogeneous variance structure. We propose a sequential test which utilizes the edge singular values (i.e., the largest few singular values) of the data matrix. It also naturally leads to a consistent sequential testing estimate of the number of signals. We describe the asymptotic distribution of the test statistic in terms of the Tracy–Widom distribution. %and tabulate the critical values of the test. 
The test is shown to be accurate and have full power against the alternative, both theoretically and numerically. The theoretical analysis relies on establishing the Tracy-Widom law for a large class of Gram type random matrices with non-zero means and completely arbitrary variance profiles, which can be of independent interest.         
\end{abstract}

\section{Introduction}

Detection of unknown noisy signals is a fundamental task in many signal processing and wireless
communication applications \cite{AN17,kay1998fundamentals,NE1,onatski2014}. Consider the following generic signal-plus-noise model 
\begin{equation}\label{eq_signalmodel}
\bm{y}= \bm{s}+ \bm{z},
\end{equation}
where $\bm{s}$ and $\bm{z}$ are independent $p$-dimensional centered signal and noise vectors, respectively. 
%respectively, and we assume that $\bm{s}$ and $\bm{x}$ are independent. %and $\bm{x}$ is an independent $p$-dimensional centered noise vector. 
In many applications, $\bm{s}$ is usually generated from a low-dimensional MIMO filter such that $\bm{s}=\Gamma \bm{\nu}$ \cite{kay1998fundamentals}, where $\Gamma$ is a $p \times r$ deterministic matrix, $\bm{\nu}$ is an $r$-dimensional centered random vector and $r$ is some unknown fixed integer that does not depend on $p$. The value of $r$ is one of the most important inputs for many computationally demanding parametric procedures such as direction of arrival estimation, blind source
deconvolution, and so on. In the literature of statistical signal processing, the most common approaches to determine the value of $r$ are perhaps the information theoretic criteria, including the minimum description length (MDL), Bayesian information criterion (BIC) and Akaike information criterion (AIC) and their variants. For a detailed review of this aspect, we refer the reader to \cite{1311138}. All these methods assume that the dimension $p$ is fixed and the sample size $n$, i.e., the number of observations, goes to infinity. Consequently, none of these estimators is applicable to large arrays where the number of sensors is comparable to or even larger than the sample size \cite{KN}.

To address the issue of high dimensionality, many methods and statistics have been proposed to infer the value of $r$ under various settings. Many methods have been proposed to test ${\mathbf{H}}_0: r=0$ against ${\mathbf{H}}_a: r\ge 1,$ %where testing the null hypothesis 
which is equivalent to testing the existence of the signals. When $\bm{z}$ is a white noise, a non-parametric method was proposed in \cite{KN}, the generalized maximum likelihood test was studied \cite{BDMN} and a sample eigenvalue based method was proposed in \cite{NE1}. When $\bm{z}$ is a colored noise, i.e., $\bm{z}=\Sigma^{1/2}\bm{x}$ for a positive definite covariance matrix $\Sigma$ and a white noise $\bm{x},$ the same testing problem has been considered in \cite{bao2015,9103128,5447639, 6588599} under different moment assumptions on the entries of $\bm{x}.$ However, all the aforementioned methods assume explicitly that the noise vectors $\bm{z}_1,\cdots, \bm{z}_n$ are generated independently from the same distribution. 
%some simple variance structures of the samples of the noise vector $\bm{x}.$ In particular, 
If the noise vectors are correlated or generated from possibly different distributions, none of these methods works or has been justified rigorously. One such example is the doubly heteroscedastic noise, whose matrix of noise vectors $(\bm{z}_1, \cdots, \bm{z}_n)$ take the form $A^{1/2}\cal N B^{1/2}$ \cite{2019arXiv190209474L}, where $\cal N$ is a $p\times n$ white noise matrix, and $A$ and $B$ are two positive definite symmetric matrices representing the spatial and temporal covariances, respectively. Many previous works also depend crucially on the null hypothesis $r=0$, and cannot be applied to the more general setting with null hypothesis $r=r_0$ for a fixed $r_0\ge 0$. 

\subsection{Problem setup and test statistics}\label{sec_setup}

%To fill this gap, 
In this paper, we present a more general setting for the statistical analysis of the detection of the number of signals. On the one hand, we propose some statistics to study the following hypothesis testing problem 
\begin{equation}\label{eq_test1}
\mathbf{H}_0: r=r_0 \quad \text{vs} \quad \mathbf{H}_a: r>r_0,
\end{equation}  
where $r_0$ is some pre-given integer representing our belief of the true value of $r.$ (\ref{eq_test1}) generalizes the previous works, which mainly focus on the $r_0=0$ case, i.e., the testing of the existence of signals. On the other hand, we consider more general covariance structures of the noise, which include the doubly-heteroscedastic noise, sparse noise and noise with banded structures as special cases. We refer the readers to Examples \ref{example_separable} and \ref{exam_sparserandomgram} and the simulation settings in Section \ref{sec_stat} for more details. We emphasize that through (\ref{eq_test1}), a natural   consistent sequential testing estimate of $r$ can be generated, that is, 
\begin{equation}\label{eq_seqestimate}
\widehat{r}:=\inf\{r_0 \geq 0: \ \mathbf{H}_0 \ \text{is accepted} \}.
\end{equation} 
We refer the readers to (\ref{eq_defn_consistentesmtiator}) and Corollary \ref{coro_onatskiresult} for more rigorous arguments on this aspect. 

In order to test (\ref{eq_test1}), we propose some data-adaptive statistics utilizing the edge eigenvalues of the data matrix. Suppose we observe $n$ data samples %generated independently from possibly different signal-plus-noise models as in (\ref{eq_signalmodel}) 
and stack them into the data matrix  
\begin{equation}\label{eq_matrixdenoisingtest}
Y=R+Z,
\end{equation} 
where $Y=(\bm{y}_1,\cdots, \by_n) \in \mathbb{R}^{p \times n}$ collects the noisy observations, $R=(\bm{s}_1,\cdots, \bm{s}_n)$ is the signal matrix of rank $r$, and $Z=(\bm{z}_1,\cdots, \bm{z}_n)$ is the noise matrix. The matrix (\ref{eq_matrixdenoisingtest}) is commonly referred to as the \emph{signal-plus-noise matrix} in the literature, which is also closely related to the problem of low-rank matrix denoising \cite{BDW, ding2020, DD1995, RRN14, TS93,YMB}.  In the current paper, we consider the high-dimensional regime where $p$ and $n$ are comparably large so that 
\begin{equation*}
\tau \leq  {p}/{n} \leq \tau^{-1},  
\end{equation*}
for a small constant $0<\tau<1$. %Since $\operatorname{rank}(R)=r,$ the problem \eqref{eq_test1} is equivalent to testing the rank of the signal matrix. 
We assume that the entries of $Y$ are independent random variables satisfying that 
%For $Y=(y_{ij})$ in (\ref{eq_matrixdenoisingtest}), we can rewrite it as 
% means and variances
\begin{equation}\label{eq_gramgeneralassumption}
\mathbb{E} y_{ij}=r_{ij},\quad \Var( y_{ij}) =s_{ij}.
\end{equation}
Correspondingly, we will also call $R=(r_{ij})$ the \emph{mean matrix}, while the \emph{variance matrix} $S=(s_{ij})$ describes a heterogeneous variance profile for the noise. In this paper, we refer to $YY^\top$ as a \emph{random Gram matrix}.  %In this way, we have generalized a signal-plus-noise matrix, which allows for a completely general signal matrix and heterogeneous variance structure of the noise matrix. 
%In this paper, we consider the signal-plus-noise random Gram matrices with complicated heterogeneous structures for the variance matrices $S$. %In terms of (\ref{eq_gramgeneralassumption}), 
We mention that the detection of the number of signals has been studied rigorously in the literature only when $S$ is of sample covariance type, that is, $s_{ij}=a_i$ for some $a_i>0$. Even for the doubly-heteroscedastic noise with $s_{ij}=a_i b_j$ for some $a_i, b_j>0,$ the aforementioned testing methods in the literature will lose their validity.

There exists a vast literature on conducting high-dimensional statistical inference using the largest eigenvalues of $YY^\top$ when $S$ is of sample covariance type. 
%have been applied to various hypothesis testing problems in high-dimensional statistics in the setting. 
For instance, they have been employed to test the existence and number of spikes for the spiked covariance matrix model \cite{johnstone2020,onatski2014}, test the number of factors in factor model \cite{OAea}, detect the signals in a signal-plus-noise model \cite{AN17, bao2015,BDMN,6588599}, test the structure of covariance matrices \cite{elkaroui2007,han2016}, and perform the multivariate analysis of variance (MANOVA) \cite{ZJ2017, han2016}. 
%More recently, they have also been used to help to understand the deep learning representations and deep neural networks \cite{deeplearningone,granziol2020towards}. 
%{\cor (add more applications or references)}
In most of these applications, on the one hand, researchers aim to test between the null hypothesis of a non-spiked sample covariance matrix and the alternative of a spiked sample covariance matrix. Under the null hypothesis, the largest few eigenvalues have been proved to satisfy the Tracy-Widom law asymptotically under a proper scaling \cite{bao2015, DY, elkaroui2007,IJ2, Anisotropic, LS, Regularity4,pillai2014}. More precisely, there exist parameters $\lambda_+$ and $\varpi$ such that $\varpi p^{2/3}(\lambda_{1}-\lambda_+)$ converges in law to the type-1 Tracy-Widom distribution \cite{TW1,TW}, where $\lambda_1$ is the largest eigenvalue of $YY^\top$. Then it is natural to choose $\varpi p^{2/3}(\lambda_{1}-\lambda_+)$ as the test statistic. 
On the other hand, especially in the setting of factor models in economics, researchers are interested in inferring the number of factors. Under the null hypothesis that there are $r$ large factors, the $(r+1)$-th eigenvalue $\lambda_{r+1}$ obeys the Tracy-Widom distribution asymptotically \cite{OAea}.

Based on the above observations, if we can show that $\lambda_{r+1}$ obeys the Tracy-Widom law in our setting %(\ref{eq_test1}) or equivalently, 
(\ref{eq_gramgeneralassumption}),
we can naturally choose $\varpi p^{2/3}(\lambda_{r+1}-\lambda_+)$ as the test statistic for the testing problem (\ref{eq_test1}). However, in practice, the two parameters $\varpi$ and $\lambda_+$ depend on the usually unknown variance matrix $S$. To resolve this issue, we can follow \cite{OAea} to use the statistic 
\begin{equation}\label{eq_ona intro}
 \mathbb{T} \equiv \mathbb{T}(r_0):=\max_{r_0< i \le r_*}\frac{\lambda_{i}-\lambda_{i+1}}{\lambda_{i+1}-\lambda_{i+2}},
\end{equation} 
where $\lambda_1 \geq \lambda_2 \geq \cdots \ge \lambda_p$ are the eigenvalues of $YY^\top$ arranged in descending order,  and $r_*$ is a pre-chosen integer that is interpreted as the maximum possible number of signals the model can have. { We will also see in Section \ref{sec_statisticsproperty}  that (\ref{eq_ona intro}) can be used to count the number of outlier eigenvalues that correspond to signals through a sequential testing procedure.}
%The main reason to consider over a sequence of ratios is allow multiple factors with the same strength.    
% in (\ref{eq_ona intro}) is  commonly referred to as the \emph{Onatski's statistic} 
%and can be used to test
%\begin{equation}\label{eq_testproblem}
%\mathbf{H}_0: r=r_0 \ \ \ \ \text{vs} \ \ \ \ \mathbf{H}_a: r_0<r \leq r_*.
%\end{equation}  
Onatski \cite{OAea} observed that in the setting of sample covariance matrices, $\mathbb T$ is independent of $\varpi$ and $\lambda_+$ under the null hypothesis, and hence is asymptotically pivotal. Moreover, its asymptotic distribution is determined by the Tracy-Widom law of the edge eigenvalues. Consequently, we can approximate the distribution of $\mathbb{T}$ using Monte Carlo simulations of Wishart matrices. %However, to the best of our knowledge, these facts have never been justified theoretically in the more general setting \eqref{eq_gramgeneralassumption}. % and one goal of this paper is to fill this gap.

We point out that in many literature and scientific applications \cite{BDW,2020arXiv200909177J,Marcotte751,RRN14,6588599}, it is reasonable to assume that the signals are distinct. Under this assumption, we also propose the following statistic
\begin{equation}\label{eq_ona intro1}
\mathbb{T}_{r_0}:=\frac{\lambda_{r_0+1}-\lambda_{r_0+2}}{\lambda_{r_*+1}-\lambda_{r_*+2}}.
\end{equation}
Compared to (\ref{eq_ona intro}),  the  statistic (\ref{eq_ona intro1}) relies on fewer (actually, only three or four) sample eigenvalues. Moreover, %in the finite-sample setting, 
{ for commonly used alternatives with low-rank signals, we expect that the statistic (\ref{eq_ona intro1}) has better performance in terms of power (i.e., it is sensitive to a wider class of alternatives and has higher power for some fixed alternative). Our expectation, although without full theoretical justification, is partly due to the fact that $\mathbb T_{r_0}$ has smaller critical values compared to $\mathbb T$ as illustrated in Table \ref{table_criticalvalue}, which is reasonable because taking maximum over a sequence of random variables increases critical values.} Empirically, our simulations in Section \ref{sec_stat} will show that  \eqref{eq_ona intro1} indeed has better finite-sample performance than \eqref{eq_ona intro} in terms of power. In fact, we believe that the statistic \eqref{eq_ona intro1} will also work when the signals are degenerate, because the corresponding sample eigenvalues will be separated. We refer the reader to Remark \ref{rem_degen} for more details.

The statistics (\ref{eq_ona intro}) and (\ref{eq_ona intro1}) are applicable to statistical inference only if the Tracy-Widom law has been established for the associated random Gram matrix $YY^\top$. However, to the best of our knowledge, this has only been proved rigorously for sample covariance type random Gram matrices in the literature. % i.e., $s_{ij}=a_i$ in (\ref{eq_gramgeneralassumption}).
Therefore, for hypothesis testing problems involving random Gram matrices with general mean and variance profiles, 
%including models (1)--(4), 
%satisfying the setup (\ref{eq_gramgeneralassumption}) (e.g., models (1)--(4) mentioned earlier), 
we need to prove the Tracy-Widom fluctuation rigorously before validating the use of $\mathbb{T}$ and $\mathbb{T}_{r_0}.$ This motivates us to study the limiting distributions of the edge eigenvalues in the general setup (\ref{eq_gramgeneralassumption}). %beyond the sample covariance model. 
Here the notion ``edge eigenvalues" refers to the largest few eigenvalues near the right edge of the bulk eigenvalue spectrum, excluding the outliers of $YY^\top$ caused by the signals. 

%In (\ref{eq_ona intro}), $r_*$ is introduced a priori and the maximum is proposed to address the issue of several equal outliers. 

\subsection{Tracy-Widom distribution for random Gram matrices}

The Tracy-Widom law for the edge eigenvalues of non-spiked sample covariance matrices has been proved in a series of papers. For Wishart matrices, %with i.i.d.  Gaussian entries $y_{ij}$ of mean zero and variance $n^{-1}$, % $r_{ij}=0$ and $s_{ij}=n^{-1}$, %$r_{ij}\equiv 0$ and $s_{ij}\equiv n^{-1}$ in (\ref{eq_gramgeneralassumption}) and, 
it was first proved in \cite{IJ2} that the largest eigenvalue satisfies the Tracy-Widom law asymptotically. This result was later extended to more general sample covariance matrices with generally distributed entries (assuming only certain moment assumptions) and variance profiles $s_{ij}=a_i$ (assuming certain regularity conditions on the sequence $\{a_i:1\le i \le p\}$) in a series of papers under various settings; see e.g. \cite{bao2015, DY, elkaroui2007, Anisotropic, LS, Regularity4,pillai2014}. 
%, i.e., sample covariance matrix with non-trivial covariance matrix structure  
%Despite the broad application of Gram type random matrices and their edge eigenvalues in statistics and machine learning theory, the literature has been focused on the sample covariance type of variance profile $s_{ij}=a_i/n$. 
However, when the mean and variance profiles of the random Gram matrix become more complicated, much less is known about the limiting distribution of the edge eigenvalues. 

In this paper, motivated by the applications in signal detection as discussed in Section \ref{sec_setup}, we establish the Tracy-Widom asymptotics for the edge eigenvalues of a general class of random Gram matrices. The informal statement is given in Theorem \ref{informal1}. Following the conventions in the random matrix theory literature, we shall rescale the matrix $Y$ properly so that the limiting ESD of $ YY^\top$ is compactly supported as $n\to \infty$. 
%One purpose of this paper is to solve this problem and prove the Tracy-Widom law for a general class of random Gram matrices (with or without outliers) with general variance profiles $S=(s_{ij})$ that satisfy certain regularity assumptions. 
Moreover, recall that GOE (Gaussian orthogonal ensemble) refers to symmetric random matrices of the form $H:=(X+X^\top)/\sqrt{2},$ where $X$ is a $p \times p$ matrix with i.i.d. real Gaussian entries of mean zero and variance $p^{-1}$. In this paper, we will consistently denote the eigenvalues of $H$ by 
\begin{equation}\label{eq GOE} 
\mu^{\rm GOE}_1 \geq \mu^{\rm GOE}_2 \geq \cdots \ge \mu^{\rm GOE}_p.
\end{equation}
 
\begin{theorem}[Informal statement of Theorem \ref{thm_twgram}]\label{informal1}
For $Y$ satisfying (\ref{eq_gramgeneralassumption}), we denote the eigenvalues of $\cal Q:=YY^\top$ by $\lambda_1\ge \lambda_2 \ge \cdots \ge \lambda_p$. Let $\lambda_+$ be the rightmost edge of the limiting bulk eigenvalue spectrum, and $a\in \N$ be the index of the largest edge eigenvalue. Then, there exists a deterministic sequence of numbers $\varpi\equiv \varpi (R,S)$ depending on $R$ and $S$, such that for any fixed $k\in \N$, the first $k$ rescaled edge eigenvalues, $\{\varpi p^{2/3}(\lambda_{a+i}-\lambda_+):0\le i \le k -1 \}$, have the same asymptotic joint distribution as the first $k$ rescaled eigenvalues of GOE, $\{p^{2/3}(\mu^{\rm GOE}_{i}-2):1\le i \le k \}$, as $p\to \infty$. 
\end{theorem}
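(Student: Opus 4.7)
The plan is to reduce the statement to the now-standard edge universality program for random matrix models governed by a matrix Dyson equation (MDE), adapted to Gram matrices with both a non-trivial mean and a completely general variance profile. The three main ingredients are: (i) a Hermitization of $Y$ that embeds $YY^\top$ into the framework of a Hermitian random matrix whose resolvent can be controlled by an MDE; (ii) a sharp anisotropic local law up to the spectral edge, which yields rigidity for the eigenvalues $\lambda_{a+i}$ near $\lambda_+$; and (iii) a Green function comparison argument that transports the edge statistics from a Gaussian reference model to the general case, and then from the Gaussian model to GOE.

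First, I would introduce the Hermitization
\[
\widetilde H = \begin{pmatrix} 0 & Y \\ Y^\top & 0 \end{pmatrix},
\]
whose nonzero eigenvalues are $\pm \sigma_i(Y)$, so that the largest eigenvalues of $YY^\top$ are the squares of the top eigenvalues of $\widetilde H$. The resolvent $G(z) = (\widetilde H - zI)^{-1}$ has a $2 \times 2$ block structure, and its diagonal entries recover the Stieltjes transforms of $YY^\top$ and $Y^\top Y$. Writing $Y = R + Z$ with $\mathbb{E} Z = 0$ and $\mathrm{Var}(z_{ij}) = s_{ij}$, a standard resolvent expansion (Schur complement plus partial tracing) leads to a self-consistent equation for a deterministic equivalent $M(z)$ of $G(z)$ of MDE type,
\[
-M(z)^{-1} = z - A + \mathcal{S}[M(z)],
\]
where $A$ encodes the mean profile $R$ through the Hermitization and $\mathcal{S}$ is a linear map on block matrices encoding the variance profile through the entries $s_{ij}$. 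Because $S$ is completely arbitrary, $\mathcal{S}$ cannot be reduced to a scalar or tensor-product form, and the MDE must be treated in full matrix generality.

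Next I would analyze the MDE near the right edge $\lambda_+$. The key properties are: (a) existence, uniqueness, and an a priori bound on $M$ in the upper half-plane; (b) edge regularity, namely that the limiting density of states vanishes as $\varrho(E) \sim C\sqrt{(\lambda_+ - E)_+}$ at $\lambda_+$ with an explicit scaling parameter that yields $\varpi$; and (c) quantitative stability of the MDE against small perturbations, uniformly up to the edge. Granting these, a bootstrap combining entrywise and averaged local laws (as in the standard proof scheme for Wigner-type and sample-covariance-type matrices) yields the optimal anisotropic local law
\[
\max_{i,j} \big| G_{ij}(z) - M_{ij}(z) \big| \prec \sqrt{\frac{\mathrm{Im}\, m(z)}{n\eta}} + \frac{1}{n\eta}
\]
uniformly on a spectral domain reaching $\eta \geq n^{-1+\varepsilon}$ near the edge. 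Standard arguments then convert this into edge rigidity, i.e.\ $|\lambda_{a+i} - \gamma_{a+i}| \prec n^{-2/3}$ for bounded $i$, where $\gamma_{a+i}$ are the classical locations determined by $M$.

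Finally, I would carry out Green function comparison. Swapping the entries of $Z$ one by one with independent Gaussian entries having the same first two moments, a Lindeberg-type argument shows that smooth statistics of the top eigenvalues of $YY^\top$ agree asymptotically with those of the Gaussian model $Y^G = R + Z^G$, which has the same $R$ and $S$. For the Gaussian reference model, a second interpolation (for instance along a Dyson Brownian motion flow, or via further moment matching with a reference Wishart ensemble) reduces the edge law to one whose top eigenvalues are already known to converge after rescaling to the GOE edge statistics appearing in \eqref{eq GOE}. I expect the main obstacle to lie in step (ii): establishing edge regularity of the MDE and the optimal local law simultaneously when $S$ has a \emph{completely arbitrary} profile and $R \neq 0$. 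The mean $R$ shifts the self-consistent equation and could in principle destroy the square-root behavior at $\lambda_+$, so one must prove that it does not, and derive quantitative stability bounds on the MDE that remain valid uniformly up to the edge, before the comparison machinery can be invoked and the rescaled edge eigenvalues identified with their GOE counterparts.
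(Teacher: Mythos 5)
Your scaffolding is right --- Hermitization into a $2\times 2$ block matrix, the vector (or matrix) Dyson equation for the deterministic equivalent, edge square-root behavior and optimal local laws, rigidity, and Green function comparison to a Gaussian model --- and this matches the paper's Step~1 and Step~3. The gap is in what you call the ``second interpolation.'' You treat the reduction from the Gaussian model to GOE edge statistics as a routine step, citing ``a Dyson Brownian motion flow, or further moment matching with a reference Wishart ensemble'' and asserting that the target law is ``already known.'' But for a Gram matrix with a completely arbitrary variance profile $S$, there is no Gaussian reference ensemble for which the Tracy--Widom law was previously established; a naive moment-matching to a (deformed) Wishart matrix fails because the variance profiles do not agree beyond low moments. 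This is precisely where the paper's main technical work lies (Theorem~\ref{thm_regularbm}): it proves that after adding a small Gaussian perturbation $\sqrt{t}X$, the rectangular DBM starting from \emph{any} $\eta_*$-regular initial data relaxes to Tracy--Widom at the edge in time $t\gg \sqrt{\eta_*}\vee n^{-1/3}$. This requires coupling the rectangular DBM of the Gaussian-divisible Gram matrix with that of a \emph{purpose-built} deformed Wishart matrix whose rectangular free convolution is tuned to match the edge and edge curvature of the RFC of the original data, and then controlling the difference of the coupled flows by a short-range approximation of the SDEs, an energy estimate for the associated parabolic equation, and a finite speed of propagation argument (Lemmas~\ref{lem_approximationcontrol}, \ref{lem_finitespeedcal}, Proposition~\ref{prop_energyestimate}). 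None of this is available off the shelf; it is the new content.

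You also locate the ``main obstacle'' in the wrong place. Edge regularity of the Dyson equation and the local law for general variance profiles with $R=0$ are already supplied by \cite{alt20172,alt20171}, and the nonzero rank-$r$ mean $R$ is handled cheaply by Cauchy interlacing and Weyl's inequality under the supercritical assumption (Lemma~\ref{rigid_lem222}): the $r$ outliers detach from the bulk, and the bulk ESD and local law are inherited from the centered model. So $R$ does \emph{not} threaten the square-root edge behavior, and proving this is not the bottleneck. The bottleneck is the Gaussian-divisible edge universality (your second interpolation), and your proposal as written would not yield a proof without filling in that step with the coupled rectangular-DBM analysis the paper develops.

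A smaller point: the ``Lindeberg swap from $Z$ to Gaussian $Z^G$'' and the ``comparison from the Gaussian-divisible $\cal Q_t$ to $\cal Q$'' are actually the \emph{same} resolvent comparison step in the paper's argument; the moment-matching target is $Y'+\sqrt{t}X$ with slightly reduced variances (so the first two moments agree with $Y$), not an independent Gaussian copy. This is why the lower bound $\min s_{ij}\gtrsim n^{-1/3+\e_*}/n$ in Assumption (A2) is needed: the perturbation $\sqrt{t}X$ must be absorbable into the variance profile.
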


It is well-known that $p^{2/3}(\mu^{\rm GOE}_{1}-2)$ converges to the type-1 Tracy-Widom distribution \cite{TW1,TW}. Furthermore, for any fixed $k\in \N$, the joint distribution of the largest $k$ eigenvalues of GOE can be written in terms of the Airy kernel \cite{Forr}. Hence Theorem \ref{informal1} gives a complete description of the finite-dimensional correlation functions of the edge eigenvalues of $\mathcal Q$. Once Theorem \ref{informal1} is established, we can determine the asymptotic distributions of the statistics \eqref{eq_ona intro} and \eqref{eq_ona intro1}, and apply them to the hypothesis testing problem (\ref{eq_test1}).   
%are asymptotically pivotal. Moreover, its asymptotic distribution is determined by the Tracy-Widom law of the edge eigenvalues

Our proof of Theorem \ref{informal1} is based on the following result on the edge eigenvalues of a general class of Gaussian divisible random Gram matrices. 

\begin{theorem}[Informal statement of Theorem \ref{thm_regularbm}]\label{informal2}
For a parameter $t>0$, we denote $\cal Q_t:=(Y+\sqrt{t}X)(Y+\sqrt{t}X)^\top$, where $X$ is a $p\times n$ random matrix independent of $Y$ and has i.i.d. Gaussian entries of mean zero and variance $n^{-1}$. Denote the eigenvalues of $\cal Q_t$ by $\lambda_1(t)\ge \lambda_2(t) \ge \cdots \ge \lambda_p(t)$. Let $ \eta_* > 0$ be a scale parameter depending on $n$. Suppose the empirical spectral distribution of $\cal Q=YY^\top$ has a regular square root behavior near the right edge $\lambda_+$ on any scale larger than $\eta_*$ (in the sense of Definition \ref{assumption_edgebehavior} below). 
%Moreover, suppose the outliers of $\cal Q$ are away from $\lambda_+$ by distances of order 1, and 
Let $a\in \N$ be the index of the largest edge eigenvalue.  Then for any $t\gg \sqrt{\eta_*}$ and fixed $k\in \N$, there exist deterministic sequences of numbers $\varpi_t $ and $\lambda_{+,t} $ such that the first $k$ rescaled edge eigenvalues of $\cal Q_t$, $\{\varpi_t p^{2/3}(\lambda_{a+i}(t)-\lambda_{+,t}):0\le i \le k -1 \}$, have the same asymptotic joint distribution as the first $k$ rescaled eigenvalues of GOE, $\{p^{2/3}(\mu^{\rm GOE}_{i}-2):1\le i \le k \}$, as $p\to \infty$. 
\end{theorem}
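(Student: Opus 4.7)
The plan is to view $\cal Q_t$ as the Gram matrix of a matrix Brownian motion started from $Y$ and to deploy the Dyson Brownian motion (DBM) machinery for edge universality. Setting $Y(t):=Y+\sqrt{t}X$ so that $\cal Q_t=Y(t)Y(t)^\top$, the eigenvalues $\lambda_i(t)$ of $\cal Q_t$ satisfy, conditional on $Y$, a Laguerre-type DBM started from $\lambda_i(0)=\lambda_i$. The Tracy-Widom GOE statistics are expected to emerge from this DBM once the time $t$ is large enough that the Gaussian noise has regularized the initial spectrum down to the natural edge scale $n^{-2/3}$. The coupling parameters $\lambda_{+,t}$ and $\varpi_t$ will be read off from a deterministic reference spectrum built from the free convolution of the initial law with a Marchenko--Pastur component of parameter $t$.

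First, I would construct this deterministic spectrum. Using the matrix Dyson equation for Gram matrices and the subordination relation for free convolution with the Marchenko--Pastur law of variance $t$, one obtains a deterministic Stieltjes transform $m_t(z)$ and density $\rho_t$. A complex-analytic analysis of the resulting fixed-point equation shows that $\rho_t$ has a square-root edge at some $\lambda_{+,t}$, and that for $t\gg\sqrt{\eta_*}$ this square-root profile persists all the way down to scale $n^{-2/3}$. The expansion $\rho_t(\lambda_{+,t}-x)=\pi^{-1}\varpi_t^{3/2}\sqrt{x}(1+o(1))$ then identifies the scaling factor $\varpi_t$ appearing in the statement.

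Next, I would upgrade this to an anisotropic local law for $\cal Q_t$ and the corresponding edge rigidity $|\lambda_{a+i}(t)-\gamma_{a+i}(t)|\le n^{-2/3+\epsilon}(i+1)^{-1/3}$ with high probability, where $\gamma_{a+i}(t)$ are the classical locations associated with $\rho_t$. This would combine the local law for $\cal Q=\cal Q_0$ (available only down to scale $\eta_*$ under the regularity hypothesis of Definition \ref{assumption_edgebehavior}) with a self-consistent equation analysis along the matrix Brownian motion. With this rigidity input in hand, I would couple the Laguerre DBM of $\cal Q_t$ with an auxiliary Laguerre DBM $\wt{\cal Q}_t$ of Wishart type whose classical locations also match $\rho_t$, and invoke the short-range / homogenization argument for edge DBM in the spirit of Landon--Yau and Bourgade--Erdos--Yau, adapted to the Gram setting as in \cite{LS,Anisotropic,pillai2014}. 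This shows that for $t\gg\sqrt{\eta_*}$ and every fixed $k$, the rescaled vectors $(\varpi_t p^{2/3}(\lambda_{a+i}(t)-\lambda_{+,t}))_{0\le i\le k-1}$ and the analogous vector for $\wt{\cal Q}_t$ share the same limiting joint law; since $\wt{\cal Q}_t$ is essentially Wishart, its rescaled edge eigenvalues converge jointly to those of GOE, and the transfer yields Theorem \ref{informal2}.

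The main obstacle is the mismatch between the assumed regularity scale $\eta_*$ for $\cal Q_0$ and the Tracy-Widom scale $n^{-2/3}$ required to run DBM universality at the edge. The hypothesis only gives a square-root profile for the empirical spectral distribution of $\cal Q$ down to $\eta_*$, so the central technical task is to prove quantitatively that a Gaussian divisible perturbation with $t\gg\sqrt{\eta_*}$ regularizes the edge of $\cal Q_t$ all the way down to $n^{-2/3}$ and produces the optimal anisotropic local law there. Concretely, this requires a subordination-type estimate on $\im m_t(E+\ii\eta)$ near $\lambda_{+,t}$ for $\eta\gtrsim n^{-2/3}$, together with a stability analysis of the vector Dyson equation for $\cal Q_t$ driven by the anisotropic errors inherited from $\cal Q_0$. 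Because the variance profile $S$ is completely arbitrary, this stability analysis is genuinely anisotropic and cannot be reduced to the scalar Dyson equation arguments available in the sample-covariance literature; handling this anisotropy uniformly in the edge scale is the step where the bulk of the technical work lies.
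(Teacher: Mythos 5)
Your plan is essentially the paper's proof of Theorem \ref{thm_regularbm}: analyze the rectangular free convolution of the initial spectrum with the Marchenko--Pastur law to extract $\lambda_{+,t}$ and the edge-curvature scaling, obtain a local law and edge rigidity for $\cal Q_t$, couple the resulting rectangular DBM with a deformed Wishart DBM whose asymptotic ESD matches near the right edge, and close via a short-range/homogenization argument in the style of Landon--Yau and Bourgade--Erd\H{o}s--Yau. The paper additionally splits the time as $t=t_0+t_1$ (first regularize, then couple) and works with singular values $y_i(t)=\sqrt{\lambda_i(t)}$, whose SDE \eqref{eq_singsde} has constant diffusion coefficient, but these are implementation choices inside the same strategy.

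One point to correct. You identify the central obstruction as an anisotropic stability analysis of the vector Dyson equation for $\cal Q_t$ driven by the arbitrary variance profile $S$. However, $S$ does not appear in the hypotheses of Theorem \ref{thm_regularbm}; the only input is $\eta_*$-regularity of the spectrum of $\cal Q_0$. The Gaussianity of $X$ is exploited precisely here: by rotational invariance $Y+\sqrt{t}X\stackrel{d}{=}U_1(W+\sqrt{t}X)U_2^\top$ with $W$ rectangular diagonal, so the local law for $\cal Q_t$ reduces to a deformed model governed by the scalar self-consistent equation \eqref{originaleqaution0}. Thus the anisotropy you anticipate is absent at this stage; it reappears only in Steps 1 and 3 of the proof of Theorem \ref{thm_twgram}. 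Within Theorem \ref{thm_regularbm} the real technical work lies in the short-range cut-off (Lemma \ref{lem_approximationcontrol}), the finite speed of propagation (Lemma \ref{lem_finitespeedcal}), and the $\ell^q\to\ell^\infty$ energy estimate for the parabolic generator (Proposition \ref{prop_energyestimate}), all of which must be carried out for initial data that is only $\eta_*$-regular.
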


On one hand, Theorem \ref{informal2} covers more general matrices than the random Gram matrices proposed in \eqref{eq_gramgeneralassumption}, because it only requires a regular square root behavior of the ESD near the right edge without assuming any independence between matrix entries of $Y$. We remark that the square root behavior of the ESD is generally believed to be a necessary condition for the appearance of the Tracy-Widom law in the asymptotic limit. For example, if the ESD has a cubic root behavior, then the corresponding cusp universality is different from the Tracy-Widom law \cite{Cusp2,Cusp1}. On the other hand, Theorem \ref{informal2} gives the Tracy-Widom law for the edge eigenvalues of a different matrix $\cal Q_t$ other than $\cal Q$. To obtain the Tracy-Widom law for the original matrix $\cal Q$, we still need to show that the edge eigenvalues of $\cal Q_t$ have the same joint distribution as those of $\cal Q$ asymptotically, which, however, is not always true. In fact, if $t$ is too large, then the edge statistics of $\cal Q_t$ can be very different from those of $\cal Q$. For example, if $Y$ is a rectangular matrix whose singular values are all the same, then $\cal Q$ trivially has a square root behavior on any scale larger than $\eta_* =1$ in the sense of Definition \ref{assumption_edgebehavior}. But in the setting of Theorem \ref{informal2}, for $t\gg 1$, the edge statistics of $\cal Q_t$ is dominated by a Wishart matrix $ t X X^\top$.   
 
From the above discussions, we see that in order to prove the Tracy-Widom law for the edge eigenvalues of $\cal Q$ using Theorem \ref{informal2}, we need to establish the following two results:
\begin{itemize}
\item the ESD of $\cal Q$ has a regular square root behavior near $\lambda_+$ on a sufficiently fine scale $\eta_*\ll 1$;
\item for some $\sqrt{\eta_*}\ll t\ll 1$, the edge statistics of $\cal Q_t$ match those of $\cal Q$ asymptotically. 
\end{itemize}
In random matrix theory, there is a general way to accomplish this by using some sharp estimates, called \emph{local laws}, on the resolvent of $\cal Q$, defined as $G(z):=(\cal Q-z)^{-1}$ for $z\in  \C.$ Such local laws for the model \eqref{eq_gramgeneralassumption} have been proved in \cite{alt20172, alt20171} under quite general conditions. %when the mean matrix $R=0$.  %s (1)-(4) have been proved in previous literature. In particular, the local laws on 
%centered random Gram matrices .
%, which can be also applied to the bipartite stochastic block model and sparse random Gram matrices. The local laws on the signal-plus-noise matrices were proved in \cite{ZP2020} when $R$ is diagonal, and the extension to the non-diagonal $R$ case was handled in \cite{DY20202}. The local laws on the separable covariance matrices were proved in \cite{yang20190}. 
Combining these local laws with Theorem \ref{informal2}, we can conclude Theorem \ref{informal1} using some standard resolvent comparison arguments developed in e.g. \cite{EYY,Anisotropic,LY,yang20190}. 

%{\cor state the local laws in the literature} For a random Gram matrix $\mathcal{Q}$ with general variance profile $S$, its limiting empirical spectral distribution (ESD) has been studied in \cite{alt20172, alt20171, hachem2007} via the Stieltjes transform method.  It turns out that the Stieltjes transform of the limiting ESD is an average of a sequence of Stieltjes transforms that are the solutions to the so-called vector Dyson equation (c.f. equation (\ref{selfm1})). Moreover, the local laws on the resolvent of $\mathcal{Q}$ and the rigidity of the eigenvalues---both of them measure the closeness between the ESD of $\mathcal{Q}$ and the limiting ESD---have been established in \cite{alt20172, alt20171}. 

We remark that there exists another method in the literature \cite{ZJ2017,LS2015,LS,ZP2020} to prove the Tracy-Widom law for sample covariance type matrices, that is, a so-called \emph{resolvent flow argument}. While we expect this method to be also applicable to our setting, the techniques seem to be much harder, and we do not pursue this direction in this paper.

%Compared to that method, from the theoretical aspect, our approach is more advantageous in the following senses.

\iffalse
\begin{itemize}
\item %This generality, on one hand, allows us to study more general and realistic random Gram matrices arising from statistical learning theory, such as the separable covariance matrices, bipartite stochastic block model and sparse random Gram matrices, and apply our results on edge eigenvalues to various hypothesis testing problems in these models. 
 Our method allows for a unified treatment of a general class of random Gram matrices. %including the models (1)--(4) mentioned earlier. 
 On the other hand, the resolvent flow argument in \cite{ZJ2017,LS2015,LS} requires a case by case study, which is technically hard in most cases.

\item Our method handles the edge eigenvalues of random Gram matrices, both with and without outliers, at the same time. This facilitates the applications of our results to the hypothesis testing problem (\ref{eq_test1}). %also provides a way to study the extremal non-outlier eigenvalues if the matrix has a few outlier eigenvalues. In particular, we can use our result to test the number of signals instead of only detecting the existence of a signal as we will discuss in next subsection.  

\item Our method lays a solid theoretical ground for the intuition that for Gram type random matrices, a spectral density with regular square root behavior implies the Tracy-Widom law of the edge eigenvalues. This observation allows for extensions beyond the models considered in this paper, both theoretically and empirically.
\end{itemize}
\fi

\medskip

The rest of this paper is organized as follows. In Section \ref{sec_assumptionandexample}, we give the precise assumptions on the signal matrix $R$ and the variance matrix $S$. We also provide some concrete examples with complicated heterogeneous variance profiles $S$, which have not been studied rigorously in the literature. In Section \ref{sec_mainresults}, we state our main results. The Tracy-Widom distribution for general random Gram matrices is presented in \ref{sec_twdistribution}, while the theoretical properties of the testing statistics (\ref{eq_ona intro}) and (\ref{eq_ona intro1}) are analyzed in Section \ref{sec_statisticsproperty}. In Section \ref{sec_stat}, we conduct numerical simulations to verify the accuracy and power of the proposed statistics for the testing problem (\ref{eq_test1}) under various noise settings that have not been considered in the literature. In Section \ref{sec_proofstrategy}, we sketch the strategy for proving the Tracy-Widom distribution. The technical proofs are put into Appendices \ref{sec_prooftwgram}--\ref{sec_calculationDBM}. 

\section{The model assumptions and examples}\label{sec_assumptionandexample}

In this section, we impose some general assumptions on the signal matrix $R$ and the variance matrix $S.$ We also provide some important examples that have been used in the literature. Note that $YY^\top$ and $Y^\top Y$ have the same non-zero eigenvalues. Hence without loss of generality, we only need to consider the high-dimensional setting where the aspect ratio $c_n:=p/n$ satisfies that
%converges to a constant {\color{red} $c \in (0,1)$ [Do we really need it??]} and 
\begin{equation}\label{eq_defnc}
 \tau\le c_n \le 1, 
\end{equation}
for a small constant $\tau>0$.
% More specifically, we establish the Tracy-Widom fluctuation {\color{red} for models (I)-(V) 
%in the introduction. In other words, we will give the rigorous statement of Theorem \ref{informal1} for each model. }
%{\color{red}
%We separate our results into two subsections according to whether the mean matrix of $Y$ is of low rank structure. In Section \ref{sec_sub_main_gram}, we introduce the results for random Gram matrix where the rank of its mean matrix $R$ is finite. As corollaries, we prove the Tracy-Widom distribution for the edge eigenvalues of  separable covariance matrices, bipartite stochastic block model, random bipartite graph and sparse  random Gram matrices. In Section \ref{sec_noncertedrandomgrammatrix}, we investigate the edge distributions for random Gram matrices with high rank mean matrices. Consequently, we show that the edge eigenvalues of the interference-plus-noise matrix model follows Tracy-Widom distribution.  } 
%\subsection{\color{blue} Random Gram matrices with low rank mean structure}\label{sec_sub_main_gram}
%In this section, we assume that the mean matrix $R=(r_{ij})$ is a low-rank matrix. 
%We refer the readers to Remark \ref{rem_highrank} below for more discussion on the high-rank case.
For the signal matrix $R$, we assume that 
\begin{equation}\label{eq_rankdefinition}
\operatorname{rank}(R)=r,
\end{equation} 
for a fixed $r\in \N$ that is independent of $p$ and $n$. {Note that when $r=0,$ $Y$ is a centered random Gram matrix.} 
%On the other hand, the structure of the variance matrix $S=(s_{ij})$ is much more interesting, and describes a heterogeneous variance profile. 
%Since $s_{ij}$ can vary with the indices $i$ and $j,$ $Y$ has heterogeneous variance profiles. 
Following \cite{alt20172,alt20171}, we impose the following regularity assumptions on the heterogeneous variance profile.

\begin{assumption}\label{assum_gram} Suppose $S$ satisfies the following regularity conditions. 
\begin{itemize}
\item[\bf (A1)] The dimensions of $S$ are comparable, that is, (\ref{eq_defnc}) holds. %Without loss of generality we assume that (\ref{eq_defnc}) holds. 

\item[\bf (A2)]  The variances are bounded in the sense that there exist constants $s_* , \e_*>0$ such that
\begin{equation}\label{eq flatS}
\max_{i,j}s_{ij} \leq \frac{s_*}{n},\quad \min_{i,j}s_{ij}\ge \frac{n^{-1/3+\e_*}}{n}.
\end{equation} 
%for all $1 \leq i \leq p$ and $1 \leq j \leq n.$  \\
\item[\bf (A3)] The matrices $S$ and $S^\top$ are irreducible in the sense that there exist $L_1, L_2 \in \N$ and a small constant $\tau >0$ such that 
\begin{equation*}
\min_{i,j}[(SS^{\top})^{L_1}]_{ij} \geq \frac{\tau}{n}, \quad \min_{i,j}[(S^{\top}S)^{L_2}]_{ij} \geq \frac{\tau}{n}.
\end{equation*}  
%for all $1 \leq i \leq p$ and $1 \leq j \leq n.$ 

\item[\bf (A4)] The rows and columns of $S$ are sufficiently close to each other in the following sense. There is a continuous %strictly 
monotonically decreasing ($n$-independent) function $\Gamma:(0,1] \rightarrow (0,\infty)$  such that $\lim_{\epsilon \downarrow 0} \Gamma(\epsilon)=\infty$, and for all $\epsilon \in (0,1],$ we have 
\begin{equation}\label{assmA4}
\Gamma(\epsilon) \leq \min \left\{ \inf_{1\le i\le p} \frac{1}{p} \sum_{l} \frac{1}{\epsilon+n\norm{S_{i}-S_l}_2^2}, \inf_{1\le j\le n} \frac{1}{n} \sum_{l} \frac{1}{\epsilon+n\norm{(S^{\top})_{j}-(S^{\top})_l}_2^2} \right\},
\end{equation}  
where $S_i$ and $(S^{\top})_{j}$ denote the $i$-th row of $S$ and $j$-th row of $S^{\top}$, respectively.
\end{itemize}
\end{assumption}

\begin{remark}\label{rem_support}
The upper bound in \eqref{eq flatS} is chosen such that the limiting ESD of {$(Y-R)(Y-R)^\top$} is compactly supported as $n\to \infty$. More precisely, we equip $\{1,\cdots, p\}$ and $\{1,\cdots, n\}$ with the $\ell^\infty$-norm and denote the induced operator norms by $\|\cdot \|_{\ell^\infty(n)\to \ell^\infty(p)}$ and $\|\cdot \|_{\ell^\infty(p)\to \ell^\infty(n)}$. Then, Proposition 2.1 of \cite{alt20172} shows that the rightmost edge $\lambda_+$ of the limiting ESD of $(Y-R)(Y-R)^\top$ satisfies 
\begin{equation}\label{bound_center}
\lambda_+\le 4\mathfrak M,\quad \text{with}\quad \mathfrak M:= \max\left\{ \|S\|_{\ell^\infty(n)\to \ell^\infty(p)},\|S^\top\|_{\ell^\infty(p)\to \ell^\infty(n)}\right\}.
\end{equation}
%are contained in $[0, (1+\oo(1))\cdot 4\mathfrak M]$ with high probability, 
%where
%\be\label{defn frakm}
%\mathfrak M:= \max\left\{ \|S\|_{\ell^\infty(n)\to \ell^\infty(p)},\|S^\top\|_{\ell^\infty(p)\to \ell^\infty(n)}\right\}.
%\end{equation}
By \eqref{eq flatS}, it is easy to see that $\mathfrak M\le s_*$.
\end{remark}

\begin{remark}
As explained in (2.22) of \cite{Quadratic2015}, assumption (A4) aims to rule out possible spikes from $S$. In \cite[Remark 2.4]{alt20172}, an easier to check sufficient (but not necessary) condition for {\bf (A4)} was also proposed. %We record it for further technical discussion. \\
\vspace{5pt}

{\bf \noindent{(A4-s)}:} There are two finite partitions $(I_{\alpha})_{\alpha \in \cal A}$ and $(J_{\beta})_{\beta \in \cal B}$ of $\{1,\cdots, p\}$ and $\{1,\cdots, n\}$, respectively, such that for any $\alpha \in \cal A$  and $\beta \in \cal B,$ we have $|I_\al|\ge \tau p$, $|I_\beta|\ge \tau n$,  and
\begin{equation}\label{assmA4s}
\begin{split}
& \norm{S_{i_1}-S_{i_2}}_2 \leq \tau^{-1}n^{-1}  |i_1-i_2|^{1/2} \    \text{for} \ i_1,i_2 \in I_\alpha, \ \ \text{and}\ \ \norm{(S^{\top})_{j_1}-(S^{\top})_{j_2}}_2 \leq \tau^{-1} n^{-1} |j_1-j_2|^{1/2}  \  \text{for} \ j_1,j_2 \in J_\beta,
 \end{split}
\end{equation}
for a small constant $\tau>0$. The condition (A4) follows easily from (A4-s) using an integral approximation. 
\end{remark}
In addition to \eqref{eq_rankdefinition}, we introduce the following assumption on the signal strengths, i.e. the singular values of $R$. 
\begin{assumption}\label{assum_meanmatrix} We assume that (\ref{eq_rankdefinition}) holds. When $r \geq 1,$ denote by $\sigma_{r} (R)$ the smallest non-trivial singular value of $R$. We assume that 
\begin{equation}\label{eq_supercritical}
\sigma_{r}(R)>(4+\tau)\sqrt{\mathfrak{M}} , 
\end{equation}
for a small constant $\tau>0$, where $\mathfrak M$ is defined in \eqref{bound_center}. %In particular, $\mathfrak M>8$ will be enough for our main result.   
\end{assumption} 
\begin{remark}\label{rem_why_outlier}
(\ref{eq_supercritical}) is commonly referred to as the supercritical condition, and has appeared in lots of literature in random matrix theory and statistics \cite{BDW,BDMN,RRN14,5447639}. It is a sufficient condition for the mean matrix $R$ to give rise to $r$ outliers of $YY^\top$ that are detached from the bulk spectrum. By Lemma \ref{rigid_lem} below, we have that the largest eigenvalue of $(Y-R)(Y-R)^\top$ is at most $ \lambda_+ +\oo(1)$ with high probability. Combining it with \eqref{eq_supercritical} and applying Weyl's inequality, it is easy to check that $YY^\top$ has $r$ eigenvalues that are larger than $(2+\tau-\oo(1))^2 \mathfrak{M}$. On the other hand, by the Cauchy interlacing, the limiting bulk eigenvalue spectrum of $YY^\top$ is supported on $[0,\lambda_+]$. Hence, under \eqref{eq_supercritical}, there are $r$ outliers that are away from the spectrum edge $\lambda_+$. 
	
{However, we remark that $4\sqrt{\mathfrak M}$ is quite likely not the exact threshold for BBP transition \cite{BBP}. To guarantee the Tracy-Widom law of the edge eigenvalues, it is necessary that all spikes of $R$ are away from (i.e., either above or below) the BBP threshold. If there are critical spikes (i.e., spikes that are exactly equal to the BBP transition threshold), then the Tracy-Widom law of the edge eigenvalues can fail; see Theorem 1.1 in \cite{BBP}. Here we have chosen \eqref{eq_supercritical} simply to ensure that all spikes are supercritical. To determine the exact BBP threshold and to include settings with subcritical spikes, we need to perform a more detailed study of spiked random Gram matrices.} We postpone it to future works, since it is not the focus of the current paper.  
%In fact, if there are some outliers that are very close to $\lambda_+$, then we are near the BBP transition threshold, and the Tracy-Widom law of the edge eigenvalues may fail especially at the BBP transition threshold; see Theorem 1.1 therein for more details.} 
 
%Assumption \ref{assum_meanmatrix} guarantees that the mean matrix $R$ is low-rank and their singular values are strong enough to detach from the bulk of the eigenvalue spectrum. 
%
% Moreover, Models (1)--(4) mentioned in Section \ref{sec_introduction} all satisfy Assumption \ref{assum_meanmatrix}. We refer the readers to   
%Examples \ref{example_separable}--\ref{exam_sparserandomgram} below for more details.
 \end{remark}

In what follows, we give two concrete examples which satisfy the above assumptions and have not been studied rigorously in the literature. 

\begin{example}[Doubly-heteroscedastic noise, \cite{2019arXiv190209474L}]\label{example_separable}
%As another important example of Theorem \ref{thm_twgram}, we now show that the edge eigenvalues of separable covariance matrices obey the Tracy-Widom fluctuation. 
%In this example, the scaling parameter $\gamma_0$ can be calculated explicitly as mentioned in \cite[Remark 2.9]{yang20190}. 
Consider the following doubly-heteroscedastic noise matrix  
\begin{equation}\label{eqn defnsep}
Y:=A^{1/2}\cal N B^{1/2},
\end{equation}
where $A$ and $B$ are deterministic positive definite symmetric matrices, and $\cal N =(\cal N_{ij})$ is a $p\times n$ random matrix with i.i.d. entries of mean zero and variance $n^{-1}$.
%$z_{ij}= n^{-1/2}\wh z_{ij}$ for a sequence of i.i.d. random variables $\wh z_{ij}$  with mean zero and variance one. 
Suppose $A$ and $B$ are diagonal matrices
\begin{equation}\label{eigen}A=\text{diag}(a_1, \ldots, a_p), \quad  B=\text{diag}(b_1, \ldots, b_n),
\end{equation}
with $a_1 \geq a_2 \geq \cdots \geq a_p>0$ and $b_1 \geq b_2 \geq \cdots \geq b_n>0$. %Let $Y:=A^{1/2}Z B^{1/2}.$
%Then for a Gaussian $Z$, using the rotational invariance of multivariate Gaussian distribution, we get
% \begin{equation*}
%\mathcal{Q}_s \stackrel{d}{=}U_a\left( YY^\top \right)U_a^\top, \quad Y:=\Sigma_a^{1/2}Z\Sigma_b^{1/2}.
%\end{equation*}  
Then $\mathcal{Q}=YY^\top$ is a random Gram matrix as in Theorem \ref{informal1} with 
%mean matrix $R=0$ and 
variance matrix $S=(({a_ib_j})/n)$ and mean matrix $R=0$. It is easy to see that (A2) and (A3) of Assumption \ref{assum_gram} hold if $a_i$'s  and $b_j$'s are all of order 1. Furthermore, assumption (\ref{assmA4}) is reduced to 
\begin{equation}\label{assmA4 rem} 
\Gamma(\epsilon) \leq \min \left\{ \inf_{1\le i\le p} \frac{1}{p} \sum_{l} \frac{1}{\epsilon+|a_{i}-a_l|^2}, \inf_{1\le j\le n} \frac{1}{n} \sum_{l} \frac{1}{\epsilon+|b_{j}-b_l|^2} \right\},
\end{equation}  
and condition (\ref{assmA4s}) is reduced to %the conditions
\begin{equation} \label{assmA4s rem} 
\begin{split}
 |a_{i_1}-a_{i_2}| \leq \tau^{-1}n^{-1/2}  |i_1-i_2|^{1/2}  \quad \text{for} \ i_1,i_2 \in I_\alpha,  \ \ \text{and} \ \
 |b_{j_1}-b_{j_2}|  \leq \tau^{-1}n^{-1/2} |j_1-j_2|^{1/2} \quad \text{for} \ j_1,j_2 \in J_\beta.
\end{split}
\end{equation}
%It is a sufficient condition for (\ref{assmA4 rem}). 
In fact, if we have $a_i=f(i/p)$ and $b_j=g(j/n)$ for some piecewise $1/2$-H{\"o}lder continuous functions $f$ and $g$, then \eqref{assmA4s rem} holds true. One special case is that $f$ and $g$ are piecewise constant functions, which happens when the eigenvalues of $A$ and $B$ take at most $\OO(1)$ many different values. If \eqref{assmA4 rem} or \eqref{assmA4s rem}  holds, as we will see in Section \ref{sec_twdistribution}, Theorem \ref{informal1} applies to \eqref{eqn defnsep} with $r=0$.  

We remark that the diagonal assumption (\ref{eigen}) is not necessary for the Tracy-Widom asymptotics. When the matrices $A$ and $B$ are non-diagonal, we get a model that extends the setting in \eqref{eq_gramgeneralassumption} because the entries of $Y=A^{1/2}\cal N B^{1/2}$ can be correlated.  Finally, we remark that (A4) of Assumption \ref{assum_gram} can be violated by allowing for some large $a_i$'s and $b_j$'s. Then we get a spiked separable covariance matrix, which has been studied in detail in \cite{dingyang2}.  Our Theorem \ref{informal1} also applies to this case.
\end{example}

\begin{example}[Sparse noise, \cite{hwang2019,lounici2014}]\label{exam_sparserandomgram} In this example, we consider the sparse noise matrix $Z$ as proposed in \cite{hwang2019}. The sparse random Gram matrices can be used as a natural model to study high-dimensional data with randomly missing observations. %and the Markov switching model. 
For instance, given a probability $\mathsf{p},$ we set $z_{ij}=h_{ij}w_{ij},$ where $w_{ij}$ are random variables independent of $\{h_{ij}\}$, and $h_{ij}$ are i.i.d. (rescaled) Bernoulli random variables with $\mathbb{P}(h_{ij}=(n \mathsf{p})^{-1/2})=\mathsf p$ and $\mathbb{P}(h_{ij}=0)=1-\mathsf p$. More generally, %given a sparsity parameter $q\gg 1,$ 
we say that $\mathcal{Q}=YY^\top$ is a sparse random Gram matrix if $Y$ satisfies the following properties: the entries $y_{ij}$, $1 \leq i \leq p, 1 \leq j \leq n$, are independent random variables satisfying 
\begin{equation}\label{eq_sy}
\mathbb{E}\left|\frac{y_{ij}-\E y_{ij}}{\sqrt{ns_{ij}}}\right|^k \leq  \frac{(Ck)^{Ck}}{nq^{k-2}} , \quad k \geq 3,
\end{equation} 
for a large constant $C>0$ and sparsity parameter $q$ with $1\ll  q \leq \sqrt{n}$. In the above setting with randomly missing observations, we have that $q=\sqrt{n\mathsf{p}}$.
%Note that for $Y= H/{\sqrt{n \mathsf{p}}}$ in \eqref{Z=Y+EZ}, $YY^\top$ is a sparse random Gram matrix with $q=\sqrt{n \mathsf{p}}.$ Theorem \ref{thm_twgram} also applies to this setting if  $S$ satisfies Assumption \ref{assum_gram} and $R=(r_{ij})$ satisfies Assumption \ref{assum_meanmatrix}. 

\end{example}

\section{Main results}\label{sec_mainresults}

In this section, we state the main results of this paper. The Tracy-Widom distribution of the edge eigenvalues for a general class of random Gram matrices, i.e., the formal statement of Theorem \ref{informal1}, will be presented in Section \ref{sec_twdistribution}. The theoretical properties of the test statistics (\ref{eq_ona intro})--(\ref{eq_ona intro1}) and the associated sequential estimator (\ref{eq_seqestimate}) will be given in Section \ref{sec_statisticsproperty}.

\subsection{Tracy-Widom distribution for random Gram matrices}\label{sec_twdistribution}

In this subsection, we provide the formal statement for Theorem \ref{informal1}. Before stating our main result, we first introduce the necessary notations. %prepare some preliminary results. 
%If $R=0$ or Assumption \ref{assum_meanmatrix} holds, assuming 
If \eqref{eq flatS} holds, then there exists a unique vector of holomorphic functions 
$$\mb(z)=(m_1(z),  \cdots, m_p(z)): \mathbb{C}_+  \rightarrow \mathbb{C}^p,\quad \mathbb{C}_+:=\{z\in \C: \im z>0\},$$ 
satisfying the so-called \emph{vector Dyson equation} 
\begin{equation}\label{selfm1}
  \frac{1}{\mb}=-z \mathbf{1}+S\frac{1}{\mathbf{1}+S^\top \mb},
\end{equation}
such that $\im m_k (z)>0,$ $k=1,\cdots, p,$  for any $z \in \mathbb{C}_+$ \cite{alt20172,alt20171,hachem2007}. In the above equation, $\mathbf 1$ denotes the vector whose entries are all equal to $1$, and both ${1}/{\mb}$ and ${1}/({\mathbf{1}+S^\top \mb})$ mean the entrywise reciprocals. Moreover, for each $k=1,\cdots,p,$ there exists a unique probability measure $\nu_k$ that has support contained in $[0, 4\mathfrak M]$ and is absolutely continuous with respect to the Lebesgue measure, such that $m_k$ is the Stieltjes transform of $\nu_k$:
\begin{equation*}
m_k(z)=\int \frac{\nu_k(\dd x)}{x-z} , \quad z \in \mathbb{C}_+. % \quad k=1,2,\cdots, p. 
\end{equation*}   
%Moreover, $\nu_k$ is absolutely continuous with respect to the Lebesgue measure. 
(If we consider the case $p>n$, then $\nu_k$ will also have a point mass at zero, but we do not have to worry about this issue under \eqref{eq_defnc}.) Let  $\rho_k$ be the density function associated with $\nu_k$. Then the asymptotic ESD of $(Y-R)(Y-R)^\top$ is given by $\nu:= p^{-1}\sum_k \nu_k$, with the following density $\rho$ and Stieltjes transform $m$,
\begin{equation}\label{defnmz}
\rho:=\frac1p\sum_{k}\rho_k,\quad m(z):=\frac{1}{p} \sum_k m_k(z).
\end{equation} 
%we have that there exists a unique probability measure $\nu$ such that $m(z)$ is the Stieltjes transform of $\nu.$  We call $\rho$ as the density function associated with $\nu.$ 
%{\color{red} add some discussion that finite rank perturbation will not influence the LSD}
We summarize the basic properties of the density functions $\rho$ and $\rho_k$, $1 \leq k \leq p$. 
 \begin{lemma}[Theorem 2.3 of \cite{alt20172}] \label{lem_edgebehavior}
 Under Assumption \ref{assum_gram}, %when either $R=0$ or Assumption \ref{assum_meanmatrix} holds,  
 for any $1 \leq k \leq p,$ there exists a sequence of positive numbers  $a_1 > a_2 >\cdots > a_{2\mathfrak q} \ge 0$ such that 
 \begin{equation*}
 \supp  \rho_k =\supp \rho  =\bigcup_{i=1}^{\mathfrak{q}} [a_{2i}, a_{2i-1}],
 \end{equation*}
 where $\mathfrak{q} \in \mathbb{N}$ depends only on $S$. Moreover, $\rho$ has the following square root behavior near  $a_{1}$:
 \begin{equation}\label{sqrtrho}
 \rho(a_1-x)={\pi}^{-1} {\varpi}\sqrt{x}+\OO(x), \quad x \downarrow 0,
\end{equation}  
{where $\varpi\equiv \varpi(S)$ is an order 1 positive value determined by $S$. }  
 \end{lemma}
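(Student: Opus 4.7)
The plan is to analyze the vector Dyson equation \eqref{selfm1} directly and extract the density information from the boundary values of its solution on the real axis. First, I would establish existence and uniqueness of a holomorphic solution $\mathbf{m}(z)$ for $z \in \mathbb{C}_+$ by showing that the map underlying \eqref{selfm1} is a strict contraction in a suitable hyperbolic-type metric on the upper half plane, following the standard strategy for matrix Dyson equations. The resulting components $m_k(z)$ all have positive imaginary part, so by Stieltjes inversion each $m_k$ is the Stieltjes transform of a probability measure $\nu_k$, and the support bound $\supp \nu_k \subset [0, 4\mathfrak{M}]$ follows from the a priori estimate $|m_k(z)| \le C/\mathrm{dist}(z, [0, 4\mathfrak{M}])$ derived directly from \eqref{selfm1} together with assumption (A2). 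This yields the densities $\rho_k(x) = \pi^{-1}\lim_{\eta \downarrow 0} \im m_k(x + \mathrm{i}\eta)$.

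Next, to establish the common support property $\supp \rho_k = \supp \rho$, I would exploit the irreducibility condition (A3). If $\rho(x_0) > 0$ for some $x_0 > 0$, then $\im m_k(x_0) > 0$ for at least one index $k$. Taking imaginary parts in \eqref{selfm1} gives a linear relation of the form $\im \mathbf{m} = K \, \im \mathbf{m}$, where $K$ is a non-negative matrix built from $S$, $S^\top$, and $|\mathbf{m}|^2$. A Perron--Frobenius argument, using the strict positivity of $(SS^\top)^{L_1}$ and $(S^\top S)^{L_2}$ from (A3), then propagates positivity of $\im m_k$ to all components simultaneously, so all $\supp \rho_k$ coincide. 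The piecewise decomposition $\bigcup_{i=1}^{\mathfrak q}[a_{2i}, a_{2i-1}]$ then follows because $\rho$ is real-analytic in the interior of its support (by the implicit function theorem applied to \eqref{selfm1}) and $\supp \rho \subset [0, 4\mathfrak{M}]$ is compact, forcing only finitely many connected components.

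The square root behavior at the rightmost edge $a_1$ is the technical heart. I would make the singular ansatz $\mathbf{m}(z) = \mathbf{m}(a_1) + \mathbf{v}\sqrt{a_1 - z} + O(a_1 - z)$ and substitute into \eqref{selfm1}. Matching orders, the coefficient vector $\mathbf{v}$ must be a Perron-type eigenvector of the linearized operator $F$, the Jacobian of the right-hand side of \eqref{selfm1} evaluated at $z = a_1$. Existence of a unique direction with Perron eigenvalue equal to $1$ follows from (A3), while assumption (A4) guarantees non-degeneracy of the resulting cubic self-consistent equation for $\mathbf{v}$ at the next order, which is precisely what distinguishes square root behavior from a cubic root (cusp) singularity. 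Solving this cubic equation then yields the leading-order expansion $\rho(a_1 - x) = \pi^{-1}\varpi\sqrt{x} + O(x)$ with an explicit formula for $\varpi$ in terms of the Perron eigenvector of $F$ and the matrix $S$; assumption (A2) ensures that $\varpi$ is of order one.

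The main obstacle is this last step: ruling out cusp-like behavior at $a_1$. Without (A4), one could have merging support components or a cubic-root singularity at the edge, which would destroy the Tracy--Widom law (cf.\ the cusp universality results in \cite{Cusp2,Cusp1}). Translating the averaged ``closeness of rows and columns'' bound (A4) into a quantitative spectral gap between the Perron eigenvalue of $F$ and the rest of the spectrum of $F$ is delicate, as it requires tracking the interplay between the variation of $S$ across rows and columns and the geometry of the self-consistent solution $\mathbf{m}(a_1)$. This is the main technical novelty of \cite{alt20172} that such a proof would ultimately rely on.
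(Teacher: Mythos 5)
This lemma is not proved in the paper: it is imported verbatim as Theorem~2.3 of \cite{alt20172} and used as a black-box input, so there is no in-paper argument to benchmark against. Your sketch is a plausible high-level summary of the methodology that the cited reference employs: a contraction principle in a hyperbolic-type metric for existence and uniqueness of $\mathbf m$, Stieltjes inversion combined with an a~priori resolvent bound from (A2) for the compact support, a Perron--Frobenius argument using the positive-irreducibility condition (A3) to force $\supp\rho_k$ to be independent of $k$, and an edge expansion governed by the linearized stability operator. You also honestly flag that the quantitatively hard step---converting the averaged condition (A4) into a spectral gap of the stability operator---is deferred to \cite{alt20172}.

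The one substantive imprecision is your account of what (A4) accomplishes. The paper itself records (in the remark just below the lemma, quoting \cite{Quadratic2015}) that (A4) is there to rule out \emph{spikes}---isolated rows or columns of $S$ that would detach a small connected component of $\supp\rho$---and not cusps. Cusp singularities arise at \emph{interior} gap closures when two support bands merge, and the paper explicitly disclaims Tracy--Widom behavior at such edges, deferring them to future work with extra edge-regularity and edge-separation hypotheses. At the outermost edge $a_1$ there is no adjacent band to the right, so cubic-root behavior is not the alternative you need to exclude; the role of (A4) here is to guarantee that $a_1$ is a genuine bulk edge with $\varpi$ of order one rather than the endpoint of a spike-generated sliver whose square-root coefficient degenerates. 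Your ansatz-matching step would still go through mechanically, but the non-degeneracy you actually need is a uniform two-sided bound on the edge coefficient, obtained from the stability operator analysis under (A2)--(A4), not a square-root-versus-cusp dichotomy.
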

% \begin{proof}
% See \cite[Theorem 2.3]{alt20172}. 
% \end{proof}
 
%In fact, Theorem 2.3 of \cite{alt20172} focuses on proving the case $R=0.$ Since finite rank perturbation will not influence the behavior of the density function, we can also conclude Lemma \ref{lem_edgebehavior} using Assumption \ref{assum_meanmatrix}. 

In what follows, we shall call $a_{k}$ the spectral edges. In particular, we will focus on the right-most edge $a_1$ and denote it by  $\lambda_+\equiv a_1$ following the convention in the random matrix theory literature. {We remark that as discussed in \cite{alt20172}, it is possible that the density $\rho$ has some cusp singularities when two edges are close to each other or when $\rho$ touches zero. In the current paper, since we are mainly interested in the edge eigenvalue statistics around $a_1$, we only need assumptions to ensure \eqref{sqrtrho}. However, to show the Tracy-Widom law at other edges, we need extra edge regularity and edge separation conditions to avoid cusp singularities as in \cite{ZJ2017,Anisotropic}. We will pursue this direction in future works.} Now, we are ready to state %a necessary and sufficient tail condition \eqref{sharp_moment} for 
the Tracy-Widom law of the largest edge eigenvalues for a general class of random Gram matrices with variance  and mean matrices satisfying Assumptions \ref{assum_gram} and \ref{assum_meanmatrix}.

\begin{theorem}\label{thm_twgram} 
Let $Y=(y_{ij})$ be a $p\times n$ random matrix such that $\wt y_{ij}:= (y_{ij}-r_{ij})/\sqrt{s_{ij}}$ are real i.i.d. random variables. Suppose $\wt y_{11}$ follows a probability distribution that does not depend on $n$, and satisfies $\E\wt y_{11}=0$, $\E\wt y^2_{11}=1$ and
\begin{equation}\label{sharp_moment}
\lim_{x \rightarrow \infty} x^4 \mathbb{P}\left(\left|\wt y_{11}\right| \geq x\right)=0.
\end{equation}
Suppose the variance matrix $S=(s_{ij})$ satisfies Assumption \ref{assum_gram} and the mean matrix $R=(r_{ij})$ satisfies Assumption \ref{assum_meanmatrix}.  Denote the eigenvalues of $\mathcal{Q}=YY^\top$ by $\lambda_1\ge \lambda_2 \ge \cdots \ge \lambda_p$.
Then we have that %there exits a scaling parameter $\gamma_0 \equiv \gamma_0(S)$ satisfying $\gamma_0=\OO(1),$ such that
\begin{equation}\label{eq_twgram}
\lim_{n \rightarrow \infty} \mathbb{P}\left(\varpi^{2/3} p^{2/3} (\lambda_{r+1}-\lambda_+) \leq x\right)=F_1(x), \quad \text{for all} \ \ x \in \mathbb{R},
\end{equation}
where $\varpi$ is the value defined in (\ref{sqrtrho}), and $ F_1$ is the type-1 Tracy-Widom cumulative distribution function. More generally, for any fixed $k\in \N$, we have that {
%the following extend \eqref{eq_twgram} to the joint distribution of the  largest $k$ eigenvalues 
\begin{equation}\label{SUFFICIENT2}
\begin{split}
   \lim_{p \to \infty}\mathbb{P}\left[ \left(\varpi^{2/3}p^{{2}/{3}}(\lambda_{i+r} - \lambda_+) \leq x_i\right)_{1\le i \le k} \right] 
=   \lim_{p \to \infty} \mathbb{P}\left[\left(p^{{2}/{3}}(\mu_i^{\rm GOE} - 2) \leq x_i\right)_{1\le i \le k} \right] , 
\end{split}
\end{equation}}
for all $(x_1 , x_2, \ldots, x_k) \in \mathbb R^k$, where we recall that $\mu_i^{\rm GOE}$ are the eigenvalues of GOE as given by \eqref{eq GOE}.
%where $F_1$ is the type-1 Tracy-Widom distribution and $S$ is the variance matrix of $Y.$ 

Furthermore, the condition \eqref{sharp_moment} is necessary in the following sense: if $r=0$ and \eqref{sharp_moment} does not hold, then we have that for any fixed $x>\lambda_+$,
\begin{equation}\label{eq_twgramnece}
\limsup_{n \rightarrow \infty} \mathbb{P}( \lambda_{1} > x)>0.
\end{equation}
Hence $\varpi^{2/3}p^{{2}/{3}}(\lambda_{1} - \lambda_+)$ does not converge to $F_1$ in distribution. 
\end{theorem}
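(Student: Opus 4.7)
The plan is to prove the limit laws \eqref{eq_twgram}--\eqref{SUFFICIENT2} via a two-step reduction to a Gaussian divisible ensemble, handle the outliers by Weyl/interlacing, and treat the necessity claim \eqref{eq_twgramnece} by a direct heavy-tail argument.

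First I would reduce to the centered case. Let $Y' := Y-R$ so that $Y'Y'^\top$ falls in the framework studied in \cite{alt20172, alt20171}. Under Assumption \ref{assum_gram}, the local laws proved there imply both the rigidity of the eigenvalues of $Y'Y'^\top$ near the rightmost edge and a regular square root behavior of the associated ESD on a sufficiently fine scale $\eta_* \ll 1$, using the density asymptotics \eqref{sqrtrho} from Lemma \ref{lem_edgebehavior}. I can therefore apply Theorem \ref{informal2} to $\mathcal{Q}_t = (Y'+\sqrt{t}X)(Y'+\sqrt{t}X)^\top$ for some scale $\sqrt{\eta_*}\ll t \ll 1$, which yields the joint Tracy-Widom law for the top edge eigenvalues of the Gaussian divisible matrix $\mathcal{Q}_t$ with appropriate deterministic parameters $\varpi_t, \lambda_{+,t}$.

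Second, to transfer this from $\mathcal Q_t$ to $\mathcal Q = Y'Y'^\top$, I would use a Green function / resolvent comparison argument of the type developed in \cite{EYY, Anisotropic, LY, yang20190}. The idea is to compare $\E \, F(\mathrm{Tr}\, G(z_1), \ldots, \mathrm{Tr}\, G(z_m))$ for a smooth bounded test function $F$ and spectral parameters $z_i$ at distance $\sim p^{-2/3}$ above $\lambda_+$, between the two ensembles by a Lindeberg-style entry-by-entry swap. The necessary input is the anisotropic local law for $\mathcal Q$ from \cite{alt20172, alt20171} together with the control of individual entries afforded by the sharp moment condition \eqref{sharp_moment} via the truncation $\mathbf 1(|\tilde y_{ij}|\le p^{1/2-\delta})$. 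Letting $t\to 0$ afterwards drives $\varpi_t \to \varpi$ and $\lambda_{+,t}\to \lambda_+$, giving \eqref{eq_twgram} and \eqref{SUFFICIENT2} in the centered case $r=0$. The joint convergence over $k$ edge eigenvalues is automatic since the comparison is carried out at multiple spectral parameters simultaneously.

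Third, for $r\ge 1$, the supercritical assumption \eqref{eq_supercritical} combined with Lemma \ref{rigid_lem} and Weyl's inequality forces $\mathcal Q$ to have exactly $r$ outliers strictly above $\lambda_+$, as already explained in Remark \ref{rem_why_outlier}. The Cauchy interlacing inequality for the rank-$r$ perturbation $Y = Y'+R$ then gives $\lambda_{i+r}(\mathcal Q) \le \lambda_i(Y'Y'^\top) \le \lambda_i(\mathcal Q)$, so on the edge scale $p^{-2/3}$ the eigenvalues $\lambda_{r+i}(\mathcal Q)$ and $\lambda_i(Y'Y'^\top)$ differ by $\oo(p^{-2/3})$ by rigidity, reducing the case $r\ge 1$ to the already proved $r=0$ case.

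Finally, for the necessity claim: if \eqref{sharp_moment} fails, there exist $\delta>0$ and a sequence $x_m \to \infty$ with $\P(|\tilde y_{11}|\ge x_m)\ge \delta x_m^{-4}$. Fix any $x>\lambda_+$ and set $\kappa := (x - \lambda_+)/2$. For large enough threshold $T_n \asymp \sqrt{n}$ the probability that at least one of the $pn$ i.i.d.\ entries $\tilde y_{ij}$ exceeds $T_n$ in absolute value is bounded below by a positive constant, by a standard Poisson-approximation argument using $pn \cdot \P(|\tilde y_{11}|\ge T_n) \gtrsim \delta$. Conditional on such a large entry at position $(i_0,j_0)$, the rank-one bound $\lambda_1(YY^\top) \ge y_{i_0 j_0}^2 \gtrsim s_{i_0 j_0} T_n^2 \gtrsim n^{-1/3+\epsilon_*} \cdot n = n^{2/3+\epsilon_*}$ (using the lower bound in \eqref{eq flatS}) gives an eigenvalue much larger than $x$, so $\P(\lambda_1>x)$ stays bounded away from zero. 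The main obstacle in the overall argument is Step 2: propagating the local laws uniformly through the comparison scheme in the presence of a genuinely heterogeneous variance profile, where the stability of the vector Dyson equation \eqref{selfm1} must be controlled on the scale of the edge.
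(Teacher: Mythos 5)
Your overall three-step architecture (DBM universality via Theorem~\ref{informal2}, resolvent comparison, outliers via interlacing, heavy-tail argument for necessity) is in the spirit of the paper, but two of the steps have genuine gaps and the paper takes a materially different route to avoid them.

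\textbf{The Gaussian-divisible reduction is set up the wrong way around.} You propose to apply Theorem~\ref{informal2} to $\mathcal Q_t=(Y'+\sqrt{t}X)(Y'+\sqrt{t}X)^\top$ and then compare $\mathcal Q_t$ with $\mathcal Q=Y'Y'^\top$ by a Lindeberg swap, sending $t\to 0$ at the end. This cannot work as stated: the entries of $Y'+\sqrt{t}X$ have variance $s_{ij}+t/n$ while those of $Y'$ have variance $s_{ij}$, so the swap does not match second moments. The cumulative effect of the $t/n$ discrepancy across $pn\sim n^2$ entries shifts the spectral edge by $\mathcal O(t)$, which is $\gg n^{-2/3}$ in the regime $t\gtrsim n^{-1/3}$ where Theorem~\ref{informal2} (and the underlying DBM relaxation) is applicable; and you cannot take $t\to 0$ because the hypothesis $t\gg\sqrt{\eta_*}$ forbids it. The paper instead constructs an auxiliary matrix $W$ with Gaussian entries of \emph{deflated} variance $\mathbb E w_{ij}^2=s_{ij}-t_0/n$, so that $W+\sqrt{t_0}X$ has exactly the same mean and variance as $Y$; the Lindeberg comparison (via Lemma~\ref{lem stronglocal} and the \cite{DY}-type argument) is then done between two ensembles with matched moments, giving \eqref{SUFFICIENT}. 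The only remaining delta is between $\lambda_{+,t_0}$, the (random) rectangular-free-convolution edge of the ESD of $WW^\top$, and the deterministic $\lambda_+$, and the paper controls this difference at scale $o(n^{-2/3})$ by a Stieltjes-transform estimate culminating in \eqref{edge_diff}. Your ``$\lambda_{+,t}\to\lambda_+$'' observation is not strong enough: what is needed, and what the proof actually establishes, is $p^{2/3}|\lambda_{+,t_0}-\lambda_+|\to 0$ in probability at a fixed admissible $t_0$.

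\textbf{The $r\ge 1$ case by interlacing does not close.} The perturbation $YY^\top-Y'Y'^\top$ has rank up to $2r$, and even granting rank $r$, Cauchy interlacing only gives $\lambda_{i+r}(\mathcal Q)\le\lambda_i(Y'Y'^\top)\le\lambda_{i-r}(\mathcal Q)$. By rigidity, consecutive edge eigenvalues are $\Theta(p^{-2/3})$ apart, so interlacing confines $\lambda_{i+r}(\mathcal Q)$ only to within $\mathcal O(p^{-2/3})$ of $\lambda_i(Y'Y'^\top)$ --- the same order as the Tracy--Widom fluctuation itself, which is not enough to conclude that the rescaled eigenvalues share the same limit. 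The paper avoids this by running the whole DBM argument directly on $\mathcal Q$ with $i_0=r+1$ (Remarks~\ref{rmk_iogeqonecase}, \ref{rmk_commentcomment}), using Lemma~\ref{rigid_lem222} to pin down the outliers and the rigidity of the edge eigenvalues of $\mathcal Q$ itself.

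\textbf{The necessity argument has a quantitative error.} You use the lower bound $\min_{i,j}s_{ij}\ge n^{-4/3+\epsilon_*}$ from \eqref{eq flatS}, but then write $s_{i_0 j_0}T_n^2\gtrsim n^{-1/3+\epsilon_*}\cdot n$, which reads off the wrong power ($n^{-1/3+\epsilon_*}$ rather than $n^{-4/3+\epsilon_*}$). With the correct power and $T_n\sim\sqrt n$ one gets $s_{i_0 j_0}T_n^2\gtrsim n^{-1/3+\epsilon_*}\to 0$, which is useless. The standard version of this argument (the one the paper inherits from \cite{DY}) uses entries at positions where $s_{ij}\asymp 1/n$ --- which necessarily exist because $\sum_{ij}s_{ij}\sim p$ while $\max_{ij}s_{ij}\le s_*/n$ --- so that $s_{ij}T_n^2\asymp 1$, and then chooses the subsequence $n_m$ along which $\mathbb P(|\wt y_{11}|\ge x_m)\ge\delta x_m^{-4}$ holds with $x_m\asymp\sqrt{n_m}$.
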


For the reader's convenience, we state the Tracy-Widom distributions for the models in Examples \ref{example_separable} and \ref{exam_sparserandomgram} as corollaries of Theorem \ref{thm_twgram}.  

\begin{corollary} \label{cor_separable}
Assume that \eqref{eq_defnc} holds. Consider the doubly-heteroscedastic matrix in \eqref{eqn defnsep}, where $\cal N$ is a $p\times n$ random matrix with \smash{$\cal N_{ij}= n^{-1/2}\wt y_{ij}$} for a sequence of i.i.d. random variables $\wt y_{ij}$. Suppose $\wt y_{11}$ follows a probability distribution that does not depend on $n$, and satisfies $\E \wt y_{11}=0$, $\E \wt y^2_{11}=1$ and \eqref{sharp_moment}. In addition, assume that 
\begin{equation} \label{assm_3rdmoment}
	\mathbb{E}\left(\wt y_{11}^3\right)=0.
\end{equation}  
Let $A$ and $B$ be $p\times p$ and $n\times n$ deterministic positive definite symmetric matrices, whose eigenvalues satisfy that %, and 
\begin{equation}\label{eq_seperablecheckable}
 {\tau} \leq a_p \le a_1 \leq  {\tau^{-1}},\quad {\tau} \leq b_n\le  b_1 \leq  {\tau^{-1}},   
\end{equation}
for a small constant $\tau>0$, and satisfy the condition \eqref{assmA4 rem} for a continuous monotonically decreasing function $\Gamma:(0,1] \rightarrow (0,\infty)$ such that $\lim_{\epsilon \downarrow 0} \Gamma(\epsilon)=\infty$. 
%Moreover, suppose the matrix $S=(({a_ib_j})/n)$ satisfies {\bf (A4)} of Assumption \ref{assum_gram}. 
%Denote the eigenvalues of $\mathcal{Q}_s$ by $\lambda_1^s\ge \lambda_2^s \ge \cdots \ge \lambda^s_p$. 
Then, for any fixed $k\in \N$, we have that
%\begin{equation}\label{eq_corononoutlier}
%\lim_{n \rightarrow \infty} \mathbb{P}(\varpi^{2/3}p^{2/3}(\lambda_1-\lambda_+) \leq x)=F_1(x), \quad \text{for all} \ \ x \in \mathbb{R},
%\end{equation}
%where $\varpi$ is the factor given in \eqref{sqrtrho}.  
%In general, for any fixed $k\in \N$, we have that
%the following extend \eqref{eq_twgram} to the joint distribution of the  largest $k$ eigenvalues 
\begin{equation}\nonumber 
\begin{split}
 & \lim_{n\to \infty}\mathbb{P}\left[ \left(\varpi^{2/3}p^{{2}/{3}}(\lambda_{i} - \lambda_+) \leq x_i\right)_{1\le i \le k} \right]= \lim_{n\to \infty} \mathbb{P}\left[\left(p^{{2}/{3}}(\mu_i^{\rm GOE} -2) \leq x_i\right)_{1\le i \le k} \right] , 
\end{split}
\end{equation}
for all $(x_1 , x_2, \ldots, x_k) \in \mathbb R^k$, where $\lambda_+$ and $\varpi$ are defined for the variance matrix $S=(({a_ib_j})/n)$. Finally, the condition \eqref{assm_3rdmoment} is not necessary if either $A$ or $B$ is diagonal.
\end{corollary}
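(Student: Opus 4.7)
The plan is to deduce Corollary \ref{cor_separable} from Theorem \ref{thm_twgram} applied to the variance profile $s_{ij} = a_i b_j/n$. First, I would verify Assumption \ref{assum_gram} for this $S$: (A1) is \eqref{eq_defnc}; (A2) follows from $\tau^2/n \le s_{ij} \le \tau^{-2}/n$ via \eqref{eq_seperablecheckable}; (A3) is the direct estimate $(SS^\top)_{ii'} = a_i a_{i'} n^{-2}\sum_j b_j^2 \ge \tau^4/n$ with $L_1 = 1$ (and symmetrically for $S^\top S$); and (A4) is precisely the assumed \eqref{assmA4 rem}. Since $R = 0$ in this model, Assumption \ref{assum_meanmatrix} holds trivially with $r = 0$, and the constants $\lambda_+$ and $\varpi$ produced by Lemma \ref{lem_edgebehavior} for this $S$ are exactly those appearing in the corollary.

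When both $A$ and $B$ are diagonal, the entries $y_{ij} = \sqrt{a_i b_j}\,\mathcal{N}_{ij}$ are genuinely independent, the normalized variables $\wt y_{ij} = \sqrt{n}\,\mathcal{N}_{ij}$ are i.i.d.\ with the required moment conditions, and Theorem \ref{thm_twgram} applies directly. For general (not-necessarily-diagonal) $A = U_A D_A U_A^\top$ and $B = U_B D_B U_B^\top$, the singular values of $Y = A^{1/2}\mathcal{N} B^{1/2}$ coincide with those of $D_A^{1/2}(U_A^\top \mathcal{N} U_B)D_B^{1/2}$. For a Gaussian benchmark $\mathcal{N}^G$ with the same first two moments as $\mathcal{N}$, orthogonal invariance gives $U_A^\top \mathcal{N}^G U_B \stackrel{d}{=} \mathcal{N}^G$, so the spectrum of $Y^G (Y^G)^\top$ agrees in law with that of $Z^G (Z^G)^\top$ for $Z^G := D_A^{1/2}\mathcal{N}^G D_B^{1/2}$, to which the diagonal case just treated applies and yields Tracy-Widom. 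To transfer this conclusion to non-Gaussian $\mathcal{N}$, I would run a Green function (Lindeberg-type) comparison of the Erd\H{o}s--Yau--Yin flavor, as in \cite{Anisotropic}, between $YY^\top$ and $Y^G (Y^G)^\top$, replacing the entries of $\mathcal{N}$ one at a time by Gaussian counterparts. The replacement error per entry is of the right order at the edge scale $p^{-2/3}$ exactly when the first three moments match, which is the role of \eqref{assm_3rdmoment}; the fourth moment need not be matched, being controlled by the tail hypothesis \eqref{sharp_moment}.

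The assumption \eqref{assm_3rdmoment} can be dropped when either $A$ or $B$ is diagonal because an additional independence structure survives. If $B$ is diagonal, set $\mathcal{M}_{ij} := \sqrt{b_j}\,\mathcal{N}_{ij}$; then $\mathcal{M}$ has genuinely independent entries with variance $b_j/n$, and $Y = A^{1/2}\mathcal{M}$ has independent columns with covariance $(b_j/n)A$. This is the separable covariance model analyzed in \cite{dingyang2}, for which Tracy-Widom at the right edge is known under only the sharp tail hypothesis \eqref{sharp_moment}: the independent-column Schur/Girko-type resolvent expansion underlying the anisotropic local law causes the third-cumulant contribution to cancel in the comparison with the Gaussian benchmark, so no matching of $\E \wt y_{11}^3$ is needed. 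The case of diagonal $A$ follows from the same argument applied to $Y^\top Y$, which has the same nonzero spectrum as $YY^\top$. The main obstacle is the Green function comparison in the fully non-diagonal case: one must establish anisotropic local laws for both $YY^\top$ and $Y^G (Y^G)^\top$ with resolution just below $p^{-2/3}$ near $\lambda_+$, and the telescoping Lindeberg argument must correctly track the correlations among the entries of $Y$ induced by the orthogonal mixing in $A^{1/2}\mathcal{N} B^{1/2}$, with moment matching up to order three keeping the accumulated replacement error negligible.
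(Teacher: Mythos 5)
Your proposal takes essentially the same route as the paper: verify that $S=(a_ib_j/n)$ satisfies Assumptions \ref{assum_gram} and \ref{assum_meanmatrix} with $r=0$, reduce the Gaussian case to diagonal $A,B$ by rotational invariance and invoke Theorem \ref{thm_twgram}, and pass to general entry distributions by a moment-matching edge comparison in which \eqref{assm_3rdmoment} provides the needed third-moment match. The only difference is one of exposition: the paper cites the edge universality result \eqref{add_edge_univ} directly from \cite{yang20190} (a continuous self-consistent comparison, which also yields the claim that \eqref{assm_3rdmoment} is dispensable when $A$ or $B$ is diagonal), rather than re-deriving it by a Lindeberg swap as you outline, and your attribution of the diagonal case to \cite{dingyang2} should be to \cite{yang20190}.
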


\begin{corollary}\label{thm_twgraph_sparse}
%Consider a centered random matrix $Y,$ 
Suppose $\mathcal{Q}=YY^\top $ is a sparse random Gram matrix, where the entries of $Y$ satisfy (\ref{eq_sy}) with $q\ge n^{1/3+c_\phi}$ for a small constant $c_\phi>0$. Suppose the variance matrix $S=(s_{ij})$ satisfies Assumption \ref{assum_gram} and the mean matrix $R=(r_{ij})$ satisfies Assumption \ref{assum_meanmatrix}. %Let $\lambda_+$ be the spectral edge defined in Lemma \ref{lem_edgebehavior} and $\varpi$ be the factor given in \eqref{sqrtrho}.
%$\rho$ be the spectral density defined in \eqref{defnmz} with the spectral edge at . 
%Denote the eigenvalues of $\mathcal{Q}_{sp}$ by $\lambda^{sp}_1\ge \lambda^{sp}_2\ge \cdots\ge \lambda^{sp}_p$. 
Then for any fixed $k\in \N$, we have that
\begin{equation}\nonumber 
\begin{split}
 & \lim_{n\to \infty}\mathbb{P}\left[ \left(\varpi^{2/3}p^{{2}/{3}}(\lambda_{i+r} - \lambda_+) \leq x_i\right)_{1\le i \le k} \right]= \lim_{n\to \infty} \mathbb{P}\left[\left(p^{{2}/{3}}(\mu_i^{\rm GOE} - 2) \leq x_i\right)_{1\le i \le k} \right] , 
\end{split}
\end{equation}
for all $(x_1 , x_2, \ldots, x_k) \in \mathbb R^k$. %where $\varpi$ is the factor given in \eqref{sqrtrho}. 
%Then we have
%\begin{equation*}
%\lim_{n \rightarrow \infty} \mathbb{P}(\varpi^{2/3} p^{2/3} (\lambda^{sp}_1-\lambda_+) \leq x)=F_1(x), \quad \text{for all} \ x \in \mathbb{R},
%\end{equation*}
%where $\varpi$ is the factor given in \eqref{sqrtrho}. 
\end{corollary}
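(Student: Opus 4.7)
The plan is to follow the three-step strategy behind Theorem \ref{thm_twgram} (sketched in Section \ref{sec_proofstrategy}), replacing the sub-Gaussian moment condition \eqref{sharp_moment} by the sparse tail bound \eqref{eq_sy}. The three ingredients needed are: (i) an anisotropic local law for the resolvent $G(z)=(\mathcal{Q}-z)^{-1}$ valid down to the optimal spectral scale $\eta\sim n^{-2/3}$ near the right edge $\lambda_+$; (ii) Theorem \ref{informal2} applied to the Gaussian-divisible matrix $\mathcal{Q}_t=(Y+\sqrt t X)(Y+\sqrt t X)^\top$; and (iii) a Green function comparison to transfer the Tracy--Widom statistics from $\mathcal{Q}_t$ back to $\mathcal{Q}$ for a suitably small $t$.

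First, I would extend the local laws of \cite{alt20171, alt20172} from the sub-Gaussian to the sparse setting. The deterministic equivalent $\mathbf m(z)$ from \eqref{selfm1} is unchanged since it depends only on $S$. The self-consistent equation for $G(z)$ is derived via a Stein-type cumulant expansion, and the $k$-th normalized cumulant of $(y_{ij}-r_{ij})/\sqrt{n s_{ij}}$ is of order $q^{-(k-2)}$ by \eqref{eq_sy}. Tracking the leading sparse error at the edge scale $\eta\sim n^{-2/3}$, the dominant contribution comes from the cubic cumulant and is bounded by $q^{-3} n^{-1}\eta^{-1}\sim q^{-3}n^{-1/3}$, which is $o(1)$ precisely when $q\ge n^{1/3+c_\phi}$. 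The local law in turn yields edge rigidity of the eigenvalues of $\mathcal{Q}$ and verifies that $\mathcal{Q}$ has a regular square-root edge near $\lambda_+$ on a fine scale $\eta_*\ll n^{-2/3+\epsilon}$ in the sense of Definition \ref{assumption_edgebehavior}.

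Second, the Tracy--Widom convergence for $\mathcal{Q}_t$ now follows directly from Theorem \ref{informal2} applied with any $t$ satisfying $\sqrt{\eta_*}\ll t\ll 1$. Under Assumption \ref{assum_meanmatrix}, the mean matrix $R$ contributes exactly $r$ outliers detached from the bulk (cf.\ Remark \ref{rem_why_outlier}), so the edge index is $a=r+1$. The scaling factor $\varpi$ and the shifted edge $\lambda_{+,t}$ agree, up to errors of size $o(p^{-2/3})$, with those of $\mathcal{Q}$ for such small $t$, which can be verified through the Dyson equation \eqref{selfm1} and the square-root expansion \eqref{sqrtrho} of $\rho$ at $\lambda_+$.

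Third, the Green function comparison between $\mathcal{Q}$ and $\mathcal{Q}_t$ is carried out along the interpolation $Y_s=Y+\sqrt s\,X$, $s\in[0,t]$, applied to a smooth observable built from resolvent traces that encode the joint position of the $k$ largest non-outlier eigenvalues near $\lambda_+$ (as in \cite{EYY, Anisotropic, LY, yang20190}). Differentiating in $s$ and expanding via cumulants, the second-cumulant contributions cancel by the variance matching $\E(\sqrt s X_{ij})^2 = s/n$, and the remaining higher-cumulant terms are controlled by combining the local law from Step 1 with the sparse bound $q^{-(k-2)}$; the cubic and quartic terms are the binding ones and again require $q\ge n^{1/3+c_\phi}$. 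The main obstacle will be Step 1: pushing the local law down to the optimal edge scale while retaining the anisotropic entrywise precision in the sparse regime requires careful bookkeeping, because the averaged estimates from \cite{alt20171} are not by themselves sharp enough to run the comparison in Step 3. Once the sparse edge local law is in place, Steps 2 and 3 proceed essentially as for Theorem \ref{thm_twgram}, and the joint convergence to the GOE edge point process follows from the standard reduction of finite-dimensional correlation functions to traces of smooth test functions applied to $\mathcal{Q}$.
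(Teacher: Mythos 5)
Your overall scheme is sound in spirit, but the paper's actual proof of this corollary is much shorter and takes a structurally different route, and a couple of your steps have specific inaccuracies worth flagging.

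The paper does not re-run the three-step DBM strategy for the sparse ensemble. Instead, it observes that the tail bound \eqref{eq_sy} together with Markov's inequality gives the \emph{bounded support condition} \eqref{eq_boundsupoorteq} with $\phi_n = q^{-1}\le n^{-1/3-c_\phi}$. Once that is in hand, Lemma \ref{lem stronglocal} (a local law proved under bounded support, not under a cumulant expansion) applies off the shelf, yielding the averaged local laws \eqref{aver_inlarge}--\eqref{aver_outlarge} and the weaker rigidity $|\lambda_j-\gamma_j|\prec j^{-1/3}n^{-2/3}+\phi_n^2$. A resolvent comparison argument of EKYY type then transfers the edge statistics of the sparse $\cal Q$ directly to those of a \emph{Gaussian} random Gram matrix with the same mean and variance matrices, provided $\phi_n\le n^{-1/3-c_\phi}$ (equivalently $\phi_n^2\ll n^{-2/3}$, so the rigidity error sits below the Tracy--Widom scale). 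Theorem \ref{thm_twgram} is then invoked for the Gaussian case. In other words, the DBM machinery is only ever applied to the Gaussian reference, once, in the proof of Theorem \ref{thm_twgram}; the corollary is a comparison lemma plus a citation.

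Two concrete issues with your write-up. First, the source of the condition $q\ge n^{1/3+c_\phi}$ is not the cubic cumulant contribution you estimate as $q^{-3}n^{-1/3}$; that quantity is $o(1)$ for a much wider range of $q$. The binding constraint is that the bounded-support parameter $\phi_n=q^{-1}$ yields a rigidity error $\phi_n^2=q^{-2}$, and this must be $\ll n^{-2/3}$ so that the comparison of edge eigenvalue distributions at the Tracy--Widom scale is not destroyed. (The paper's Remark \ref{rmk_techinical} explains that a sharper rigidity with a deterministic shift $\delta(q)=\OO(q^{-2})$ plus a $q^{-4}$ error would permit relaxing to $q\ge n^{1/6+c_\phi}$, which is the conjectured threshold.) Second, in your Step 3, interpolating along $Y_s=Y+\sqrt s X$ does not keep the variance fixed: $Y_s$ has variance $s_{ij}+s/n$, so the second cumulants of the two endpoints do not match and the quadratic terms do not cancel. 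To compare $\cal Q$ with a Gaussian-divisible ensemble you must either introduce an auxiliary $Y'$ with variance $s_{ij}-t/n$ so that $Y'+\sqrt{t}X$ matches $Y$ in mean and variance (as the paper does in the proof of Theorem \ref{thm_twgram}), or else carefully track the moving edge $\lambda_{+,t}$ and show $|\lambda_{+,t}-\lambda_+|=o(n^{-2/3})$ as in \eqref{edge_diff}. As written, your cancellation claim would leave a non-negligible second-order discrepancy.
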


The proofs of Theorem \ref{thm_twgram}, Corollary \ref{cor_separable} and Corollary \ref{thm_twgraph_sparse} will be given in Appendix \ref{sec_prooftwgram}. We remark that the settings of Corollaries \ref{cor_separable} and \ref{thm_twgraph_sparse} are actually beyond the one in Theorem \ref{thm_twgram}: in Corollary \ref{cor_separable}, the entries of $Y$ can be correlated because we did not assume that $A$ and $B$ are diagonal, while in Corollary \ref{thm_twgraph_sparse}, the distribution of $\wt y_{11}= (y_{11}-r_{11})/\sqrt{s_{11}}$ may depend on $n$ under the condition (\ref{eq_sy}). Hence, they are not trivial corollaries of Theorem \ref{thm_twgram}. But in the proof, we can reduce their settings to ones that are compatible with Theorem \ref{thm_twgram}. {For example, for doubly-heteroscedastic matrices, under the setting of Corollary \ref{cor_separable},  \cite{yang20190} has proved the edge universality---the limiting distribution of the edge eigenvalues is the same as that in the Gaussian case with i.i.d. Gaussian $\wt y_{ij}$. On the other hand, by the rotational invariance of Gaussian $\cal N$, we can reduce the model to one with diagonal $A$ and $B$ so that Theorem \ref{thm_twgram} applies. Combining these two results finishes the proof of Corollary \ref{cor_separable}.} 

{We also mention that the condition (\ref{assm_3rdmoment}) in Corollary \ref{cor_separable} and the condition $q \geq n^{1/3+c_{\phi}}$ in Corollary \ref{thm_twgraph_sparse} are mainly technical. The edge universality in \cite{yang20190} was proved under the vanishing third moment condition. Hence, we have kept (\ref{assm_3rdmoment}) in Corollary \ref{cor_separable}, but we believe it can be removed with further theoretical development. We also believe that $q \geq n^{1/3+c_{\phi}}$ can be weakened to $q \geq n^{1/6+c_{\phi}}$, while Corollary \ref{thm_twgraph_sparse} may fail when $q \le n^{1/6}$. Since these problems are not the main focus of this paper, we will pursue them in future works. We also refer the readers to Remark \ref{rmk_techinical} for more details. }  

\subsection{Theoretical properties of the test statistics}\label{sec_statisticsproperty}

With Theorem \ref{thm_twgram}, we can readily obtain the asymptotic distributions of the statistics {$ \mathbb{T}(r_0)$} in (\ref{eq_ona intro}) and  $\mathbb{T}_{r_0}$ in (\ref{eq_ona intro1}) under the null hypothesis in (\ref{eq_test1}), and analyze the statistical power of them under the alternatives. Corresponding to {$\mathbb{T}(r_0)$} and  $\mathbb{T}_{r_0}$, we define the following two sequential testing estimators  
\begin{equation}\label{eq_defn_consistentesmtiator}
 \wh r_1:=\inf\{r_0\ge 0:   \mathbb{T}(r_0)  < \delta_n^{(1)} \},\quad \wh r_2:=\inf\{r_0\ge 0: \mathbb T_{r_0} < \delta_n^{(2)} \}.
\end{equation}  
We will show that $\widehat{r}_1$ and $\wh r_2$ are consistent estimators of $r$ as long as we choose the critical values $\delta_n^{(1)}$ and $\delta_n^{(2)}$ properly. Let $\cal W \sim W_p(I_p ,n)$ be a standard Wishart matrix.
%Let $X$ be a $p \times n$ Gaussian random matrix with i.i.d. $\mathcal{N}(0,1)$ entries. 
We define the following statistics $\mathbb{G}_1$ and $\mathbb{G}_2$ in terms of the eigenvalues of $\cal W$,
\begin{equation*}
\mathbb{G}_1:=\max_{1 \leq i \le r_*-r_0} \frac{\lambda_i(\cal W)-\lambda_{i+1}(\cal W)}{\lambda_{i+1}(\cal W)-\lambda_{i+2}(\cal W)}, \quad 
\mathbb{G}_2:= \frac{\lambda_1(\cal W)-\lambda_2(\cal W)}{\lambda_{r_* - r_0 +1}(\cal W)-\lambda_{r_* - r_0 +2}(\cal W)}. 
\end{equation*}   

\begin{corollary}\label{coro_onatskiresult}
Suppose the assumptions of Theorem \ref{thm_twgram} hold and $r_* > r$. Under the null hypothesis $\mathbf{H}_0$ in (\ref{eq_test1}), we have that  
\begin{equation}
\lim_{n \rightarrow \infty} \mathbb{P}(\mathbb{T} \leq x)=\lim_{n \rightarrow \infty} \mathbb{P} (\mathbb{G}_1 \leq x),
%\end{equation*}
\quad \text{and}\quad
%\begin{equation*}
\lim_{n \rightarrow \infty} \mathbb{P}(\mathbb{T}_{r_0} \leq x)=\lim_{n \rightarrow \infty} \mathbb{P} (\mathbb{G}_2 \leq x), \label{first_ona}
\end{equation} 
for all $x \in \mathbb{R}$. On the other hand, %under the alternative hypothesis $\mathbb{H}_a$, %for any arbitrarily large constant $\mathfrak{M}>0,$ for any large constant $C>0$, 
if $\delta_n^{(1)} p^{-2/3}\to 0$, then %we have that 
\begin{equation}\label{second_ona}
\lim_{n \rightarrow \infty} \mathbb{P} (\mathbb{T}>\delta_n^{(1)})=1,\quad \text{under \ \ \ $\mathbf{H}_a$};
\end{equation} 
%for any large constant $C_a>0$. %Similarly, when 
if $\delta_n^{(2)} p^{-2/3}/\left(\lambda_{r_0+1}-\lambda_{r_0+2}\right)\to 0$, then %we have that
%then for any large constant $C>0$, we have that 
\begin{equation}\label{third_ona}
\lim_{n \rightarrow \infty} \mathbb{P} (\mathbb{T}_{r_0}>\delta_n^{(2)})=1,\quad \text{under \ \ \ $\mathbf{H}_a$.}
\end{equation}
Consequently, if $\delta_n^{(1)}\to \infty$ and $\delta_n^{(1)} p^{-2/3}\to 0$, then %we have that 
\begin{equation}\label{consis1}
\lim_{n \rightarrow \infty} \mathbb{P}(\widehat{r}_1=r)=1;
\end{equation}
if $\delta_n^{(2)}\to \infty$ and $\delta_n^{(2)} p^{-2/3}/\left(\lambda_{r_0+1}-\lambda_{r_0+2}\right)\to 0$, then % we have that
\begin{equation}\label{consis2}
\lim_{n \rightarrow \infty} \mathbb{P}(\widehat{r}_2=r)=1.
\end{equation}
\end{corollary}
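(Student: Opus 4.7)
The plan is to derive Corollary \ref{coro_onatskiresult} from Theorem \ref{thm_twgram} via a continuous-mapping argument together with the outlier/edge separation described in Remark \ref{rem_why_outlier}.

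First, for the null distributional statements in \eqref{first_ona}, I would apply Theorem \ref{thm_twgram} under $\mathbf{H}_0$ with $k=r_*-r_0+2$ to obtain the joint weak convergence of $(\varpi^{2/3}p^{2/3}(\lambda_{r_0+i}-\lambda_+))_{1\le i\le k}$ to $(p^{2/3}(\mu_i^{\rm GOE}-2))_{1\le i\le k}$. Writing $\xi_j := \varpi^{2/3}p^{2/3}(\lambda_{r_0+j}-\lambda_+)$, the scaling cancels in every ratio, so
$$\frac{\lambda_{r_0+j}-\lambda_{r_0+j+1}}{\lambda_{r_0+j+1}-\lambda_{r_0+j+2}} = \frac{\xi_j-\xi_{j+1}}{\xi_{j+1}-\xi_{j+2}},$$
exhibiting both $\mathbb T$ and $\mathbb T_{r_0}$ as continuous functions of $(\xi_1,\ldots,\xi_k)$ on the open set where consecutive differences are non-zero. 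Almost sure simplicity of the GOE spectrum places the limit inside this open set, and the continuous mapping theorem transfers the weak convergence. Applying Theorem \ref{thm_twgram} to the standard Wishart ensemble (i.e.\ to $R=0$, $s_{ij}=1/n$, which trivially satisfies Assumptions \ref{assum_gram}--\ref{assum_meanmatrix}) yields the identical GOE limit for the top eigenvalues of $\mathcal W$, hence for $\mathbb G_1$ and $\mathbb G_2$ after the same cancellation. This gives \eqref{first_ona}.

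Next, for divergence under $\mathbf H_a$, I would use that $r\ge r_0+1$ together with Remark \ref{rem_why_outlier}: with high probability $\lambda_r \ge (2+\tau)^2\mathfrak M$, which is strictly above $\lambda_+$, while Theorem \ref{thm_twgram} gives $\lambda_{r+i}-\lambda_+ = O(p^{-2/3})$ for any fixed $i\ge 1$. Taking the $i=r$ term in the maximum defining $\mathbb T$ (permissible since $r_0<r\le r_*$),
$$\mathbb T \;\ge\; \frac{\lambda_r-\lambda_{r+1}}{\lambda_{r+1}-\lambda_{r+2}} \;\gtrsim\; p^{2/3}$$
with high probability, because the numerator is $\Theta(1)$ and the denominator is $\Theta(p^{-2/3})$; this yields \eqref{second_ona} under $\delta_n^{(1)}p^{-2/3}\to 0$. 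For $\mathbb T_{r_0}$, the denominator $\lambda_{r_*+1}-\lambda_{r_*+2}$ is a pure edge spacing $\Theta(p^{-2/3})$ since $r_*\ge r+1$, so
$$\mathbb T_{r_0} \;\ge\; c\,p^{2/3}(\lambda_{r_0+1}-\lambda_{r_0+2})$$
w.h.p.\ for a constant $c>0$, and the stated normalization on $\delta_n^{(2)}$ gives \eqref{third_ona}.

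For the consistency claims, $\{\widehat r_1=r\}$ contains $\bigcap_{r_0=0}^{r-1}\{\mathbb T(r_0)\ge\delta_n^{(1)}\}\cap\{\mathbb T(r)<\delta_n^{(1)}\}$; each event in the finite intersection has probability tending to $1$ by \eqref{second_ona}, and the last event has probability tending to $1$ because $\mathbb T(r)$ is tight (in fact convergent in distribution by \eqref{first_ona}) and $\delta_n^{(1)}\to\infty$. The same reasoning with \eqref{third_ona} and \eqref{first_ona} yields \eqref{consis2}. The principal technical input is already contained in Theorem \ref{thm_twgram}; the only subtlety in the deduction is securing the lower bound $\lambda_r-\lambda_{r+1}\gtrsim 1$ \emph{without} assuming distinct signal singular values, which I would handle precisely as above by invoking only the outlier-to-edge gap and never any gap between two outliers.
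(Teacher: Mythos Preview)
Your proposal is correct and follows essentially the same approach as the paper: \eqref{first_ona} from the joint convergence \eqref{SUFFICIENT2} and the analogous Wishart result, \eqref{second_ona} by lower-bounding $\mathbb{T}$ via the $i=r$ term and using the outlier/edge separation from Remark \ref{rem_why_outlier} together with the $O(p^{-2/3})$ edge spacing, and \eqref{third_ona} from the $O(p^{-2/3})$ bound on $\lambda_{r_*+1}-\lambda_{r_*+2}$. Your write-up is in fact more explicit than the paper's (the continuous-mapping justification and the consistency intersection argument are only implicit there), but the substance is the same; one minor point is that you only need the upper bound $\lambda_{r+1}-\lambda_{r+2}=O_P(p^{-2/3})$ rather than the full $\Theta(p^{-2/3})$ you assert.
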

\begin{proof}
\eqref{first_ona} follows directly from \eqref{SUFFICIENT2}. 
%The first part of the results when $\mathbf{H}_0$ holds, i.e., the convergence of the statistics $\mathbb{T}$ and $\mathbb{T}_{r_0}$, follows directly from Theorem \ref{thm_twgram}. 
%For the second part of the results, without loss of generality, we consider $\mathbb{T}.$ 
On the other hand, under $\mathbf{H}_a$ and the assumption $r_*>r$, we have that
\begin{equation*}
\mathbb{T} \geq \frac{\lambda_{r}-\lambda_{r+1}}{\lambda_{r+1}-\lambda_{r+2}}. %\quad \text{under \ \ \ $\mathbf{H}_a$.}
\end{equation*}
%Consequently, we need to show that with probability $1-o(1),$ when $n$ is large enough,  
%\begin{equation*}
% \frac{\lambda_{r}-\lambda_{r+1}}{\lambda_{r+1}-\lambda_{r+2}} \rightarrow \infty. 
%\end{equation*}
By Theorem \ref{thm_twgram}, we have that
\begin{equation}
\lambda_{r+1}-\lambda_+=\OO(p^{-2/3}),\quad \lambda_{r+1}-\lambda_{r+2}=\OO(p^{-2/3}),\quad \lambda_{r_*+1}-\lambda_{r_*+2} =\OO(p^{-2/3}), \label{eq_smallgap}
\end{equation} 
with probability $1-\oo(1)$. Furthermore, under Assumption \ref{assum_meanmatrix}, as discussed in Remark \ref{rem_why_outlier} we have that
%for any arbitrarily large constant $\mathfrak{M}>0,$ by Weyl's inequalities for singular values, we have that 
 $ |\lambda_r - \lambda_+| \geq c_\tau$ for a small constant $c_\tau>0$. Hence we get that with probability $1-\oo(1)$,
%Since $\lambda_+=O(1),$ let $\mathfrak{M}>2 \lambda_+,$ we have that 
\begin{equation*}
\mathbb{T} \geq \frac{\lambda_{r}-\lambda_{r+1}}{\lambda_{r+1}-\lambda_{r+2}}\gtrsim p^{2/3}, 
\end{equation*}
which concludes \eqref{second_ona} and \eqref{consis1}. %by Theorem \ref{thm_twgram}, we have $|\lambda_{r_*+1}-\lambda_{r_*+2}| =\OO(p^{-2/3})$ with probability $1-\oo(1)$ under $\mathbf{H}_a$. Together with
Finally, using \eqref{eq_smallgap}, we immediately conclude \eqref{third_ona} and \eqref{consis2}. 
\end{proof}

\begin{remark}\label{rem_degen}
{ We make a few remarks here. First, the conditions $\delta_n^{(1)} \rightarrow \infty$ and $\delta_n^{(2)} \rightarrow \infty$ are necessary and sufficient to guarantee that $\mathbb{T}$ and $\mathbb{T}_{r_0}$ have asymptotic zero type I errors. For any fixed $r_*-r_0$, the joint distribution of $\{ \lambda_i(\mathcal{W})\}_{1 \leq i \le r_*-r_0+2}$ can be expressed in terms of the Airy kernel \cite{Forr}. 
%Following \cite{TW1}, the joint distribution of $\{ \lambda_i(\mathcal{W})\}_{1 \leq r_*-r_0}$ can be formulated using the Airy kernel 
Although it is hard to get explicit expressions of the limiting distributions of $\mathbb{G}_1$ and $\mathbb{G}_2$, it is easy to check that 
both the distributions are supported on the whole positive real line. Consequently, it is necessary to let $\delta_n^{(1)}$ and $\delta_n^{(2)}$ diverge.}
%and hence the distribution of $\mathbb{T}$ and $\mathbb{T}_{r_0}$ can be calculated according to (\ref{first_ona}). Moreover, similar to \cite{OAea}, we can see that the limiting distributions of $\mathbb{T}$ and $\mathbb{T}_{r_0}$ will supported on the whole positive real line. 
Second, in order to choose a non-trivial $\delta^{(2)}_n$ satisfying $\delta_n^{(2)}\to \infty$ and $\delta_n^{(2)} p^{-2/3}/\left(\lambda_{r_0+1}-\lambda_{r_0+2}\right)\to 0$, we need the following estimate:	
%In addition, we assume that the eigenvalues of $\cal Q$ satisfy that 
\begin{equation}\label{non-degenerate eq} p^{2/3}\left(\lambda_{r_0+1}-\lambda_{r_0+2}\right)\to \infty  \quad \text{in probability}.\end{equation} 
The condition \eqref{non-degenerate eq} can be guaranteed if $\mathbf H_a$ holds and the $(r_0+1)$-th and $(r_0+2)$-th singular values of $R$ are non-degenerate.  However, we believe that even in the degenerate case, the condition \eqref{non-degenerate eq} still holds. In fact, following \cite{BBP, principal}, we conjecture that the degenerate $(r_0+1)$-th and $(r_0+2)$-th spikes of $R$ will give rise to outliers satisfying that $\lambda_{r_0+1} - \lambda_{r_0+2}\gtrsim p^{-1/2}$ with probability $1-\oo(1)$. To prove this fact, we need to establish the limiting distributions of the outliers of spiked random Gram matrices, and we postpone the study to a future work. 
\end{remark} 

In Table \ref{table_criticalvalue}, we report some simulated finite sample critical values of $\mathbb{G}_1$ and $\mathbb{G}_2$ corresponding to type I error rate $\alpha=0.1$ for different choices of $r_*-r_0$, $n\in \{200,\, 500\}$ and $c_n=p/n\in \{0.5,\, 1,\, 2\}$ based on $5,000$ Monte Carlo simulations. 
%Empirically, we find these critical values seem to be relatively stable for different values of $c.$ (this is trivial because the factor $\gamma_0$ cancels in $T_1$)
All the simulations in Section \ref{sec_stat} will be based on these critical values.

\begin{table}[H]
\def\arraystretch{1.3}
\begin{center}
\begin{tabular}{cccccccc}

$r_*-r_0/(p,n)$ & $(100,200)$  &  $(250,500)$  & $(200,200)$ & $(500,500)$ & $(400, 200)$ & $(1000, 500)$    \\ \hline
1 & 4.77 & 4.68 & 4.71 & 4.53 & 4.51 & 4.51\\ 
2 & 5.68 (4.98) & 5.6 (4.86) & 5.68 (5.02) & 5.62 (4.96) & 5.59 (4.95) & 5.62 (4.87)\\  
3 & 6.37 (5.15) & 6.42 (4.95) & 6.51 (5.23) & 6.41 (5.48) & 6.63 (5.23) & 6.38 (5.19)\\  
4 & 6.94 (5.41) & 7.12 (5.28) & 6.98 (5.63) & 6.96 (5.52) & 7.07 (5.34) & 7.93 (5.48)\\  
5 & 7.86 (5.94) & 8.12 (5.87) & 8.23 (6.03) & 7.89 (5.94) & 7.91 (5.82) & 7.78 (5.79)\\  \hline
\end{tabular}
\caption{Critical values for $\mathbb{G}_1$ and $\mathbb{G}_2$ (inside the parentheses) for different combinations of $p$, $n$ and $r_*-r_0$ under the nominal significance level $0.1.$ 
%This table gives the critical values for $\mathbb{G}_1$ and $\mathbb{G}_2$ (inside the parentheses) for various scenarios. Note that 
When $r_*-r_0=1,$ we have $\mathbb{G}_1=\mathbb{G}_2$, so they share the same critical values. Note that $\mathbb{G}_2$ always has smaller critical values than $\mathbb G_1$. 
%the critical values increase as $r_*-r$ increases but $\mathbb{G}_2$ always has smaller critical values. 
\iffalse This indicates that the modified statistic $\mathbb{T}_{r_0}$ in (\ref{eq_ona intro1}) has a {\color{red} better performance in terms of} power, as long as the singular values of $R$ are non-degenerate. \fi
}\label{table_criticalvalue}
% distinct is more sensitive to the alternative when the singular values of $R$ are distinct. 
\end{center}
\end{table}

%{\color{red} add some discussion that the sequential estimator is consistent. }

\section{Numerical simulations}\label{sec_stat}

In this section, we design Monte-Carlo simulations to demonstrate the accuracy and power of our proposed statistics for the hypothesis testing problem (\ref{eq_test1}) under some general noise structures. By Corollary \ref{coro_onatskiresult}, we will use the statistics $\mathbb{T}$ and $\mathbb{T}_{r_0}$ and reject the null hypothesis \smash{$\mathbf{H}_0$} of (\ref{eq_test1}) if they are larger than the critical values in Table \ref{table_criticalvalue}. %Moreover, if the singular values of $R$ are non-degenerate, we can also use the  statistic $\mathbb{T}_{r_0}$ in (\ref{eq_ona intro1}). 
% In particular, when $r_0=0,$ the problem aims to detect the existence of signals. 
% Several statistics have been proposed in the literature to estimate the number of signals, i.e., the rank $r$ of $R$, under various assumptions on $S$; see e.g. \cite{ KUNDU200057, 5447639,4395698}. Here we perform a statistical testing on $r$ using the modified Onatski's statistic under a more general setting of $S$, which has not been studied in detail in the literature.
%Under the null hypothesis \smash{$\mathbf{H}_0^{(1)},$} Theorem \ref{thm_twgram} and Corollary \ref{coro_onatskiresult} hold for $r=r_0.$  %Consequently, when $r_*$ is known,
%
%In practice, we observe $n$ independent or correlated samples generated from (\ref{eq_signalmodel}) and form the data matrix $Y$. Then $Y$ is a classical sample covariance type model if the samples are independent, and a separable covariance type model if the samples are correlated.
%Now we consider the test (\ref{eq_test1}) and conduct extensive simulations to illustrate the performance of the statistics (\ref{eq_ona intro}) and (\ref{eq_ona intro1}) under general noise structure and various values of $r_0$. 
For the simulations, we always consider the following scenario: $R$ is of rank $r\le 5$, and all the singular values of $R$ are non-degenerate. 
%our modified statistic (\ref{eq_ona intro1}) and the Onatski's statistic (\ref{eq_ona intro}) with $r_*=5$. 
%\vspace{3pt}
%
%{\color{red}
%\noindent {\bf Scenario (b)}: $R$ is of low rank and $r\le 5$. All the singular values of $R$ are the same. In this case, we take $r_*=5$ and use the statistic $\mathbb T$. }
In the above scenario, we consider the following three noise structures, whose impact on the signal detection is still unknown rigorously in the literature.
\begin{enumerate}
\item[(I)] $Z$ is a doubly-heteroscedastic noise matrix. Specifically, we take {$Z= A^{1/2} \cal N B^{1/2},$ where $\cal N$ is a $p \times n$ white noise matrix with i.i.d. entries of mean zero and variance $n^{-1}$,} {and $A$ and $B$ are two positive definite matrices generated as follows: $A$ and $B$ have spectral decompositions $A=U_A \Sigma_A U_A^\top$ and $B=U_B \Sigma_B U_B^\top,$ where 
%the diagonal matrices $\Sigma_A$ and $\Sigma_B$ contain the eigenvalues of $A$ and $B$ such that
\begin{equation*}
\Sigma_A=\operatorname{diag}(\underbrace{1,\cdots,1}_{p/2 \ \text{times}},\  \underbrace{2, \cdots,2}_{p/2 \ \text{times}} ),\quad 
\Sigma_B=\operatorname{diag}(\underbrace{3,\cdots,3}_{p/4 \ \text{times}},\  \underbrace{4, \cdots,4}_{p/4 \ \text{times}}, \underbrace{5, \cdots,5}_{p/2 \ \text{times}} ),
\end{equation*}
and $U_A$ and $U_B$ are two orthogonal matrices generated from the \texttt{R} package \texttt{pracma}. 
% We will consider the
%standard Gaussian distribution and Rademacher distribution as the distributions of the entries of $\sqrt{n}X.$ 
%\begin{equation*}
%a_i=2-\mathbf{1}_{\{1 \leq i \leq p/2\}}, \quad b_j=4-\mathbf{1}_{\{1 \leq j \leq n/2\}},\quad  1 \leq i \leq p, \ 1 \leq j \leq n.  
%\end{equation*}
}
\item[(II)] %$Z$ is a sparse noise matrix. Specifically, we take $z_{ij}=h_{ij}w_{ij},$ where $w_{ij}$ are i.i.d. $\mathcal{N}(0,1)$ random variables independent of $\{h_{ij}\}$, and $h_{ij}$ are i.i.d. (rescaled) Bernoulli random variables with $\PP(h_{ij}=(n \mathsf {p})^{-1/2})=\mathsf  p$ and $\PP(h_{ij}=0)=1-\mathsf  p$. Here we take $\mathsf {p}=n^{-1/4}.$ 
$Z$ is a sparse noise matrix. Specifically, we take $z_{ij}=h_{ij}w_{ij}$, where $h_{ij}$ are i.i.d. (rescaled) Bernoulli random variables % independent of $\{w_{ij}\}$ and 
satisfying $\PP(h_{ij}=(n \mathsf {p})^{-1/2})=\mathsf  p$ and $\PP(h_{ij}=0)=1-\mathsf  p$, and $w_{ij}$ are independent $\mathcal{N}(0,s_{ij})$ random variables. In the simulations, we take $\mathsf {p}=n^{-1/4}$ and $s_{ij}=\alpha_i \beta_j$ with $\alpha_i$ being i.i.d. random variables uniformly distributed on $[1,2]$ and  $\beta_j$ being i.i.d. random variables uniformly distributed on $[3,4]$. 

\item[(III)] $Z=(z_{ij})$ is a noise matrix whose variance matrix $S$ has a banded latent structure. Specifically, we assume that $z_{ij} \sim \mathcal{N}(0, s_{ij})$ with %and $s_{ij}$ is defined as 
\begin{equation*}
s_{ij}= \left( 1+\nu_{ij} \mathbf{1}_{|i-j|\leq 5} \right)/n,
\end{equation*}   
where %$\Upsilon$ is some fixed integer and 
$\nu_{ij}$ are i.i.d. random variables uniformly distributed on $[1,2].$ %In the simulations, we set $\Upsilon=5.$ 

%$\mathsf {p} \delta_{\frac{1}{\sqrt{n \mathsf {p}}}}+(1-\mathsf {p}) \delta_0$ random variables independent of $z_{ij}.$ Here we choose $\mathsf {p}=n^{-1/4}.$   
\end{enumerate}
In the simulations, we always take $r_*=5$ and $c_n\in \{0.5, \, 1,\,  2\}$. %we consider the following settings. %for the noise matrix $Z.$

%and compare the finite sample performance of $\mathbb T$ and $\mathbb{T}_{r_0}$.

First, under the null hypothesis $\mathbf{H}_0$ in \eqref{eq_test1}, we check the accuracy of the statistics under the nominal significance level $0.1.$ %In both scenarios (a) and (b), 
We consider the above settings (I)--(III) under the null hypothesis $r_0=3$, with signal matrix $R=18\mathbf{e}_{1p} \mathbf{e}_{1n}^\top+16 \mathbf{e}_{2p} \mathbf{e}_{2n}^\top+14 \mathbf{e}_{3p} \mathbf{e}_{3n}^\top$.
%different values of $r_0$. We consider three different null hypotheses $r_0\in \{0, 1,3\}.$  For $r_0=1$ and $r_0=3,$ we choose the following signal matrices: $R_1=14 \mathbf{e}_{1p} \mathbf{e}_{1n}^\top$ and $R_3=18\mathbf{e}_{1p} \mathbf{e}_{1n}^\top+16 \mathbf{e}_{2p} \mathbf{e}_{2n}^\top+14 \mathbf{e}_{3p} \mathbf{e}_{3n}^\top$.
% for scenario (a), and $R_3=14(\mathbf{e}_{1p} \mathbf{e}_{1n}^\top+  \mathbf{e}_{2p} \mathbf{e}_{2n}^\top+ \mathbf{e}_{3p} \mathbf{e}_{3n}^\top)$ for scenario (b). 
Here, $\mathbf{e}_{ip}$ and $\mathbf{e}_{in}$ denote the unit vectors along the $i$-th coordinate axis in $\R^p$ and $\R^n$, respectively. In Figure \ref{fig_test1typeis1}, we report the simulated type I error rates for both the statistics (\ref{eq_ona intro}) and (\ref{eq_ona intro1}) in the settings (I)--(III) for the noise matrices. We find that both statistics combined with the critical values in Table \ref{table_criticalvalue} can attain reasonable accuracy even when $n=200$. %For the other settings, they all lead to the same conclusion even though we do not report the results here.

%  we will report the simulation results for scenario (a) with other choices of $r_0$, %and the sample size $n=500$, 
%and also the simulation results for scenario (b). 
%We can make similar conclusions for other simulation setups. 

% We present our results for two different sample sizes $n=200$ and $n=500.$ 

% {\color{red} In Figure \ref{fig_test1typeis2}, we provide the results for scenario (2), which also show that our statistic is quite accurate. [Add simulation results here. Consider move some of them to the supplementary file]} %even when $n=200.$ 

\begin{figure}[!ht]
\subfigure[Accuracy of  $\mathbb{T}$ in (\ref{eq_ona intro}).]{\label{fig:a}\includegraphics[width=8cm,height=5cm]{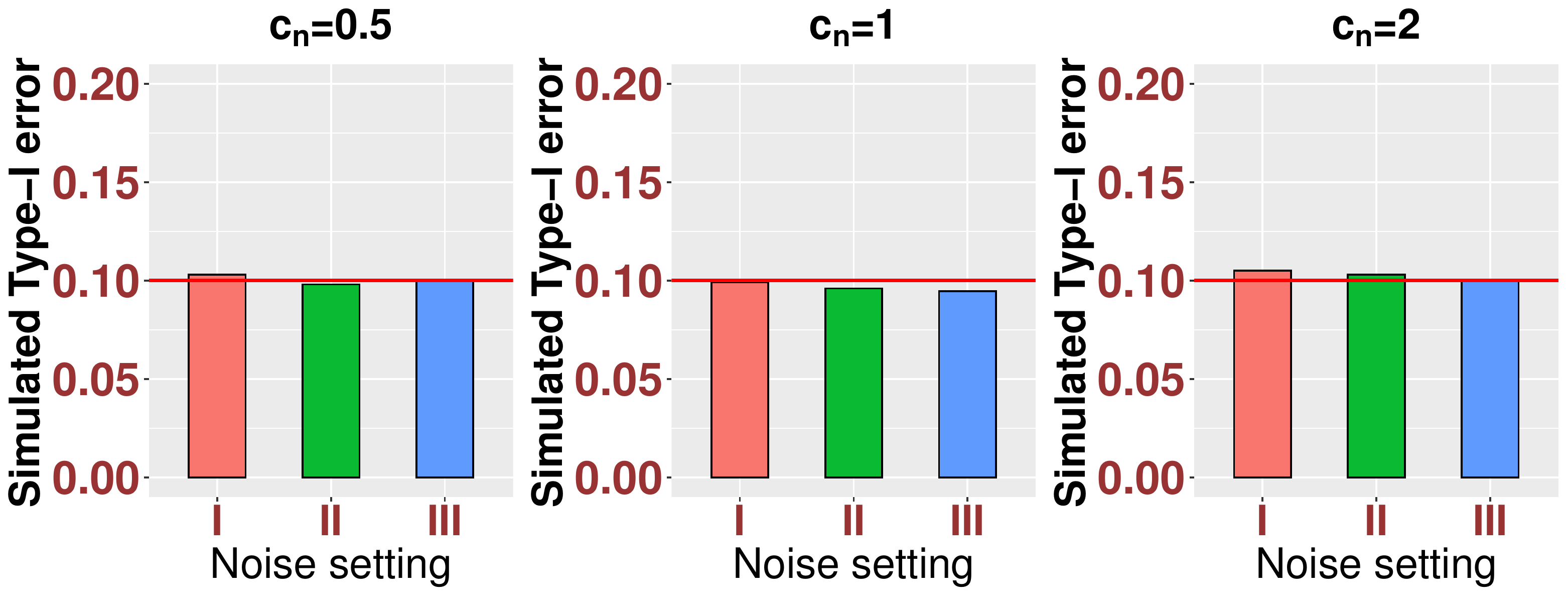}}
\hspace*{2cm}
\subfigure[Accuracy of  $\mathbb{T}_{r_0}$ in (\ref{eq_ona intro1}).]{\label{fig:b}\includegraphics[width=8cm,height=5cm]{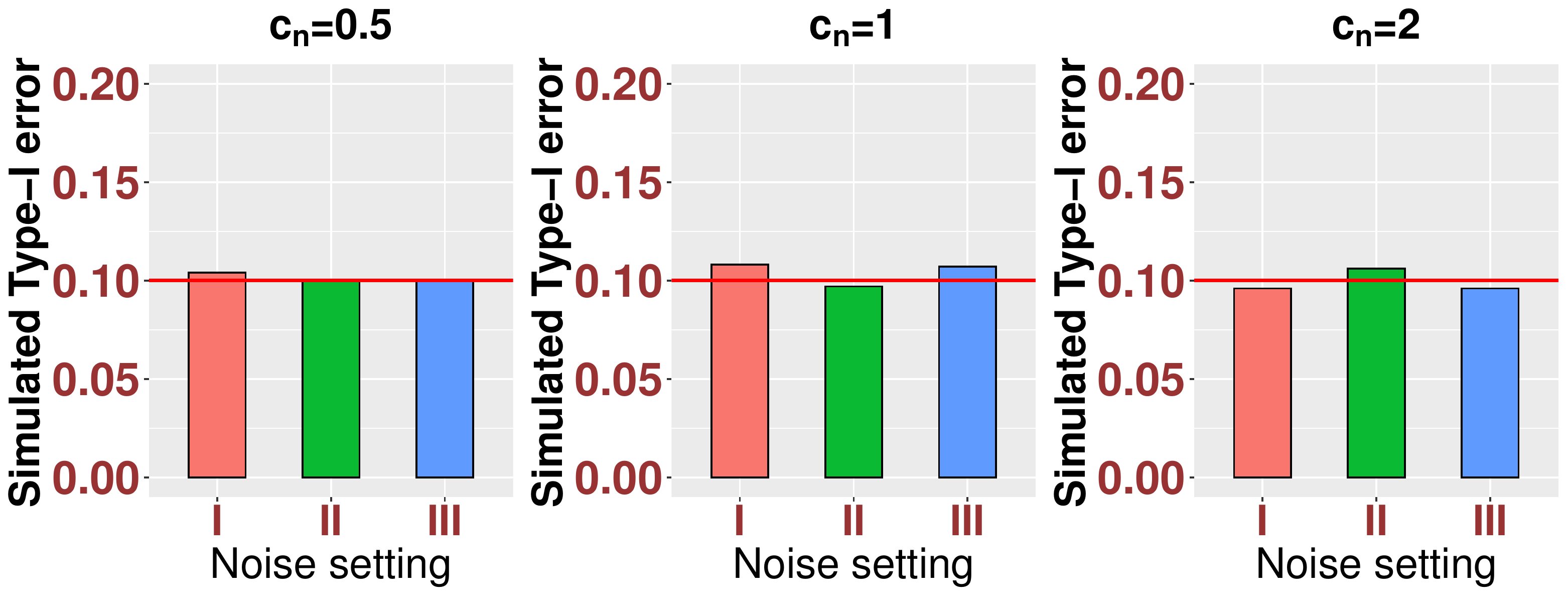}}
%\begin{subfigure}{0.3\textwidth}
%\includegraphics[width=8cm,height=5cm]{revfig/app1san2r3.eps}
%\caption{Accuracy of  $\mathbb{T}$ in (\ref{eq_ona intro}).}
%\end{subfigure}
%\hspace*{4cm}
%\begin{subfigure}{0.3\textwidth}
%\includegraphics[width=8cm,height=5cm]{revfig/app1san2r3new.eps}
%\caption{Accuracy of  $\mathbb{T}_{r_0}$ in (\ref{eq_ona intro1}).}
%\end{subfigure}
\caption{Simulated type I error rates under the nominal level 0.1 for $\mathbb{T}$ and $\mathbb{T}_{r_0}$. We take $n=200$ and report the results based on 2,000 Monte-Carlo simulations and the critical values from Table \ref{table_criticalvalue}.}
\label{fig_test1typeis1}
\end{figure}  

Second, we examine the power of the statistics under the nominal level $0.1$ when $r_0=0$ in (\ref{eq_test1}).  We set the alternative as 
\begin{equation}\label{eq_alternativet1}
\mathbf{H}_a: \ R=d \mathbf{e}_{1p} \mathbf{e}_{1n}^\top,\quad { \text{for some fixed value $d> 0$}}.
\end{equation}  
%{\color{red} \text{for some fixed value $d$}}.
%where $d=0$ corresponds to the null case. %In this single-spike setting, we only need to consider scenario (a). 
In Figure \ref{fig_test1powers1}, we report the simulated power for both the statistics (\ref{eq_ona intro}) and (\ref{eq_ona intro1}) as $d$ increases, where we take $c_n=2$ and the settings (I)--(III) for the noise matrices. We see that both statistics have high power even for a not so large $n$, $n=200$, as long as $d$ is above some threshold. Furthermore, %{\color{red} when the alternative (\ref{eq_alternativet1}) is fixed}, i.e., 
when $d$ is in a certain range, we find that the statistic  $\mathbb{T}_{r_0}$ in (\ref{eq_ona intro1}) has better performance in terms of power than the statistic $\mathbb{T}$ in (\ref{eq_ona intro}). {Finally, the statistic $\mathbb{T}_{r_0}$ starts to have non-zero power for smaller values of $d$ compared to $\mathbb{T}.$ This enables us to study a wider range of alternatives in terms of the $d$ value.} We expect that this is due to the fact that the statistic $\mathbb{T}$  needs a larger critical value to reject \smash{$\mathbf{H}_0$} as illustrated in Table \ref{table_criticalvalue}.

% {\color{red} [may we need to add more simulation here for different choices of $c_n.$]} 

%In Section \ref{sec_addtionalsimu} of the supplement \cite{twsuppl}, we will report the simulation results for other choices of $c_n$. %and the sample size $n=500$.  
%They all lead to similar conclusions. %We can make similar conclusions for other simulation setups. 

\begin{figure}[!ht]
%\begin{figure}
%\in
%\end{figure}
%\begin{subfigure}{0.4\textwidth}
\centering
\includegraphics[width=15cm,height=6cm]{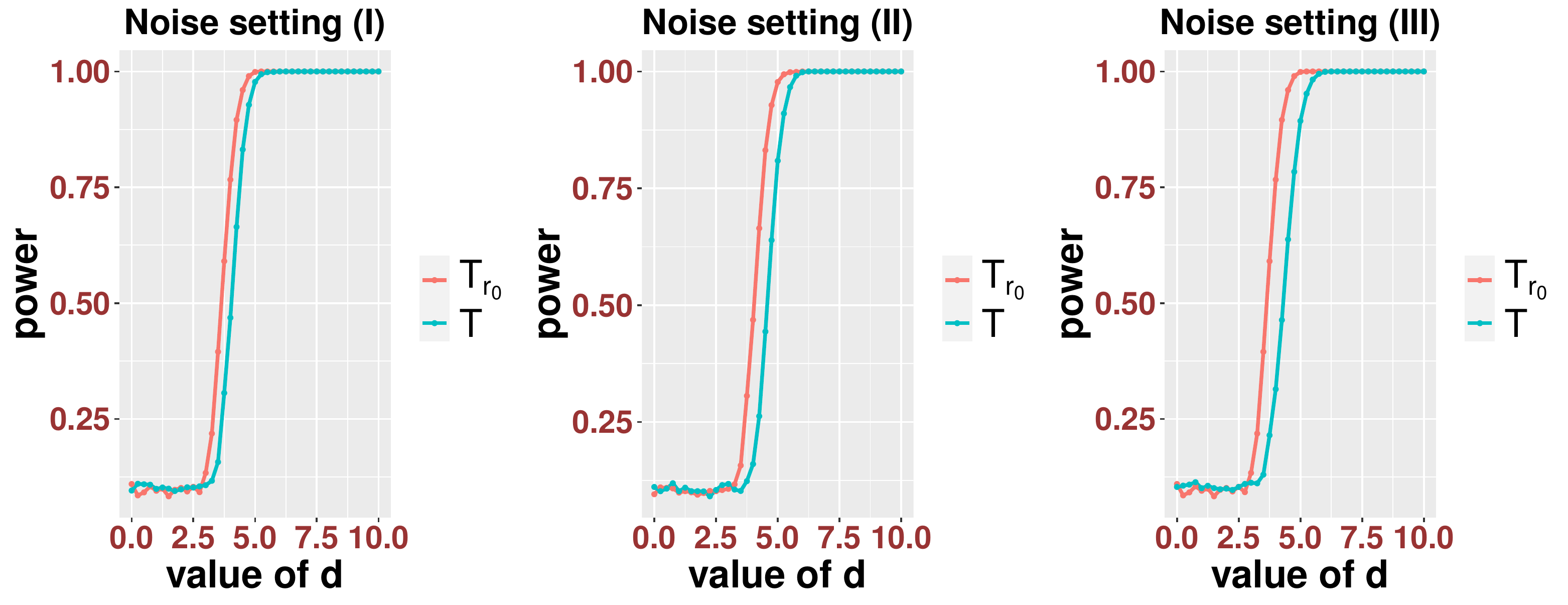}
%\caption{$n=200.$}
%\end{subfigure}
%\hspace{1cm}
%\begin{subfigure}{0.4\textwidth}
%\includegraphics[width=7cm,height=5cm]{powert1s1n500.eps}
%\caption{$n=500.$}
%\end{subfigure}
\caption{Simulated power of the statistics $\mathbb{T}$ and $\mathbb{T}_{r_0}$ for the alternative (\ref{eq_alternativet1}) under the nominal level 0.1. We take $n=200$ and $c_n=2$. We report the results based on 2,000 Monte-Carlo simulations and the critical values from Table \ref{table_criticalvalue}. }
\label{fig_test1powers1}
\end{figure}

\section{Proof strategies}\label{sec_proofstrategy}

In this section, we describe the main strategy for the proof of Theorem \ref{thm_twgram}. All the technical details can be found in the appendix. 
%\subsection{Further extensions}\label{sec_ext} 
% The results of this paper can be extended beyond the models (I)-(V) both theoretically and empirically. 
%Given a particular Gram type random matrix ensemble of the form $YY^\top$, we 
From the theoretical point of view, our proof of Theorem \ref{thm_twgram} employs the following three step strategy. 

\vspace{5pt}

\noindent{\bf Step 1:} Proving a local law on the Stieltjes transform of the random Gram matrix $\cal Q$, $m_{\cal Q}(z):=p^{-1}\tr(\cal Q-z)^{-1}$. This is needed in order to check the square root behavior of the ESD of $\cal Q$ around the right edge.  
%regularity conditions in  Definition \ref{assumption_edgebehavior} below.

\vspace{5pt}

\noindent{\bf Step 2:} Establishing the asymptotic Tracy-Widom law for the edge eigenvalues of the Gaussian divisible random Gram matrix $\cal Q_t$ in Theorem \ref{informal2} for a small $t>0$. % where $X$ is an independent $p\times n$ random matrix whose entries are i.i.d. centered Gaussian random variables with variance $n^{-1}$. 
%{\cor Following the literature, we shall call the evolution of $\cal Q_t$ with respect to $t$ the rectangular {\it matrix} Dyson Brownian motion (DBM), while we shall call the evolution of the {\it eigenvalues} of $\cal Q_t$ with respect to $t$ the rectangular Dyson Brownian motion. }

\vspace{5pt}

\noindent{\bf Step 3:} Showing that $\cal Q$ has the same edge eigenvalue statistics as $\cal Q_t$ asymptotically. 

\vspace{5pt}

\noindent This three step strategy has been widely used in the proof of bulk universality of random matrices \cite{EPRSY2010,ESY2011,ESYY2012,Bulk_univ}. For a more extensive review, we refer the reader to \cite{erdos2017dynamical} and references therein. However, it has been rarely (if any) used in the study of the edge eigenvalues of random Gram matrices. One of the main reasons is that the above Step 2 for Gram type random matrices---the core of the strategy---was not well-understood previously. 

Regarding the proof of Theorem \ref{thm_twgram}, even though the results of Step 1 have been established in \cite{alt20172,alt20171}, Steps 2 and 3 are still missing. For Step 3, we can employ some standard resolvent comparison arguments developed in e.g. \cite{bao2015,DY,EYY,Anisotropic,LY,pillai2014,yang20190}. 
In this paper, we mainly focus on Step 2, which is completed by Theorem \ref{informal2}. We will provide the formal statement of { Theorem \ref{informal2}} in Theorem \ref{thm_regularbm}. For this purpose, we first need to introduce some new notations.

Let $Y$ be a $p\times n$ data matrix, and $X$ be an independent $p\times n$ random matrix whose entries are i.i.d. centered Gaussian random variables with variance $n^{-1}$. Since the multivariate Gaussian distribution is rotationally invariant under orthogonal transforms, for any $t>0$ we have that % following equality in distribution:
$$Y+\sqrt{t}X\stackrel{d}{=}U_1\left(W+\sqrt{t}X\right)U_2^\top,$$
where $Y=U_1WU_2^\top$ is a singular value decomposition of $Y$ with $W$ being a $p\times n$ rectangular diagonal matrix,
$$W:= \begin{pmatrix} D & \ 0\end{pmatrix}, \quad D=\diag(\sqrt{d_1}, \cdots, \sqrt{d_p}).$$
Here, $\sqrt{d_1} \geq \sqrt{d_2} \geq \cdots \geq \sqrt{d_p}>0$ are the singular values of $Y$ arranged in descending order. Thus, to study the singular values of $Y+\sqrt{t}X$, it suffices to assume that the initial data matrix is $W$. 
%Following the literature, we shall call the evolution of $W+\sqrt{t}X$ with respect to $t$ the rectangular matrix Dyson Brownian motion (MDBM), while we call the evolution of the eigenvalues of $(W+\sqrt{t}X)(W+\sqrt{t}X)^\top$ with respect to $t$ the rectangular Dyson Brownian motion (DBM). 
 %
% Since the Gram matrices $ (W+\sqrt{t}X)(W+\sqrt{t}X)^\top$ and $ (W+\sqrt{t}X)^\top(W+\sqrt{t}X)$ have the same nonzero eigenvalues,  
%
We assume that the ESD of $V:=WW^\top$ has a regular square root behavior near the spectral edge, which is generally believed to be a necessary condition for the appearance of the Tracy-Widom law. Following \cite{edgedbm}, we state the regularity conditions in terms of the Stieltjes transform of $V$, %of the ESD of $V$: 
$$m_V(z):= \frac1p\tr \left(V-z \right)^{-1}= \frac1p\sum_{i=1}^p \frac1{d_i -z},\quad z\in \C_+ .$$
%Specially, we introduce the definition of $\eta_*$-regular. 

\begin{definition}[$\eta_*$-regular]\label{assumption_edgebehavior}
 Let $\eta_*$ be a deterministic parameter satisfying $\eta_*:=n^{-\phi_*}$ for some constant $0<\phi_* \le 2/3$. We say $V$ is $\eta_*$-regular around the right-edge $\lambda_+ :=d_{i_0}$ for a fixed $i_0\in \N$, if the following properties hold %for $\lambda_+:=d_1$ %(here $\lambda_+$ is a standard notation for the right spectral edge in random matrix literature) and 
for some constants $c_V, C_V>0$. %Let $V:=WW^\top.$ 
\begin{enumerate}
\item For $z=E+\ii \eta$ with $\lambda_+ - c_V \le E \le \lambda_+ $ and $\eta_* + \sqrt{\eta_* |\lambda_+ - E|}\le \eta \le 10$, we have
\begin{equation}\label{regular1}
\frac{1}{C_V} \sqrt{|\lambda_+ - E| + \eta} \le \im m_V(z) \le C_V\sqrt{|\lambda_+ - E| + \eta}, 
\end{equation}
and for $z=E+\ii \eta$ with $\lambda_+ \le E \le \lambda_+ + c_V$ and $\eta_* \le \eta \le 10$, we have
\begin{equation}\label{regular2}
\frac{1}{C_V} \frac{\eta}{\sqrt{|\lambda_+ - E| + \eta}} \le \im m_V(z) \le C_V\frac{\eta}{\sqrt{|\lambda_+ - E| + \eta}}.
\end{equation}

\item There are no eigenvalues $d_i$ of $V$ insider the interval $[\lambda_++\eta_*,\lambda_+ + c_V]$.

\item We have $2c_V \le \lambda_+ \le C_V/2$ and $\norm{V} \le N^{C_V}$.
\end{enumerate}

\end{definition}

\begin{remark}
For our setting in Theorem \ref{thm_twgram}, the index $i_0$ is equal to $r+1$, which labels the first non-outlier eigenvalue of $V$. 
%However, the above assumption covers more general cases. For example, in some models, the eigenvalue spectrum may have more than one disjoint bulk components. Then $i_0$ can be the right edge of a sub-leading component.  
The motivation for (i) is as follows: if $m(z)$ is the Stieltjes transform of a density $\rho$ with square root behavior around $\lambda_+$, i.e., 
\begin{equation}\label{strong_sqrt} \rho(x)\sim \sqrt{(\lambda_+-x)_+} ,
\end{equation} 
then \eqref{regular1} and \eqref{regular2} hold for $\im m(z)$ with $\eta_*=0$. For a general $\eta_*>0$, \eqref{regular1} and \eqref{regular2} essentially mean that the empirical spectral density of $V$ behaves like a square root function near $\lambda_+$ on any scale larger than $\eta_*$. The condition $\eta\le 10$ in the definition is purely for definiteness of presentation---we can replace 10 with any constant of order 1.
\end{remark}

Regarding $t$ as a time parameter, we are interested in the dynamics of the edge eigenvalues of $\mathcal{Q}_t:=(W+\sqrt{t}X)(W+\sqrt{t}X)^\top$ with respect to $t$ for $0< t\ll 1$. Let $\rho_{w,t}$ be the 
asymptotic spectral density of $\mathcal{Q}_t$, and $m_{w,t}$ be the corresponding 
Stieltjes transform. It is known that for any $t>0$, $m_{w,t}$ is the unique solution to
\begin{equation}\label{originaleqaution0}
m_{w,t}=\frac{1}{p} \sum_{i=1}^p \frac{1}{d_i(1+c_n tm_{w,t})^{-1}-(1+c_ntm_{w,t})z+t(1-c_n)},
\end{equation} 
such that  $\im m_{w,t}>0$ for $z \in \mathbb{C}_+$ \cite{DOZIER20071099,DOZIER2007678,VLM2012}. 
%It is known that $m_{w,t}$ is the Stieltjes transform of the asymptotic spectral density $\mathcal{Q}_t$, denoted by $\rho_{w,t}$. 
Adopting the notations from free probability theory, we shall call $\rho_{w,t}$ the \emph{rectangular free convolution} (RFC) of $\rho_{w,0}$ with Marchenko-Pastur (MP) law at time $t$. Let $\lambda_{+,t}$ be the rightmost edge of the bulk  component of $\rho_{w,t}$.  
%By  (\ref{eq_edgebound}) below, we know that 
%\be\label{eq rightedge}
%0< \lambda_{+,t}-\lambda_{+,0} \sim t^2
%\end{equation}
%for $t^2\ge n^\e\eta_* $ for some constant $\e>0$. Moreover, 
By Lemma \ref{lem_asymdensitysquare}, we know that $\rho_{w,t}$ has a square root behavior near $\lambda_{+,t}$.

We introduce the notation %denote $ b(z):=1+c_n t m_{w,t}(z)$ and 
\begin{equation}\label{eq_defnzeta0}
 \zeta_t(z):=[1+c_n t m_{w,t}(z)]^2 z-(1-c_n)t[1+c_n t m_{w,t}(z)],
\end{equation} 
which is the so-called subordination function for the RFC. Then, we define the function
\begin{equation}\label{eq_subcompansion}
\Phi_t(\zeta)=[1-c_nt m_{w,0}(\zeta)]^2\zeta +(1-c_n)t[1-c_n tm_{w,0}(\zeta)],
\end{equation} 
and the parameter
\begin{equation}\label{eq_generealscaling}
\gamma_n \equiv \gamma_n(t):=\left( \frac12 \left[4\lambda_{+,t}\zeta_{+,t} + (1-c_n)^2 t^2\right] c_n^2 t^2\Phi_{t}^{''}(\zeta_{+,t}) \right)^{-1/3},
\end{equation}
where we have abbreviated that $\zeta_{+,t}\equiv \zeta_t(\lambda_{+,t}).$ Here we used the short-hand notation $\zeta_t(\lambda_{+,t})\equiv \lim_{\eta \downarrow 0} \zeta_t(\lambda_{+,t}+\ii \eta).$
Now we are ready to give the formal statement of Theorem \ref{informal2}.  %state the main result for the proof of Theorem \ref{thm_twgram}, i.e. the formal statement of Theorem \ref{informal2}. 
%Recall that Gaussian orthogonal ensemble (GOE) refers to symmetric random matrices of the form $H:=(Z+Z^\top)/\sqrt{2},$ where $Z$ is a $p \times p$ matrix with i.i.d. real centered Gaussian entries with variance $p^{-1}$. We denote the eigenvalues of $H$ by $\mu^{\rm GOE}_1 \geq \mu^{\rm GOE}_2 \geq \cdots \ge \mu^{\rm GOE}_p$ and those of $\mathcal{Q}_t$ by $\lambda_1 (t)\geq \lambda_2(t) \geq \cdots \geq\lambda_{p}(t)$. %, where we abbreviated $\lambda_i \equiv \lambda_i(t).$

\begin{theorem}\label{thm_regularbm}
Suppose $W$ is $\eta_*$-regular in the sense of Definition \ref{assumption_edgebehavior} with $\eta_*=n^{-\phi_*}$. Suppose $t$ satisfies $n^{\epsilon} \eta_* \leq t^2 \leq n^{-\epsilon}$ for a small constant $\epsilon>0$. Fix any $k \in \N$, and let $f: \mathbb{R}^{k} \rightarrow \mathbb{R}$ be a test function such that 
\begin{equation*}
\norm{f}_{\infty} \leq C, \quad \norm{\nabla f}_{\infty} \leq C,
\end{equation*} 
for a constant $C>0$. Denote the eigenvalues of $\mathcal{Q}_t$ by $\lambda_1(t)\ge \lambda_2(t) \ge \cdots \ge \lambda_p(t)$. Then, we have that %for $\gamma_n$ in (\ref{eq_generealscaling}),  %there exists a constant $\mathfrak{c}>0$ such that
%\be\label{eq_maintheoremeq1}
%\begin{split}
%\lim_{n\to \infty} \mathbb{E}   f(\gamma_n p^{2/3}(\lambda_1(t)-\lambda_{+,t}),  \cdots, \gamma_n p^{2/3}(\lambda_{\fa}(t)-\lambda_{+,t})) \\
%= \lim_{n\to \infty} \mathbb{E}  f(p^{2/3}(\mu_1^g-2),\cdots, p^{2/3}(\mu^{\rm GOE}_{\fa}-2))  ,
%\end{split}
%\end{equation}
\begin{equation}
\begin{split}\label{eq_maintheoremeq1}
\lim_{n\to \infty} \mathbb{E} \left[ f\left(\gamma_n p^{2/3}(\lambda_{i_0}(t)-\lambda_{+,t}),  \cdots, \gamma_n p^{2/3}(\lambda_{i_0+k-1}(t)-\lambda_{+,t})\right) \right] \\
 =\lim_{n\to \infty} \mathbb{E}\left[ f\left(p^{2/3}(\mu_1^{\rm GOE}-2),\cdots, p^{2/3}(\mu^{\rm GOE}_{k}-2)\right) \right]  ,
\end{split}
\end{equation}
 where we recall that $\mu_i^{\rm GOE}$ are the eigenvalues of GOE as given by \eqref{eq GOE}.
\end{theorem}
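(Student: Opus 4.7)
The plan is to prove Theorem \ref{thm_regularbm} via a Dyson Brownian motion (DBM) analysis of the singular values of the Gaussian-divisible matrix $W+\sqrt{t}X$ at the spectral edge. Since $X$ has i.i.d.\ Gaussian entries of variance $n^{-1}$, the process $s \mapsto W+\sqrt{s}X$ is essentially a matrix heat flow starting from $W$, and the eigenvalues $\{\lambda_i(s)\}$ of $\mathcal{Q}_s=(W+\sqrt{s}X)(W+\sqrt{s}X)^\top$ satisfy a coupled system of SDEs driven by independent Brownian motions---the rectangular DBM. The heuristic driving the proof is that the edge eigenvalues of this flow relax to their universal (GOE) local statistics on a time scale comparable to $\sqrt{\eta_*}$, so that the hypothesis $n^\epsilon \eta_* \le t^2 \le n^{-\epsilon}$ ensures that at time $t$ the edge statistics of $\mathcal{Q}_t$ have reached local equilibrium while remaining close to the initial datum in the global sense.

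First I would establish an optimal local law and edge rigidity for $\mathcal{Q}_t$ in a neighborhood of $\lambda_{+,t}$. Using the fixed-point equation \eqref{originaleqaution0} together with the subordination function $\zeta_t$ in \eqref{eq_defnzeta0} and the map $\Phi_t$ in \eqref{eq_subcompansion}, a stability-of-free-convolution argument for the rectangular free convolution with MP shows that the square-root regularity of $m_{w,0}$ near $\lambda_+$ is preserved by the MP convolution and is in fact improved to the full scale $\eta \gtrsim p^{-1}$ at time $t$, with the local scale dictated precisely by $\gamma_n$ as in \eqref{eq_generealscaling}. Combining this with the $\eta_*$-regularity of $W$ and standard self-consistent equation arguments (as in \cite{alt20172,alt20171}), one derives a strong local law and edge rigidity for $\{\lambda_i(t)\}_{i\ge i_0}$ at the optimal scale $p^{-2/3}/\gamma_n$.

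The core of the argument is then a coupling of two rectangular DBMs. I would introduce a reference initial condition $\widetilde{W}$ chosen so that, at time $t$, the edge eigenvalues of the associated process $\widetilde{\mathcal{Q}}_s:=(\widetilde W+\sqrt{s}X')(\widetilde W+\sqrt{s}X')^\top$ are already known to obey the GOE Tracy-Widom statistics at the scale $\gamma_n p^{2/3}$---for instance, $\widetilde{W}$ can be taken as a rectangular matrix with i.i.d.\ Gaussian entries or any other matrix whose regularized DBM edge universality has been previously established. Coupling the two processes by driving them with the same family of Brownian motions and analyzing the discrete parabolic equation for the differences $u_i(s):=\lambda_i(s)-\widetilde{\lambda}_i(s)$, a short-range analysis in the spirit of the homogenization/coupling method of Landon-Yau and the subsequent extensions to sample-covariance-type DBMs (e.g.\ \cite{yang20190,edgedbm}) shows that the local edge statistics of the two processes agree once the elapsed time exceeds the edge relaxation scale $\sqrt{\eta_*}$. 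Since this scale is dominated by $t$ under the hypothesis $t^2 \ge n^\epsilon \eta_*$, and since the reference process satisfies the GOE edge limit on the right-hand side of \eqref{eq_maintheoremeq1}, the same limit transfers to $\{\lambda_i(t)\}_{i_0 \le i \le i_0+k-1}$.

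The main technical obstacle will be matching the deterministic edge scaling exactly with the normalization $\gamma_n p^{2/3}$ in \eqref{eq_generealscaling}. This requires computing the Puiseux expansion of the subordination function $\zeta_t$ near $\zeta_{+,t}$ up to the second-order term $\Phi_t''(\zeta_{+,t})$ and showing that the leading square-root coefficient of $\rho_{w,t}$ at $\lambda_{+,t}$ reads off precisely in terms of $\zeta_{+,t}$, $\lambda_{+,t}$, $c_n$, and $t$ as in \eqref{eq_generealscaling}---this is presumably the content of Appendix \ref{sec_calculationDBM}. A secondary difficulty is that the coupling proceeds for the singular-value DBM (a Bessel-type flow with rectangular drift), rather than the simpler Wigner-type additive DBM, so one must carefully handle the $1/\sqrt{\lambda_i(s)}$ singularities in the drift and ensure they do not affect the short-range energy estimates near the (non-zero) right edge. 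Once these two points are settled, the rigidity and coupling combine to yield \eqref{eq_maintheoremeq1} for any bounded Lipschitz test function $f$.
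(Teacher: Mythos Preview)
Your proposal is essentially correct and follows the same coupling-of-rectangular-DBMs strategy as the paper, including the passage to the singular-value flow and the short-range cutoff in the spirit of \cite{edgedbm}. Two points where the paper is more specific than your sketch: (i) the reference ensemble is not an arbitrary Gaussian rectangular matrix but a \emph{deformed} Wishart $\mathcal U\mathcal U^\top$ with $\mathcal U=\Sigma^{1/2}\mathcal X$, where $\Sigma$ is tuned so that its asymptotic edge density matches that of $\gamma_w\mathcal Q_{t_0}$ to leading order (the matching condition \eqref{same rho})---this density matching is what makes the short-range comparison go through with the sharp error; and (ii) rather than analyzing the difference $\lambda_i-\widetilde\lambda_i$ directly, the paper interpolates the initial data via $z_i(0,\alpha)=\alpha\lambda_i(0)+(1-\alpha)\mu_i(0)$, passes to the shifted singular values, builds a short-range approximation $\widehat y_i(t,\alpha)$, and then controls $\partial_\alpha\widehat y_i$ via a parabolic energy estimate (Proposition \ref{prop_energyestimate}) combined with a finite-speed-of-propagation lemma. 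These are refinements of exactly the mechanism you describe, not a different route.
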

 
Since the edge eigenvalues of GOE at $\pm 2$ obey the type-1 TW fluctuation \cite{TW1,TW}, by Theorem \ref{thm_regularbm} and the Portmanteau lemma we immediately obtain that 
\begin{equation*}
\lim_{n \rightarrow \infty} \mathbb{P}(\gamma_n p^{2/3}(\lambda_1(t)-\lambda_{+,t}) \leq x)=F_1(x), \quad \text{for all} \ \ x \in \mathbb{R},
\end{equation*}
where recall that $F_1$ is the type-1 TW distribution function.

Following the literature, we shall call the evolution of $\cal Q_t$ with respect to $t$ the \emph{rectangular matrix Dyson Brownian motion}, while we call the evolution of the {\it eigenvalues} of $\cal Q_t$ with respect to $t$ the \emph{rectangular Dyson Brownian motion}. 
We remark that the edge statistics of the symmetric Dyson Brownian motion (DBM) have been studied in \cite{edgedbm} for Wigner type matrix ensembles. The above Theorem \ref{thm_regularbm} extends the result there to Gram type matrix ensembles.

%\subsection{Proof idea of Theorem \ref{thm_regularbm}}\label{proofstrategy_v3}

Before the end of this section, we summarize the basic ideas for the proof of Theorem \ref{thm_regularbm} and provide some (possibly helpful) heuristic discussions. The proof utilizes the matching and coupling strategy in \cite{BEYY2016,edgedbm}. First, in order to see the Tracy-Widom limit, we need to show that: (i) the rectangular free convolution (RFC) has a square root behavior near the right edge in the sense of \eqref{strong_sqrt}, and (ii) the edge eigenvalues of $\cal Q_t$ distribute according to the RFC on scales $\ge n^{-2/3}$. However, at $t=0$, the conditions \eqref{regular1} and \eqref{regular2} are not strong enough for both of these purposes. We need to run the dynamics for an amount of time $t_0$ to regularize both the RFC and the rectangular DBM. To show (i), we need a detailed analysis of the RFC, which has been done in another paper \cite{DY20202}. In particular, the analysis shows that under the $\eta_*$-regular assumption, we are able to obtain the square root behavior of RFC once $t_0 \gg \sqrt{\eta_*}$. We summarize some key properties of the RFC in Appendix \ref{sec_summaryrectnagular}. To show (ii), we need to prove some sharp local laws on the resolvent $(\cal Q_{t_0}-z)^{-1}$ for $z=E+\ii\eta$ with $E$ around the right edge and $\eta\ge n^{-2/3-\epsilon}$. These local laws are also proved in \cite{DY20202} and summarized in Section \ref{sec_localaws}. 

%Especially, according to Lemma \ref{lem_asymdensitysquare}, we find that once $t \gg \sqrt{\eta_*},$ we will be able to see the square root decay behavior. To streamline the arguments, we choose $t_0$ as in (\ref{defn_t0t1}). It is easy to see that we shall consider $t \geq t_0$ which guarantees $t \gg \sqrt{\eta_*},$ so that after running the rectangular DBM for $t_0$ amount of time we can regularize the global eigenvalue density of the rectangular DBM and obtain a square root behavior. 

Next, we consider the rectangular DBM starting with the regular initial data $\mathcal{Q}_{t_0}$ (i.e., the evolution of the eigenvalues of $\cal Q_{t_0+t}$). It is known from the literature that the rectangular DBM satisfies a system of SDEs in equation \eqref{SDE rDBM}, which is the main tool for our proof. We couple it with the system of SDEs for another rectangular DBM of a properly chosen sample covariance matrix, whose Tracy-Widom law is known from the literature and whose asymptotic ESD matches that of $\cal Q_{t_0}$ around the right edge. 
Under this coupling, we will show that after shifted by respective right edges, the differences between the edge eigenvalues of the two rectangular DBMs are much smaller than $n^{-2/3}$ if we run them for an amount of time $t_1$ so that $n^{-1/3}\ll t_1 \ll t_0$. This key result is summarized in Theorem \ref{thm_firstkey}. Here, $t_1 \ll t_0$ is required so that the RFC does not change much from $t_0$ to $t_0+t_1$. In particular, the right edge $\lambda_{+,t}$ and the scaling factor $\gamma_n(t)$ remain approximately constant throughout the evolution. On the other hand, the condition $t_1\gg n^{-1/3}$ is essential because the ``relaxation time to equilibrium" of the coupled DBM is of order $n^{-1/3}$ at the right edge, which we will explain below.

To prove Theorem \ref{thm_firstkey}, it suffices to study the differences between the two coupled rectangular DBMs, denoted by $\{\lambda_i(t)\}$ and $\{\mu_i(t)\}$, respectively. For this purpose, we consider an interpolating process $z_i(t,\alpha)$ for $0 \leq \alpha \leq 1$ (cf. equation (\ref{eq_interpolationsde})), which is a rectangular DBM with initial data $z_i(0,\alpha)=\alpha \lambda_i(0)+(1-\alpha) \mu_i(0).$ Note that $z_i(t,0)=\mu_i(t)$ and $z_i(t,1)=\lambda_i(t)$, so we only need to control $\partial_\alpha z_i(t,\alpha)$ for $0\le \alpha \le 1$. In the proof, we find that it is more convenient to work with the singular values ${y}_i(t,\alpha):=\sqrt{z_i(t,\alpha)}$ and its shifted (by the right edge) version $\widetilde{y}_i(t,\alpha)$. Then, it suffices to control $\partial_\alpha \widetilde{y}_i(t,\alpha)$ by analyzing a system of SDEs given by equation (\ref{eq_singsde2}). However, for the analysis, we have to cut off the effect of bulk eigenvalues away from the edge, because the $\eta_*$-regular condition only describes the edge behavior of the initial data. Hence, similar to \cite{Bourgade2017,edgedbm}, we localize the analysis by introducing to the SDEs of $\widetilde{y}_i(t,\alpha)$ a short-range approximation (cf. equations (\ref{wtySDE1})--(\ref{wtySDE3})), whose solutions are denoted by $\widehat{y}_i(t,\alpha)$. Through a careful analysis, we find that the bulk eigenvalues indeed have negligible effect and the differences $|\widehat{y}_i(t,\alpha)-\widetilde{y}_i(t,\alpha)|$ are much smaller than $n^{-2/3}$ (cf. Lemma \ref{lem_approximationcontrol}). 
%Second, the partial derivative of  $\widehat{y}_i(t,\alpha)$ with respect to $\alpha$ can be well controlled.

Now, armed with the above preparation, it remains to control $\partial_\alpha\widehat{y}_i(t,\alpha)$, which turns out to satisfy a deterministic parabolic PDE in (\ref{eq_ui}). Using the local laws for $(\cal Q_{t_0}-z)^{-1}$, we can show that the eigenvalues of $\cal Q_{t_0}$ satisfy a rigidity estimate (see Lemma \ref{lem_correct_rigi}), which implies that the initial data $\{\widehat{y}_i(t,\alpha)\}$ has an $\ell^q$ norm bounded by $n^{-2/3+\epsilon}$ for any $q\ge 4$ and small constant $\epsilon>0$. The last piece is then to prove an energy estimate for this PDE, which is summarized in Proposition \ref{prop_energyestimate}. Roughly speaking, Proposition \ref{prop_energyestimate} shows that the $\ell^\infty$ norm of the solution at time $t$ is smaller than the $\ell^q$ norm of the initial data by a factor of order $n^{-1/3}t^{-1}$.  
Consequently, as long as $t_1 \ge n^{-1/3+\delta}$ for a constant $\delta>0$ and $\epsilon$ is chosen small enough, the $\ell^{\infty}$ norm of the solution at time $t_1$ is much smaller than $n^{-2/3}.$ %This completes the picture of our proof.

Combining all the above pieces shows that the eigenvalues of $\cal Q_{t}$ satisfy \eqref{eq_maintheoremeq1} for $t=t_0+t_1$. We can see from the above arguments that there are two conditions that lead to a lower bound for $t$: $t > t_0 \gg \sqrt{\eta_*}$ to ensure a regular square root behavior of the RFC and sharp local laws for $\cal Q_{t_0}$; $t > t_1 \gg n^{-1/3}$ to ensure the ``closeness" of the two coupled rectangular DBMs. Since we have assumed $0<\phi_* \leq 2/3$ in Definition \ref{assumption_edgebehavior}, we only need to take $t\gg \sqrt{\eta_*}$.
In fact, in the application to the proof of Theorem \ref{thm_twgram}, we will take $\phi_*=2/3$ so that we run the rectangular DBM for an amount of time $t \gg n^{-1/3}$. 
%$t \geq t_0$ implies that $t \gg n^{-1/3}$ so that we only need $t \geq t_0,$ or equivalently $t \gg \sqrt{\eta_*}$. In fact, in our applications, we have $\phi_*=2/3$ so that we shall run the rectangular DBM for the amount of time $t \gg n^{-1/3}.$ 

Finally, we discuss the comparison argument for Step 3 of the proof of Theorem \ref{thm_twgram}. First, it requires a moment matching condition, as is well-known in the random matrix theory literature. More precisely, we will construct another random Gram matrix, say $Y'=(y'_{ij})$, with independent entries that have the same mean $r_{ij}$ but different variances $\Var( y'_{ij}) =s_{ij}-t/n$. Then, the rectangular matrix DBM $Y'+\sqrt{t}X$ has the same mean matrix $R$ and variance matrix $S$ as $Y$. Now, applying Theorem \ref{thm_regularbm} shows that the edge eigenvalues (denoted by $\lambda_{i,t}'$) of $Y'+\sqrt{t}X$ satisfy the Tracy-Widom law around the right edge (denoted by $\lambda'_{+,t}$) of the corresponding RFC. It remains to show that the limiting law of the (shifted and rescaled) edge eigenvalues $p^{2/3}(\lambda_i-\lambda_+)$, $1\le i \le k$, of $Y$ match that of \smash{$p^{2/3}(\lambda_{i,t}'-\lambda_{+,t}')$}, $1\le i \le k$,. This uses a standard resolvent comparison argument in the literature, and the key technical input is the local law for the resolvent of $(Y'+\sqrt{t}X)(Y'+\sqrt{t}X)^\top$, which is given in Appendix \ref{sec_localaws}. While the resolvent comparison argument is almost the same as the ones in e.g., \cite{DY,LY}, it only gives that $p^{2/3}(\lambda_i-\lambda_{+,t}')$ satisfy the Tracy-Widom law. We still need to show that the difference between the right edges $\lambda_+$ and $\lambda_{+,t}'$ is much smaller than the Tracy-Widom fluctuation scale $n^{-2/3}$. By analyzing the Stieltjes transform of the RFC, we will see (cf. equation \eqref{edge_diff}) that for any small constant $\epsilon>0$,
\begin{equation}\label{eq_bbbb}
|\lambda_{+,t}-\lambda_+| \le n^{-2/3-\epsilon} + n^{-2/3+\epsilon}t + n^{-1+\epsilon} t^{-1} \quad \text{with high probability}. 
\end{equation}
Since we need to control the second and third terms on the right-hand side, we have to take $n^{-1/3+\delta}\le t\le n^{-\delta}$ for a constant $\delta>0$. To summarize, for the above argument to work, we need that $n^{-1/3+\delta}\le t\le n^{-\delta}\wedge \min s_{ij}$. In particular, taking a smaller $t$ means relaxing the lower bound on $s_{ij}$, so that we can handle a more general class of random Gram matrices. On the other hand, we have seen a lower bound $t\gg \sqrt{\eta_*}\vee n^{-1/3}$ for Step 2. Therefore, in the proof of Theorem \ref{thm_twgram}, we will take (almost) optimal parameters: $\eta_*=n^{-2/3}$ and $t = n^{-1/3+\delta}$. This also leads to the lower bound on $s_{ij}$ in \eqref{eq flatS}.  

%This also verifies the necessity of letting $t \rightarrow 0.$ 
%Finally,  as we will see in the discussion below Theorem \ref{thm_regularbm}, %in Section \ref{proofstrategy_v3}, Step 2 requires $t \geq t_0$ as in (\ref{defn_t0t1}). In our applications, $t_0 \gg n^{-1/3}.$ Therefore, in our proof, we will choose $t=t_0$ by optimizing the bound in (\ref{eq_bbbb}).      

\normalcolor

%\end{remark}

% use section* for acknowledgment
\section*{Acknowledgment}
The authors would like to thank the associated editor and three anonymous reviewers for many insightful comments and suggestions, which have resulted in a significant improvement of the paper. The first author is partially supported by NSF DMS-2113489.

\bibliographystyle{abbrv}
\bibliography{references,aniso_bib}

\begin{thebibliography}{10}

\bibitem{Quadratic2015}
O.~Ajanki, L.~Erd{\H o}s, and T.~Kr{\"u}ger.
\newblock Quadratic vector equations on complex upper half-plane.
\newblock {\em arXiv:1506.05095}, 2015.

\bibitem{alt20172}
J.~Alt.
\newblock Singularities of the density of states of random {G}ram matrices.
\newblock {\em Electron. Commun. Probab.}, 22:13 pp., 2017.

\bibitem{alt20171}
J.~Alt, L.~Erdős, and T.~Krüger.
\newblock Local law for random {G}ram matrices.
\newblock {\em Electron. J. Probab.}, 22:41 pp., 2017.

\bibitem{AN17}
N.~{Asendorf} and R.~R. {Nadakuditi}.
\newblock Improved detection of correlated signals in low-rank-plus-noise type
  data sets using informative canonical correlation analysis {(ICCA)}.
\newblock {\em IEEE Transactions on Information Theory}, 63(6):3451--3467,
  2017.

\bibitem{BBP}
J.~Baik, G.~Ben~Arous, and S.~P{\'e}ch{\'e}.
\newblock Phase transition of the largest eigenvalue for nonnull complex sample
  covariance matrices.
\newblock {\em Ann. Probab.}, 33(5):1643--1697, 2005.

\bibitem{BDW}
Z.~Bao, X.~Ding, and K.~Wang.
\newblock {Singular vector and singular subspace distribution for the matrix
  denoising model}.
\newblock {\em The Annals of Statistics}, 49(1):370 -- 392, 2021.

\bibitem{bao2015}
Z.~Bao, G.~Pan, and W.~Zhou.
\newblock Universality for the largest eigenvalue of sample covariance matrices
  with general population.
\newblock {\em Ann. Statist.}, 43(1):382--421, 2015.

\bibitem{Bensoussan2007}
A.~Bensoussan, G.~D. Prato, M.~C. Delfour, and S.~K. Mitter.
\newblock Semigroups of operators and interpolation.
\newblock In {\em Representation and Control of Infinite Dimensional Systems},
  pages 87--172. Birkh{\"a}user, 2007.

\bibitem{BDMN}
P.~Bianchi, M.~Debbah, M.~Maida, and J.~Najim.
\newblock Performance of statistical tests for single-source detection using
  random matrix theory.
\newblock {\em IEEE Trans. Inf. Theor.}, 57(4):2400--2419, 2011.

\bibitem{isotropic}
A.~Bloemendal, L.~Erd{\H o}s, A.~Knowles, H.-T. Yau, and J.~Yin.
\newblock Isotropic local laws for sample covariance and generalized {W}igner
  matrices.
\newblock {\em Electron. J. Probab.}, 19(33):1--53, 2014.

\bibitem{principal}
A.~Bloemendal, A.~Knowles, H.-T. Yau, and J.~Yin.
\newblock On the principal components of sample covariance matrices.
\newblock {\em Prob. Theor. Rel. Fields}, 164(1):459--552, 2016.

\bibitem{Bourgade2014EdgeUO}
P.~Bourgade, L.~Erd{\"o}s, and H.~B. Yau.
\newblock Edge universality of {B}eta ensembles.
\newblock {\em Communications in Mathematical Physics}, 332:261--353, 2014.

\bibitem{BEYY2016}
P.~Bourgade, L.~Erdős, H.-T. Yau, and J.~Yin.
\newblock Fixed energy universality for generalized {W}igner matrices.
\newblock {\em Communications on Pure and Applied Mathematics},
  69(10):1815--1881, 2016.

\bibitem{Bourgade2017}
P.~Bourgade and H.-T. Yau.
\newblock The eigenvector moment flow and local quantum unique ergodicity.
\newblock {\em Communications in Mathematical Physics}, 350(1):231--278, 2017.

\bibitem{9103128}
L.~D. {Chamain}, P.~{Dharmawansa}, S.~{Atapattu}, and C.~{Tellambura}.
\newblock Eigenvalue-based detection of a signal in colored noise: Finite and
  asymptotic analyses.
\newblock {\em IEEE Transactions on Information Theory}, 66(10):6413--6433,
  2020.

\bibitem{Cusp2}
G.~Cipolloni, L.~Erdős, T.~Krüger, and D.~Schröder.
\newblock Cusp universality for random matrices, {II}: The real symmetric case.
\newblock {\em Pure Appl. Anal.}, 1(4):615--707, 2019.

\bibitem{ding2020}
X.~Ding.
\newblock High dimensional deformed rectangular matrices with applications in
  matrix denoising.
\newblock {\em Bernoulli}, 26(1):387--417, 2020.

\bibitem{DY}
X.~Ding and F.~Yang.
\newblock A necessary and sufficient condition for edge universality at the
  largest singular values of covariance matrices.
\newblock {\em Ann. Appl. Probab.}, 28(3):1679--1738, 2018.

\bibitem{DY20202}
X.~Ding and F.~Yang.
\newblock Edge statististics of large dimensional deformed rectangular
  matrices.
\newblock {\em arXiv:2009.00389}, 2020.

\bibitem{dingyang2}
X.~Ding and F.~Yang.
\newblock {Spiked separable covariance matrices and principal components}.
\newblock {\em The Annals of Statistics}, 49(2):1113 -- 1138, 2021.

\bibitem{DD1995}
D.~L. {Donoho}.
\newblock De-noising by soft-thresholding.
\newblock {\em IEEE Transactions on Information Theory}, 41(3):613--627, 1995.

\bibitem{DOZIER20071099}
R.~B. Dozier and J.~W. Silverstein.
\newblock Analysis of the limiting spectral distribution of large dimensional
  information-plus-noise type matrices.
\newblock {\em Journal of Multivariate Analysis}, 98(6):1099 -- 1122, 2007.

\bibitem{DOZIER2007678}
R.~B. Dozier and J.~W. Silverstein.
\newblock On the empirical distribution of eigenvalues of large dimensional
  information-plus-noise-type matrices.
\newblock {\em Journal of Multivariate Analysis}, 98(4):678 -- 694, 2007.

\bibitem{elkaroui2007}
N.~El~Karoui.
\newblock Tracy-{W}idom limit for the largest eigenvalue of a large class of
  complex sample covariance matrices.
\newblock {\em Ann. Probab.}, 35(2):663--714, 2007.

\bibitem{Average_fluc}
L.~Erd{\H o}s, A.~Knowles, and H.-T. Yau.
\newblock Averaging fluctuations in resolvents of random band matrices.
\newblock {\em Ann. Henri Poincar\'e}, 14:1837--1926, 2013.

\bibitem{EKYY}
L.~Erd{\H{o}}s, A.~Knowles, H.-T. Yau, and J.~Yin.
\newblock Spectral statistics of {E}rd{\H{o}}s-{R}{\'e}nyi graphs {II}:
  Eigenvalue spacing and the extreme eigenvalues.
\newblock {\em Comm. Math. Phys.}, 314:587--640, 2012.

\bibitem{Semicircle}
L.~Erd{\H o}s, A.~Knowles, H.-T. Yau, and J.~Yin.
\newblock The local semicircle law for a general class of random matrices.
\newblock {\em Electron. J. Probab.}, 18:1--58, 2013.

\bibitem{EKYY1}
L.~Erd{\H o}s, A.~Knowles, H.-T. Yau, and J.~Yin.
\newblock Spectral statistics of {E}rd{\H o}s-{R}{\' e}nyi graphs {I}: Local
  semicircle law.
\newblock {\em Ann. Probab.}, 41(3B):2279--2375, 2013.

\bibitem{Cusp1}
L.~Erd{\H o}s, T.~Kr{\"u}ger, and D.~Schr{\"o}der.
\newblock Cusp universality for random matrices {I}: Local law and the complex
  hermitian case.
\newblock {\em Communications in Mathematical Physics}, 2020.

\bibitem{EPRSY2010}
L.~Erd{\H o}s, S.~P{\'e}ch{\'e}, J.~A. Ram{\' i}rez, B.~Schlein, and H.-T. Yau.
\newblock Bulk universality for {W}igner matrices.
\newblock {\em Communications on Pure and Applied Mathematics}, 63(7):895--925,
  2010.

\bibitem{ESY2011}
L.~Erd{\H o}s, B.~Schlein, and H.-T. Yau.
\newblock Universality of random matrices and local relaxation flow.
\newblock {\em Inventiones mathematicae}, 185(1):75--119, 2011.

\bibitem{ESYY2012}
L.~Erd{\H o}s, B.~Schlein, H.-T. Yau, and J.~Yin.
\newblock The local relaxation flow approach to universality of the local
  statistics for random matrices.
\newblock {\em Ann. Inst. H. Poincaré Probab. Statist.}, 48(1):1--46, 2012.

\bibitem{erdos2017dynamical}
L.~Erd{\H o}s and H.-T. Yau.
\newblock A dynamical approach to random matrix theory.
\newblock {\em Courant Lecture Notes in Mathematics}, 28, 2017.

\bibitem{Bulk_univ}
L.~Erd{\H o}s, H.-T. Yau, and J.~Yin.
\newblock Bulk universality for generalized {W}igner matrices.
\newblock {\em Probab. Theory Relat. Fields}, 154(1):341--407, 2012.

\bibitem{EYY}
L.~Erd{\H{o}}s, H.-T. Yau, and J.~Yin.
\newblock Rigidity of eigenvalues of generalized {W}igner matrices.
\newblock {\em Advances in Mathematics}, 229:1435 -- 1515, 2012.

\bibitem{Erdos2015}
L.~Erdős and H.-T. Yau.
\newblock Gap universality of generalized {W}igner and $\beta $-ensembles.
\newblock {\em Journal of the European Mathematical Society},
  017(8):1927--2036, 2015.

\bibitem{ZJ2017}
Z.~Fan and I.~M. Johnstone.
\newblock Tracy-{W}idom at each edge of real covariance and {MANOVA}
  estimators.
\newblock {\em arXiv preprint arXiv 1707.02352}, 2017.

\bibitem{Forr}
P.~Forrester.
\newblock The spectrum edge of random matrix ensembles.
\newblock {\em Nucl. Phys. B}, 402(3):709 -- 728, 1993.

\bibitem{hachem2007}
W.~Hachem, P.~Loubaton, and J.~Najim.
\newblock Deterministic equivalents for certain functionals of large random
  matrices.
\newblock {\em Ann. Appl. Probab.}, 17(3):875--930, 2007.

\bibitem{han2016}
X.~Han, G.~Pan, and B.~Zhang.
\newblock The {T}racy-{W}idom law for the largest eigenvalue of {F} type
  matrices.
\newblock {\em Ann. Statist.}, 44(4):1564--1592, 2016.

\bibitem{TWsharp}
Y.~He and A.~Knowles.
\newblock Fluctuations of extreme eigenvalues of sparse {E}rd{\H
  o}s-{R}{\'e}nyi graphs.
\newblock {\em arXiv:2005.02254}, 2020.

\bibitem{huang2020}
J.~Huang, B.~Landon, and H.-T. Yau.
\newblock Transition from {T}racy-{W}idom to {G}aussian fluctuations of
  extremal eigenvalues of sparse {E}rd{\H o}s-{R}{\'e}nyi graphs.
\newblock {\em Ann. Probab.}, 48(2):916--962, 2020.

\bibitem{hwang2019}
J.~Y. Hwang, J.~O. Lee, and K.~Schnelli.
\newblock Local law and {T}racy-{W}idom limit for sparse sample covariance
  matrices.
\newblock {\em Ann. Appl. Probab.}, 29(5):3006--3036, 2019.

\bibitem{2020arXiv200909177J}
J.~{Jin}, Z.~T. {Ke}, S.~{Luo}, and M.~{Wang}.
\newblock {Estimating the number of communities by Stepwise Goodness-of-fit}.
\newblock {\em arXiv preprint arXiv 2009.09177}, 2020.

\bibitem{IJ2}
I.~M. Johnstone.
\newblock On the distribution of the largest eigenvalue in principal components
  analysis.
\newblock {\em Ann. Statist.}, 29:295--327, 2001.

\bibitem{johnstone2020}
I.~M. Johnstone and A.~Onatski.
\newblock Testing in high-dimensional spiked models.
\newblock {\em Ann. Statist.}, 48(3):1231--1254, 2020.

\bibitem{kay1998fundamentals}
S.~Kay.
\newblock {\em Fundamentals of Statistical Signal Processing: Detection
  theory}.
\newblock Prentice-Hall PTR, 1998.

\bibitem{Anisotropic}
A.~Knowles and J.~Yin.
\newblock Anisotropic local laws for random matrices.
\newblock {\em Probability Theory and Related Fields}, pages 1--96, 2016.

\bibitem{KN}
S.~{Kritchman} and B.~{Nadler}.
\newblock Non-parametric detection of the number of signals: Hypothesis testing
  and random matrix theory.
\newblock {\em IEEE Transactions on Signal Processing}, 57(10):3930--3941,
  2009.

\bibitem{dysonbulk}
B.~Landon, P.~Sosoe, and H.-T. Yau.
\newblock Fixed energy universality of {D}yson brownian motion.
\newblock {\em Advances in Mathematics}, 346:1137 -- 1332, 2019.

\bibitem{edgedbm}
B.~{Landon} and H.-T. {Yau}.
\newblock {Edge statistics of Dyson Brownian motion}.
\newblock {\em arXiv preprint arXiv:1712.03881}, 2017.

\bibitem{LS2015}
J.~O. Lee and K.~Schnelli.
\newblock Edge universality for deformed {W}igner matrices.
\newblock {\em Reviews in Mathematical Physics}, 27(08):1550018, 2015.

\bibitem{LS}
J.~O. Lee and K.~Schnelli.
\newblock Tracy-{W}idom distribution for the largest eigenvalue of real sample
  covariance matrices with general population.
\newblock {\em Ann. Appl. Probab.}, 26:3786--3839, 2016.

\bibitem{LY}
J.~O. Lee and J.~Yin.
\newblock A necessary and sufficient condition for edge universality of
  {W}igner matrices.
\newblock {\em Duke Math. J.}, 163:117--173, 2014.

\bibitem{2019arXiv190209474L}
W.~{Leeb}.
\newblock {Matrix denoising for weighted loss functions and heterogeneous
  signals}.
\newblock {\em arXiv preprint arXiv 1902.09474}, 2019.

\bibitem{loubaton2011}
P.~Loubaton and P.~Vallet.
\newblock Almost sure localization of the eigenvalues in a {G}aussian
  information plus noise model. application to the spiked models.
\newblock {\em Electron. J. Probab.}, 16:1934--1959, 2011.

\bibitem{lounici2014}
K.~Lounici.
\newblock High-dimensional covariance matrix estimation with missing
  observations.
\newblock {\em Bernoulli}, 20(3):1029--1058, 2014.

\bibitem{MP}
V.~A. Mar{\v c}enko and L.~A. Pastur.
\newblock Distribution of eigenvalues for some sets of random matrices.
\newblock {\em Mathematics of the USSR-Sbornik}, 1:457, 1967.

\bibitem{Marcotte751}
E.~M. Marcotte, M.~Pellegrini, H.-L. Ng, D.~W. Rice, T.~O. Yeates, and
  D.~Eisenberg.
\newblock Detecting protein function and protein-protein interactions from
  genome sequences.
\newblock {\em Science}, 285(5428):751--753, 1999.

\bibitem{RRN14}
R.~R. {Nadakuditi}.
\newblock Optshrink: An algorithm for improved low-rank signal matrix denoising
  by optimal, data-driven singular value shrinkage.
\newblock {\em IEEE Transactions on Information Theory}, 60(5):3002--3018,
  2014.

\bibitem{NE1}
R.~R. {Nadakuditi} and A.~{Edelman}.
\newblock Sample eigenvalue based detection of high-dimensional signals in
  white noise using relatively few samples.
\newblock {\em IEEE Transactions on Signal Processing}, 56(7):2625--2638, 2008.

\bibitem{5447639}
R.~R. {Nadakuditi} and J.~W. {Silverstein}.
\newblock Fundamental limit of sample generalized eigenvalue based detection of
  signals in noise using relatively few signal-bearing and noise-only samples.
\newblock {\em IEEE Journal of Selected Topics in Signal Processing},
  4(3):468--480, 2010.

\bibitem{Regularity4}
A.~Onatski.
\newblock The {T}racy-{W}idom limit for the largest eigenvalues of singular
  complex {W}ishart matrices.
\newblock {\em Ann. Appl. Probab.}, 18(2):470--490, 2008.

\bibitem{OAea}
A.~Onatski.
\newblock Testing hypotheses about the number of factors in large factor
  models.
\newblock {\em Econometrica}, 77(5):1447--1479, 2009.

\bibitem{onatski2014}
A.~Onatski, M.~J. Moreira, and M.~Hallin.
\newblock Signal detection in high dimension: {T}he multispiked case.
\newblock {\em Ann. Statist.}, 42(1):225--254, 2014.

\bibitem{pillai2014}
N.~S. Pillai and J.~Yin.
\newblock Universality of covariance matrices.
\newblock {\em Ann. Appl. Probab.}, 24(3):935--1001, 2014.

\bibitem{1311138}
P.~{Stoica} and Y.~{Selen}.
\newblock Model-order selection: a review of information criterion rules.
\newblock {\em IEEE Signal Processing Magazine}, 21(4):36--47, 2004.

\bibitem{TW1}
C.~A. Tracy and H.~Widom.
\newblock Level-spacing distributions and the {A}iry kernel.
\newblock {\em Comm. Math. Phys.}, 159:151--174, 1994.

\bibitem{TW}
C.~A. Tracy and H.~Widom.
\newblock On orthogonal and symplectic matrix ensembles.
\newblock {\em Comm. Math. Phys.}, 177:727--754, 1996.

\bibitem{TS93}
D.~W. {Tufts} and A.~A. {Shah}.
\newblock Estimation of a signal waveform from noisy data using low-rank
  approximation to a data matrix.
\newblock {\em IEEE Transactions on Signal Processing}, 41(4):1716--1721, 1993.

\bibitem{VLM2012}
P.~{Vallet}, P.~{Loubaton}, and X.~{Mestre}.
\newblock Improved subspace estimation for multivariate observations of high
  dimension: The deterministic signals case.
\newblock {\em IEEE Transactions on Information Theory}, 58(2):1043--1068,
  2012.

\bibitem{6588599}
J.~{Vinogradova}, R.~{Couillet}, and W.~{Hachem}.
\newblock Statistical inference in large antenna arrays under unknown noise
  pattern.
\newblock {\em IEEE Transactions on Signal Processing}, 61(22):5633--5645,
  2013.

\bibitem{YMB}
D.~Yang, Z.~Ma, and A.~Buja.
\newblock Rate optimal denoising of simultaneously sparse and low rank
  matrices.
\newblock {\em J. Mach. Learn. Res.}, 17(1):3163–3189, 2016.

\bibitem{yang20190}
F.~Yang.
\newblock Edge universality of separable covariance matrices.
\newblock {\em Electron. J. Probab.}, 24:57 pp., 2019.

\bibitem{ZP2020}
Z.~Zhang and G.~Pan.
\newblock Tracy-widom law for the extreme eigenvalues of large
  signal-plus-noise matrices.
\newblock {\em arXiv:2009.12031}, 2020.

\end{thebibliography}

\appendices
	\setcounter{page}{1}
	\renewcommand{\thepage}{SI.\arabic{page}}
	\renewcommand{\thesection}{\Alph{section}}

\section{Proofs of Theorem \ref{thm_twgram}, Corollary \ref{cor_separable} and Corollary \ref{thm_twgraph_sparse}}\label{sec_prooftwgram}

We will use the following notion of stochastic domination, which was first introduced in \cite{Average_fluc} and subsequently used in many works on random matrix theory. 
%such as \cite{isotropic,principal,Semicircle,Anisotropic}.
It simplifies the presentation of the results and their proofs by systematizing statements of the form ``$\xi$ is bounded by $\zeta$ with high probability up to a small power of $n$".

\begin{definition}[Stochastic domination and  high probability event]\label{stoch_domination}
%\begin{itemize}
%\item[(i)] 
(i) Let
\[\xi=\left(\xi^{(n)}(u):n\in\bbN, u\in U^{(n)}\right),\hskip 10pt \zeta=\left(\zeta^{(n)}(u):n\in\bbN, u\in U^{(n)}\right),\]
be two families of nonnegative random variables, where $U^{(n)}$ is a possibly $n$-dependent parameter set. We say $\xi$ is stochastically dominated by $\zeta$, uniformly in $u$, if for any fixed (small) $\epsilon>0$ and (large) $D>0$, 
\[\sup_{u\in U^{(n)}}\bbP\left(\xi^{(n)}(u)>n^\epsilon\zeta^{(n)}(u)\right)\le n^{-D}\]
for large enough $n \ge n_0(\epsilon, D)$, and we will use the notation $\xi\prec\zeta$ to denote it. Throughout this paper, the stochastic domination will always be uniform in all parameters that are not explicitly fixed, such as the matrix indices and the spectral parameter $z$.  
%that takes values in some compact set. 
%Note that $n_0(\epsilon, D)$ may depend on quantities that are explicitly constant, such as $\tau$ in Assumption \ref{assm_big1} and \eqref{assm_gap}. 
If for some complex family $\xi$ we have $|\xi|\prec\zeta$, then we will also write $\xi \prec \zeta$ or $\xi=\OO_\prec(\zeta)$.
%\item[(ii)] 

\vspace{5pt}

%\noindent (ii) We extend the definition of $\OO_\prec(\cdot)$ to matrices in the operator norm sense as follows. Let $A$ be a family of random matrices and $\zeta$ be a family of nonnegative random variables. Then $A=\OO_\prec(\zeta)$ means that $\|A\|\prec \zeta$.
%$\left|\left\langle\mathbf v, A\mathbf w\right\rangle\right|\prec\zeta \| \mathbf v\|_2 \|\mathbf w\|_2 $ uniformly in any deterministic vectors $\mathbf v$ and $\mathbf w$. Here and throughout the following, whenever we say ``uniformly in any deterministic vectors", we mean that ``uniformly in any deterministic vectors belonging to a set of cardinality $n^{\OO(1)}$".
%\item[(iv)] 
\noindent (ii) We say an event $\Xi$ holds with high probability if for any constant $D>0$, $\mathbb P(\Xi)\ge 1- n^{-D}$ for large enough $n$.
%\end{itemize}
\end{definition}

The following lemma collects basic properties of stochastic domination, which will be used tacitly in the following proof.

\begin{lemma}[Lemma 3.2 of \cite{isotropic}]\label{lem_stodomin}
Let $\xi$ and $\zeta$ be two families of nonnegative random variables, $U$ and $V$ be two parameter sets and $C>0$ be a large constant.
\begin{enumerate}
\item Suppose that $\xi (u,v)\prec \zeta(u,v)$ uniformly in $u\in U$ and $v\in V$. If $|V|\le n^C$, then $\sum_{v\in V} \xi(u,v) \prec \sum_{v\in V} \zeta(u,v)$ uniformly in $u \in U$.

\item If $\xi_1 (u)\prec \zeta_1(u)$ and $\xi_2 (u)\prec \zeta_2(u)$ uniformly in $u\in U$, then $\xi_1(u)\xi_2(u) \prec \zeta_1(u)\zeta_2(u)$ uniformly in $u\in U$.

\item Suppose that $\Psi(u)\ge n^{-C}$ is deterministic and $\xi(u)$ satisfies $\mathbb E\xi(u)^2 \le n^C$ for all $u\in U$. Then if $\xi(u)\prec \Psi(u)$ uniformly in $u\in U$, we have that $\mathbb E\xi(u) \prec \Psi(u)$ uniformly in $u\in U$.
\end{enumerate}
\end{lemma}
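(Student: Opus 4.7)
The three claims are standard manipulations with the $\prec$ notation, each reducible to a union bound or a Cauchy--Schwarz argument, so I would tackle them in order of increasing delicacy.

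For part (i), the plan is to unwind the definition: fix $\epsilon>0$ and $D>0$, and observe that if $\xi(u,v)\le n^\epsilon \zeta(u,v)$ for every $v\in V$, then summing over $v$ yields $\sum_v \xi(u,v)\le n^\epsilon \sum_v \zeta(u,v)$. Thus the bad event for the sum is contained in the union $\bigcup_{v\in V}\{\xi(u,v)>n^\epsilon \zeta(u,v)\}$. Apply the union bound together with the defining estimate $\mathbb P(\xi(u,v)>n^\epsilon \zeta(u,v))\le n^{-D'}$, valid uniformly in $u,v$, with $D'=D+C$; since $|V|\le n^C$, this gives $n^{-D}$. Uniformity in $u$ is inherited from the uniform hypothesis. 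The only subtlety is making sure the stochastic domination is applied with the enlarged exponent $D'$, which is legitimate because the defining inequality holds for every $D$.

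Part (ii) is similarly a two-line argument: split the $n^\epsilon$ budget as $n^{\epsilon/2}\cdot n^{\epsilon/2}$ and note that on the intersection $\{\xi_1\le n^{\epsilon/2}\zeta_1\}\cap \{\xi_2\le n^{\epsilon/2}\zeta_2\}$ we obtain $\xi_1\xi_2\le n^\epsilon \zeta_1\zeta_2$, while the complement has probability at most $2n^{-D}$ by a second union bound. Uniformity in $u$ is again preserved because both hypotheses are uniform.

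Part (iii) is the only one requiring a genuinely quantitative input, namely the second-moment bound. I would split
\begin{equation*}
\mathbb{E}\xi(u) = \mathbb{E}\bigl[\xi(u)\mathbf 1\{\xi(u)\le n^\epsilon\Psi(u)\}\bigr] + \mathbb{E}\bigl[\xi(u)\mathbf 1\{\xi(u)> n^\epsilon\Psi(u)\}\bigr].
\end{equation*}
The first term is bounded deterministically by $n^\epsilon \Psi(u)$. For the second, I would apply Cauchy--Schwarz to get $\sqrt{\mathbb E\xi(u)^2}\cdot \sqrt{\mathbb P(\xi(u)>n^\epsilon\Psi(u))}\le n^{C/2}\cdot n^{-D/2}$; choosing $D$ large enough (depending on $C$, which is permissible in the defining quantifier of $\prec$) makes this at most $n^{-C}\le \Psi(u)$. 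Adding the two pieces and noting that stochastic domination between deterministic quantities just means the bound $\mathbb E\xi(u)\le 2n^\epsilon \Psi(u)$ holds for every $\epsilon$, we conclude $\mathbb E\xi(u)\prec \Psi(u)$. The main (mild) obstacle is book-keeping in (iii): one must carefully track the dependence of $D$ on $C$, and ensure the $\Psi(u)\ge n^{-C}$ lower bound is used to absorb the tail contribution; this is why the hypothesis is stated that way.
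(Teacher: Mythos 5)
The paper does not prove this lemma itself; it is cited directly as Lemma 3.2 of \cite{isotropic}. Your proof is correct and is essentially the standard argument given in that reference: union bounds over the polynomially bounded index set for part (i), a split of the $n^\epsilon$ budget and a union bound for part (ii), and the indicator decomposition plus Cauchy--Schwarz (using the second-moment bound and the polynomial lower bound on $\Psi$) for part (iii). The bookkeeping you flag in (iii) is exactly the point of the hypotheses $\mathbb E\xi(u)^2 \le n^C$ and $\Psi(u) \ge n^{-C}$, and you have handled it correctly.
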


%To state the assumption of the moments of $y_{ij},$ 
We introduce the following bounded support condition, which has been used in a sequence of papers to improve the moment assumption, see e.g. \cite{DY, dingyang2, LY,  yang20190}.

\begin{definition}[Bounded support condition]\label{defn_boundedsupport} We say a random matrix $Y$ satisfies the bounded support condition with $\phi_n$ if
\begin{equation}\label{eq_boundsupoorteq}
\max_{i,j}\left|y_{ij}-\E y_{ij}\right| \prec \phi_n,
\end{equation}
where $\phi_n$ is a deterministic parameter satisfying that $n^{-1/2} \leq \phi_n \leq n^{-c_\phi}$ for some small constant $c_\phi>0.$ Whenever (\ref{eq_boundsupoorteq}) holds, we say that $Y$ has support $\phi_n.$ 
\end{definition}  

We introduce the following $(p+n)\times (p+n)$ symmetric block matrix
 \begin{equation}\label{linearize_block}
   H \equiv H(Y): = \left( {\begin{array}{*{20}c}
   { 0 } & Y   \\
   Y^\top & {0}  \\
   \end{array}} \right),
 \end{equation}
and its resolvent 
 \begin{equation}\label{eqn_defG000}
G(z)\equiv G(Y,z) :=(z^{1/2}H - z)^{-1}, \quad z\in \mathbb C_+ . 
\end{equation}
Moreover, for $\cal Q_1:= YY^\top$ and $\cal Q_2:= Y^\top Y$, we define their resolvents as
\begin{equation}\label{def_green000}
  \mathcal G_1(z) :=\left({\mathcal Q}_1 -z\right)^{-1} , \quad  \mathcal G_2 (z):=\left({\mathcal Q}_2-z\right)^{-1} .
\end{equation}
%Similarly, we denote the empirical spectral density $\rho_2$ of $ {\mathcal Q}_{2}$ and its Stieltjes transform as
%\be\label{defn_m2000}
%\rho_2\equiv \rho_{2,t}(X,W,z):= \frac{1}{n} \sum_{i=1}^n \delta_{\lambda_i( {\mathcal Q}_2)},\quad m_2(z)\equiv m_{2,t}(X,W,z):=\int \frac{1}{x-z}\rho_2(\dd x)=\frac{1}{n} \mathrm{Tr} \, \mathcal G_2(z).
%\ee
Using the Schur complement formula, it is easy to check that 
\begin{equation} \label{green2000}
G = \left( {\begin{array}{*{20}c}
   { \mathcal G_1} & z^{-1/2} \mathcal G_1 Y \\
   {z^{-1/2} Y^\top \mathcal G_1} & { \mathcal G_2 }  \\
\end{array}} \right)= \left( {\begin{array}{*{20}c}
   { \mathcal G_1} & z^{-1/2} Y \mathcal G_2   \\
   z^{-1/2} {\mathcal G_2}Y^\top & { \mathcal G_2 }  \\
\end{array}} \right).
\end{equation}
%where we abbreviated $Y_t:=W+\sqrt{t}X$. 
Thus, a control of $G$ yields directly a control of the resolvents $\mathcal G_1$ and $\mathcal G_2$. 
 We denote the empirical spectral density $\rho_1$ of $ {\mathcal Q}_{1}$ and its Stieltjes transform by
\begin{equation}\label{defn_m000}
\rho_1 := \frac{1}{p} \sum_{i=1}^p \delta_{\lambda_i( {\mathcal Q}_1)},\quad g_1(z) :=\int \frac{1}{x-z}\rho_1(\dd x)=\frac{1}{p} \mathrm{Tr} \, \mathcal G_1(z).
\end{equation}

In \cite{alt20171}, it has been shown that if $Y$ is centered, i.e. $R= 0$, then the diagonal entries $(\cal G_1)_{ii}$ and $(\cal G_2)_{jj}$ can be approximated by $M_{1,i}$ and $M_{2,j}$, respectively, where $M_1=(M_{1,1},\cdots, M_{1,p}):\C_+\to \C^p$ and $M_2=(M_{2,1},\cdots, M_{2,n}):\C_+\to \C^n$ are the unique solution of
\begin{equation}\label{vecself1}
\frac{1}{M_1}= - z - z SM_2,\quad \frac{1}{M_2}= - z - z S^\top M_1, 
\end{equation}
such that $\im M_{1,i} (z)>0$, $i=1,2,\cdots, p,$ and $\im M_{2,j} (z)>0$, $j=1,2,\cdots, n$, for all $z\in \C_+$. Here both $1/{M_1}$ and $1/{M_2}$ denote the entrywise reciprocals. Notice that if we plug the second equation of \eqref{vecself1} into the first equation, then $M_1$ satisfies equation \eqref{selfm1}, which shows that $M_1(z)=\mb(z)$. Then we define the asymptotic matrix limit of $G$ as 
\begin{equation}\label{defn_Piz}
\Pi(z):=\diag\left( M_{1,1}(z), \cdots M_{1,p}(z),M_{2,1}(z), \cdots M_{2,n}(z)\right).
\end{equation}
We define the following spectral domains: for some small constants $c_0, \vartheta>0$,
\begin{align*}
 \mathcal D(c_0,\vartheta)&:=\left\{z=E+\ii \eta: \lambda_+ -c_0\le E \le \lambda_+ + c_0 , n^{-1+\vartheta}\le \eta \le c_0^{-1}\right\},\\
 \mathcal D_{out}(c_0,\vartheta)&:=\mathcal D(c_0,\vartheta)\cap   \{z=E+\ii\eta: E\ge \lambda_+, n\eta\sqrt{\kappa + \eta} \ge n^\vartheta\}.
\end{align*}
Finally, we define the distance to the rightmost edge as
\begin{equation}
\kappa \equiv \kappa(z) := \vert E -\lambda_+\vert  \quad \text{for } \ \ z= E+\ii \eta.\label{KAPPA}
\end{equation}
Then, the following local law has been proved in \cite{alt20172}.

%We mention that a different form of local laws using stronger moment assumptions has been proved in \cite[Theorem 2.6]{alt20172}.
\begin{lemma}[Theorem 2.6 of \cite{alt20172}]\label{lem weakerlocal}
Assume that $Y$ is a $p\times n$ random matrix with real independent entries satisfying \eqref{eq_gramgeneralassumption} and that for any fixed $k\in \N$, 
\begin{equation}\label{high moment}
\mathbb{E}|y_{ij}-\E y_{ij}|^k \leq C_k s_{ij}^{k/2}, 
\end{equation}
for some constant $C_k>0$. %Suppose $c_n=p/n$ satisfies $\tau \le c_n \le 1$ for some constant $\tau>0$. 
Moreover, suppose that the variance matrix $S$ satisfies Assumption \ref{assum_gram}, and the mean matrix is $R=0$. Then there exists a constant $c_0>0$ such that the following \emph{averaged local laws} hold for any (small) constant $\vartheta>0$.
%\begin{itemize}
%\item[(1)] {\bf Averaged local law}: 
For any $z\in \mathcal D(c_0,\vartheta)$,  we have that
\begin{equation} 
 \vert g_1(z)-m(z) \vert \prec  (n \eta)^{-1}, \label{aver_in1} %+ q^2 
\end{equation}
where $m(z)$ is defined in \eqref{defnmz} and $g_1(z)$ is defined in \eqref{defn_m000}, and for any $z\in \mathcal D_{out}(c_0,\vartheta)$, we have a stronger estimate
\begin{equation}\label{aver_out1}
 | g_1(z)-m(z)|\prec   \frac{1}{n(\kappa +\eta)} + \frac{1}{(n\eta)^2\sqrt{\kappa +\eta}}.
\end{equation}
%
%\item[(2)] {\bf Entrywise local law}: For any $z\in \mathcal D(c_0,\vartheta)$, we have
%\begin{equation}\label{entry_law}
%\max_{1\le i,j\le p+n}\left|G_{ij}(z)  -\Pi_{ij}(z) \right| \prec \sqrt{\frac{\im m(z)}{n\eta}} +\frac1{n\eta} .
%\end{equation}
%
%\end{itemize}
Both of the above estimates are uniform in the spectral parameter $z$.
\end{lemma}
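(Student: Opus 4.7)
The plan is to derive a self-consistent equation for the diagonal entries of the resolvent $G$ in \eqref{eqn_defG000}, then exploit stability of the vector Dyson equation \eqref{vecself1} to transfer it into entrywise and averaged control. First I would apply the Schur complement formula to the Hermitized matrix $H$ from \eqref{linearize_block}, which expresses each diagonal entry as
\begin{equation*}
\frac{1}{G_{ii}(z)} = -z - z \sum_{j=1}^n s_{ij} G^{(i)}_{p+j,p+j}(z) + \Upsilon_i(z),
\end{equation*}
for $1 \le i \le p$, where $G^{(i)}$ denotes the resolvent with the $i$-th row and column removed, and $\Upsilon_i$ collects all fluctuation terms (off-diagonal quadratic forms and the replacement of $G^{(i)}$ by $G$). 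A parallel identity holds for $p+1 \le i \le p+n$. Comparing with \eqref{vecself1}, this is precisely an approximate vector Dyson equation for the pair $(G_{ii})_{i\le p}$, $(G_{p+j,p+j})_{j\le n}$.

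Second, the fluctuation errors $\Upsilon_i$ should be estimated using large-deviation (Hanson--Wright type) bounds for quadratic forms $\sum_{j,k} y_{ij} y_{ik} G^{(i)}_{p+j,p+k}$ against the independent random vector $(y_{ij})_j$. Using the moment assumption \eqref{high moment} together with the Ward identity $\sum_j |G^{(i)}_{p+j,p+k}|^2 = \eta^{-1}\im G^{(i)}_{p+k,p+k}$, one gets $|\Upsilon_i(z)| \prec \Psi(z) := \sqrt{\im m(z)/(n\eta)} + (n\eta)^{-1}$. Off-diagonal entries $G_{ij}$ with $i\ne j$ are treated in parallel via the resolvent expansion and enter $\Upsilon_i$ with the same size.

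Third, the central technical step is the stability analysis of \eqref{vecself1}. One must invert the linearization of the Dyson map around the deterministic solution $\Pi(z)$ on the approximate equation $\Upsilon$ to extract $|G_{ii} - \Pi_{ii}| \prec \Psi$. This is where Assumption \ref{assum_gram} does its work: conditions \textbf{(A3)} and \textbf{(A4)} guarantee that the stability operator has a quantitatively controlled pseudoinverse, with its smallest singular direction producing precisely the square-root edge behaviour recorded in Lemma \ref{lem_edgebehavior}. I would identify the unstable direction (the derivative of $\Pi$ in $z$, which blows up at $\lambda_+$), decompose the Dyson error along it, and obtain polynomial-in-$(\kappa+\eta)$ stability on the good component and a quadratic self-improving equation on the unstable one — a perturbation of $\beta^2 + (\kappa+\eta)^{1/2}\beta = \Psi$ whose solution yields the desired entrywise bound on any scale $\eta \ge n^{-1+\vartheta}$. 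Proving this stability is the hardest part, since, unlike the sample-covariance case $s_{ij}=a_i$ where the Dyson equation reduces to a single scalar fixed-point equation, here we must do the analysis on the full vectorial equation, and irreducibility plus the integrability of row-differences encoded in \textbf{(A3)}--\textbf{(A4)} are essential to exclude spurious unstable modes from $S$.

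Finally, to get \eqref{aver_in1} I would run a standard multi-scale bootstrap: starting at $\eta = c_0^{-1}$ where a trivial bound $|G_{ii} - \Pi_{ii}| \le 2\eta^{-1}$ holds, descend in $\eta$ in small increments and at each scale feed the previous bound into the stability estimate of the prior step to upgrade the control. For the improved estimate \eqref{aver_out1} valid on $\mathcal D_{out}$, the extra gain comes from \emph{fluctuation averaging}: summing the errors $\Upsilon_i$ against the deterministic weight $\Pi_{ii}^2$ produces an additional factor $(n\eta)^{-1}$ beyond the entrywise bound, and combined with $\im m(z) \asymp \sqrt{\kappa+\eta} \wedge \eta/\sqrt{\kappa+\eta}$ outside the limiting spectrum this reproduces the two-term bound $1/[n(\kappa+\eta)] + 1/[(n\eta)^2\sqrt{\kappa+\eta}]$. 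The fluctuation averaging is standard once one has a priori entrywise control, so the real content of the proof is the stability analysis of \eqref{vecself1} under the heterogeneous variance profile.
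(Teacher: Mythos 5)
The paper does not actually prove this lemma: it is imported verbatim as ``Theorem 2.6 of \cite{alt20172}'' (the Alt--Erd\H{o}s--Kr\"uger work on random Gram matrices), and the only remark the authors add is that the stronger out-of-spectrum estimate \eqref{aver_out1} was not explicitly stated there but follows by combining \cite{alt20172} with a standard separate argument in the domain $\mathcal D_{out}$, in the style of the analogous semicircle-law computation. So there is no in-paper proof to compare your proposal against. Your sketch is, however, a fair high-level description of the mechanism that the cited reference uses: Schur complement on the linearization $H$ to produce an approximate vector Dyson equation, large-deviation plus Ward-identity bounds on the fluctuation error $\Upsilon$, a stability analysis of \eqref{vecself1} whose controllability rests on the irreducibility and closeness conditions \textbf{(A3)}--\textbf{(A4)}, a scale-by-scale bootstrap in $\eta$, and fluctuation averaging to upgrade the entrywise bound to the averaged bound with an extra $(n\eta)^{-1}$ factor. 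The identification of \textbf{(A3)}--\textbf{(A4)} as the source of the quantitative pseudoinverse bound (excluding spurious unstable modes) is exactly the role these conditions play in \cite{alt20172}, and your observation that the real difficulty, absent in the $s_{ij}=a_i$ case, is doing the stability analysis on a genuinely vector-valued equation, is correct.

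One small inaccuracy worth flagging: you write $\im m(z)\asymp\sqrt{\kappa+\eta}\wedge \eta/\sqrt{\kappa+\eta}$. Since $\eta/\sqrt{\kappa+\eta}\le\sqrt{\kappa+\eta}$ always, that wedge just equals $\eta/\sqrt{\kappa+\eta}$, which is the correct asymptotics only \emph{outside} the spectrum. What you need (and what the fluctuation-averaging argument in $\mathcal D_{out}$ actually uses) is the regime-dependent statement $\im m(z)\sim\sqrt{\kappa+\eta}$ for $E\le\lambda_+$ and $\im m(z)\sim\eta/\sqrt{\kappa+\eta}$ for $E\ge\lambda_+$, cf.\ the form in \eqref{Immc} or \eqref{eq_imasymptoics}; the small $\im m$ outside the support is precisely what produces the two-term bound in \eqref{aver_out1}. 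This does not affect the soundness of the overall plan.
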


\begin{remark}
Strictly speaking, the estimate \eqref{aver_out1} was not proved in \cite{alt20172}. However, its proof is standard by combining the results in \cite{alt20172} with a separate argument for  $z\in \mathcal D_{out}(c_0,\vartheta)$; see e.g. the proof of (2.20) in \cite{Semicircle}.
\end{remark}

As a consequence of \eqref{aver_in1} and \eqref{aver_out1}, we obtain the following rigidity estimate in Lemma \ref{rigid_lem} for the eigenvalues %$\lambda_1\ge \lambda_2\ge \cdots \ge \lambda_p$ 
of $\cal Q_1$ near the right edge $\lambda_{+}$. We define the classical location $\gamma_j$ of the $j$-th eigenvalue as
\begin{equation}\label{gammaj000}
\gamma_j:=\sup_{x}\left\{\int_{x}^{+\infty} \rho (x)\dd x > \frac{j-1}{p}\right\} ,
\end{equation}
where $\rho$ was defined in \eqref{defnmz}. In other words, $\gamma_j$'s are the quantiles of  the asymptotic spectral density $\rho$ of $\cal Q_1$. Note that under the above definition, we have $\gamma_1 = \lambda_{+}$.
\begin{lemma}\label{rigid_lem}
%If \eqref{aver_in1} and \eqref{aver_out1} hold, then 
Under the assumptions of Lemma \ref{lem weakerlocal}, for any $j$ such that $\lambda_{+} -  c_0/2 < \gamma_j \le \lambda_{+}$, we have 
%for some constant $\tilde C>0$, the following events hold with high probability:
\begin{equation}\label{rigidity000}
\vert \lambda_j(\cal Q_1) - \gamma_j \vert \prec  j^{-1/3}n^{-2/3} .
\end{equation}
\end{lemma}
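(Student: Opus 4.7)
\textbf{Proof proposal for Lemma \ref{rigid_lem}.} The plan is to deduce the edge rigidity \eqref{rigidity000} from the averaged local laws \eqref{aver_in1} and \eqref{aver_out1} via a standard Helffer--Sj\"ostrand argument, combined with the square-root edge behavior \eqref{sqrtrho} of $\rho$ from Lemma \ref{lem_edgebehavior}. The workflow proceeds in three stages: (i) convert the local laws to an estimate on the empirical counting function; (ii) invert this via the square-root density to obtain pointwise rigidity of the $\lambda_j$'s; (iii) handle the extreme edge case separately, which is the real technical core.

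First I would introduce the empirical counting function $N(E) := \#\{j : \lambda_j(\mathcal Q_1) \ge E\}$ and its deterministic counterpart $N_0(E) := p \int_E^{+\infty} \rho(x)\,\dd x$, and show that
\begin{equation}\label{plan counting}
\sup_{E \in [\lambda_+ - c_0/2,\ \lambda_+ + c_0/2]} |N(E) - N_0(E)| \prec 1.
\end{equation}
To prove \eqref{plan counting}, pick a smooth cutoff $\chi_E$ approximating $\mathbf 1_{[E, +\infty)}$ on scale $\eta_1 := n^{-2/3 - \epsilon}$, and apply the Helffer--Sj\"ostrand formula to write
\begin{equation*}
\operatorname{Tr} \chi_E(\mathcal Q_1) - p \int \chi_E \,\dd\rho
 = \frac{1}{\pi}\int_{\mathbb C} \bar\partial \tilde\chi_E(z)\, p\,(g_1(z) - m(z))\, dA(z),
\end{equation*}
where $\tilde\chi_E$ is a quasi-analytic extension. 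Split the $z=E'+\ii\eta$ integration into the regions $\{E' \le \lambda_+\}$ (where \eqref{aver_in1} is applied) and $\{E' > \lambda_+\}$ (where the sharper \eqref{aver_out1} is applied), and split the $\eta$-integration into $|\eta| \le \eta_1$ and $\eta_1 < |\eta| \le 1$. Using the edge estimate $\operatorname{Im} m(E+\ii \eta) \asymp \sqrt{\kappa + \eta}$ implied by \eqref{sqrtrho} and standard bookkeeping, every resulting piece is bounded by $\OO_\prec(1)$, which yields \eqref{plan counting}.

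Second I would translate \eqref{plan counting} into the rigidity bound. Integrating \eqref{sqrtrho} gives $N_0(\lambda_+ - x) \asymp p\,\varpi\, x^{3/2}$ for $0 \le x \le c_0/2$, so the classical location obeys $\lambda_+ - \gamma_j \asymp (j/p)^{2/3}$ and $N_0'(\gamma_j) \asymp p^{2/3} j^{1/3}$. Combining \eqref{plan counting} evaluated at $E = \lambda_j$ with monotonicity of $N_0$ and this slope estimate via the usual inversion argument gives
\begin{equation*}
|\lambda_j - \gamma_j| \prec \frac{1}{N_0'(\gamma_j)} \asymp j^{-1/3} n^{-2/3},
\end{equation*}
which is exactly \eqref{rigidity000}, provided we know a priori that $\lambda_j$ lies within a neighborhood of $\gamma_j$ on which $N_0'$ has the stated lower bound.

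The main obstacle, and the step I expect to require the most care, is the extreme edge case $j=1$: we need to rule out eigenvalues significantly above $\lambda_+$, namely $\lambda_1 \le \lambda_+ + n^{-2/3+\epsilon}$ with high probability. For a spectral parameter $z = E + \ii\eta$ with $E > \lambda_+$ and $\eta$ below the scale $n^{-2/3}$, the naive bound $(n\eta)^{-1}$ from \eqref{aver_in1} is not summable and hence useless. It is precisely the refined bound \eqref{aver_out1} on $\mathcal D_{\mathrm{out}}(c_0,\vartheta)$ that is needed: by evaluating the identity $\operatorname{Im} g_1(E+\ii\eta) = \frac{1}{p}\sum_k \eta/[(\lambda_k - E)^2 + \eta^2]$ at a suitable $\eta$ slightly above $n^{-2/3}$ and using \eqref{aver_out1} to show $\operatorname{Im} g_1 \approx \operatorname{Im} m$ there, one finds that $\operatorname{Im} g_1$ is too small to accommodate any eigenvalue in $(\lambda_+ + n^{-2/3+\epsilon}, +\infty)$. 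Once this upper tail bound is in hand, the Helffer--Sj\"ostrand estimate \eqref{plan counting} together with the inversion argument yields \eqref{rigidity000} uniformly in the allowed range of $j$.
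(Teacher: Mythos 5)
Your proposal is correct and coincides with the route the paper takes: the paper simply states that \eqref{rigidity000} follows from the averaged local laws \eqref{aver_in1}--\eqref{aver_out1} via the standard Helffer--Sj\"ostrand argument and refers to \cite{EKYY1,EYY,pillai2014} for details, which is precisely the three-stage argument (counting-function estimate, inversion via the square-root density, and the no-outlier bound from the refined law on $\mathcal D_{\mathrm{out}}$) that you have spelled out.
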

\begin{proof}
The estimate \eqref{rigidity000} follows from  \eqref{aver_in1} and \eqref{aver_out1} combined with a standard argument using Helffer-Sj\"ostrand calculus. The details are already given in \cite{EKYY1,EYY,pillai2014}. 
\end{proof}

Combining Lemma \ref{rigid_lem} with the Cauchy interlacing theorem, we immediately obtain the following result when $R$ is non-zero and satisfies Assumption \ref{assum_meanmatrix}.
\begin{lemma}\label{rigid_lem222}
Assume that $Y$ is a $p\times n$ random matrix with real independent entries satisfying \eqref{eq_gramgeneralassumption} and \eqref{high moment}. Suppose that the variance matrix $S$ satisfies Assumption \ref{assum_gram} and the mean matrix $R$ satisfies Assumption \ref{assum_meanmatrix}. Denote the eigenvalues of $YY^\top$ by $\lambda_1\ge \lambda_2 \ge \cdots \ge \lambda_p$. Then there exists a constant $c_0>0$ such that the following statements hold for any small constant $\vartheta>0$.
\begin{itemize}

\item[(1)] {\bf Outliers}: The first $r$ eigenvalues satisfy
\begin{equation}\label{large_supp_outlier222}
\lambda_1\ge \lambda_2 \ge \cdots \ge \lambda_r \ge \lambda_+ +2 c_0.
\end{equation}

\item[(2)] {\bf Eigenvalues rigidity}: For any $j$ such that $\lambda_{+} -  c_0/4 < \gamma_j \le \lambda_{+}$, we have that
%for some constant $\tilde C>0$, the following events hold with high probability:
\begin{equation}\label{rigidity222}
\vert \lambda_{j+r} - \gamma_j \vert \prec  j^{-1/3}n^{-2/3} .
\end{equation}

\item[(3)] {\bf Averaged local law}: \eqref{aver_in1} holds uniformly for all $z\in \mathcal D(c_0,\vartheta)$, and \eqref{aver_out1} holds uniformly for all $z\in \mathcal D_{out}(c_0,\vartheta)$.
%\begin{equation} 
% \vert g_1(z)-m(z) \vert \prec  (n \eta)^{-1}.\label{aver_in1222} 
%\end{equation}
\end{itemize}
\end{lemma}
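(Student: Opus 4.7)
My plan is to deduce Lemma~\ref{rigid_lem222} from the centered case already handled by Lemma~\ref{lem weakerlocal} and Lemma~\ref{rigid_lem}. Set $Z := Y - R$. Then $Z$ has independent, centered entries with $\Var(z_{ij})=s_{ij}$ and still satisfies \eqref{high moment}, so the averaged local laws \eqref{aver_in1}--\eqref{aver_out1} and the rigidity estimate \eqref{rigidity000} hold for $\cal Q_Z := ZZ^\top$ on the same domains. Throughout, I use that $R$ has fixed rank $r$, so $YY^\top$ is a deterministic rank-$(\leq 2r)$ perturbation of $\cal Q_Z$.

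For part (1), I apply Weyl's inequality $\sigma_{i+j-1}(Z+R)\le \sigma_i(Z)+\sigma_j(R)$ with $i=r$, $j=1$ to obtain $\sigma_r(Y)\ge \sigma_r(R)-\sigma_1(Z)$. By Lemma~\ref{rigid_lem} applied to $\cal Q_Z$ and the bound $\lambda_+\le 4\mathfrak M$ from Remark~\ref{rem_support}, $\sigma_1(Z)=\sqrt{\lambda_1(\cal Q_Z)} \le 2\sqrt{\mathfrak M}+\OO_\prec(n^{-1/3})$. Combined with \eqref{eq_supercritical} this gives $\sigma_r(Y)\ge (2+\tau)\sqrt{\mathfrak M}-\OO_\prec(n^{-1/3})$, so $\lambda_r\ge (2+\tau)^2\mathfrak M - \OO_\prec(n^{-1/3})\ge \lambda_++(4\tau+\tau^2)\mathfrak M-\OO_\prec(n^{-1/3})$. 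Choosing $c_0$ a small constant multiple of $\tau\mathfrak M$ yields \eqref{large_supp_outlier222}.

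For part (2), I use Weyl's inequality twice. From $\sigma_{i+r}(Z+R)\le \sigma_i(Z)+\sigma_{r+1}(R)=\sigma_i(Z)$ and $\sigma_i(Z)=\sigma_i(Y-R)\le \sigma_{i-r}(Y)$, I obtain, after squaring and relabeling,
\begin{equation*}
\lambda_{j+2r}(\cal Q_Z)\le \lambda_{j+r}(YY^\top)\le \lambda_j(\cal Q_Z).
\end{equation*}
The centered rigidity \eqref{rigidity000} bounds both endpoints by $\gamma_j+\OO_\prec(j^{-1/3}n^{-2/3})$ and $\gamma_{j+2r}+\OO_\prec(j^{-1/3}n^{-2/3})$ respectively. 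Finally, the square root behavior $\rho(\lambda_+-x)\sim \pi^{-1}\varpi\sqrt{x}$ from Lemma~\ref{lem_edgebehavior} gives $|\gamma_{j+2r}-\gamma_j|\lesssim r\cdot j^{-1/3}n^{-2/3}$ via the defining integral \eqref{gammaj000}, because inverting a square-root density at the $j/n$-quantile yields spacings of exactly this order. Combining these estimates finishes \eqref{rigidity222}.

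For part (3), I write $YY^\top-\cal Q_Z = RZ^\top+ZR^\top+RR^\top = AB^\top$ for some $A,B\in\R^{p\times 2r}$, and apply the Woodbury identity to get
\begin{equation*}
g_1(z)-g_{1,Z}(z) = -\frac{1}{p}\,\mathrm{tr}\bigl[(I+B^\top G_Z(z) A)^{-1} B^\top G_Z(z)^2 A\bigr],
\end{equation*}
where $G_Z=(\cal Q_Z-z)^{-1}$. Since $\|A\|,\|B\|=\OO_\prec(1)$ and the trace is over a $2r\times 2r$ matrix, the coarse bound $\|G_Z\|\le 1/\eta$ on $\cal D(c_0,\vartheta)$ gives $|g_1-g_{1,Z}|\prec 1/(p\eta)\prec (n\eta)^{-1}$, which combined with the centered \eqref{aver_in1} yields \eqref{aver_in1} for $YY^\top$. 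On $\cal D_{out}(c_0,\vartheta)$, I use the centered rigidity to guarantee $\lambda_1(\cal Q_Z)\le \lambda_++n^{-2/3+\epsilon}$ with high probability; spectral separation from $z=E+\ii\eta$ with $E\ge\lambda_+$ then upgrades $\|G_Z(z)\|$ to $\OO(1/(\kappa+\eta))$, producing $|g_1-g_{1,Z}|\prec 1/(p(\kappa+\eta))\lesssim 1/(n(\kappa+\eta))$, which matches the leading term of \eqref{aver_out1}.

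The main technical point I expect to spend care on is the $\cal D_{out}$ improvement in (3): one must verify that the event where the spectral norm bound $\|G_Z(z)\|\le C/(\kappa+\eta)$ fails is excluded by the defining condition $n\eta\sqrt{\kappa+\eta}\ge n^\vartheta$ together with rigidity of $\cal Q_Z$. This forces $\kappa+\eta\gg n^{-2/3}$, so the rigidity fluctuation of $\lambda_1(\cal Q_Z)$ is harmless. Once this check is done, the remaining arguments are Weyl/Woodbury algebra plus standard stochastic-domination bookkeeping.
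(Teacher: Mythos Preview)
Your arguments for parts (1) and (2) are essentially the paper's: both you and the paper use Weyl's inequality for singular values (the paper phrases it as Cauchy interlacing, giving $\wt\lambda_{j+r}\le\lambda_j\le\wt\lambda_{j-r}$) together with the centered rigidity \eqref{rigidity000} and the quantile gap bound coming from the square-root edge.

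Part (3) is where you diverge. The paper bounds $\wt g_1(z)-g_1(z)$ directly from the eigenvalue interlacing, pairing the spectra of $YY^\top$ and $\cal Q_Z$. Your Woodbury route is a legitimate alternative, but as written it has a gap: the expression $\frac{1}{p}\tr[(I+B^\top G_Z A)^{-1}B^\top G_Z^2 A]$ contains the factor $(I+B^\top G_Z A)^{-1}$ whose norm you never control, and the crude bound $\|G_Z\|\le 1/\eta$ applied to $G_Z^2$ gives $1/\eta^2$, not the claimed $1/(p\eta)$. (Also, $\|B\|=\OO_\prec(1)$ tacitly assumes $\|R\|=\OO(1)$, which is not part of Assumption~\ref{assum_meanmatrix}.)

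The clean fix is to discard the explicit Woodbury algebra and use only rank: $G_Y-G_Z=-G_Y(AB^\top)G_Z$ has rank $\le 2r$, hence
\[
|g_1-g_{1,Z}|=\tfrac{1}{p}|\tr(G_Y-G_Z)|\le \tfrac{2r}{p}\|G_Y-G_Z\|\le \tfrac{2r}{p}(\|G_Y\|+\|G_Z\|).
\]
On $\cal D(c_0,\vartheta)$ both resolvents have norm $\le 1/\eta$, giving $|g_1-g_{1,Z}|\le 4r/(p\eta)\prec(n\eta)^{-1}$. On $\cal D_{out}(c_0,\vartheta)$ you also need $\|G_Y(z)\|\lesssim 1/(\kappa+\eta)$: this follows because (i) by part (1) the outliers satisfy $|\lambda_j-z|\ge c_0$ for $j\le r$ since $E\le\lambda_++c_0$, and (ii) by interlacing $\lambda_{r+1}(YY^\top)\le\wt\lambda_1\le\lambda_++\OO_\prec(n^{-2/3})$, so the bulk is separated from $z$ as in your $G_Z$ argument. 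This repaired version is independent of $\|R\|$ and gives exactly the bounds the paper states.
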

\begin{proof}
For simplicity, we denote $\wt Y:= Y-\E Y$, and the eigenvalues of $\wt{\cal Q}_1:=\wt Y\wt Y^\top$ by $\wt\lambda_1\ge \wt\lambda_2\ge \cdots \ge \wt\lambda_p$. As in \eqref{defn_m000}, we define the Stieltjes transform of the ESD of $\wt{\cal Q}_1$ as
$$\wt g_1(z) := \frac1p\sum_{i=1}^p \frac{1}{\wt\lambda_i -z }= \frac{1}{p} \mathrm{Tr} \frac{1}{\wt{\cal Q}_1- z} .$$
By Lemma \ref{rigid_lem}, the eigenvalues $\wt\lambda_i$ satisfy that
\begin{equation}\label{rigidity000.222}
\vert \wt\lambda_j - \gamma_j \vert \prec  j^{-1/3}n^{-2/3} ,
\end{equation}
for any $j$ satisfying $\lambda_{+} -  c_0/2 < \gamma_j \le \lambda_{+}$.  By the Cauchy interlacing theorem, we have that  
\begin{equation}\label{Cauchy_inter_eq}\wt\lambda_{j+r}\le \lambda_j \le \wt\lambda_{j-r},\quad 1\le j \le p,
\end{equation}
where we adopt the conventions $\wt\lambda_j=\infty$ if $j\le 0$, and $\wt\lambda_j=0$ if $j\ge p+1$. By the square root behavior of $\rho(x)$ around $\lambda_+$ as shown in \eqref{sqrtrho}, it is easy to get that
\begin{equation}\label{Cauchy_inter_eq2}|\gamma_j -\gamma_{j+2r}| \lesssim j^{-1/3}n^{-2/3} \end{equation}
for any $j$ satisfying $\lambda_{+} -  c_0 < \gamma_j \le \lambda_{+}$ as long as $c_0$ is sufficiently small. Combining \eqref{rigidity000.222}, \eqref{Cauchy_inter_eq}  and \eqref{Cauchy_inter_eq2}, we obtain \eqref{rigidity222}.  

Now suppose the mean matrix $R$ has SVD
$$R=\sum_{i=1}^r \sigma_i(R)u_i v_i^\top,$$
with $\sigma_1(R)\ge \sigma_2(R) \ge \cdots \ge \sigma_r(R)\ge (4+\tau)\sqrt{\mathfrak M}$ by \eqref{eq_supercritical}. Using Weyl's inequality for singular values, we obtain that
\begin{align*}
 \lambda_r & \ge\left[ \sigma_r(R) -\wt\lambda_1^{1/2}\right]^2  \ge \left[(4+\tau)\sqrt{\mathfrak M} - \left( \lambda_+ + \OO_\prec(n^{-2/3})\right)^{1/2}\right]^2  \ge  \left[(4+\tau)\sqrt{\mathfrak M} -  2\sqrt{\mathfrak M} + \OO_\prec(n^{-2/3}) \right]^2 \\
 &\ge \left[ 4+4\tau + \OO_\prec(n^{-2/3})\right] \mathfrak M \ge \left[ 1+\tau + \OO_\prec(n^{-2/3})\right] \lambda_+,
\end{align*}
where we used \eqref{rigidity000.222} for $\wt\lambda_1$ in the second step, and \eqref{bound_center} in the third and last steps. This gives \eqref{large_supp_outlier222}.

Finally, using \eqref{rigidity000.222} and the interlacing result \eqref{Cauchy_inter_eq}, we can show that for $z\in \mathcal D(c_0,\vartheta)$,
$$ \left|\wt g_1(z) - g_1(z) \right| \prec (n \eta)^{-1},$$
and for $z\in \mathcal D_{out}(c_0,\vartheta)$,
$$ \left|\wt g_1(z) - g_1(z) \right| \prec  \left[n(\kappa +\eta)\right]^{-1} .$$ 
We omit the details because it is a standard argument, which involves bounding the real and imaginary parts of $\wt g_1(z) - g_1(z)$ using \eqref{Cauchy_inter_eq}. Combining the above two estimates with Lemma \ref{lem weakerlocal} for $\wt g_1(z)$, we conclude part (3) of Lemma \ref{rigid_lem222}.
\end{proof}

From \eqref{high moment} and Markov's inequality, we get that the matrix $Y$ in Lemma \ref{lem weakerlocal} has support \smash{$\max_{i,j}s_{ij}^{1/2}$}. Now combining the analysis of the vector Dyson equation \eqref{selfm1} in \cite{alt20172} with the arguments for local law in \cite{DY}, we can relax the moment condition \eqref{high moment} to a weaker bounded support condition. 

\begin{lemma} \label{lem stronglocal}
Assume that $Y$ is a $p\times n$ random matrix with real independent entries satisfying \eqref{eq_gramgeneralassumption}. %Suppose $c_n=p/n$ satisfies $\tau \le c_n \le 1$ for some constant $\tau>0$. 
Suppose that the variance matrix $S$ satisfies Assumption \ref{assum_gram} and the mean matrix $R$ satisfies Assumption \ref{assum_meanmatrix}. Moreover, assume that $Y$ satisfies the bounded support condition (\ref{eq_boundsupoorteq}) with $\phi_n\le n^{-c_\phi}$ for a small constant $c_\phi>0$. Then there exists a constant $c_0>0$ such that the following estimates hold for any small constant $\vartheta>0$.
\begin{itemize}
\item[(1)] {\bf Averaged local law}: 
For any $z\in \mathcal D(c_0,\vartheta)$,  we have that
\begin{equation} 
 \vert g_1(z)-m(z) \vert \prec  \min\left\{\phi_n,\frac{\phi_n^2}{\sqrt{\kappa+\eta}}\right\} + \frac{1}{n \eta}, \label{aver_inlarge} %+ q^2 
\end{equation}
%where recall that $m(z)$ is defined in \eqref{defnmz}. 
and for $z\in \mathcal D_{out}(c_0,\vartheta)$, we have a stronger estimate
\begin{equation}\label{aver_outlarge}
 | g_1(z)-m(z)|\prec  \min\left\{\phi_n,\frac{\phi_n^2}{\sqrt{\kappa+\eta}}\right\} +  \frac{1}{n(\kappa +\eta)} + \frac{1}{(n\eta)^2\sqrt{\kappa +\eta}}.
\end{equation}
\item[(2)] {\bf Entrywise local law}: For any $z\in \mathcal D(c_0,\vartheta)$, we have that
\begin{equation}\label{entry_lawlarge}
\max_{1\le i,j\le p+n}\left|G_{ij}(z)  -\Pi_{ij}(z) \right| \prec \phi_n+ \sqrt{\frac{\im m(z)}{n\eta}} +\frac1{n\eta} ,
\end{equation}
where $\Pi$ is defined in \eqref{defn_Piz}. 
\end{itemize}
All of the above estimates are uniform in the spectral parameter $z$.
\end{lemma}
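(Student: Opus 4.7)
The plan is to follow the standard three-module approach for proving local laws under a bounded support condition, combining the stability analysis of the vector Dyson equation \eqref{selfm1}/\eqref{vecself1} carried out in \cite{alt20172,alt20171} with the improved probabilistic estimates and fluctuation averaging technique of \cite{DY}. As a first reduction, I would remove the mean: writing $Y=\wt Y + R$ with $\mathbb E\wt Y=0$ and $\mathrm{rank}(R)=r$ fixed, the resolvent $G(z)$ of $H(Y)$ differs from the resolvent $\wt G(z)$ of $H(\wt Y)$ by a rank-$O(1)$ perturbation. Thus, by the resolvent identity together with the boundedness of $\|\wt G\|$ on the relevant domain (which we prove for $\wt Y$ and then transfer), it suffices to establish \eqref{aver_inlarge}, \eqref{aver_outlarge} and \eqref{entry_lawlarge} in the centered case $R=0$; the bulk $g_1-m$ comparison with the non-centered problem then picks up only an $O(1/(n\eta))$ correction (as in the proof of Lemma \ref{rigid_lem222}), which is already absorbed in the stated bounds.

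For the centered problem, I would derive the entrywise law \eqref{entry_lawlarge} by the standard self-consistent perturbation scheme on the $(p+n)\times(p+n)$ linearization $H$ in \eqref{linearize_block}. Applying the Schur complement formula to $z^{1/2}H-z$ gives, for each diagonal entry $G_{ii}$, an identity of the form $G_{ii}^{-1} = -z - z(S\,G^{(i)})_i + \mathcal E_i$, where $G^{(i)}$ is the appropriate minor and $\mathcal E_i$ collects off-diagonal fluctuation terms. The key probabilistic input is large-deviation control of quadratic and linear forms $\sum_{k\ne \ell} \overline{\wt y}_{ik} G^{(i)}_{k\ell} \wt y_{i\ell}$. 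Under the bounded support condition \eqref{eq_boundsupoorteq}, Hanson–Wright type bounds yield $|\mathcal E_i|\prec \phi_n + \sqrt{\im m/(n\eta)}+1/(n\eta)$, which is precisely the error appearing in \eqref{entry_lawlarge}. Feeding this into the stability analysis of the vector Dyson equation \eqref{vecself1} developed in \cite{alt20172} (whose stability operator is invertible on $\mathcal D(c_0,\vartheta)$ with norm controlled by the regularity assumptions (A1)–(A4)) closes the self-consistent loop and gives $|G_{ii}-\Pi_{ii}|\prec \phi_n + \sqrt{\im m/(n\eta)}+1/(n\eta)$. Off-diagonal entries are estimated similarly using their own Schur-type identities; this whole step is implemented through a standard bootstrap on $\eta$, starting from $\eta\sim 1$ and descending, with the \emph{a priori} weak bound promoting itself to the strong bound at every scale.

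To obtain the sharper averaged bounds \eqref{aver_inlarge} and \eqref{aver_outlarge}, I would next apply the fluctuation averaging lemma of \cite{DY} (an adaptation of the technique of \cite{EKYY1}). Summing the entrywise identities and exploiting cancellations between the off-diagonal fluctuations $\mathcal E_i$, one trades the pointwise $\phi_n$ error for $\phi_n^2$ up to a $(\kappa+\eta)^{-1/2}$ factor coming from the edge square-root behavior \eqref{sqrtrho}, yielding the $\min\{\phi_n,\phi_n^2/\sqrt{\kappa+\eta}\}$ term. The remaining $1/(n\eta)$ term inside the bulk and the improved $1/[n(\kappa+\eta)] + 1/[(n\eta)^2\sqrt{\kappa+\eta}]$ outside are then obtained exactly as in the sample covariance case: the outside-bulk improvement uses the deterministic fact that $|m(z)|+|\im m(z)|\lesssim \sqrt{\kappa+\eta}$ at the edge (Lemma \ref{lem_edgebehavior}), together with a second application of fluctuation averaging on the refined error.

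The main technical obstacle is the fluctuation averaging step in the presence of the fully heterogeneous variance profile $S$: unlike in the separable/sample-covariance setting of \cite{DY,Anisotropic}, the linear stability operator arising from \eqref{vecself1} is not diagonalizable by a simple Fourier-type transform, so one must use the weighted cancellation estimates of \cite{alt20172,alt20171} (which rely on the irreducibility (A3) and the H\"older-type regularity (A4) of $S$) in place of the usual scalar deconvolution. Once this weighted version of the fluctuation averaging is in hand, the rest of the argument is a routine adaptation of \cite{DY}, and the proof is complete.
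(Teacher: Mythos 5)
Your proposal takes essentially the same route as the paper. The paper's proof of Lemma~\ref{lem stronglocal} is a one-sentence citation: it says to combine the stability analysis of the vector Dyson equation in \cite[Section~3]{alt20172} with the arguments in \cite[Lemma~3.11]{DY} and \cite[Theorem~3.6]{yang20190}. Your sketch unpacks exactly that recipe---Schur complement on the linearization $H$, large deviation control of quadratic forms under the bounded support condition, a self-consistent bootstrap in $\eta$ closed by the invertibility of the stability operator of \eqref{vecself1}, and then fluctuation averaging to trade the pointwise $\phi_n$ for $\phi_n^2/\sqrt{\kappa+\eta}$ and to get the sharper $\mathcal D_{out}$ bound. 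Your identification of the main technical obstacle (the stability operator cannot be diagonalized by a Fourier-type transform when $S$ is a general variance profile, so one must invoke the weighted stability estimates of \cite{alt20172,alt20171}, which rest on (A3)--(A4)) is the correct reason why the lemma is not a trivial transcription of \cite{DY}.

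One step of your reduction is imprecise and would need care to make rigorous. Writing $Y=\wt Y+R$, the difference $G-\wt G=-GH(R)\wt G$ is indeed a rank-$O(1)$ matrix, and taking the normalized trace gives the $O((n\eta)^{-1})$ correction you quote---this handles \eqref{aver_inlarge}--\eqref{aver_outlarge} exactly as in the interlacing argument of Lemma~\ref{rigid_lem222}. But for the \emph{entrywise} law \eqref{entry_lawlarge}, a rank-$O(1)$ perturbation of the resolvent need not have entrywise small error: if a singular direction of $R$ is aligned with a coordinate axis $e_i$ (as in the simulation choice $R=d\,\mathbf e_{1p}\mathbf e_{1n}^\top$), then on $\mathcal D(c_0,\vartheta)$ the outlier eigenvector contribution to $(\cal G_1)_{ii}$ is of order $1/(\lambda_1-z)=O(1)$, while $\Pi_{ii}$, defined in \eqref{defn_Piz} solely from $S$, sees no such contribution. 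So the claim that it ``suffices to establish \eqref{entry_lawlarge} in the centered case'' and then transfer by the resolvent identity is not justified as written; to salvage it one would either need to restrict to directions orthogonal to the row/column spaces of $R$, or build the deterministic approximant for the spiked model rather than using the centered $\Pi$. To be fair, the paper's one-sentence proof does not flag this either (both cited references treat centered or centered-and-then-spiked models), so this is as much an imprecision in the statement of the lemma as in your sketch---but it is exactly the kind of point a careful reader should raise.
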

\begin{proof}
With the stability analysis of equation \eqref{selfm1} in \cite[Section 3]{alt20172}, we can repeat the same proofs for Lemma 3.11 of \cite{DY} and Theorem 3.6 of \cite{yang20190} to conclude \eqref{aver_inlarge}--\eqref{entry_lawlarge}. We omit the details.
\end{proof}

Now we are ready to give the proof of Theorem \ref{thm_twgram}.
\begin{proof}[Proof of Theorem \ref{thm_twgram}]
Using the estimates in Lemma \ref{lem stronglocal}, we can repeat the proof for \cite[Theorem 2.7]{DY} almost verbatim to conclude \eqref{eq_twgramnece} and the following universality result as $n\to \infty$:
\begin{equation}
 \mathbb{P}\left[ \left( p^{{2}/{3}}(\lambda_{i+r} - \lambda_+) \leq x_i\right)_{1\le i \le k} \right]- \mathbb{P}^G\left[ \left( p^{{2}/{3}}(\lambda_{i+r} - \lambda_+) \leq x_i\right)_{1\le i \le k} \right]\to 0  \label{SUFFICIENT}
\end{equation}
for any $(x_1 , x_2, \ldots, x_k) \in \mathbb R^k$, where $\mathbb P^G$ denotes the law for $Y=(y_{ij})$ with independent Gaussian entries satisfying \eqref{eq_gramgeneralassumption}. To conclude \eqref{eq_twgram} and \eqref{SUFFICIENT2}, it remains to show that $(\varpi^{2/3} p^{{2}/{3}}(\lambda_{i+r} - \lambda_+))_{1\le i \le k} $ has the same asymptotic distribution as \smash{$(p^{{2}/{3}}(\mu_i^{\rm GOE} - 2) )_{1\le i \le k}$} in the Gaussian case.
%converges weakly to the type-1 Tracy-Widom distribution for a Gaussian $Y$. \nc
For simplicity of notations, we only write down details of the proof for the $r=0$ case, which is based on Theorem \ref{thm_regularbm}, Lemma \ref{lem weakerlocal} and Lemma \ref{rigid_lem}. {The argument for the $r>0$ case is similar and will be discussed at the end of the proof. }

Let $t_0=n^{-1/3+\e_0}$ for a small constant $\e_0<\e_*$, where recall that $\e_*$ is the constant in \eqref{eq flatS}. Then, we pick the initial data matrix $W$ to be a $p\times n$ random matrix with independent Gaussian entries satisfying 
$$ \E w_{ij}=0, \quad \mathbb{E} w_{ij}^2=s_{ij}- {t_0}/{n}. $$
Let $X$ be an independent $p\times n$ matrix with i.i.d. Gaussian entries of mean zero and variance $n^{-1}$. Then, we have that
$$  Y\stackrel{d}{=} W+ \sqrt{t_0} X.$$
We regard $W+\sqrt{t}X$ as a rectangular matrix DBM starting at $W$, and at time $t_0$ it has the same distribution as $ Y$.

We now fix the notations for the proof. First, in light of (\ref{SUFFICIENT}), we denote the eigenvalues of $\cal Q:=(W+ \sqrt{t_0} X)(W+ \sqrt{t_0} X)^\top$ by $\lambda_1\ge \lambda_2\ge \cdots \ge \lambda_p$. We %follow the notations in Section \ref{sec_sub_main_gram} and 
define its asymptotic spectral density $\rho$ and the corresponding Stieltjes transform $m(z)$ as in \eqref{defnmz}. Moreover, let $\lambda_+$ be the right edge of $\rho$, and $\gamma_j$ be the quantiles of $\rho$ defined as in \eqref{gammaj000}.
We denote the variance matrix of $W$ by $S_w=(s_{ij}-{t_0}/{n}:1\le i \le p, 1\le j \le n)$, and let $\mathbf M_w(z)=(M_{w,1}(z),\cdots, M_{w,p}(z)): \mathbb{C}_+ \rightarrow \mathbb{C}^p$ be the unique solution to the vector Dyson equation 
\begin{equation}\label{selfmw}
  \frac{1}{\mathbf M_w}=-z \mathbf{1}+S_w\frac{1}{\mathbf{1}+S_w^\top \mathbf M_w}, %\ \text{for all} \ z \in \mathbb{C}_+,
\end{equation}
such that $\im M_{w,k} (z)>0$, $k=1,2,\cdots, p,$ for any $z\in \C_+$. Then, we define $M_w(z):={p}^{-1} \sum_k M_{w,k}(z)$, which is the Stieltjes transform of the asymptotic spectral density of $WW^\top$, denoted by $\rho_{w}$. We denote the right edge of $\rho_{w}$ by $\lambda_{+,w}$, and define the quantiles of $\rho_{w}$ as
\begin{equation}\label{gammaj-t0}
\gamma_{j,w}:=\sup_{x}\left\{\int_{x}^{+\infty} \rho_{w} (x)dx > \frac{j-1}{p}\right\},\quad 1\le j \le p .
\end{equation}
Finally, following the notations in Section \ref{sec_proofstrategy}, we denote 
$$m_V(z)\equiv m_{w,0}(z):=p^{-1}\tr (WW^\top - z)^{-1},$$ and the eigenvalues of $WW^\top$ by $d_1\ge d_2 \ge \cdots \ge d_p$. Then, we define $m_{w,t}$ as in \eqref{originaleqaution0}, and let $\lambda_{+,t}$ be the rightmost edge of the rectangular free convolution $\rho_{w,t}$. 

We take $\eta_*=n^{-2/3+\e_1}$ for a small enough constant $0<\e_1<\e_0$. We first verify that $m_V$ is $\eta_*$-regular in the sense of Definition \ref{assumption_edgebehavior}. Notice that $W$ is also a random Gram matrix satisfying the assumptions of Lemma \ref{lem weakerlocal}. Denoting $z=E+\ii \eta$ and $\kappa=|E-\lambda_{+,w}|$, by \eqref{aver_in1} and \eqref{aver_out1} we have that for $ \lambda_{+,w} - c_0 \le E\le \lambda_{+,w}$ and $n^{-2/3+\vartheta}\le \eta \le 10,$
\begin{equation}\label{aver_inw0} 
 \vert m_{w,0}(z)-M_w(z) \vert \prec  (n \eta)^{-1} ,
 \end{equation}
 and for $ \lambda_{+,w}  \le E\le \lambda_{+,w} + c_0$ and $ n^{-2/3+\vartheta}\le \eta \le 10$,
\begin{equation}\label{aver_outw0}  
 \vert m_{w,0}(z)-M_w(z) \vert \prec   \frac{1}{n(\kappa +\eta)} + \frac{1}{(n\eta)^2\sqrt{\kappa +\eta}}.
\end{equation}
Moreover, as a consequence of the square root behavior of $\rho_{+,w}$ around $\lambda_{+,w}$ as given by \eqref{sqrtrho}, it is easy to show that 
\begin{equation}\label{Immc}
\vert M_w(z) \vert \sim 1,  \quad  \im M_w(z) \sim \begin{cases}
    {\eta}/{\sqrt{\kappa+\eta}}, & \text{ if } E\geq \lambda_{+,w} \\
    \sqrt{\kappa+\eta}, & \text{ if } E \le \lambda_{+,w}\\
  \end{cases},
\end{equation}
for any $z=E+\ii\eta$ satisfying that $\lambda_{+,w} - c_0\le E \le \lambda_{+,w}+c_0$ and $0\le \eta\le c_0^{-1}$ for a small enough constant $c_0>0$. {In this paper, given two sequences of positive values $a_n$ and $b_n,$ we use $a_n \sim b_n$ to mean that there exists a constant $C>0$ so that $C^{-1} a_n \leq b_n \leq C a_n.$} Finally, using \eqref{rigidity000} we get that 
\begin{equation}\label{rigiditywww}
\vert d_{j} - \gamma_{j,w} \vert \prec  j^{-1/3}n^{-2/3} ,
\end{equation}
for any $j$ such that $\lambda_{+,w} -  c_0/2 < \gamma_{j,w} \le \lambda_{+,w}$. Combining the above estimates \eqref{aver_inw0}--\eqref{rigiditywww}, we obtain that for some constants $0<c_V<c_0/2$ and $C_V>0$, the following estimates hold on a high probability event $\Xi$:  for $ d_1 - c_V \le E\le d_1$ and $ \eta_*\le \eta \le 10$,
\begin{equation*} 
 \frac{1}{C_V} \sqrt{|d_1 - E| + \eta} \le \im m_{w,0}(E+\ii \eta) \le C_V\sqrt{|d_1 - E| + \eta}\ ;  
 \end{equation*}
for $d_{1}  \le E\le d_1 + c_V$ and $\eta_*\le \eta \le 10$,
\begin{equation*} 
\frac{1}{C_V} \frac{\eta}{|d_1 - E| + \eta} \le \im m_{w,0}(E+\ii \eta) \le C_V\frac{\eta}{|d_1 - E| + \eta} \ .
\end{equation*}
 Thus, on event $\Xi$, $m_V$ is $\eta_*$-regular. Then, applying Theorem \ref{thm_regularbm} to $\cal Q=(W+ \sqrt{t_0} X)(W+ \sqrt{t_0} X)^\top$, we conclude that there exists a parameter $\gamma_n \sim 1$ such that for any fixed $k\in \N$,
\begin{equation}\label{diffTW1}
\left(\gamma_n p^{2/3}(\lambda_i-\lambda_{+,t_0})\right)_{1\le i \le k} \stackrel{d}{\sim} \left(p^{{2}/{3}}(\mu_i^{\rm GOE} - 2) \right)_{1\le i \le k},
\end{equation} 
where $\stackrel{d}{\sim}$ means that the two random vectors have the same asymptotic distribution. 
Now, to conclude the proof, it remains to show that  
	\begin{equation}\label{match edge}
	p^{2/3}|\lambda_{+,t_0}-\lambda_+| \to 0 \quad \text{in probability}.
\end{equation}
We recall that $\lambda_+$ is the right edge of the asymptotic density $\rho$, which by definition is also the rectangular free convolution of $\rho_w$ with MP law at time $t_0$. On the other hand, for a given $W$, $\lambda_{+,t_0}$ is the right edge of $\rho_{w,t}$, which is the rectangular  free convolution of $\rho_{w,0}:=p^{-1}\sum_{i=1}^p \delta_{d_i}$ with MP law at time $t_0$. Hence $\lambda_{+,t_0}$ and $\lambda_+$ are different quantities, but we can control their difference using \eqref{aver_inw0}, \eqref{aver_outw0} and \eqref{rigiditywww}. 

 Recalling the notation in \eqref{eq_defnzeta0}, we denote
\begin{equation*}
\zeta_{+,t_0} :=[1+c_nt_0 m_{w,t_0}(\lambda_{+,t_0})]^2 \lambda_{+,t_0} -(1-c_n)t_0[1+c_nt_0 m_{w,t_0}(\lambda_{+,t_0})],
\end{equation*} 
and 
$$  \zeta_+:=[1+c_nt_0 m(\lambda_+)]^2 \lambda_+-(1-c_n)t_0[1+c_nt_0 m(\lambda_+)].$$
Using (\ref{eq_edgebound}) below and \eqref{rigiditywww}, we can obtain that
\begin{equation}\label{eq rightedge2} |\zeta_+-\lambda_{+,w}|\sim |\zeta_{+,t_0} - \lambda_{+,w}| \sim t_0^2 .\end{equation}
Then, repeating the proof of Lemma \ref{lem edgemoving} (which is given in \cite[Lemma A.2]{DY20202}), we can obtain that 
\begin{equation}\label{eq boundedgediff}
\begin{split}
|\lambda_{+,t_0}-\lambda_+| & \lesssim |\zeta_{+,t_0} -\zeta_+| + t_0 |M_w(\zeta_{+,t_0})-M_w(\zeta_{+})| + t_0 \left| m_{w,0}(\zeta_{+,t_0})- M_w(\zeta_{+,t_0}) \right| ,
\end{split}
\end{equation}
and 
\begin{equation}\label{eq boundedgediff2} |\zeta_{+,t_0} -\zeta_+| \lesssim t_0^3 \left| m'_{w,0}(\zeta_{+,t_0})- M'_w(\zeta_{+,t_0}) \right|.
\end{equation}
Using the definition of $\gamma_{j,w}$, we can get that  
\begin{align}
   \left| m_{w,0}'(\zeta_{+,t_0})- M_w'(\zeta_{+,t_0}) \right| &= \left| \frac1p\sum_{j}\frac{1}{(d_j-\zeta_{+,t_0})^2}-\int_0^{\lambda_{+,w}}\frac{\rho_w(x)}{(x-\zeta_{+,t_0})^2}\dd x\right| \nonumber\\
&\le \sum_{j:\gamma_{j,w}>\lambda_{+,w}-c_0/2}\int_{\gamma_{j+1,w}}^{\gamma_{j,w}}\left| \frac{\rho_w(x)}{(d_j - \zeta_{+,t_0})^2}-\frac{\rho_w(x)}{(x- \zeta_{+,t_0})^2}\right|  \dd x+\OO(1) \nonumber\\
&\prec \sum_{j:\gamma_{j,w}>\lambda_{+,w}-c_0/2}\int_{\gamma_{j+1,w}}^{\gamma_{j,w}}  \frac{j^{-1/3}n^{-2/3}(|\lambda_{+,w}-x| + t_0^2)  \rho_w(x)}{|x- \zeta_{+,t_0}|^4}  \dd x+\OO(1) \nonumber\\
&\lesssim n^{-1}\int_{\lambda_{+,w}-c_0/2}^{\lambda_{+,w}}\frac{(\lambda_{+,w}-x) + t_0^2}{|(\lambda_{+,w}-x) + t_0^2|^4}\dd x+\OO(1) \lesssim \frac{1}{nt_0^4}.\label{derivdiff1}
\end{align}
Here in the third step we used that for $\gamma_{j+1,w}\le  x \le \gamma_{j,w}$,
$$|(x- \zeta_{+,t_0})^2-(d_j - \zeta_{+,t_0})^2|\prec  j^{-1/3}n^{-2/3}(|\lambda_{+,w}-x| + t_0^2),$$
by \eqref{rigiditywww}, \eqref{eq rightedge2} and $\lambda_{+,w}-\gamma_{j+1,w}\sim j^{2/3}n^{-2/3}$. In the fourth step, we used that $\rho_w(x)\sim \sqrt{\lambda_{+,w} - x}$ and $j^{-1/3}\sim n^{-1/3}(\lambda_{+,w} - x)^{-1/2}$. Plugging \eqref{derivdiff1} into \eqref{eq boundedgediff2}, we obtain that
\begin{equation}\label{eq boundedgediff3} |\zeta_{+,t_0} -\zeta_+| \prec  n^{-1}t_0^{-1}.
\end{equation}
Moreover, as a consequence of the square root behavior of $\rho_w$ around $\lambda_+$, it is easy to check that 
\begin{equation}\label{eq boundedgediff3.5}
 t_0|M_w(\zeta_{+,t_0})-M_w(\zeta_{+})| \lesssim t_0\frac{|\zeta_{+,t_0} -\zeta_+|}{\min\{|\zeta_+-\lambda_{+,w}|^{1/2},|\zeta_{+,t_0}-\lambda_{+,w}|^{1/2}\}} \prec  n^{-1}t_0^{-1},
\end{equation}
where we used \eqref{eq rightedge2} and \eqref{eq boundedgediff3} in the last step. Finally, we need to bound $ \left| m_{w,0}(\zeta_{+,t_0})- M_w(\zeta_{+,t_0}) \right|$. Denote $z_0:= \zeta_{+,t_0}+\ii \eta_0$ with $\eta_0:=n^{-2/3+\vartheta}$ for some small constant $\vartheta>0$. We now decompose $m_{w,0}(\zeta_{+,t_0})- M_w(\zeta_{+,t_0})$ as
\begin{align*}
&m_{w,0}(\zeta_{+,t_0})- M_w(\zeta_{+,t_0})= m_{w,0}(z_0)- M_w(z_0) + \cal K_1 + \cal K_2 ,
\end{align*}
where 
\begin{align*}
 \cal K_1 &:=\sum_{ j:\gamma_{j,w}>\lambda_{+,w}-c_0/2} \left(\frac1p \frac{1}{d_j - \zeta_{+,t_0}}-\int_{\gamma_{j+1,w}}^{\gamma_{j,w}} \frac{\rho_w(x)}{x- \zeta_{+,t_0}}\dd x\right)   - \sum_{ j :\gamma_{j,w}>\lambda_{+,w}-c_0/2} \left( \frac1p \frac{1}{d_j - z_{0}}-\int_{\gamma_{j+1,w}}^{\gamma_{j,w}} \frac{\rho_w(x)}{x-z_{0}}\dd x\right) ,\\
 \cal K_2 &:=  \sum_{ j:\gamma_{j,w}\le \lambda_{+,w}-c_0/2} \left( \frac1p \frac{1}{d_j - \zeta_{+,t_0}}-\int_{\gamma_{j,w}}^{\gamma_{j-1,w}} \frac{\rho_w(x)}{x-\zeta_{+,t_0}}\dd x\right)  -  \sum_{ j:\gamma_{j,w}\le \lambda_{+,w}-c_0/2} \left(\frac1p  \frac{1}{d_j - z_0}-\int_{\gamma_{j,w}}^{\gamma_{j-1,w}} \frac{\rho_w(x)}{x- z_0}\dd x\right) .
\end{align*}
By \eqref{aver_outw0}, we have
\begin{equation} \nonumber%\label{atz0}
 \left| m_{w,0}(z_0)- M_w(z_0) \right| \prec  \frac{1}{nt_0^2}+\frac{1}{(n\eta_0)^2 t_0}.
\end{equation}
Using \eqref{rigiditywww}, it is easy to bound $\cal K_2\lesssim \eta_0$ with high probability. Then using a similar argument as for \eqref{derivdiff1}, we can bound 
\begin{align*}
\cal K_1&\prec \sum_{j:\gamma_{j,w}>\lambda_{+,w}-c_0/2}\int_{\gamma_{j+1,w}}^{\gamma_{j ,w}}  \frac{j^{-1/3}n^{-2/3}(|\lambda_{+,w}-x| + t_0^2)  \rho_w(x)}{|x- \lambda_{+,t_0}|^2}  \dd x  \lesssim n^{-1}\int_{\lambda_{+,w}-c_0/2}^{\lambda_{+,w}}\frac{\dd x}{(\lambda_{+,w}-x) + t_0^2}  \lesssim n^{-1} \log n.
\end{align*}
Combining the above three estimates, we get that
\begin{equation}\label{eq boundedgediff4}
|m_{w,0}(\lambda_{+,t_0})- M_w(\lambda_{+,t_0})|\prec \eta_0 +  \frac{1}{nt_0^2}+\frac{1}{(n\eta_0)^2 t_0}.
\end{equation}
Now, with \eqref{eq boundedgediff}, \eqref{eq boundedgediff3}, \eqref{eq boundedgediff3.5} and \eqref{eq boundedgediff4}, we can bound that 
\begin{equation}\label{edge_diff}
|\lambda_{+,t_0}-\lambda_+| \prec t_0 \eta_0+\frac{1}{nt_0} + \frac{1}{(n\eta_0)^2}  .
\end{equation}
Plugging into $t_0=n^{-1/3+\e_0}$ and $\eta_0=n^{-2/3+\vartheta}$, we conclude \eqref{match edge} as long as $\e_0$ and $\vartheta$ are chosen such that $\e_0+\vartheta<1/3$.

Combining \eqref{diffTW1} and \eqref{match edge}, we obtain that $(\gamma_n p^{2/3}(\lambda_i-\lambda_{+}))_{1\le i\le p}$ converges weakly to the Tracy-Widom law. Furthermore, matching the gap between the quantiles $\gamma_1$ and $\gamma_2$ (recall \eqref{gammaj000}) of the density $\rho$ in \eqref{sqrtrho} and the one for the semicircle law $\rho_{sc}(2-x)=\pi^{-1}\sqrt{x}+\OO(x)$ around the right edge at $2$, we see that $\gamma_n$ must be $\varpi^{2/3}$. This concludes the proof of \eqref{eq_twgram} and \eqref{SUFFICIENT2}.

{Finally, we briefly discuss the proof for the $r>0$ case. In fact, its proof uses the same argument as above, except that we need to replace Lemma \ref{rigid_lem} with Lemma \ref{rigid_lem222} and apply Theorem \ref{thm_regularbm} with $i_0=r+1$. %The detailed arguments can be made varbatim as $r=0$, 
For example, the equation (\ref{diffTW1}) above should be replaced by
\begin{equation*}
\left(\gamma_n p^{2/3}(\lambda_{i+r}-\lambda_{+,t_0})\right)_{1\le i \le k} \stackrel{d}{\sim} \left(p^{{2}/{3}}(\mu_i^{\rm GOE} - 2) \right)_{1\le i \le k}.
\end{equation*} 
We omit the details.}
\end{proof}

Finally, we complete the proofs of Corollaries \ref{cor_separable} and \ref{thm_twgraph_sparse} using Theorem \ref{thm_twgram}.

\begin{proof}[Proof of Corollary \ref{cor_separable}]
In \cite{yang20190}, the following edge universality result was proved under the assumptions of this corollary:
\begin{equation}\label{add_edge_univ}
 \lim_{n\to \infty}\left\{\mathbb{P}\left[ \left( p^{{2}/{3}}(\lambda_{i} - \lambda_+) \leq x_i\right)_{1\le i \le k} \right]  - \mathbb{P}^G\left[ \left( p^{{2}/{3}}(\lambda_{i} - \lambda_+) \leq x_i\right)_{1\le i \le k} \right]\right\} = 0,  %\label{SUFFICIENT2}
\end{equation}
for all $(x_1 , x_2, \ldots, x_k) \in \mathbb R^k$, where $\mathbb P^G$ denotes the law for $\cal N$ with i.i.d. Gaussian entries of mean zero and variance $n^{-1}$. In particular, the condition \eqref{assm_3rdmoment} is not necessary if $A$ or $B$ is diagonal. 
%Again it remains to study the Tracy-Widom law for the case with a Gaussian $Z=(z_{ij})$. Suppose that $A$ and $B$ have eigendecompositions
%\be\label{eigen}A= U\Sig_a U^\top, \quad B= V \Sigma_b V^\top ,\quad \Sig_a=\text{diag}(a_1, \ldots, a_p), \quad  \Sig_b=\text{diag}(b_1, \ldots, b_N).
%\ee
%Then using the rotational invariance of multivariate Gaussian distribution, we get
% \begin{equation*}
%\mathcal{Q}_s=A^{1/2}ZBZ^\top A^{1/2} \stackrel{d}{=}U\left( YY^\top \right)U^\top, 
%\end{equation*}  
%where $Y:=\Sigma_a^{1/2}Z\Sigma_b^{1/2}$. Note that $YY^\top$ is a random Gram matrix with variance matrix $S=(\sqrt{a_ib_j})$, which satisfies Assumption \ref{assum_gram}. 
Note that %$Y=Z$ in this setting. 
if $\cal N$ is Gaussian, then using the rotational invariance of multivariate Gaussian distribution, we can reduce $Q=YY^\top$ to a  %the problem to the study of the largest eigenvalues of a 
random Gram matrix satisfying \eqref{eq_gramgeneralassumption} with $R=0$ and variance matrix $S=(({a_ib_j})/n)$. Furthermore, notice that \eqref{eq_seperablecheckable} is stronger than \eqref{eq flatS} and equivalent to (A3) of Assumption \ref{assum_gram}. Hence $YY^\top$ satisfies the assumptions of Theorem \ref{thm_twgram} with $r=0$, which immediately concludes the proof.
\end{proof}

\begin{remark}\label{rem outliers}
Regarding Example \ref{example_separable}, suppose there are some spikes in the eigenvalue spectrum of $A$ and $B$ such that  $a_1\ge  \cdots \ge a_r \ge a_{r+1}+\tau$ and $b_1\ge  \cdots \ge b_s \ge b_{s+1}+\tau$ for some $r,s\in \N$ and a small constant $\tau>0$. Then it is easy to check that 
$$\min \left\{ \inf_{1\le i\le p} \frac{1}{p} \sum_{l} \frac{1}{\epsilon+|a_{i}-a_l|^2}, \inf_{1\le j\le n} \frac{1}{n} \sum_{l} \frac{1}{\epsilon+|b_{j}-b_l|^2} \right\} \lesssim \frac{1}{n\e} + 1,$$
and the condition \eqref{assmA4 rem} cannot hold for all $n$. Hence the condition \eqref{assmA4 rem} rules out the existence of outliers.
%(\ref{assmA4s}) is equivalent to the conditions
	%\begin{equation} \label{assmA4s rem} 
	%\begin{split}
	%& |a_{i_1}-a_{i_2}| \leq \tau^{-1} \frac{|i_1-i_2|^{1/2}}{n^{1/2} } \quad \text{for} \ i_1,i_2 \in I_\alpha,  \\
	%& |b_{j_1}-b_{j_2}|  \leq \tau^{-1} \frac{|j_1-j_2|^{1/2}}{n^{1/2} } \quad \text{for} \ j_1,j_2 \in J_\beta.
	%\end{split}
	%\end{equation}
	%It is a sufficient condition for (\ref{assmA4 rem}). Moreover, 
But the condition (\ref{assmA4 rem}) sometimes is too strong because it does not allow for any spikes or isolated eigenvalues in the eigenvalue spectrum of $A$ and $B$. (Here by an isolated eigenvalue of $A$, we mean an $a_i$ such that $a_{i+1}+\tau \le a_i \le a_{i-1}-\tau$ for some $1\le i\le p$ and a small constant $\tau>0$. For the isolated eigenvalues of $B$, we have a similar definition.) On the other hand, in \cite{dingyang2} we have found that a spike of $A$ or $B$ gives rise to an outlier only when it is above the BBP transition threshold. In fact, the following weaker regularity condition was used in \cite{dingyang2,yang20190}. For $\mb(z)$ in \eqref{selfm1}, we define another two holomorphic functions
\begin{equation*}
m_{1c}(z):=\frac{1}{n} \sum_{i=1}^p a_i m_i(z), \quad  m_{2c}(z):=\frac{1}{n}\sum_{j=1}^n \frac{b_j}{-z(1+b_j m_{1c}(z))}. 
\end{equation*}
	%We mention that in \cite{yang20190}, instead of assuming (\ref{eq_seperablecheckable}) and {\bf (A4)} of Assumption \ref{assum_gram}, the author proved the universality using a different regularity condition. Specifically, 
Then, we say that the spectral edge $\lambda_+$ is \emph{regular} if for some constant $\tau>0,$
\begin{equation}\label{eq_oldregular}
1+m_{1c}(\lambda_+)b_1 \geq \tau, \quad 1+m_{2c}(\lambda_+)a_1 \geq \tau. 
\end{equation} 
This condition not only allows for isolated eigenvalues of $A$ and $B$, but also allows for zero $a_i$'s or $b_j$'s, that is, the lower bounds in \eqref{eq_seperablecheckable} can be relaxed to some extent. Compared with conditions \eqref{assmA4 rem} and \eqref{assmA4s rem}, the condition \eqref{eq_oldregular} is less explicit and harder to check, but it appears more often in the random matrix theory literature. 
\end{remark}

\begin{proof}[Proof of Corollary \ref{thm_twgraph_sparse}]
Combining \eqref{eq_sy} with Markov's inequality, we see that $Y$ satisfies the bounded support condition (\ref{eq_boundsupoorteq}) with $\phi_n =  q^{-1} \le n^{-1/3-c_\phi}$. Then Lemma \ref{lem stronglocal} holds, and in \cite[Lemma 3.11]{DY} we have shown that \eqref{aver_inlarge} and \eqref{aver_outlarge} imply the following weaker rigidity estimate than \eqref{rigidity000}: 
\begin{equation}\label{rigidityqqq}
\vert \lambda_j - \gamma_j \vert \prec  j^{-1/3}n^{-2/3}+\phi_n^2 .
\end{equation}
With \eqref{aver_inlarge}, \eqref{aver_outlarge} and \eqref{rigidityqqq} as the main inputs, using the same argument as for \cite[Theorem 2.7]{EKYY}, we can show that the edge statistics of $\cal Q$ match those of the Gaussian case in the sense of \eqref{SUFFICIENT} as long as $\phi_n\le n^{-1/3-c_\phi}$. Then we immediately conclude the proof using Theorem \ref{thm_twgram}. 
%and shall use the same notations. We take $\eta_*=n^{-2/3+\e_1}$. Then with \eqref{aver_inlarge}-\eqref{aver_outlarge}, $\phi_n \le n^{-1/3-c_\phi}$ and \eqref{Immc}, it is easy to check that $W$ is $\eta_*$ regular. Hence the weak convergence in \eqref{diffTW1} holds by Theorem \ref{thm_regularbm}, and it remains to show \eqref{match edge}. In fact the proof of \eqref{match edge} for the current setting is almost the same as ......... Cannot use Theorem \ref{thm_twgram} directly: explain why... when very sparse, the regularity condition cannot hold. 
%\cor By Corollary \ref{thm_twgraph_sparse}, we immediately obtain that for matrix $Y$ defined in \eqref{Z=Y+EZ}, the edge eigenvalues of $YY^\top$ satisfy the desired Tracy-Widom law. On the other hand, with the same arguments as for \cite[Theorem 6.2]{EKYY}, we can show that largest {\it bulk eigenvalue} (not the outliers) of $(n\mathtt{p})^{-1}ZZ^\top$ are sticked to the largest eigenvalue of $YY^\top$ up to an error of order $n^{-1+\e}$. This concludes Corollary \ref{thm_twgraph}. Alternatively, one can also use the version of Theorem \ref{thm_regularbm} with outliers; see Remark \ref{rmk_outlierdb}.
\end{proof}

\begin{remark}\label{rmk_techinical}
	
We make a few remarks on the technical assumptions (\ref{assm_3rdmoment}) and $q \geq n^{1/3+c_{\phi}}$ in Corollaries \ref{cor_separable} and \ref{thm_twgraph_sparse}, respectively. First, as mentioned in the proof of Corollary \ref{cor_separable}, we need to use the edge universality result \eqref{add_edge_univ} from \cite{yang20190}, where the vanishing third moment condition (\ref{assm_3rdmoment}) is needed (see the discussion below Theorem 3.6 in \cite{yang20190}). More precisely, a continuous self-consistent comparison argument is used in \cite{yang20190} to show that the non-Gaussian case is close to the Gaussian case in the sense of limiting distributions of edge eigenvalues. For the comparison argument to work, we need to match the third moment of $\widetilde y_{ij}$ with that of a standard Gaussian random variable, which leads to the condition (\ref{assm_3rdmoment}). 
%As discussed in \cite{yang20190} (see the paragraph below Theorem 3.6 therein), to prove the universality, we need to adopt a continuous self-consistent comparison argument going from Gaussian to general distribution as developed in \cite{Anisotropic}. The main reason is that we need to match the third moment of $x_{ij}$ with that of the Gaussian random variables (see the equation (6.26) of \cite{yang20190}). 
However, we believe that (\ref{assm_3rdmoment}) is not necessary and can be removed with further theoretical development. 
%at least under certain setting.  For example, if one of the $A$ and $B$ is diagonal, then a notable argument in \cite[Section 8]{Anisotropic} can remove this requirement by exploring more detailed structures of the resolvents.   

Second, we believe that the condition $q \geq n^{1/3+c_{\phi}}$ in Corollary \ref{thm_twgraph_sparse} can be weakened to $q \geq n^{1/6+c_{\phi}}$. In fact, following the arguments in \cite{hwang2019}, we expect that (\ref{rigidityqqq}) can be sharpened to 
\begin{equation*}
\vert \lambda_j - \gamma_j - \delta(q) \vert \prec  j^{-1/3}n^{-2/3}+q^{-4},
\end{equation*}
for some deterministic shift $\delta(q)=\OO(q^{-2})$. As long as $q \ge n^{1/6+c}$, the term $q^{-4}$ will be much smaller than the Tracy-Widom scale $n^{-2/3}$, and the Tracy-Widom law around $\lambda_++\delta(q)$ can be established. 
%so that the condition can be generalized to  $q \geq n^{1/6+c_{\phi}}.$ 
However, when $q \ll n^{1/6},$ the limiting distribution of the second largest eigenvalue (i.e., the largest edge eigenvalue) of the Erd{\H o}s-R{\'e}nyi graph will become Gaussian \cite{TWsharp, huang2020}. We conjecture that a similar phenomenon also occurs for the model in Corollary \ref{thm_twgraph_sparse}. 
%These will be the topics for future studies.

%we believe that the Tracy-Widom distribution no more holds and instead, we expect the edge eigenvalue to be asymptotically Gaussian.

%In the current paper, we have decided to focus on random Gram matrices in the setting of Theorem \ref{thm_twgram} and postpone deeper study of 
Since the above directions are not the focus of this paper, we will pursue them in future works.  
\end{remark}

\section{Rectangular free convolution and local laws} \label{sec_preliminary_feelocal}

In this section, we collect some basic estimates on the rectangular free convolution $\rho_{w,t}$ and its Stieltjes transform $m_{w,t}$ for an $\eta_*$-regular $V=WW^\top$ as in Definition \ref{assumption_edgebehavior}. Furthermore, we will state an (almost) sharp local law on the resolvent of $\mathcal{Q}_t=(W+\sqrt{t}X)(W+\sqrt{t}X)^\top$, and a rigidity estimate on the rectangular DBM $\{\lambda_i(t):1\le i \le p\}$. 
%As described in Section \ref{sec_strat}, 
These estimates will serve as important inputs for the detailed analysis of the rectangular DBM in Section \ref{sec_calculationDBM} below. Most of the results in this section were proved in \cite{DY20202} under more general assumptions on $X$, and we will provide the exact reference for each of them.  Without loss of generality, throughout this section, we assume that $i_0=1$. { The general case with $i_0>1$ will be discussed in Remark \ref{rmk_iogeqonecase} below. }

%We summarize these results here. 

\subsection{Properties of rectangular free convolution}\label{sec_summaryrectnagular}
%%Denote $\mathcal{Q}_t=(W+\sqrt{t} X)(W+\sqrt{t}X)^\top.$ 
%%The eigenvalue density of $\mathcal{Q}_t$ is described by the rectangular free convolution . 
%%We always have $t=\oo(1)$ for the discussion of this section.
%Recall that $m_{w,t}$ is the Stieltjes transform of the ESD for $\mathcal{Q}_t.$ For convenience, we allow $t=0$, in which case $m_{w,0}$ is the  Stieltjes transform of the ESD for $WW^\top.$ 

For simplicity, we denote $b_t(z):=1+c_n t m_{w,t}(z).$ It is easy to see from (\ref{originaleqaution0}) that $b_t$ satisfies the following equation %\cite[Sections 2.1.3 and 2.2.3]{capfreeprob}
\begin{equation}\label{originaleqautionbbb}
b_t =1+\frac{c_n t}{p} \sum_{i=1}^p \frac{1}{b_t^{-1} d_i-b_t z+t(1-c_n)}.
\end{equation} 
%{\cob
%At $z=\lambda_{+,t}$, we have the following equation
%\begin{equation}\label{originaleqautionbbbderv}
%1=\frac{tc}{p} \sum_{i=1}^p \frac{d_i b^{-2}+\lambda_{+,t}}{[b^{-1} d_i-b \lambda_{+,t}+t(1-c_n)]^2}.
%\end{equation} 
%Taking partial derivatives of \eqref{originaleqautionbbb} with respect to $t$, we get
%\begin{align*}
%\frac{\partial b}{\partial t} &= \frac{c}{p} \sum_{i=1}^p \frac{1}{b^{-1} d_i-b \lambda_{+,t}+t(1-c_n)}+\frac{tc}{p} \sum_{i=1}^p \frac{b^{-2}d_i + \lambda_{+,t}}{[b^{-1} d_i-b z+t(1-c_n)]^2}\frac{\partial b}{\partial t} \\
%&+ \frac{tc}{p} \sum_{i=1}^p \frac{1}{[b^{-1} d_i-b z+t(1-c_n)]^2} \left( b\frac{\partial \lambda_{+,t}}{\partial t} - (1-c)\right).
%\end{align*}
%Using \eqref{originaleqautionbbb} and \eqref{originaleqautionbbbderv}, we get
%$$ \frac{\partial \lambda_{+,t}}{\partial t}=\frac{1-c}{b}+ \left[\frac{t}{p} \sum_{i=1}^p \frac{1}{[b^{-1} d_i-b z+t(1-c_n)]^2}\right]^{-1} \frac{m_{w,t}(\lambda_{+,t})}{b^{-1}} . $$
% }
Recalling $\zeta_t$ defined in \eqref{eq_defnzeta0}, 
%that we introduce the notation
%\begin{equation*}
%\zeta \equiv \zeta(z):=b^2 z-bt(1-c_n). 
%\end{equation*}
%Indeed, $\zeta$ is the subordination function of the rectangular free convolution \cite{capfreeprob}.
the equation \eqref{originaleqautionbbb} can be also rewritten as
\begin{equation}\label{eq_keyequation11}
\frac{1}{c_nt}\left(1-\frac{1}{b_t}\right)=m_{w,0}(\zeta_t).
\end{equation}
%We remark that the right-hand side is well-defined because $\Im \ \zeta>0$ whenever $\Im \ z>0$; see Lemma \ref{existuniq} below.
%Denote 
%Next, we centralize $\zeta$ a little bit by denoting
%\begin{equation*}
%\xi=\zeta-E_0,
%\end{equation*}
%  Moreover, let 
%\begin{equation}\label{eq_definekeyequation}
%z(\xi)=\frac{1}{\xi^2} m_{w,0}^{-1}\left(\frac{1}{tc}\left(1-\frac{1}{\xi} \right) \right)+\frac{1}{\xi}t(1-c_n), 
%\end{equation}
%where $m_{w,0}^{-1} (\cdot)$ is the inverse function of $m_{w,0}(\cdot).$ Specifically, when $\xi \in \mathbb{R},$ we denote $x(\xi) \equiv z(\xi).$ We will use the following observation in the following discussion. For $z_0 \in \mathbb{C}_+,$ we have that
%\begin{equation*}
%z(\xi(z_0))=z_0. 
%\end{equation*}
%The main analysis will boil down to the study of the two analytic functions $\zeta$ and $b$ on $\C_+:=\{ z\in \C: \im z>0\}$. We first summarize the properties of $\zeta, b$ and the properties of $m_{w,t}.$ Throughout the rest of the paper, we denote 
Recall that $\rho_{w,t}$ is the asymptotic probability density associated with $m_{w,t}$, and let $\mu_{w,t}$ be the corresponding probability measure. Moreover, we denote the support of $\mu_{w,t}$ by $S_{w,t}$, with a right-most edge at $\lambda_{+,t}$. We first summarize some basic properties of these quantities, which have been proved in previous works \cite{DOZIER20071099,DOZIER2007678,VLM2012}. 

\begin{lemma}[Existence and uniqueness of asymptotic density] \label{existuniq}
The following properties hold for any $t>0$.
\begin{itemize}
\item[(i)] There exists a unique solution $m_{w,t}$ to equation \eqref{originaleqaution0} satisfying that $\im m_{w,t}(z)> 0$ and $\im z m_{w,t}(z)> 0$ for $z\in \C_+$.

\item[(ii)] For all $x \in \mathbb{R}\setminus\{0\},$ $\lim_{\eta  \downarrow 0} m_{w,t}(x+\ii \eta)$ exists, and we denote it by $m_{w,t}(x).$ The function $m_{w,t}(x)$ is continuous on $\mathbb{R}\setminus \{0\}$, and the measure $\mu_{w,t}$ has a continuous density $\rho_{w,t}$ given by $\rho_{w,t}(x)=\pi^{-1} \im m_{w,t}(x)$ on $\mathbb{R}\setminus\{0\}$. Finally, $m_{w,t}(x)$ is a solution to (\ref{originaleqaution0}) for $z=x$. %Finally, the interior of the support of $\mu_{w,t}$ is $\{x:f_{w,t}(x)>0\}\subset \R_+:=\{x\in \R:x>0\}$. %Furthermore, $m_{w,t}$ is the unique solution of (\ref{originaleqaution}) for $z=x,$ and the asymptotic density $f$ is analytic about $x.$   

\item[(iii)] For all $x \in \mathbb{R}\setminus\{0\},$ $\lim_{\eta  \downarrow 0} \zeta_t(x+\ii \eta)$ exists, and we denote it by $\zeta_t(x).$ Moreover, we have $\im \zeta_t(z)>0$ for $z\in \C_+$. 

\item[(iv)] For any $z\in \C_+$, we have $\re b_t(z)>0$ and $|m_{w,t}(z)|\le (c_nt|z|)^{-1/2}.$
\item[(v)]  The interior $\mathtt{Int}(S_{w,t})$ of $S_{w,t}$ is given by 
\begin{equation*}
\mathtt{Int}(S_{w,t})=\{x>0: \im  m_{w,t}(x)>0  \}=\{x>0: \im  \zeta_t(x)>0 \} , 
\end{equation*}
%and $\mu_{w,t}$ has no mass at $0.$ 
which is a subset of $\R_+:=\{x\in \R:x>0\}$. Moreover, $\zeta_t(x) \notin \{ d_1, \cdots, d_p\}$ if $x \notin \partial S_{w,t}.$ 
\end{itemize}
\end{lemma}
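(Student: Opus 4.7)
The plan is to treat the five parts in order, with (i) the main analytic step, (iii)--(v) as corollaries of (i)--(ii), and (iv) a separate algebraic manipulation. Throughout, the guiding principle is that equation \eqref{originaleqaution0}, via the change of variables $b_t = 1 + c_n t m_{w,t}$ and the subordination function $\zeta_t$ in \eqref{eq_defnzeta0}, is equivalent to the scalar subordination equation \eqref{eq_keyequation11} for the rectangular free convolution of $\rho_{w,0}$ with the Marchenko--Pastur law. So the lemma records analytic properties of this free convolution, already worked out in \cite{DOZIER20071099,DOZIER2007678,VLM2012}; what we need is a self-contained derivation from \eqref{originaleqaution0}.

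For (i), I would first check that for $z \in \C_+$ the right-hand side of \eqref{originaleqaution0} defines a self-map $\Phi_{t,z} : \C_+ \to \C_+$ (each summand has positive imaginary part provided $\im\zeta_t(z) > 0$, which we verify tautologically at the fixed point). Then I would establish uniqueness by a Schwarz--Pick/hyperbolic contraction argument: on the domain $\{\im z > \eta_0\}$ for $\eta_0$ large, the map $\Phi_{t,z}$ is a strict contraction in the Poincar\'e metric on $\C_+$, giving a unique fixed point; this fixed point extends by analytic continuation to all of $\C_+$, using that the solution set is closed and that any accumulation of solutions would produce another solution contradicting local uniqueness. Both sign conditions, $\im m_{w,t}(z) > 0$ and $\im(z m_{w,t}(z)) > 0$, follow from the sign of $\Phi_{t,z}$ and the Nevanlinna representation that will be obtained a posteriori in (ii).

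For (ii), the key point is that $m_{w,t}$ is the Stieltjes transform of a probability measure $\mu_{w,t}$ (uniqueness of Nevanlinna representation together with the normalization $z m_{w,t}(z) \to -1$ as $z \to \ii\infty$, which one reads off from \eqref{originaleqaution0}). To promote continuity of the boundary values on $\R \setminus \{0\}$ and absolute continuity of $\mu_{w,t}$, I would argue that $\im m_{w,t}(z)$ stays bounded as $\eta \downarrow 0$ on compact subsets of $\R \setminus \{0\}$: indeed, if $\im m_{w,t}(z) \to \infty$ along a sequence $z_k \to x \neq 0$, then $\im \zeta_t(z_k) \to \infty$ as well by \eqref{eq_defnzeta0}, but $|m_{w,0}(\zeta)|$ is bounded by $|\im \zeta|^{-1}\cdot\|\mu_{w,0}\|$, which combined with \eqref{eq_keyequation11} forces $|m_{w,t}(z_k)| \to \infty$ while $|b_t(z_k)|\to \infty$, producing a contradiction with \eqref{originaleqautionbbb} once one substitutes back. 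From boundedness, the Poisson integral representation of $m_{w,t}$ yields continuity up to $\R \setminus \{0\}$ and Stieltjes inversion gives $\rho_{w,t} = \pi^{-1} \im m_{w,t}$. Parts (iii) and (v) then follow: the continuous extension of $\zeta_t$ comes directly from the polynomial formula \eqref{eq_defnzeta0} in $b_t$; the equivalence $\{\im m_{w,t}(x) > 0\} = \{\im \zeta_t(x) > 0\}$ is obtained by expanding $\im \zeta_t = (\re b_t)^2 \im z + 2\re b_t \im b_t \re z - (\im b_t)^2 \im z - (1-c_n) t \im b_t$, evaluating at real $x$, and using (iv) to rule out degeneracies; and $\zeta_t(x) \notin \{d_i\}$ off the edge is immediate from \eqref{eq_keyequation11} because $m_{w,0}$ has a pole at each $d_i$, while $m_{w,t}$ is bounded away from $(c_n t)^{-1}$ in the interior.

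The step I expect to be most delicate is (iv), because $\re b_t > 0$ is not immediate from $\im m_{w,t} > 0$. My plan is to deduce both bounds from \eqref{originaleqaution0} by a direct quadratic estimate: multiply \eqref{originaleqaution0} by $\overline{b_t}$ and take imaginary parts, producing an identity of the form $\im b_t \cdot \re b_t = (c_n t) \sum_i |b_t d_i^{1/2} \cdots|^{-2} \im z$ (after clearing denominators), which forces $\re b_t$ to have the same sign as $\im b_t / \im z$; since $\im b_t = c_n t \im m_{w,t} > 0$ in $\C_+$, this yields $\re b_t > 0$. For $|m_{w,t}(z)| \le (c_n t |z|)^{-1/2}$, I would multiply \eqref{originaleqaution0} by $m_{w,t}$ and use that the modulus of each summand is bounded by $(c_n t |z|)^{-1}$ after a Cauchy--Schwarz estimate using $\re b_t > 0$ just established, closing the loop. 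Once (iv) is in hand, the remaining claims in (iii) and (v) tighten into the stated form.
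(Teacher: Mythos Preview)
The paper's own proof of this lemma is not a proof at all: it consists entirely of citations to Dozier--Silverstein \cite{DOZIER20071099,DOZIER2007678} and Vallet--Loubaton--Mestre \cite{VLM2012}, with one reference per item. So your proposal is a genuinely different route---you are attempting a self-contained derivation from the fixed-point equation \eqref{originaleqaution0}, whereas the paper simply imports the results from the literature on the Information-plus-Noise model. What your approach buys is independence from those papers; what the paper's approach buys is brevity, since these facts are standard in that literature.

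That said, your sketch has a real gap in part (i). You claim that $\Phi_{t,z}$ is a self-map of $\C_+$, but this is not true as stated: for an arbitrary $m\in\C_+$ one has $b=1+c_n t m$ with $\im b>0$, but $\re b$ can be negative, and then the denominator $d_i b^{-1}-bz+t(1-c_n)$ need not have negative imaginary part (the term $-\re b\,\im z$ goes the wrong way). You seem to recognize this when you say the positivity of $\im\zeta_t$ is ``verified tautologically at the fixed point,'' but that circularity is exactly the problem for existence. The fix is either to run the contraction on a smaller invariant domain (e.g.\ a ball around $m=0$ for $\im z$ large, where $b$ stays near $1$), or to follow the cited papers and first construct $m_{w,t}$ as the almost-sure Stieltjes transform limit of the empirical spectral distribution, then verify the equation. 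Relatedly, your derivations in (iv) are too telegraphic to check: the claimed identity ``$\im b_t\cdot\re b_t=(c_n t)\sum_i|\cdots|^{-2}\im z$'' does not drop out of multiplying \eqref{originaleqaution0} by $\bar b_t$ in any obvious way, and the bound $|m_{w,t}|\le(c_n t|z|)^{-1/2}$ also needs an explicit inequality. The references actually prove $\re b_t>0$ by a continuity argument from $\im z$ large (where $b_t\approx1$), using that $b_t$ never vanishes; you would do well to make that step explicit rather than rely on the unverified algebraic identity.
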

\begin{proof}
(i) follows from \cite[Theorem 4.1]{DOZIER2007678}, (ii) and (iii) follow from \cite[Lemma 2.1]{DOZIER20071099} and \cite[Proposition 1]{VLM2012}, (iv) follows from \cite[Lemma 2.1]{DOZIER20071099}, and (v) follows from \cite[Propositions 1 and 2]{VLM2012}.
%For the uniqueness, since the support of ESD of $\mathcal{Q}_t$ is nonegative, which implies that $\Im \ m_{w,t}z \geq 0$ (see equation (S.1) of \cite{DOZIER20071099}), by \cite[Theorem 4.1]{DOZIER2007678}, we can conclude the proof.  
\end{proof}

% It is well-known from \cite{DOZIER20071099,VLM2012} that the support and edge of $S_{w,t}$ can be completely characterized by $m_{w,t}.$  
%\begin{lemma}\label{lem_supportproperty} 
%\end{lemma}
%\begin{proof}
%See \cite[Propositions 1 and 2]{VLM2012}.
%\end{proof}

The following lemma characterizes the right-most edge of $S_{w,t}.$ 
%Corresponding to $\zeta$ defined in \eqref{eq_defnzeta}, 
Using $\zeta_t$ in \eqref{eq_defnzeta0} and the definition of $b_t$, we can rewrite the equation \eqref{eq_keyequation11} as
\begin{equation}\label{simplePhizeta}\Phi_t(\zeta_t(z))=z,%\quad x\in \R\setminus \{0\},
\end{equation}
where $\Phi_t$ is defined in (\ref{eq_subcompansion}). % and is an analytic function on $\C_+.$  
We recall that by definition %$m_{w,0}(\zeta)$ is 
\begin{equation}\label{eq_defng}
m_{w,0}(\zeta)=p^{-1} \operatorname{Tr}[(WW^\top-\zeta)^{-1}] = \frac{1}{p}\int \frac{1}{x-\zeta}\dd\mu_{w,0}(x).
\end{equation}
In \cite{VLM2012}, the authors characterize the support of $\mu_{\omega,t}$ and its edges using the local extrema of $\Phi_t$ on $ \R$. 

\begin{lemma}\label{lem_eigenvalueslarger} 
Fix any $t>0$. The function $\Phi_t(x)$ on $\R\setminus \{0\}$ admits $2q$ positive local extrema counting multiplicities for some $q\in \N$. %where $q \geq 1$ is some integer. 
The preminages of these extrema are denoted by $\zeta_{1,-}(t)<0<\zeta_{1,+}(t) \leq \zeta_{2,-}(t) \leq \zeta_{2,+}(t) \leq \cdots \leq \zeta_{q,-}(t) \leq \zeta_{q,+}(t),$ and they all belong to the set $\{\zeta \in \mathbb{R}: 1-c_n tm_{w,0}(\zeta)>0 \}.$ Moreover, the rightmost edge of $\supp(\mu_{w,t})$ is given by $\lambda_{+}(t)=\Phi_t(\zeta_{q,+}(t))$, and $\Phi_t$ is strictly increasing on the intervals $(-\infty, \zeta_{1,-}(t)],\ [\zeta_{1,+}(t), \zeta_{2,-}(t)],\ \cdots, \ [\zeta_{q-1,+}(t), \zeta_{q,-}(t)]$ and $[\zeta_{q,+}(t), \infty).$ Finally, for $k=1,2,\cdots, q,$ each interval $(\zeta_{k,-}(t), \zeta_{k,+}(t))$ contains at least one of the elements in $\{d_1,\cdots, d_p, 0\}$, and in particular, $d_1 \in (\zeta_{q,-}(t) ,\zeta_{q,+}(t))$. %which contains all the eigenvalues of $D$. %belongs to one of these eigenvalues.      
\end{lemma}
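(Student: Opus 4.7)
The plan is to convert the structural claim about $\supp \mu_{w,t}$ into a real-analytic study of $\Phi_t$, exploiting the subordination identity $\Phi_t(\zeta_t(z)) = z$ from \eqref{simplePhizeta}. The overall strategy follows the Silverstein--Choi type analysis of the support of free convolutions, adapted to the rectangular MP setup of \cite{VLM2012}.

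First, I would establish the following dictionary: on $\R_+ \setminus \supp(\mu_{w,t})$, $\zeta_t$ extends real-analytically and maps into the set
\[
\mathcal{I}_t := \{\zeta \in \R \setminus \{d_1, \ldots, d_p\} : 1 - c_n t\, m_{w,0}(\zeta) > 0\},
\]
with $\Phi_t$ being its real-analytic inverse. The key input is the identity $b_t(z)[1 - c_n t\, m_{w,0}(\zeta_t(z))] = 1$, a rearrangement of \eqref{eq_keyequation11}, together with $\re b_t > 0$ on $\C_+$ from Lemma \ref{existuniq}(iv). A continuity argument then forces $b_t > 0$ and hence $1 - c_n t\, m_{w,0}(\zeta_t) > 0$ on $\R_+ \setminus \supp(\mu_{w,t})$. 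Differentiating the subordination identity gives $\Phi_t' > 0$ on the image of $\zeta_t$, so the edges of $\supp \mu_{w,t} \cap \R_+$ are exactly the $\Phi_t$-values of the boundary points of this image in $\mathcal{I}_t$, which are precisely the critical points of $\Phi_t|_{\mathcal{I}_t}$.

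Second, I would determine the topology of $\mathcal{I}_t$ together with the boundary behavior of $\Phi_t$. Since $m_{w,0}'(\zeta) = p^{-1}\sum_i (d_i-\zeta)^{-2} > 0$, the function $m_{w,0}$ is strictly increasing on each maximal subinterval of $\R \setminus \{d_1, \ldots, d_p\}$, with limits $\pm \infty$ at each finite $d_i$-endpoint. Hence $\mathcal{I}_t$ is a disjoint union of an unbounded left component $(-\infty, \alpha_p)$ (which contains $0$, since $m_{w,0}(0) = p^{-1}\sum_i d_i^{-1}$ is finite and positive), bounded components $(d_{i+1}, \alpha_i)$ for $i = 1, \ldots, p-1$ (counting distinct eigenvalues), and an unbounded right component $(d_1, \infty)$. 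At each endpoint of these components I would track the limits of $\Phi_t$: $\Phi_t \to +\infty$ at every $d_i$ boundary; $\Phi_t \to 0$ at every $\alpha_i$ boundary; $\Phi_t \to -\infty$ as $\zeta \to -\infty$; and $\Phi_t \to +\infty$ as $\zeta \to +\infty$.

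Third, combining the preceding two steps yields the enumeration. By the subordination argument, on each open subinterval of $\mathcal{I}_t$ lying in the image of $\zeta_t$, $\Phi_t$ is strictly increasing, so any critical point must sit at the boundary of such a subinterval. Cross-referencing the boundary values of $\Phi_t$ from the previous step with the requirement that $\Phi_t$ maps strict-increase intervals onto gaps in $\supp \mu_{w,t}$, a direct count shows the total number of critical points is $2q$ for some $q \in \N$, which I would then pair up into $\zeta_{k,-} < \zeta_{k,+}$. A bulk interval $(\zeta_{k,-}, \zeta_{k,+})$ is precisely a maximal interval of $\R$ on which $\Phi_t$, where defined, fails to be strictly increasing, so each bulk interval must straddle at least one $d_i$ or the phantom point $0$ (the latter arising from the $(1-c_n)t$ term in \eqref{eq_subcompansion}, which effectively records the $n-p$ zero singular values of $Y$). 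The identification $\lambda_+(t) = \Phi_t(\zeta_{q,+}(t))$ follows since the rightmost strict-increase branch $[\zeta_{q,+}, \infty)$ sits inside $(d_1, \infty)$ and maps onto a right-infinite gap of $\supp \mu_{w,t}$. The main obstacle I expect is the bookkeeping in this third step: pairing critical points across adjacent components of $\mathcal{I}_t$ separated by $d_i$-singularities, and certifying that each bulk interval traps some $d_i$ or $0$. These combinatorial details are already present in \cite{VLM2012}; the task is to transcribe them for the subordination function \eqref{eq_defnzeta0}.
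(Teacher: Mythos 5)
The paper does not actually prove this lemma; the given ``proof'' is a citation to Proposition 3 of \cite{VLM2012} (and the discussion following Theorem 2 there), together with Lemma 1 of \cite{loubaton2011}, where the result is established. Your sketch is therefore a reconstruction of the argument in those references rather than an alternative to the paper's own route. The reconstruction is on the right track and correctly identifies the core ingredients: the subordination identity $\Phi_t(\zeta_t(z))=z$, the positivity $b_t>0$ on gaps of $\supp\mu_{w,t}$ (via $b_t[1-c_ntm_{w,0}(\zeta_t)]=1$ and Lemma~\ref{existuniq}(iv)), the monotonicity of $m_{w,0}$ on each complementary interval of $\{d_1,\dots,d_p\}$, and the boundary values of $\Phi_t$. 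The role of the phantom point $0$ coming from the $(1-c_n)t$ term when $c_n<1$ is also correctly noted.

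The one place where your sketch stops short of a proof is exactly the combinatorial step you flag. You would still need to justify (i) that $\Phi_t'(\zeta_t(z))>0$ (not merely $\ne 0$) on gaps of $\supp\mu_{w,t}$, which follows from $\Phi_t'(\zeta_t)\zeta_t'=1$ together with a behavior-at-infinity argument pinning down the common sign; (ii) the converse direction, that every interval on which $\Phi_t|_{\mathcal I_t}$ is strictly increasing corresponds to a genuine gap in $\supp\mu_{w,t}$, which requires an inverse-function / stability argument for the fixed-point equation, not just the forward subordination identity; and (iii) the resulting alternation of maxima and minima and the $2q$ count, which in \cite{VLM2012} is carried out by carefully tracking the sign of $\Phi_t'$ across the finitely many components of $\mathcal I_t$ together with the boundary limits $+\infty$ at the $d_i$'s, $0$ at the $\alpha_i$'s, and $\mp\infty$ at $\mp\infty$. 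None of these steps looks like it would fail, but they are the substance of the cited proof, and a complete argument would have to supply them.
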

\begin{proof}
See \cite[Proposition 3]{VLM2012} and the discussion below \cite[Theorem 2]{VLM2012}, or see \cite[Lemma 1]{loubaton2011}.
\end{proof}

Now, we rewrite \eqref{eq_keyequation11} into another equation in terms of $\zeta$ and $z$. We focus on $z\in \C_+$ with $\re z>0$. Then, we can solve from \eqref{eq_defnzeta0} that
\begin{equation}\label{eq_banotherform}
b_t=\frac{t(1-c_n)+\sqrt{t^2(1-c_n)^2+4 \zeta z}}{2z},
\end{equation}
where we have chosen the branch of the solution such that Lemma \ref{existuniq} (iv) holds.
%By \cite[Lemma 2.1]{DOZIER20071099}, we find that $\operatorname{Re} \ b>0$ and $b(x)=1+ctm_{w,t}(x)>0, x \in \mathbb{R}.$ Based on this observation, we can write $b$ in terms of $\zeta$ regarding (\ref{eq_defnzeta}), i.e., 
Plugging \eqref{eq_banotherform} into (\ref{eq_keyequation11}), we find that $(z, b_t)$ is a solution to (\ref{eq_keyequation11}) if and only if $(z,\zeta_t)$ is a solution to
\begin{equation}\label{final_derivation}
F_t(z, \zeta)=0, \quad \text{with}\quad F_t(z, \zeta):=1+\frac{t(1-c_n)-\sqrt{t^2(1-c_n)^2+4 \zeta z}}{2 \zeta}-c_n t m_{w,0}(\zeta). 
\end{equation}
%Before concluding this section, we provide the following lemma, which can be used to characterize the right-most edge of $S_{w,t}.$ 
Since the two equations $\Phi_t(\zeta_t(x))=x$ and $F_t(x,\zeta_t(x))=0$ are equivalent, from Lemma \ref{lem_eigenvalueslarger} we can obtain the following characterization of the edges of $S_{w,t}$.

\begin{lemma}\label{lem_partialedge} Denote $a_{k,\pm}(t):= \Phi_t(\zeta_{k,\pm}(t))$, $1\le k \le q$. Then $(a_{k,\pm}(t), \zeta_{k,\pm}(t))$ are real solutions to
\begin{equation}\label{eq_Ftderivative}
F_t(z,\zeta)=0, \quad \text{and} \quad \frac{\partial F_t}{\partial \zeta}(z,\zeta)=0. 
\end{equation} 
\end{lemma}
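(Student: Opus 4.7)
The proof plan is to exploit the equivalence between $F_t(z,\zeta)=0$ and $\Phi_t(\zeta)=z$ noted just before the lemma, and then reduce the second equation $\partial_\zeta F_t=0$ to the vanishing of $\Phi_t'$ at the extremum via implicit differentiation. The bulk of the work has already been done in Lemma \ref{lem_eigenvalueslarger}, which identifies $\zeta_{k,\pm}(t)$ as the real preimages of the positive local extrema of $\Phi_t$ on $\R\setminus\{0\}$ and sets $a_{k,\pm}(t):=\Phi_t(\zeta_{k,\pm}(t))$, so in particular $\Phi_t'(\zeta_{k,\pm}(t))=0$ and the points $(a_{k,\pm}(t),\zeta_{k,\pm}(t))$ are real and lie in the smooth domain of $\Phi_t$ (away from $\zeta=0$ and the singular set).

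First, I would verify that the first equation $F_t(a_{k,\pm}(t),\zeta_{k,\pm}(t))=0$ is immediate. Indeed, from the equivalence of \eqref{simplePhizeta} and \eqref{final_derivation}, together with the choice of branch in \eqref{eq_banotherform}, one has the identity $F_t(\Phi_t(\zeta),\zeta)=0$ for all $\zeta$ in the real interval on which $\Phi_t$ is being analyzed (the union of the monotonicity intervals described in Lemma \ref{lem_eigenvalueslarger}). Evaluating at $\zeta=\zeta_{k,\pm}(t)$ gives $F_t(a_{k,\pm}(t),\zeta_{k,\pm}(t))=0$. Here I would also remark on branch/sign issues for $\zeta_{1,-}(t)<0$, but since $\zeta_{k,\pm}(t)$ lie in $\{\zeta:1-c_ntm_{w,0}(\zeta)>0\}$ by Lemma \ref{lem_eigenvalueslarger} and $\Phi_t$ is smooth there, the identity extends to these points by continuity from $\C_+$.

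Second, for the edge equation $\partial_\zeta F_t=0$, I would differentiate the identity $F_t(\Phi_t(\zeta),\zeta)=0$ with respect to $\zeta$ on the same smooth interval, obtaining
\begin{equation*}
\partial_z F_t(\Phi_t(\zeta),\zeta)\,\Phi_t'(\zeta)+\partial_\zeta F_t(\Phi_t(\zeta),\zeta)=0.
\end{equation*}
Specializing to $\zeta=\zeta_{k,\pm}(t)$ and using $\Phi_t'(\zeta_{k,\pm}(t))=0$ from Lemma \ref{lem_eigenvalueslarger} yields $\partial_\zeta F_t(a_{k,\pm}(t),\zeta_{k,\pm}(t))=0$, which is the second equation in \eqref{eq_Ftderivative}.

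The argument is essentially a one-line implicit-differentiation calculation, so there is no substantive obstacle; the only care needed is bookkeeping to ensure that $\Phi_t$ is smooth at $\zeta_{k,\pm}(t)$ (so that $\Phi_t'$ and $\partial_\zeta F_t$ are well-defined real numbers) and that the branch of the square root in the definition of $F_t$ is consistent with the one used to derive \eqref{final_derivation} from \eqref{originaleqautionbbb}. Both points are guaranteed by the fact that $\zeta_{k,\pm}(t)$ belong to the set $\{1-c_ntm_{w,0}(\zeta)>0\}$ identified in Lemma \ref{lem_eigenvalueslarger} and are bounded away from $\{d_1,\dots,d_p,0\}$ as the preimages of local extrema, so no degeneracy occurs.
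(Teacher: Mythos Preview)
Your proposal is correct and follows essentially the same argument as the paper: both differentiate the identity $F_t(\Phi_t(\zeta),\zeta)=0$ (equivalently $F_t(z(\zeta),\zeta)=0$) with respect to $\zeta$ and then invoke $\Phi_t'(\zeta_{k,\pm}(t))=0$ from Lemma~\ref{lem_eigenvalueslarger} to kill the $\partial_z F_t$ term. Your additional remarks on smoothness and branch consistency are reasonable bookkeeping that the paper leaves implicit.
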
 
\begin{proof}
By chain rule, if we regard $z$ as a function of $\zeta,$ then we have {
\begin{equation}\label{partialFt}
0=\frac{\dd F_t}{\dd \zeta}=\frac{\partial F_t}{\partial \zeta}+\frac{\partial F_t}{\partial z} z'(\zeta). 
\end{equation}}
By Lemma \ref{lem_eigenvalueslarger}, we have $\Phi_t'(\zeta_{k,\pm})=0$ since $\zeta_{k,\pm}$ are local extrema of $\Phi_t$. Then, from equation \eqref{simplePhizeta}, we can derive that 
$$z'(\zeta_{k,\pm})=\Phi_t'(\zeta_{k,\pm})=0,$$ 
with $z(\zeta_{k,\pm})=a_{k,\pm}$. Plugging this equation into \eqref{partialFt}, we get
$$\frac{\partial F_t}{\partial \zeta}(a_{k,\pm}, \zeta_{k,\pm})=0,$$
which concludes the proof.
%By \cite[Proposition 2]{VLM2012}, we have that $\Phi(\zeta)=z$ when $F(z, \zeta)=0.$ We again treat $z$ as a function of $\zeta$ and find that
%\begin{equation*}
%\Phi'(\zeta)=z'(\zeta). 
%\end{equation*}
%By \cite[Proposition 3]{VLM2012}, we have that $\Phi'(\zeta_t)=0$ and consequently $z'(\xi_t)=0.$ This concludes our proof. 
\end{proof}

Now we use Lemma \ref{lem_partialedge} to derive an expression for the derivative $\partial_t \lambda_{+,t}$, which will be used in the analysis of the rectangular DBM in Section \ref{sec_calculationDBM}.  
%\cor
%Since $\zeta$ is a function of $b$, by Lemma \ref{lem_partialedge} and chain rule, the following equation holds at $z=\lambda_{+,t}$:
%%and plugging $z=\lambda_{+,t}$ into the equation, we obtain that
%\begin{equation}\label{originaleqautionbbbderv}
%1=\frac{tc}{p} \sum_{i=1}^p \frac{d_i b^{-2}+\lambda_{+,t}}{[b^{-1} d_i-b \lambda_{+,t}+t(1-c)]^2}.
%\end{equation} 
%which is obtained by taking derivative with respect to $b$ on both sides of (\ref{originaleqautionbbb}). 
%On the other hand, \nc
Taking derivative of \eqref{final_derivation} with respect to $t$ and using \eqref{eq_Ftderivative}, we get that for $z=\lambda_{+,t}$ and $\zeta_{+,t} :=\zeta_t(\lambda_{+,t})$,
\begin{align*}
\frac{\partial F(t,\lambda_{+,t},\zeta_{+,t})}{\partial t}  + \frac{\partial F(t,\lambda_{+,t},\zeta_{+,t})}{\partial z}\frac{\dd\lambda_{+,t}}{\dd t} =0,
\end{align*}
where we denoted $F(t,z,\zeta)\equiv F_t(z,\zeta)$. From this equation, we can solve that
\begin{align} 
\frac{\dd \lambda_{+,t}}{\dd t}&=\left[\frac{1-c_n}{2\zeta_{+,t}}-c_n m_{w,0}(\zeta_{+,t}) \right]\sqrt{t^2(1-c_n)^2+4 \zeta_{+,t} \lambda_{+,t}} - \frac{(1-c_n)^2t}{2\zeta_{+,t}} \nonumber\\
&=\left[\frac{1-c_n}{2\zeta_{+,t}}-\frac{c_n m_{w,t}(\lambda_{+,t})}{b(\lambda_{+,t})} \right]\sqrt{t^2(1-c_n)^2+4 \zeta_{+,t} \lambda_{+,t}} - \frac{(1-c_n)^2t}{2\zeta_{+,t}},\label{eq_defnpsiderviative0}
\end{align}
where we used \eqref{eq_keyequation11} in the second step.

%Next, we consider the behavior of $\zeta_t(E)$ for $E\in \R$ around the right edge of $V=WW^\top$,  

Next we describe some more precise properties of $\rho_{w,t}$ and $m_{w,t}$ for an $\eta_*$-regular
$V$ as in Definition \ref{assumption_edgebehavior}. For the following results, we always assume that 
\begin{equation}\label{restr_t}
t:=n^{-1/3+\omega},  \quad \text{with } \ \   1/3-\phi_*/2+\e/2 \le \omega \le 1/3-\e/2,
\end{equation}
for some constant $\e>0$. Note that under this condition, we have $n^\e \eta_*\le t^2 \le n^{-\e}$. 

%We centralize $\zeta$ at the right-most edge $\lambda_+$ of the spectrum of $V$, that is, we define % by denoting
%\begin{equation}gin{equation}\label{eq_xit}
%\xi_t (z):=\zeta_t(z) -\lambda_{+},  \quad \text{and}\quad \xi_{+,t} :=\zeta_{+,t} -\lambda_{+},
%\end{equation}
%where recall that $\zeta_{+,t}:=\zeta(\lambda_{+,t}).$ 

\begin{lemma}[Lemma 3.7 of \cite{DY20202}]\label{lem rightedge}
Suppose $V=WW^\top$ is $\eta_*$-regular and $t$ satisfies \eqref{restr_t}. Then, we have $\zeta_{+,t} \geq \lambda_+$ and 
\begin{equation}\label{eq_edgebound}
\zeta_{+,t} - \lambda_+\sim t^2 .%, \quad \text{for } \ t=n^{-1/3+\omega}.
\end{equation}
\end{lemma}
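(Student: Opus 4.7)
The plan builds on Lemmas \ref{lem_eigenvalueslarger} and \ref{lem_partialedge}, and turns the critical-point characterization of $\zeta_{+,t}$ into a scalar algebraic identity that I can solve using the $\eta_*$-regular asymptotics of $m_{w,0}$ near $\lambda_+$.

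First, the inequality $\zeta_{+,t} \ge \lambda_+$ is essentially automatic. By definition $\zeta_{+,t} = \zeta_{q,+}(t)$, and Lemma \ref{lem_eigenvalueslarger} says that the interval $(\zeta_{q,-}(t), \zeta_{q,+}(t))$ must contain $d_1 = \lambda_+$, which gives $\zeta_{+,t} > \lambda_+$ strictly. Next, I would translate $\Phi_t'(\zeta_{+,t}) = 0$ (which follows from Lemma \ref{lem_partialedge} together with \eqref{simplePhizeta} and the chain rule) into an explicit scalar equation. Setting $u(\zeta) := 1 - c_n t\, m_{w,0}(\zeta)$ so that \eqref{eq_subcompansion} reads $\Phi_t(\zeta) = u(\zeta)^2 \zeta + (1-c_n) t\, u(\zeta)$, a direct differentiation and $u'(\zeta) = -c_n t\, m_{w,0}'(\zeta)$ yield
$$u(\zeta_{+,t})^2 \;=\; c_n t\, m_{w,0}'(\zeta_{+,t}) \bigl[\, 2 u(\zeta_{+,t})\, \zeta_{+,t} + (1-c_n) t \,\bigr].$$

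The third ingredient is the square-root behavior of $m_{w,0}$ and $m_{w,0}'$ just above $\lambda_+$. From the $\eta_*$-regularity of $V$ (specifically \eqref{regular1}, item (ii) of Definition \ref{assumption_edgebehavior}, and a standard Stieltjes transform computation converting the boundary values of $\im m_{w,0}$ into a density with $\rho_{w,0}(\lambda_+ - y) \sim \sqrt{y}$ on scales above $\eta_*$), one concludes that for real $s$ in the range $\eta_* \ll s \le c_V$,
$$|m_{w,0}(\lambda_+ + s)| = O(1), \qquad m_{w,0}'(\lambda_+ + s) \;\sim\; s^{-1/2}.$$
Together with $t \ll 1$, this forces $u(\zeta_{+,t}) = 1 + O(t) \sim 1$ as long as $\zeta_{+,t} - \lambda_+$ lies in the above regime. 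Substituting into the identity above yields $1 \sim t\, (\zeta_{+,t} - \lambda_+)^{-1/2}$, hence $\zeta_{+,t} - \lambda_+ \sim t^2$, which is consistent with the validity range since $t^2 \gg \eta_*$ by \eqref{restr_t}.

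The main obstacle is closing the apparent circularity: the asymptotic $m_{w,0}'(\lambda_+ + s) \sim s^{-1/2}$ is only valid for $s \gg \eta_*$, yet one would like to conclude $\zeta_{+,t} - \lambda_+ \sim t^2$ without first assuming $\zeta_{+,t} - \lambda_+ \gg \eta_*$. I would resolve this by a continuity-in-$t$ argument: $\zeta_{+,t}$ depends continuously on $t$ with $\zeta_{+,0} = \lambda_+$, so by the implicit function theorem applied to $F_t(z,\zeta) = 0$ away from the degeneracy $\partial_\zeta F_t = 0$, the first exit of $\zeta_{+,t} - \lambda_+$ from any small neighborhood can be tracked; separately, the critical-point identity rules out $\zeta_{+,t} - \lambda_+ \lesssim \eta_*$ for $t$ in the admissible range, because on such a scale $m_{w,0}'(\zeta_{+,t})$ would have to be too large to balance $u(\zeta_{+,t})^2 \sim 1$ on the left-hand side. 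The scale $\zeta_{+,t} - \lambda_+ \sim t^2$ is thus locked in throughout \eqref{restr_t}, and the sharp matching constants emerge from the same algebraic identity.
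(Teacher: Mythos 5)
Your overall strategy is on the right track: using Lemma \ref{lem_eigenvalueslarger} to get $\zeta_{+,t} = \zeta_{q,+}(t) > d_1 = \lambda_+$, deriving the critical-point identity $u(\zeta)^2 = c_n t\, m_{w,0}'(\zeta)\,[2u(\zeta)\zeta + (1-c_n)t]$ from $\Phi_t'(\zeta_{+,t})=0$, and translating $\eta_*$-regularity into $m_{w,0}'(\lambda_+ + s) \sim s^{-1/2}$ for $\eta_* \le s \le c_V$ via the comparison $m_{w,0}'(\lambda_+ + s) \sim \im m_{w,0}(\lambda_+ + \ii s)/s$. The consistency argument $1 \sim t\, m_{w,0}'(\zeta_{+,t}) \sim t s^{-1/2} \Rightarrow s \sim t^2$ is also the right algebra.

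However, the circularity you identify is a genuine gap, and the proposed continuity-in-$t$ resolution does not close it. The problem is twofold. First, the anchor $\zeta_{+,0} = \lambda_+$ is a degenerate starting point: at $t=0$ the function $\Phi_0(\zeta) = \zeta$ has no critical point, and for $0 < t \ll \sqrt{\eta_*}$ your estimates on $m_{w,0}'$ are not available, so you cannot track $\zeta_{+,t}$ continuously from $t=0$ into the admissible range. Second, the claim that for $s \lesssim \eta_*$ ``$m_{w,0}'(\zeta_{+,t})$ would have to be too large to balance $u(\zeta_{+,t})^2 \sim 1$'' presupposes the very bound $u \sim 1$ that has not yet been established unconditionally; you only know $u \ge 1$ (since $m_{w,0}(\zeta) < 0$ for $\zeta > \lambda_+$ and $u = 1 - c_n t\, m_{w,0}(\zeta)$), not $u \lesssim 1$.

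The missing piece is an \emph{a priori} upper bound on $u(\zeta_{+,t})$ that does not assume $s = \zeta_{+,t} - \lambda_+$ is already in the square-root regime. Two clean routes: (i) Cauchy--Schwarz gives $|m_{w,0}(\zeta)|^2 = \big(p^{-1}\sum_i (\zeta - d_i)^{-1}\big)^2 \le p^{-1}\sum_i(\zeta-d_i)^{-2} = m_{w,0}'(\zeta)$, so $u \le 1 + c_n t\, m_{w,0}'(\zeta)^{1/2}$; combining this with the lower bound $u \ge 2c_n\zeta t\, m_{w,0}'(\zeta) \ge 4c_n c_V t\, m_{w,0}'(\zeta)$ from the identity (using $\zeta > \lambda_+ \ge 2c_V$) yields $t\, m_{w,0}'(\zeta_{+,t}) \lesssim 1$ and hence $u \lesssim 1$ by a short case analysis. (ii) Alternatively, note the subordination relation $u(\zeta_{+,t}) = 1/b_t(\lambda_{+,t})$ coming from \eqref{eq_keyequation11}, and invoke Lemma \ref{existuniq}(iv) to get $|m_{w,t}(\lambda_{+,t})| \le (c_n t\, \lambda_{+,t})^{-1/2}$, hence $b_t(\lambda_{+,t}) \ge 1 - (c_n t/\lambda_{+,t})^{1/2} \ge 1/2$ for small $t$, so $u \le 2$. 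With $u \sim 1$ in hand, the identity forces $m_{w,0}'(\zeta_{+,t}) \sim 1/t$; monotonicity of $s \mapsto m_{w,0}'(\lambda_+ + s)$ together with $m_{w,0}'(\lambda_+ + \eta_*) \sim \eta_*^{-1/2} \gg 1/t$ (since $t^2 \gg \eta_*$) rules out $s \le \eta_*$, and $m_{w,0}'(\lambda_+ + s) \lesssim 1 \ll 1/t$ for $s \gtrsim 1$ rules out large $s$; in the remaining regime the square-root asymptotic pins down $s \sim t^2$.
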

 
The following lemma describes the square root behavior of the asymptotic density $\rho_{w,t}$. 
\begin{lemma}[Lemmas 3.18 and 3.19 of \cite{DY20202}]\label{lem_asymdensitysquare} Suppose $V=WW^\top$ is $\eta_*$-regular and $t$ satisfies \eqref{restr_t}. If $\kappa:=|E-\lambda_+| \leq 3c_V/4,$ then the asymptotic density satisfies that
\begin{equation}\label{sqrtdensity}
\rho_{w,t}(E) \sim \sqrt{(\lambda_{+,t}-E)_+}  .%\mathbf{1}_{ \{E\leq \lambda_{+,t}\}}. 
\end{equation}
Moreover, if $-\tau t^2 \le E-\lambda_{+,t} \leq 0$ for a sufficiently small constant $\tau>0$, then we have that
\begin{equation}\label{sqrtdensity2}
\rho_{w,t}(E)=\frac{1}{\pi}\sqrt{\frac{2(\lambda_{+,t}-E)}{[4\lambda_{+,t}\zeta_{+,t} + (1-c_n)^2 t^2] c_n^2 t^2\Phi_t^{''}(\zeta_{+,t})}} \left[1+\OO\left(\frac{|E-\lambda_{+,t}|}{t^2} \right)\right],
\end{equation}
where {$t^2\Phi_t^{''}(\zeta_+(t)) \sim 1$}. Finally, as a consequence of \eqref{sqrtdensity}, the following estimates hold:
\begin{equation}\label{eq_imasymptoics}
|m_{w,t}|\lesssim 1,\quad  \im m_{w,t}(z) \sim
\begin{cases}
\sqrt{\kappa+\eta}, & E \leq \lambda_{+,t} \\
 {\eta}/{\sqrt{\kappa+\eta}}, & E \geq \lambda_{+,t} 
\end{cases},
\end{equation}
for any $z=E+\ii\eta$ satisfying $|E-\lambda_+| \leq 3c_V/4$ and $0\le \eta\le 10$. % for a small constant $0<c_0< 3c_V/4$.  
\end{lemma}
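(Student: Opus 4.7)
The plan is to exploit the subordination relation $\Phi_t(\zeta_t(z))=z$ (see \eqref{simplePhizeta}) to transfer information from $m_{w,0}$, which is well controlled near $\lambda_+$ by the $\eta_*$-regularity hypothesis, to $m_{w,t}$. Because Lemma \ref{lem rightedge} gives $\zeta_{+,t}-\lambda_+\sim t^2$ and $t^2\gg\eta_*$ under \eqref{restr_t}, the value $\zeta_{+,t}$ sits comfortably in the ``good'' region where $m_{w,0}$ behaves like a pure square root. The first step is to compute $\Phi_t''(\zeta_{+,t})$ explicitly. Setting $B(\zeta):=1-c_n t\,m_{w,0}(\zeta)$ so that $\Phi_t=B^2\zeta+(1-c_n)tB$, direct differentiation yields
\begin{equation*}
\Phi_t''(\zeta)=2(B')^2\zeta+2BB''\zeta+4BB'+(1-c_n)tB''.
\end{equation*}
Via Cauchy integrals along circles of radius $\sim t^2$ centered at $\zeta_{+,t}$, the $\eta_*$-regularity estimates \eqref{regular1}--\eqref{regular2} will upgrade to pointwise bounds $|m_{w,0}^{(k)}(\zeta_{+,t})|\sim t^{-(2k-1)}$ for $k=1,2,3$. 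Substituting gives $\Phi_t''(\zeta_{+,t})\sim t^{-2}$ with the correct sign, i.e.\ $t^2\Phi_t''(\zeta_{+,t})\sim 1$, together with $|\Phi_t^{(3)}(\zeta_{+,t})|\lesssim t^{-4}$.

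Since $\Phi_t'(\zeta_{+,t})=0$ by Lemma \ref{lem_partialedge}, a Taylor expansion of $\Phi_t$ around $\zeta_{+,t}$ combined with Step~1 will yield, for $|z-\lambda_{+,t}|\lesssim t^4$,
\begin{equation*}
z-\lambda_{+,t}=\tfrac12\,\Phi_t''(\zeta_{+,t})\,(\zeta_t(z)-\zeta_{+,t})^2\,\bigl[1+O(|\zeta_t(z)-\zeta_{+,t}|/t^2)\bigr].
\end{equation*}
Inverting this with the branch $\im\zeta_t>0$ on $\C_+$ (Lemma \ref{existuniq}(iii)) gives the square-root asymptotic for $\zeta_t(z)-\zeta_{+,t}$. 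The density is then recovered by substituting \eqref{eq_banotherform} into \eqref{eq_keyequation11} to express $m_{w,t}(z)$ as an elementary function of $\zeta_t(z)$ and $z$; taking the imaginary part for real $E$ slightly below $\lambda_{+,t}$, the only source of $\im m_{w,t}(E)$ is $\im\zeta_t(E)$, and a short Jacobian computation produces the prefactor $[(1-c_n)^2 t^2+4\lambda_{+,t}\zeta_{+,t}]^{-1/2}$ (the square root originating from the radical in $b_t$). This will yield \eqref{sqrtdensity2} with the sharp constant, together with \eqref{eq_imasymptoics} in a neighborhood of $\lambda_{+,t}$ of size $\sim t^2$.

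To extend the crude square-root estimate \eqref{sqrtdensity} from the scale $|E-\lambda_{+,t}|\lesssim t^2$ to the full constant-size interval $|E-\lambda_+|\le 3c_V/4$, I will argue that $\supp\mu_{w,t}$ has no edge other than $\lambda_{+,t}$ in this window. By Lemma \ref{lem_eigenvalueslarger}, any adjacent edge would force an eigenvalue $d_i$ of $V$ to lie in a bracketing interval to the right of $\lambda_+$, contradicting part (ii) of Definition \ref{assumption_edgebehavior} which forbids eigenvalues in $[\lambda_++\eta_*,\lambda_++c_V]$. A monotonicity argument for $\Phi_t$ on $(\zeta_{+,t},\infty)$ together with Lemma \ref{existuniq}(iv) then produces matching upper and lower bounds for $\rho_{w,t}(E)$ throughout. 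Finally, the $\im m_{w,t}$ asymptotics in \eqref{eq_imasymptoics} follow from the Poisson-integral identity for $\im m_{w,t}(E+\ii\eta)$ split into a neighborhood of $\lambda_{+,t}$ (handled by the sharp density already established) and its complement (which contributes $O(\eta)$), while the uniform bound $|m_{w,t}|\lesssim 1$ comes from the same split combined with Lemma \ref{existuniq}(iv).

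The main obstacle will be sharpening Step~1 from a qualitative bound $|\Phi_t''(\zeta_{+,t})|\lesssim t^{-2}$ to the two-sided asymptotic $t^2\Phi_t''(\zeta_{+,t})\sim 1$: the $\eta_*$-regularity is stated only on mesoscopic scales $\eta\ge\eta_*$ rather than as a pointwise expansion of $m_{w,0}$, so the Cauchy-integral arguments need uniform \emph{lower} bounds on $\im m_{w,0}$ along the relevant contours, which forces one to use \eqref{regular1} (not just \eqref{regular2}) and to track constants carefully in order to identify the exact coefficient appearing in \eqref{sqrtdensity2}. A secondary difficulty is propagating the error estimate from the Taylor inversion of Step~2 through the nonlinear map $\zeta_t\mapsto m_{w,t}$ without degrading the $O(|E-\lambda_{+,t}|/t^2)$ remainder in the final formula.
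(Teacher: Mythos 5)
The paper does not prove this lemma; it is quoted verbatim from Lemmas 3.18 and 3.19 of the companion paper [DY20202], so there is no internal proof to compare against. Your subordination strategy — differentiate $\Phi_t$, Taylor-expand around the critical point $\zeta_{+,t}$, invert, and push the square root back through the algebraic relation between $m_{w,t}$ and $\zeta_t$ via \eqref{eq_banotherform} — is the natural route and is surely close to what [DY20202] does (the explicit constant in \eqref{sqrtdensity2} is exactly what falls out of the Jacobian of \eqref{eq_banotherform}). But two of the points you flag as ``difficulties'' are genuine gaps in the proposal as written.

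First, for $t^2\Phi_t''(\zeta_{+,t})\sim 1$: Cauchy's estimate only transfers \emph{upper} bounds across a contour, so circles of radius $\sim t^2$ cannot by themselves yield the needed two-sided bound on $m_{w,0}''(\zeta_{+,t})$. The correct mechanism is that $\zeta_{+,t}$ is a real point to the right of the (bulk) spectrum, so $m_{w,0}^{(k)}(\zeta_{+,t})=k!\,p^{-1}\sum_i(d_i-\zeta_{+,t})^{-(k+1)}$ is a sum of one-signed terms; the lower bound $|m_{w,0}''(\zeta_{+,t})|\gtrsim t^{-3}$ follows by restricting to eigenvalues $d_i\in[\lambda_+-Kt^2,\lambda_+]$, whose number is $\gtrsim pt^3$ by Stieltjes inversion of \eqref{regular1} at scale $\eta\sim t^2\gg\eta_*$. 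The upper bound is the easy half. Your phrase ``Cauchy-integral arguments need uniform lower bounds on $\im m_{w,0}$'' gestures at the right ingredient (\eqref{regular1}) but the wrong tool.

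Second, the claimed remainder $O(|E-\lambda_{+,t}|/t^2)$ does not follow from the Taylor inversion as you set it up: writing $\Delta\zeta=\zeta_t(E)-\zeta_{+,t}$, the naive relative error in $E-\lambda_{+,t}=\tfrac12\Phi_t''(\zeta_{+,t})\Delta\zeta^2[1+O(\Delta\zeta\,\Phi_t'''/\Phi_t'')]$ is $O(|\Delta\zeta|/t^2)=O(\sqrt{|E-\lambda_{+,t}|}/t)$, which is the square root of the claimed error. To close this gap one must use that $E$ is real: writing $\Delta\zeta=a+\ii b$ and separating real and imaginary parts, the vanishing of $\im(E-\lambda_{+,t})$ forces $a=\tfrac{\Phi_t'''(\zeta_{+,t})}{6\Phi_t''(\zeta_{+,t})}b^2+\cdots=O(b^2/t^2)$, after which the relative correction to $b$ (hence to $\im m_{w,t}$) is $O(b^2/t^4)=O(|E-\lambda_{+,t}|/t^2)$. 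This cancellation is not optional; without it your formula \eqref{sqrtdensity2} would come out with a weaker, and in fact incorrect, error term. The extension to the constant-size interval in \eqref{sqrtdensity} and the $\im m_{w,t}$ asymptotics are handled essentially as you describe.
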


We also need to control the derivative $\partial_z m_{w,t}(z)$. First, note that with the definition of $m_{w,t}$, we can get the trivial estimate
\begin{equation}\label{eq_trivialbounderivative}
\left| \partial_z m_{w,t}(z) \right| = \left|\int \frac{\dd \mu_{w,t}(x)}{(x-z)^2}\right|\leq \int \frac{\dd \mu_{w,t}(x)}{|x-z|^2} = \frac{\im m_{w,t}}{\eta}.
\end{equation}
Moreover, we claim the following estimates.

\begin{lemma} [Lemma 3.20 of \cite{DY20202}]\label{lem_partialm}
Suppose $V=WW^\top$ is $\eta_*$-regular and $t$ satisfies \eqref{restr_t}. Consider any $z=E+\ii\eta$ with $\kappa:=|E-\lambda_+| \leq 3c_V/4$ and $0\le \eta\le 10$. If $\kappa+\eta \leq t^2,$ then we have that
\begin{equation}\label{eq_bound1}
\left| \partial_z m_{w,t}(z) \right| \lesssim ( {\kappa+\eta})^{-1/2}.
\end{equation}
%for some constant $C>0$. 
If $\kappa+\eta \geq t^2$,  we have that for $E \geq \lambda_{+,t},$ 
\begin{equation}\label{eq_bound2}
|\partial_z m_{w,t}(z)| \lesssim  ( {\kappa+\eta})^{-1/2},
\end{equation}
and for $E \leq \lambda_{+,t}$, 
\begin{equation}\label{eq_bound3}
|\partial_z m_{w,t}(z)| \lesssim \frac{ \sqrt{\kappa+\eta}}{t \sqrt{\kappa+\eta}+\eta}. 
\end{equation}
\end{lemma}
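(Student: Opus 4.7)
The plan is to combine the trivial pointwise bound
\[
|\partial_z m_{w,t}(z)| \le \frac{\im m_{w,t}(z)}{\eta}
\]
from \eqref{eq_trivialbounderivative} with a subordination-derived identity. Rewriting the scalar relation \eqref{eq_keyequation11} as $m_{w,t}(z) = b_t(z)\,m_{w,0}(\zeta_t(z))$, differentiating in $z$, collecting the $\partial_z m_{w,t}(z)$-terms, and substituting the identities $1 - c_n t\, m_{w,0}(\zeta_t(z)) = 1/b_t(z)$ (from \eqref{eq_keyequation11}) and $\zeta_t'(z) = 1/\Phi_t'(\zeta_t(z))$ (from $\Phi_t(\zeta_t(z)) = z$), I obtain
\[
\partial_z m_{w,t}(z) \;=\; \frac{b_t(z)^2\, m_{w,0}'(\zeta_t(z))}{\Phi_t'(\zeta_t(z))}.
\]
This identity is the main engine of the proof.

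Bound \eqref{eq_bound2} follows immediately from the trivial bound and $\im m_{w,t} \sim \eta/\sqrt{\kappa+\eta}$ in \eqref{eq_imasymptoics}. For \eqref{eq_bound3} the trivial bound already gives $\sqrt{\kappa+\eta}/\eta$, matching the target in the sub-regime $\eta \gtrsim t\sqrt{\kappa+\eta}$; only the complementary sub-regime, together with case (i), requires the subordination identity. To deploy it I first note $|b_t(z)|\sim 1$ from $|m_{w,t}|\lesssim 1$ and $t \ll 1$. Next, Lemma \ref{lem_asymdensitysquare} supplies $\Phi_t'(\zeta_{+,t}) = 0$ and $t^2\Phi_t''(\zeta_{+,t})\sim 1$, so Taylor expansion yields $|\Phi_t'(\zeta_t(z))| \sim |\zeta_t(z)-\zeta_{+,t}|/t^2$; inverting the quadratic relation $\Phi_t(\zeta_t(z)) - \lambda_{+,t} = \tfrac12\Phi_t''(\zeta_{+,t})(\zeta_t(z)-\zeta_{+,t})^2(1+o(1))$ produces $|\zeta_t(z) - \zeta_{+,t}|\sim t\sqrt{\kappa+\eta}$, hence $|\Phi_t'(\zeta_t(z))|\sim \sqrt{\kappa+\eta}/t$. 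Since $\re\zeta_{+,t} - \lambda_+ \sim t^2 \gg \eta_*$ by Lemma \ref{lem rightedge}, the point $\zeta_t(z)$ sits at distance $\gtrsim \max(t^2, t\sqrt{\kappa+\eta})$ from $\lambda_+$, and a direct integration against the square-root density $\rho\sim\sqrt{(\lambda_+-x)_+}$ granted by the $\eta_*$-regularity on scales above $\eta_*$ gives $|m_{w,0}'(\zeta_t(z))|\lesssim |\zeta_t(z) - \lambda_+|^{-1/2}$. Assembling the pieces, case (i) yields $1\cdot (1/t)\cdot(t/\sqrt{\kappa+\eta}) = (\kappa+\eta)^{-1/2}$, while the remaining sub-regime of case (iii) yields $t^{1/2}/(\kappa+\eta)^{3/4} \le 1/t \sim \sqrt{\kappa+\eta}/(t\sqrt{\kappa+\eta})$, using $\kappa+\eta\ge t^2$.

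The main obstacle will be securing the quadratic expansion of $\Phi_t$ and the induced square-root expansion of $\zeta_t$ \emph{uniformly} on a neighborhood of $\zeta_{+,t}$ of radius of order $t^2 + t\sqrt{\kappa+\eta}$, rather than only asymptotically at $\zeta_{+,t}$. This reduces to uniform control of $\Phi_t'''$ -- and therefore of $m_{w,0}''$ and $m_{w,0}'''$ -- on the same neighborhood. The separation $\re\zeta_{+,t}-\lambda_+\sim t^2\gg\eta_*$ permits Cauchy representation of these higher derivatives by contour integrals of $m_{w,0}$ on circles of radius $\sim t^2$, at which scale the $\eta_*$-regularity supplies the required bounds; the bookkeeping of error terms across the transition $\kappa+\eta \sim t^2$ is the delicate piece.
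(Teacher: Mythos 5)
Your central idea---passing through the subordination identity
\[
\partial_z m_{w,t}(z)=\frac{b_t(z)^2\,m_{w,0}'(\zeta_t(z))}{\Phi_t'(\zeta_t(z))},
\]
obtained from $m_{w,t}=b_t\,m_{w,0}(\zeta_t)$, $\Phi_t(\zeta_t(z))=z$, and $1-c_n t\,m_{w,0}(\zeta_t)=1/b_t$---is correct, and I verified the algebra. Combined with $|b_t|\sim 1$, $t^2\Phi_t''(\zeta_{+,t})\sim 1$ from Lemma \ref{lem_asymdensitysquare}, $\zeta_{+,t}-\lambda_+\sim t^2$ from Lemma \ref{lem rightedge}, and the trivial bound \eqref{eq_trivialbounderivative} for cases where it suffices, this reproduces the three bounds with the right orders of magnitude, and the exponent bookkeeping in each regime is consistent. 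Since the paper only cites Lemma 3.20 of \cite{DY20202} without reproducing the proof, I cannot confirm your path is identical, but subordination is the natural and likely route there too.

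However, there is a real gap in how you justify $|m_{w,0}'(\zeta_t(z))|\lesssim|\zeta_t(z)-\lambda_+|^{-1/2}$. Recall $m_{w,0}$ is the Stieltjes transform of the \emph{atomic} ESD of $V$, so this bound is not an automatic consequence of the limiting square-root density: it requires $\zeta_t(z)$ to stay at a distance $\gg\eta_*$ from the eigenvalues $\{d_i\}$ so that the $\eta_*$-regular bounds \eqref{regular1}--\eqref{regular2} can be applied via $|m_{w,0}'(\zeta)|\le\im m_{w,0}(\zeta)/\im\zeta$. Your justification---that $\zeta_{+,t}-\lambda_+\sim t^2\gg\eta_*$ hence ``$\zeta_t(z)$ sits at distance $\gtrsim\max(t^2,t\sqrt{\kappa+\eta})$''---is too quick: for $E$ inside the support (which is the hard regime), $\zeta_t(z)$ is complex and can have $\re\zeta_t<\lambda_+$, so the distance to $\zeta_{+,t}$ does not directly control the distance to the spectrum. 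The correct route is to extract a lower bound on $\im\zeta_t$ directly from \eqref{eq_defnzeta0}: writing $b_t=b_t^R+\ii b_t^I$ with $b_t^R\sim 1$ and $b_t^I=c_n t\,\im m_{w,t}$, one gets
\[
\im\zeta_t(z)=\eta\bigl[(b_t^R)^2-(b_t^I)^2\bigr]+2E\,b_t^R b_t^I-(1-c_n)t\,b_t^I\;\gtrsim\;\eta+t\,\im m_{w,t}(z),
\]
and then \eqref{eq_imasymptoics} gives $\im\zeta_t\gtrsim\eta+t\sqrt{\kappa+\eta}\gg\eta_*$ in the relevant regimes. One also needs that $|\re\zeta_t-\lambda_+|\lesssim\im\zeta_t$ whenever $\re\zeta_t<\lambda_+$ (otherwise the factor $\im m_{w,0}/\im\zeta_t$ is not $\lesssim|\zeta_t-\lambda_+|^{-1/2}$); this follows from the quadratic inversion because $z-\lambda_{+,t}$ then has argument near $\pi$, so $\zeta_t-\zeta_{+,t}$ is predominantly imaginary, but the point deserves to be made explicit rather than asserted.

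A second, more minor issue: the square-root inversion gives $|\zeta_t(z)-\zeta_{+,t}|\sim t\sqrt{|z-\lambda_{+,t}|}$, not $t\sqrt{\kappa+\eta}$ with $\kappa=|E-\lambda_+|$. Since $\lambda_{+,t}-\lambda_+\sim t$ (one can check $\Phi_t(\zeta_{+,t})-\zeta_{+,t}=-2c_n t\,m_{w,0}(\zeta_{+,t})\zeta_{+,t}+\mathrm{O}(t)\sim t$), the two quantities $\kappa$ and $|E-\lambda_{+,t}|$ can differ by order $t$, which matters in case (i) where $\kappa+\eta\le t^2\ll t$. As it happens, the replacement only weakens the final bound in the right direction (and so the stated inequalities survive), but you should run the argument with $|E-\lambda_{+,t}|$ internally and only convert to $\kappa$ at the end when comparing against the target. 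Your closing paragraph correctly flags the remaining technical work (uniform Taylor expansion of $\Phi_t$ via Cauchy estimates at scale $t^2$), and that concern is legitimate.
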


%\subsection{Comparison of free rectangular convolutions of matching measure}\label{sec comparison}

Finally, in Section \ref{sec_calculationDBM}, we will need to compare the edge behaviors of two free rectangular convolutions satisfying certain matching properties. Specifically, let $t_0= N^{-1/3+\omega_0}$ for some constant $0<\omega_0<1/3$. We consider two probability measures $\rho_1$ and $\rho_2$ having densities on the interval $[0,2\psi]$ with $\psi\sim 1$ being a positive constant, such that for some constant $c_\psi>0$ the following properties hold:
\begin{equation}\label{eq_closerho}
\rho_1(\psi-x)=\rho_2(\psi-x)\left[1+\OO\left( {|x|}/{t_0^2} \right) \right], \quad 0 \leq x \leq c_\psi t_0^2,
\end{equation}
and 
\begin{equation}\label{sqrt12}
\rho_1(x)=\rho_2(x)=0 \ \ \text{on}\ \ [\psi,2\psi],\quad \rho_1(x)\sim \rho_2(x) \sim \sqrt{\psi-x} \ \ \text{on}\ \ [\psi- c_\psi,\psi].
\end{equation} 
Let $\rho_{1,t}$ and $\rho_{2,t}$ be the free rectangular convolutions of the MP law with $\rho_1$ and $\rho_2$, respectively. Moreover, the Stieltjes transform of $\rho_{i,t}$, denoted by $m_{i,t}$, satisfies a similar equation as in \eqref{eq_keyequation11}:
\begin{equation*}
\frac{1}{c_n t}\left(1-\frac{1}{b_{i,t}}\right)=\int\frac{\rho_i(x)}{x-\zeta_{i,t}}\dd x ,\quad i=1,2,
\end{equation*}
where
\begin{equation}\label{defnbit} b_{i,t}(z):=1+c_ntm_{i,t}(z),\quad  \zeta_{i,t}(z):= zb_{i,t}^2 - (1-c_n)t b_{i,t}. \end{equation}
For $i=1,2,$ let $\lambda_{+,i}(t)$ be the right edge of $\rho_{i,t}$, and denote $\zeta_{+,i}(t):=\zeta_{i,t}(\lambda_{+,i}(t))$. Due to the matching condition \eqref{eq_closerho}, we can show that $\zeta_{+,1}(t)$ and $\zeta_{+,2}(t)$ are close to each other with a distance of order $\oo(t^2)$ for $t\ll t_0$.

\begin{lemma}[Lemma A.2 of \cite{DY20202}]\label{lem edgemoving}
Suppose \eqref{eq_closerho} and \eqref{sqrt12} hold. Then there exists a constant $C>0$ such that for any $0\le t\le t_0$, 
 \begin{equation}\label{eq_edgecomparebound}
| \zeta_{+,1}(t)- \zeta_{+,2}(t)| \leq \frac{C t^3}{t_0},
\end{equation}
and
 \begin{equation}\label{eq_edgecomparebound2}
|\lambda_{+,1}(t)-\psi|+|\lambda_{+,2}(t)-\psi| \leq Ct.
\end{equation}
 \end{lemma}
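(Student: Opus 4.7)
The plan is to analyze the two edges through their characterization as critical points of $\Phi_{i,t}$ and to compare them via a mean-value-type argument, with the matching condition \eqref{eq_closerho} entering through control of the Stieltjes transforms $g_i(\zeta) := \int \rho_i(x)/(x-\zeta)\,\dd x$ of the initial densities.

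For \eqref{eq_edgecomparebound2}, I would first observe that the proof of Lemma \ref{lem rightedge} only invokes the square-root edge behavior, so it applies to each $\rho_i$ (which satisfies \eqref{sqrt12}) to give $\zeta_{+,i}(t) - \psi \sim t^2$. Since $\zeta_{+,i}(t)$ lies outside the support of $\rho_i$ at distance of order $t^2$, the Stieltjes transform $g_i(\zeta_{+,i}(t)) = \OO(1)$. Plugging this into $\lambda_{+,i}(t) = \Phi_{i,t}(\zeta_{+,i}(t)) = (1-c_n t g_i)^2 \zeta_{+,i} + (1-c_n) t (1 - c_n t g_i)$ and expanding yields $|\lambda_{+,i}(t) - \psi| = \OO(t)$.

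For \eqref{eq_edgecomparebound}, the key identity from Lemma \ref{lem_partialedge} is $\Phi_{i,t}'(\zeta_{+,i}(t)) = 0$, where
\[
\Phi_{i,t}'(\zeta) = (1 - c_n t g_i(\zeta))^2 - c_n t\, g_i'(\zeta)\bigl[2(1-c_n t g_i(\zeta))\zeta + (1-c_n) t\bigr].
\]
Subtracting the $i=1$ and $i=2$ relations and inserting $\pm \Phi_{2,t}'(\zeta_{+,1}(t))$, the mean value theorem gives
\[
\Phi_{2,t}''(\tilde\zeta)\bigl(\zeta_{+,2}(t) - \zeta_{+,1}(t)\bigr) = \Phi_{1,t}'(\zeta_{+,1}(t)) - \Phi_{2,t}'(\zeta_{+,1}(t))
\]
for some $\tilde\zeta$ between the two edges. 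By \eqref{sqrtdensity2} one has $t^2 \Phi_{2,t}''(\zeta_{+,2}) \sim 1$, and the extension to nearby $\tilde\zeta$ follows from a bootstrap (first get a crude bound $|\zeta_{+,1}-\zeta_{+,2}|\lesssim t^2$, then improve). It therefore suffices to prove $|\Phi_{1,t}'(\zeta_{+,1}) - \Phi_{2,t}'(\zeta_{+,1})| \lesssim t/t_0$. Writing $A_i := 1 - c_n t g_i$, so that $A_1 - A_2 = -c_n t(g_1 - g_2)$, an algebraic expansion yields
\[
\Phi_{1,t}'(\zeta) - \Phi_{2,t}'(\zeta) = -c_n t\, \Delta g(\zeta)(A_1 + A_2) - c_n t\, \Delta g'(\zeta)\bigl[2 A_1 \zeta + (1-c_n)t\bigr] + \OO\bigl(t^2 |\Delta g(\zeta)|\bigr),
\]
with $\Delta g := g_1 - g_2$ and $\Delta g' := g_1' - g_2'$. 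The task thus reduces to proving $|\Delta g(\zeta_{+,1})| + |\Delta g'(\zeta_{+,1})| \lesssim 1/t_0$.

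The main technical obstacle is this Stieltjes-transform comparison. Setting $\kappa := \zeta_{+,1}(t) - \psi$ (of order $t^2 \le t_0^2$) and $H(u) := \int_0^u (\rho_1-\rho_2)(\psi-s)\,\dd s$, integration by parts produces
\[
\Delta g(\zeta_{+,1}) = -\int_0^\psi \frac{H(u)}{(\kappa+u)^2}\,\dd u,
\]
where the boundary terms drop because $H(0) = 0$ and, crucially, $H(\psi) = \int \rho_1 - \int\rho_2 = 0$ by mass conservation of the two probability measures. I would split the integral at $u = c_\psi t_0^2$. In the near-edge region, \eqref{eq_closerho} combined with $\rho_2(\psi-u) \sim \sqrt{u}$ (from \eqref{sqrt12}) gives $|\rho_1-\rho_2|(\psi-u) \lesssim u^{3/2}/t_0^2$ and hence $|H(u)| \lesssim u^{5/2}/t_0^2$, contributing at most $\OO(t_0)$. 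In the bulk region the matching condition provides no pointwise control, and a crude bound $|H| \lesssim 1$ is too weak; the needed estimate must be squeezed out of the global cancellation $H(\psi)=0$, e.g. by a multiscale decomposition of the kernel $(\kappa+u)^{-2}$ against the oscillating profile of $H$ on dyadic scales, ultimately producing a contribution of order $1/t_0$. An analogous argument with one more integration by parts handles $\Delta g'$. Combining these estimates gives $|\Phi_{1,t}'(\zeta_{+,1}) - \Phi_{2,t}'(\zeta_{+,1})| \lesssim t/t_0$, and therefore $|\zeta_{+,1}(t) - \zeta_{+,2}(t)| \lesssim t^3/t_0$, proving \eqref{eq_edgecomparebound}. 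The hardest step is precisely the bulk estimate on $\Delta g$, since the matching condition is silent on the interior of the spectrum and the required cancellation must be extracted purely from the probability-mass constraint together with the localization of $(\kappa+u)^{-2}$ near the edge.
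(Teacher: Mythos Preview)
Your overall strategy is sound and matches what the paper indicates: the lemma is not proved here but cited from \cite{DY20202}, and the related calculations in the proof of Theorem~\ref{thm_twgram} (equations \eqref{eq boundedgediff}--\eqref{eq boundedgediff2} and \eqref{derivdiff1}) confirm exactly the structure you set up, namely $|\zeta_{+,1}-\zeta_{+,2}| \lesssim t^3\,|\Delta g'(\zeta_{+,1})|$ via the critical-point equation $\Phi_{i,t}'(\zeta_{+,i})=0$ together with $\Phi_t''\sim t^{-2}$.

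The flaw is in what you call the hardest step. The bulk estimate on $\Delta g$ and $\Delta g'$ is in fact routine; your integration by parts manufactures a difficulty that is not there. Concretely, on the intermediate region $c_\psi t_0^2 \le u \le c_\psi$ the hypothesis \eqref{sqrt12} still applies and gives $\rho_1(\psi-u)\sim\rho_2(\psi-u)\sim\sqrt{u}$, hence $|(\rho_1-\rho_2)(\psi-u)|\lesssim\sqrt{u}$. A direct bound then yields
\[
\int_{c_\psi t_0^2}^{c_\psi}\frac{\sqrt{u}}{(u+\kappa)^2}\,\dd u \;\lesssim\; \int_{c_\psi t_0^2}^{c_\psi} u^{-3/2}\,\dd u \;\sim\; \frac{1}{t_0},
\qquad
\int_{c_\psi t_0^2}^{c_\psi}\frac{\sqrt{u}}{u+\kappa}\,\dd u \;\lesssim\; 1,
\]
while the far region $u\ge c_\psi$ contributes $\OO(1)$ trivially. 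If you insist on the integrated quantity $H$, the same observation gives $|H(u)|\lesssim u^{3/2}$ on $[0,c_\psi]$ (not merely $|H|\lesssim 1$), and the integral $\int u^{3/2}/(u+\kappa)^2\,\dd u$ is again harmless. No appeal to $H(\psi)=0$, oscillation, or multiscale decomposition is needed; the matching condition \eqref{eq_closerho} is used only in the innermost window $u\le c_\psi t_0^2$, and the square-root bound \eqref{sqrt12} handles the rest.
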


 The following matching estimates will play an important role in constructing the short-range
approximation of the rectangular DBM in Section \ref{subsec_short}. % the analysis of rectangular DBM.
  
\begin{lemma}[Lemmas A.4 and A.5 of \cite{DY20202}]\label{lem comparison1}
Suppose \eqref{eq_closerho} and \eqref{sqrt12} hold, and $0 < t \leq t_0 n^{-\epsilon_0}$ for a constant $\epsilon_0>0$. If $0 \leq x \leq \tau n^{-2\epsilon} t_0^2$ for some small enough constants $\tau,\epsilon >0$, then for any (large) constant $D>0$ we have that
\begin{equation}\label{eq_densitycomparison}
\rho_{1,t}(\lambda_{+,1}-x)=\rho_{2,t}(\lambda_{+,2}-x)\left[1+\OO\left(\frac{n^{\epsilon}t}{t_0}+n^{-D}\right)\right] ,
\end{equation}
and
\begin{equation} \label{eq_realpartinside}
\begin{split}
 \left| \re [m_{1,t}(\lambda_{+,1}-x)-m_{1,t}(\lambda_{+,1})]-\re [m_{2,t}(\lambda_{+,2}-x)-m_{2,t}(\lambda_{+,2})] \right|  \lesssim \left( \frac{n^{\epsilon}}{t_0}+\frac{n^{-D}}{t}\right)x.
 \end{split}
\end{equation}
If $ 0\le x \le \tau n^{-2\epsilon} t_0 t ,$ %where $\tau>0$ is a sufficiently small constant, 
then for any (large) constant $D>0$ we have that
\begin{equation}\label{eq_realpartoutside}
\begin{split}
\left| \re   [m_{1,t}(\lambda_{+,1}+x)-m_{1,t}(\lambda_{+,1})]-\re   [m_{2,t}(\lambda_{+,2}+x)-m_{2,t}(\lambda_{+,2})] \right|  \lesssim \left(n^{\epsilon} \frac{t^{1/2} }{t_0^{1/2}} + n^{-D}\frac{t_0^{1/2}}{t^{1/2} }\right)x^{1/2}.
\end{split}
\end{equation}
%and
%\begin{equation}gin{equation} \label{eq_realPsiinside}
% \left| [\Psi_1(\lambda_{1,t}-x)-\Psi_1(\lambda_{1,t})]-[\Psi_{2,t}(\lambda_{2,t}-x)-\Psi_{2,t}(\lambda_{2,t})] \right|  \lesssim  \left( \frac{n^{\epsilon}}{t_0}+\frac{n^{-D}}{t}\right)x.
%\end{equation}
\end{lemma}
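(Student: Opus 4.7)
I would prove both matching estimates by analyzing the two rectangular free convolutions through their common defining structure—the subordination equation $F_{i,t}(z,\zeta_{i,t}(z))=0$ from \eqref{final_derivation} with $m_{w,0}$ replaced by the Cauchy transform of $\rho_i$—together with the edge characterization of Lemma \ref{lem_partialedge}. Since the endpoints $\lambda_{+,i}(t)$ and subordination points $\zeta_{+,i}(t)$ already satisfy $|\zeta_{+,1}(t)-\zeta_{+,2}(t)|\le Ct^3/t_0$ and $|\lambda_{+,i}(t)-\psi|\le Ct$ by Lemma \ref{lem edgemoving}, the remaining task is to track how the initial density matching \eqref{eq_closerho} propagates to the convolved quantities $\rho_{i,t}$ and $\re m_{i,t}$ near their respective right edges.

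\textbf{Step 1: Density comparison \eqref{eq_densitycomparison}.} In the regime $0\le x\le \tau n^{-2\epsilon}t_0^2\ll t^2$, I would substitute into the near-edge expansion \eqref{sqrtdensity2} applied to each $\rho_{i,t}$, so that the ratio $\rho_{1,t}(\lambda_{+,1}-x)/\rho_{2,t}(\lambda_{+,2}-x)$ reduces to a ratio of the three prefactors $\lambda_{+,i}\zeta_{+,i}$, $t^2$ (common), and $\Phi_{i,t}''(\zeta_{+,i})$, plus a multiplicative error $1+\OO(x/t^2)$. The first prefactor is handled directly by \eqref{eq_edgecomparebound}--\eqref{eq_edgecomparebound2}, contributing a relative error $\OO(t/t_0)$. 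For $\Phi_{i,t}''(\zeta_{+,i})$, which scales like $t^{-2}$, I would differentiate \eqref{eq_subcompansion} twice and express the result in terms of $m_{i,0}'(\zeta_{+,i})$ and $m_{i,0}''(\zeta_{+,i})$; these derivatives are then compared by representing $m_{i,0}^{(k)}(\zeta_{+,i})$ as a Cauchy integral against $\rho_i$ and inserting the pointwise matching \eqref{eq_closerho} together with the square-root bound \eqref{sqrt12} on a contour of radius of order $t^2$ centered at $\zeta_{+,i}$. The $\OO(x/t^2)$ remainder in \eqref{sqrtdensity2} is absorbed using the assumption $x\le \tau n^{-2\epsilon}t_0^2$ and the extra $n^\epsilon$ factor in the target bound.

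\textbf{Step 2: Real-part estimates \eqref{eq_realpartinside} and \eqref{eq_realpartoutside}.} For each sign, I would write
\[
m_{i,t}(\lambda_{+,i}\mp x)-m_{i,t}(\lambda_{+,i})=\mp\int_0^x \partial_z m_{i,t}(\lambda_{+,i}\mp s)\,\dd s
\]
and take real parts. Inside the support the bound \eqref{eq_bound1} gives an integrand of order $s^{-1/2}$; the difference between the two integrals is controlled by combining the subordination relation $m_{i,t}(z)=(b_{i,t}(z)-1)/(c_n t)$ from \eqref{defnbit} with implicit differentiation of $F_{i,t}(z,\zeta_{i,t}(z))=0$, which expresses $\partial_z m_{i,t}$ at corresponding points $\lambda_{+,i}-s$ through $m_{i,0}'$ at corresponding subordination points. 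Matching of the latter then follows as in Step 1 from \eqref{eq_closerho} via Cauchy integration. Outside the support $\im m_{i,t}=0$, $|\partial_z m_{i,t}|\lesssim(\kappa+\eta)^{-1/2}$ by \eqref{eq_bound2}, and the same implicit-differentiation scheme yields the cruder $x^{1/2}$ bound \eqref{eq_realpartoutside} in the regime $x\le \tau n^{-2\epsilon}t_0 t$ (which is smaller than the regime inside because outside the edge one only has linear rather than quadratic control).

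\textbf{Main obstacle.} The crux is propagating the $L^\infty$ density matching \eqref{eq_closerho} to derivative-level matching of $m_{i,0}$ and $m_{i,0}''$ at points that are only a distance $\sim t^2$ from the initial edge $\psi$. A naive Cauchy-integral argument costs a factor $t^{-2k}$ per $k$-th derivative, which could overwhelm the $t^2/t_0$ gain from \eqref{eq_closerho}. The resolution is to notice that the prefactors $[4\lambda_{+,i}\zeta_{+,i}+(1-c_n)^2t^2]c_n^2 t^2\Phi_{i,t}''(\zeta_{+,i})$ themselves scale like $1$ because of exact cancellations between $t^2$ and $\Phi_{i,t}''\sim t^{-2}$, so it is the \emph{relative} error in each factor, not the absolute one, that enters; combined with careful contour choice (placing the integration circle at distance $\sim t^2$ from $\psi$, where both the density matching and the square-root behavior \eqref{sqrt12} are simultaneously effective) this produces the claimed $n^\epsilon t/t_0$ relative bound, up to an exponentially small $n^{-D}$ remainder coming from regions outside the matching window.
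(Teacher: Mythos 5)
This lemma is imported verbatim from Lemmas A.4 and A.5 of \cite{DY20202}; the paper under review does not reprove it, so there is no internal proof to compare your proposal against. Judging the proposal on its own merits, the high-level plan (subordination equations, edge characterization from Lemma \ref{lem_partialedge}, Cauchy-integral transfer of the initial matching \eqref{eq_closerho}) is the right one, but Step~1 has a genuine gap and the real-part estimates are too vague to be checked.

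\emph{Gap in the density comparison.} You propose to substitute both densities into the near-edge expansion \eqref{sqrtdensity2}, which carries a multiplicative error $1+\OO(x/t^2)$, and then claim this remainder is ``absorbed using the assumption $x\le \tau n^{-2\epsilon}t_0^2$.'' That does not work: with $t\le t_0 n^{-\epsilon_0}$ the hypothesis allows $x$ as large as $\tau n^{-2\epsilon}t_0^2$, for which $x/t^2\gtrsim n^{2\epsilon_0-2\epsilon}$ can be $\gg 1$ (the expansion \eqref{sqrtdensity2} is in fact only stated for $x\lesssim t^2$). Your ``Main obstacle'' paragraph addresses the relative matching of the order-$1$ prefactor $C_i:=[4\lambda_{+,i}\zeta_{+,i}+(1-c_n)^2t^2]c_n^2t^2\Phi_{i,t}''(\zeta_{+,i})$, but not the $\OO(x/t^2)$ correction terms, whose individual size is uncontrolled in the outer part of the $x$-range and which do not cancel automatically in a ratio. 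To close the argument you would need either (a) a genuinely uniform comparison of $\im m_{1,t}$ and $\im m_{2,t}$ obtained by tracking the subordination functions $\zeta_{1,t},\zeta_{2,t}$ directly on the scale $\kappa\sim x$ (so that the error is controlled by the matching of $m_{1,0},m_{2,0}$ at the appropriate, not edge-pinned, spectral parameters), or (b) a higher-order expansion in which the $\OO(x/t^2)$ corrections themselves inherit the relative matching $\OO(t/t_0)$. Neither is carried out.

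\emph{Vagueness in the real-part estimates.} For \eqref{eq_realpartinside} you integrate $\partial_z m_{i,t}(\lambda_{+,i}-s)$ over $s\in[0,x]$ using \eqref{eq_bound1}, so the crude integrand is $\lesssim s^{-1/2}$. Multiplying by a multiplicative matching factor $\OO(t/t_0)$ would give a bound of order $(t/t_0)\sqrt{x}$, whereas the lemma asserts a \emph{linear} bound $\lesssim(n^\epsilon/t_0)\,x$, which is strictly smaller when $x\ll t^2$. Getting this linear-in-$x$ bound requires showing the \emph{difference} $|\partial_z m_{1,t}-\partial_z m_{2,t}|\lesssim n^\epsilon/t_0$ uniformly near the edge, rather than controlling each derivative by $s^{-1/2}$ and then matching multiplicatively. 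Your sketch acknowledges that the subordination relation must be differentiated implicitly and the matching pushed through, but gives no indication of how the $s^{-1/2}$ singularity cancels in the difference; that cancellation is the heart of the estimate and is precisely what distinguishes the inside-the-support bound from the cruder $x^{1/2}$-type bound \eqref{eq_realpartoutside}.
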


\subsection{Local laws}\label{sec_localaws} 

In this section, we state the local laws and rigidity estimates for the rectangular DBM considered in this paper. We first consider $t$ satisfying \eqref{restr_t}. Define the following $(p+n)\times (p+n)$ symmetric block matrix %We shall prove a local law for 
$$H_t := \begin{bmatrix} 0 & W +\sqrt{t} X \\ (W+\sqrt{t}X)^\top & 0\end{bmatrix}.$$
%$H_t$ in \eqref{defn_Ht}, 
%which also implies the eigenvalues rigidity for $(W+\sqrt{t}X)(W+\sqrt{t}X)^\top$. 

\begin{definition}[Resolvents]\label{resol_not}
We define the resolvent of $H_t$ as
 \begin{equation}\label{eqn_defG}
G(z)\equiv G_t(X,W,z):=(z^{1/2}H_t - z)^{-1}, \quad z\in \mathbb C_+ . 
\end{equation}
For $ \cal Q_{1,t}:= (W+\sqrt{t}X)(W+\sqrt{t}X)^\top$ and $\cal Q_{2,t}:= (W+\sqrt{t}X)^\top(W+\sqrt{t}X)$, we define the resolvents 
\begin{equation}\label{def_green}
  \mathcal G_1(z)\equiv \cal G_{1,t}(X,W,z) :=\left({\mathcal Q}_{1,t} -z\right)^{-1} , \ \ \ \mathcal G_2 (z)\equiv \cal G_{2,t}(X,W,z):=\left({\mathcal Q}_{2,t}-z\right)^{-1} .
\end{equation}
 We denote the empirical spectral density $\rho_{1,t}$ of $ {\mathcal Q}_{1,t}$  and its transform by
\begin{equation}\nonumber%\label{defn_m}
\rho_1\equiv \rho_{1,t}(X,W,z):= \frac{1}{p} \sum_{i=1}^p \delta_{\lambda_i( {\mathcal Q}_{1,t})},\quad  m_1(z)\equiv m_{1,t}(X,W,z):=\int \frac{1}{x-z}\rho_1(\dd x)=\frac{1}{p} \mathrm{Tr} \, \mathcal G_1(z).
\end{equation}

\iffalse
Similarly, we denote the empirical spectral density $\rho_{2,t}$ of $ {\mathcal Q}_{2,t}$ and its Stieltjes transform by
\begin{equation} \nonumber%\label{defn_m2}
\begin{split}
\rho_2\equiv \rho_{2,t}(X,W,z):= \frac{1}{n} \sum_{i=1}^n \delta_{\lambda_i( {\mathcal Q}_{2,t})},\quad m_2(z)\equiv m_{2,t}(X,W,z):=\int \frac{1}{x-z}\rho_2(\dd x)=\frac{1}{n} \mathrm{Tr} \, \mathcal G_2(z).
 \end{split}
\end{equation}
\fi

%, \ \ \rho_{2}^{(N)} := \frac{1}{N} \sum_{i=1}^N \delta_{\lambda_i(\mathcal Q_2)}.$$
%Then the Stieltjes transforms of $\rho_{1}$ is given by
%\begin{equation}gin{align*}
%& m_1^{(n)}(z):=\int \frac{1}{x-z}\rho_{1}^{(n)}(\dd x)=\frac{1}{n} \mathrm{Tr} \, \mathcal G_1(z). 
%%& m_2^{(N)}(z):=\int \frac{1}{x-z}\rho_{2}^{(N)}(\dd x)=\frac{1}{N} \mathrm{Tr} \, \mathcal G_2(z). %\label{ST_m12}
%\end{align*}
%and
%\begin{equation}gin{equation}
%m_2^{(N)}(z):=\int \frac{1}{x-z}d\rho_{2}^{M}(x)=\frac{1}{N}\sum_{i=1}^N (\mathcal G_2)_{ii}(z)=\frac{1}{N} \mathrm{Tr} \, \mathcal G_2(z). \label{ST_m2}
%\end{equation}
%Similarly, we can also define $m_1(z)\equiv m_1^{(M)}(z):= M^{-1}\mathrm{Tr} \, \mathcal G_1(z)$. 
\end{definition}
%\begin{equation}gin{remark}
%Since $Q_2$ share the same nonzero eigenvalues with $Q_1$ and has $n-p$ more zero eigenvalues, we have 
%\begin{equation}\label{m21} m_2 =\frac{p}{n} m_1 - \frac{n-p}{nz} = c_n m_1 - \frac{1-c_n}{z}.\end{equation}
%\end{remark}

For any constant $\vartheta>0$, we define the spectral domain  
\begin{equation}\label{eq_domantheta}
\begin{split}
\mathcal{D}_{\vartheta}:=&\left\{z=E+\ii \eta: \lambda_{+,t}- \frac34c_V \leq E \leq \lambda_{+,t}, \frac{n^{\vartheta}}{n \eta}  \le \sqrt{\kappa+\eta} \leq 10 \right\} \\
&\bigcup \left\{z=E+\ii \eta:  \lambda_{+,t} \le E \le \lambda_{+,t} +\frac34c_V, n^{-2/3+\vartheta} \le \eta \le 10\right\},
\end{split}
\end{equation}
where recall that $\lambda_{+,t}$ is the right-edge of $\rho_{w,t}$.
%Moreover, for some fixed constant $C_1>0,$ we set the domain for $t \geq n^{-1/3}$ 
%\begin{equation}gin{equation}\label{eq_dtheta1}
%\mathcal{D}_{\vartheta}^1:= \mathcal{D}_{\vartheta} \cap \{z=E+\ii \eta: E \leq \lambda_{+,t}+C_1 t^2 \},
%\end{equation}
%and 
%\begin{equation}gin{equation}\label{eq_dtheta2}
%\mathcal{D}_{\vartheta}^2:= \mathcal{D}_{\vartheta} \cap \{z=E+\ii \eta: E > \lambda_{+,t}+C_1 t^2 \}.
%\end{equation}
The following theorem gives the local laws on the domain $\mathcal{D}_{\vartheta}$. 

\begin{theorem}[Theorem 2.7 of \cite{DY20202}]\label{thm_local}
Suppose $V=WW^\top$ is $\eta_*$-regular, and $t$ satisfies \eqref{restr_t}. For any constant $\vartheta>0$, the following estimates hold uniformly in $z \in \mathcal{D}_{\vartheta}$:
\begin{itemize}
\item for $E\le \lambda_{+,t}$, we have
\begin{equation}\label{averin}
|m_{1,t}(z)-m_{w,t}(z)|\prec \frac{1}{n\eta};
\end{equation}
\item for $E\ge \lambda_{+,t}$, we have
\begin{equation}\label{averout}
|m_{1,t}(z)-m_{w,t}(z)|\prec \frac{1}{n(\kappa+\eta)}+\frac{1}{(n\eta)^2\sqrt{\kappa+\eta}}.
\end{equation}
\end{itemize}
\end{theorem}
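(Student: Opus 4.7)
My plan is to prove Theorem \ref{thm_local} by deriving an approximate self-consistent equation for $m_{1,t}(z)$ that mimics equation \eqref{originaleqaution0}, and then invoking the stability analysis of the rectangular free convolution (already established in Section \ref{sec_summaryrectnagular}) to transfer the approximate equation's error into an estimate for $|m_{1,t}-m_{w,t}|$. The natural framework is the $(p+n)\times(p+n)$ Hermitization $H_t$ with resolvent $G(z)$ from Definition \ref{resol_not}, since individual entries of $\mathcal{G}_1$ and $\mathcal{G}_2$ can be recovered from $G$ via \eqref{green2000} (with $Y$ replaced by $W+\sqrt{t}X$). I would then work entrywise, controlling the full matrix $G-\Pi_t$ for a suitable deterministic approximant $\Pi_t$ analogous to \eqref{defn_Piz} (with $M_{1,i}$, $M_{2,j}$ defined via the rectangular vector Dyson equation), and finally extract the tracial bounds.

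The first concrete step is to apply the Schur complement to the $i$-th diagonal entry of $G(z)$, for $1\le i\le p$, which expresses $G_{ii}$ in terms of the $i$-th row of $W+\sqrt{t}X$ and the resolvent of a minor. Because $X$ has independent Gaussian entries, Gaussian integration by parts (equivalently, a cumulant expansion truncated at low order) replaces the quadratic form in the Schur identity by an average that depends on the minor's trace, with fluctuations controlled by off-diagonal entries of $G$. Combining these $p+n$ identities yields an approximate vector equation of the form $[1/G_{ii}] = -z - zS_t M_2 + \varepsilon_i$ and its dual, where $S_t$ is a modified variance matrix accounting for the $\sqrt{t}X$ contribution and $\varepsilon_i$ collects the stochastic errors. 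Tracing out gives a perturbed version of \eqref{originaleqaution0} for $m_{1,t}$.

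The stability step uses the analysis of the rectangular free convolution that is already summarized in Section \ref{sec_summaryrectnagular}: specifically, the square-root behavior of $\rho_{w,t}$ at $\lambda_{+,t}$ in \eqref{sqrtdensity}--\eqref{sqrtdensity2} together with the sharp derivative bounds in Lemma \ref{lem_partialm} imply that the scalar self-consistent equation \eqref{originaleqaution0} is stable with the standard edge degeneracy, namely a stability factor of order $(\sqrt{\kappa+\eta})^{-1}$ inside the bulk and $(\kappa+\eta)^{-1/2}\cdot(\eta/\sqrt{\kappa+\eta})^{-1}$ outside. Feeding the Schur-based error bound into this stability estimate converts entrywise control of $G-\Pi_t$ into the averaged bounds \eqref{averin}--\eqref{averout}. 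Crucially, the outside-spectrum improvement comes from the smallness of $\im m_{w,t}(z)\sim \eta/\sqrt{\kappa+\eta}$ in \eqref{eq_imasymptoics}, which via a fluctuation averaging argument produces the additional factor of $(n\eta)^{-1}$ in \eqref{averout}.

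The whole scheme is then implemented as a bootstrap on the imaginary part $\eta$: start at $\eta\sim 1$, where the weak a priori bound $|m_{1,t}-m_{w,t}|\lesssim 1$ is trivial and all stability factors are harmless, and descend in steps of size $n^{-\delta}$ down to $\eta=n^{-2/3+\vartheta}$, at each step plugging the previous estimate into the perturbed equation. The main obstacle will be the edge regime $\kappa+\eta\asymp n^{-2/3}$, where the stability of the self-consistent equation is weakest and where the $\eta_*$-regular assumption on $V=WW^\top$ must be used to guarantee that the rectangular free convolution at time $t\gg\sqrt{\eta_*}$ has a genuine square-root singularity (via Lemma \ref{lem_asymdensitysquare}). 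Handling the asymmetry between the two sides of $\lambda_{+,t}$ requires tracking the different rates in \eqref{eq_bound1}--\eqref{eq_bound3} and exploiting a suitable Ward identity for $\mathcal{G}_1$ to convert sums of $|G_{ij}|^2$ into $\eta^{-1}\im m_{1,t}$, which is the mechanism producing the sharpness of \eqref{averout} away from the spectrum.
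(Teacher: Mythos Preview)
The paper does not contain its own proof of Theorem \ref{thm_local}; the result is imported verbatim from \cite{DY20202} (Theorem 2.7 there), as announced at the start of Appendix \ref{sec_preliminary_feelocal}. There is therefore nothing in the present paper to compare your proposal against.

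Your outline is the standard local-law strategy and is essentially what \cite{DY20202} carries out, but two points deserve sharpening. First, since $W$ is conditioned upon (deterministic, diagonal) and only $X$ is random with flat variance $t/n$, the relevant self-consistent equation is not a vector Dyson equation of the form \eqref{vecself1} with a general $S_t$, but rather the scalar equation \eqref{originaleqaution0}; the deterministic approximant for $G_{ii}$ in the $p\times p$ block is $[d_i b_t^{-1}-b_t z+t(1-c_n)]^{-1}$, and the deterministic part $W$ enters the Schur identity explicitly (not merely through a variance matrix). Second, the $\eta_*$-regularity of $V$ is not only needed to produce the square-root edge of $\rho_{w,t}$ via Lemma \ref{lem_asymdensitysquare}; it is also what makes the stability analysis work when $\kappa+\eta$ is of order $t^2$ or smaller, since the stability factor of the perturbed equation \eqref{originaleqaution0} degenerates there and the restriction $t^2\ge n^\epsilon\eta_*$ in \eqref{restr_t} is precisely what rescues it. With these adjustments your sketch matches the argument in \cite{DY20202}.
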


As a consequence of this theorem, we can obtain the following rigidity estimate for the eigenvalues $\lambda_1\ge \lambda_2\ge \cdots \ge \lambda_p$ of $\cal Q_{1,t}$ near the right edge $\lambda_{+,t}$. We define the quantiles of $\rho_{w,t}$ as in \eqref{gammaj000}:
%classical location $\gamma_j$ of the $j$-th eigenvalue of $\mathcal Q_1$ as
\begin{equation}\label{gammaj}
\gamma_j:=\sup_{x}\left\{\int_{x}^{+\infty} \rho_{w,t}(x)\dd x > \frac{j-1}{p}\right\},\quad 1\le j \le p .
\end{equation}
%In particular, we have $\gamma_1 = \lambda_{+,t}$.
\begin{lemma}\label{lem_correct_rigi}
Suppose the local laws \eqref{averin} and \eqref{averout} hold. Then, for any $j$ such that $\lambda_{+,t} -  c_V/2 < \gamma_j \le \lambda_{+,t}$, we have
%for some constant $\tilde C>0$, the following events hold with high probability:
\begin{equation}\label{rigidity}
\vert \lambda_j - \gamma_j \vert \prec  j^{-1/3}n^{-2/3} .
\end{equation}
\end{lemma}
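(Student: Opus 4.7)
\textbf{Proof plan for Lemma \ref{lem_correct_rigi}.} The strategy is the by-now standard conversion of a local law for the Stieltjes transform into eigenvalue rigidity via Helffer--Sjöstrand functional calculus, as carried out in \cite{EKYY1,EYY,pillai2014} and already invoked in Lemma \ref{rigid_lem}. Since the two essential ingredients---the local laws \eqref{averin}--\eqref{averout} and the square-root behavior \eqref{sqrtdensity}---are already established, the argument is an adaptation rather than something requiring new technical input.

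The first step is to pass from a Stieltjes-transform estimate to a counting-function estimate. Introduce
\[
\mathfrak n(E) := \#\{i : \lambda_i \geq E\}, \qquad \mathfrak n_{w,t}(E) := p\int_E^{\infty} \rho_{w,t}(x)\,\dd x,
\]
and aim to show that uniformly for $E$ in a neighborhood $[\lambda_{+,t}-c_V/2,\ \lambda_{+,t}+c_V/2]$,
\[
|\mathfrak n(E) - \mathfrak n_{w,t}(E)| \prec 1.
\]
I would apply Helffer--Sjöstrand to a smoothed indicator $\chi_E$ of $[E, \lambda_{+,t} + c_V)$, with a cutoff function of scale $\eta_0 := n^{-2/3-\vartheta}$ in the imaginary direction and of order $1$ in the real direction. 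The resulting contour integral splits into regions where the imaginary part $\eta$ of the spectral parameter is in $[\eta_0, 1]$, and one then bounds $\im (m_1 - m_{w,t})$ and $\partial_\eta\re(m_1 - m_{w,t})$ by the local laws. The contour naturally decomposes into an \emph{inside} part ($\re z \leq \lambda_{+,t}$) handled by \eqref{averin}, contributing $\prec \int (n\eta)^{-1} \dd \eta \prec n^{-1+\e}$, and an \emph{outside} part ($\re z \geq \lambda_{+,t}$) handled by the stronger \eqref{averout}, whose three terms integrate to the same order $\prec n^{-1+\e}$. Multiplication by $p$ gives the counting estimate up to $n^\e$, which in the $\prec$ notation is exactly $|\mathfrak n(E) - \mathfrak n_{w,t}(E)| \prec 1$.

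The second step is to invert this counting estimate to an eigenvalue estimate. By the square-root asymptotics \eqref{sqrtdensity}, for $0 \leq x \leq c_V/2$ one has $\mathfrak n_{w,t}(\lambda_{+,t} - x) \sim n\, x^{3/2}$, so that the quantile defined in \eqref{gammaj} satisfies $\lambda_{+,t} - \gamma_j \sim (j/n)^{2/3}$ and the local density of quantiles obeys $\gamma_j - \gamma_{j+1} \sim j^{-1/3} n^{-2/3}$ for all relevant $j$. Combining $|\mathfrak n(E) - \mathfrak n_{w,t}(E)| \prec 1$ with this spacing gives $|\lambda_j - \gamma_j| \prec j^{-1/3} n^{-2/3}$: any shift of $\lambda_j$ from $\gamma_j$ by more than $n^{\e'} j^{-1/3} n^{-2/3}$ would force the counting functions to disagree by more than $n^{\e'/2}$, contradicting the bound.

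The only place requiring care is the outside regime, which is handled precisely by the improved bound in \eqref{averout}; I would also need a preliminary argument ruling out outlier eigenvalues above $\lambda_{+,t} + n^{-2/3+\vartheta}$, which follows in a standard way from \eqref{averout} at $\eta = n^{-2/3+\vartheta}$ (the imaginary part of $m_1$ there must be comparable to that of $m_{w,t}$, which forbids eigenvalues further out). I do not foresee any genuine obstacle; the main subtlety is bookkeeping of the $n^\e$ factors and of the $\vartheta$ appearing in the spectral domain $\cal D_\vartheta$, which is absorbed into $\prec$ in the final statement.
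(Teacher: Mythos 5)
Your proposal is correct and matches the argument the paper intends: the paper simply cites \cite{EKYY1,EYY,pillai2014} for the standard Helffer--Sj\"ostrand derivation of rigidity from the interior/exterior local laws plus the square-root density behavior, and you spell out exactly that route (counting-function estimate via Helffer--Sj\"ostrand, then inversion through the quantile spacing $\gamma_j - \gamma_{j+1} \sim j^{-1/3}n^{-2/3}$, with the exterior law used to exclude eigenvalues above $\lambda_{+,t}+\OO_\prec(n^{-2/3})$). No substantive difference.
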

\begin{proof}
The estimate \eqref{rigidity} follows from the local laws \eqref{averin} and \eqref{averout} combined with a standard argument using Helffer-Sj\"ostrand calculus. The details are already given in \cite{EKYY1,EYY,pillai2014}. 
\end{proof}

Then, we present the local laws for the case where $W$ already satisfies a local law. %In this case we can take any $t>0$ beyond the regime \eqref{restr_t}.
\begin{assumption}\label{assm regularW}
Suppose $m_{V}(z)\equiv m_{w,0}(z)$ satisfies the following estimates for any constant $\vartheta>0$:
$$|m_{w,0}(z)-m_c(z)| \prec \frac{1}{n\eta},$$
for $\lambda_+ - c_V \le E \le \lambda_+$ and $ n^\vartheta (n\eta)^{-1}\le \sqrt{|E-\lambda_+|+\eta} \le 10$; 
$$|m_{w,0}(z)-m_c(z)| \prec \frac{1}{n(|E-\lambda_+|+\eta)}+ \frac{1}{(n\eta)^2\sqrt{|E-\lambda_+|+\eta}},$$
for $\lambda_+ \le E \le \lambda_++c_V$ and $n^{-2/3+\vartheta}\le \eta\le 10$. Here $m_c(z)$ is the Stieltjes transform of a deterministic probability density $\rho_c(x)$ that is compactly supported on $[0,\lambda_+]$, and satisfies $\rho_c(x)\sim \sqrt{x}$ for $\lambda_+-c_V\le x\le \lambda_+.$
\end{assumption}
We denote the rectangular free convolution of $\rho_c$ with MP law at time $t$ by $\rho_{c,t}$, and its Stieltjes transform by $m_{c,t}$. We also denote the right edge of $\rho_{c,t}$ by $\lambda_{c,t}$ and define $\kappa_c:=|E-\lambda_{c,t}|$. 
% As discussed at the beginning of Section \ref{sec comparison}, the analysis in Sections \ref{sec_contour}--\ref{sec_properties} goes through for $m_{c,t}$ and $\rho_{c,t}$ for any $0< t\ll 1$. 
Then we define the following spectral domain 
\begin{equation}\label{eq_domanthetas}
\begin{split}
\mathcal{D}_{\vartheta,c}:=&\left\{z=E+\ii \eta: \lambda_{c,t}- \frac34c_V \leq E \leq \lambda_{c,t},  \frac{n^{\vartheta}}{n \eta} \le \sqrt{\kappa_c+\eta} \le 10 \right\} \\
&\bigcup \left\{z=E+\ii \eta:  \lambda_{c,t} \le E \le \lambda_{c,t}+\frac34c_V,  n^{-2/3+\vartheta}\le \eta \le 10 \right\}.
\end{split}
\end{equation}
%Moreover, we denote 
%\begin{equation}\label{eq_zetaex}
%\zeta_c\equiv \zeta_c(t):= %z(1+c_ntm_{c,t})^2 -(1-c)t(1+c_ntm_{c,t}),%\end{equation}
%and
%$$\Phi_c(\zeta)\equiv \Phi_{c}%(\zeta,t):=\zeta(1-ct m_{c}(\zeta))^2+(1-c)t(1-c_ntm_{c}(\zeta)).$$
%%All the other notations will be the same as in Section \ref{sec largelocal} except that we change $m_{w,t}$ to $m_{c,t}$ in all places.
%
Then, we have the following local law on the domain $\mathcal{D}_{\vartheta,c}$. 
 \begin{theorem}[Theorem 2.10 of \cite{DY20202}]\label{thm_local2}
Suppose Assumption \ref{assm regularW} holds. For any fixed constants $\vartheta,\delta>0$, the following estimates hold uniformly in $z \in \mathcal{D}_{\vartheta,c}$ and $0\le t \le n^{-\delta}$:
\begin{itemize}
\item for $E\le \lambda_{c,t}$, we have
\begin{equation}\label{averin2}
|m_{1,t}(z)-m_{c,t}(z)|\prec \frac{1}{n\eta};
\end{equation}
\item for $E\ge \lambda_{c,t}$, we have
\begin{equation}\label{averout2}
|m_{1,t}(z)-m_{c,t}(z)|\prec \frac{1}{n(\kappa_c+\eta)}+\frac{1}{(n\eta)^2\sqrt{\kappa_c+\eta}}.
\end{equation}
\end{itemize}
\end{theorem}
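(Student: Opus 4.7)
The plan is to reduce Theorem \ref{thm_local2} to Theorem \ref{thm_local} combined with the rectangular free convolution comparison estimates in Lemmas \ref{lem edgemoving} and \ref{lem comparison1}. The guiding observation is that Assumption \ref{assm regularW} is the probabilistic counterpart of the deterministic $\eta_*$-regularity used in Theorem \ref{thm_local}: the assumed local law for $m_{w,0}$ together with the square root behavior $\rho_c(x) \sim \sqrt{\lambda_+ - x}$ implies, on a high-probability event $\Xi$, that $V = WW^\top$ is $\eta_*$-regular in the sense of Definition \ref{assumption_edgebehavior} with $\eta_* = n^{-2/3 + \vartheta'}$ for some small $\vartheta' > 0$. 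I would establish this by the now-standard Helffer--Sj\"ostrand argument yielding edge rigidity for the eigenvalues of $V$ (as in the proof of Lemma \ref{lem_correct_rigi}), which combined with the square root limit gives the bounds \eqref{regular1}--\eqref{regular2} for $\im m_{w,0}$.

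On the event $\Xi$, for $t$ in the range $n^{-1/3 + \vartheta'/2 + \epsilon/2} \le t \le n^{-\delta}$ where Theorem \ref{thm_local} applies, I use the triangle inequality $|m_{1,t}(z) - m_{c,t}(z)| \le |m_{1,t}(z) - m_{w,t}(z)| + |m_{w,t}(z) - m_{c,t}(z)|$ to reduce the task to controlling the second term. For this I would apply Lemmas \ref{lem edgemoving} and \ref{lem comparison1} with $(\rho_1, \rho_2) = (\rho_{w,0}, \rho_c)$ and $t_0^2 \sim \eta_*$; the matching condition \eqref{eq_closerho} follows by inverting the local law estimate for $m_{w,0} - m_c$ via a contour-integration (Helffer--Sj\"ostrand) argument, while \eqref{sqrt12} is immediate from the assumption on $\rho_c$. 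Lemma \ref{lem edgemoving} then gives $|\lambda_{+,t} - \lambda_{c,t}| \prec t^3/t_0 \ll n^{-2/3}$, so the domains $\mathcal{D}_\vartheta$ and $\mathcal{D}_{\vartheta,c}$ coincide up to a negligible shift, and \eqref{eq_realpartinside}--\eqref{eq_realpartoutside} bound $|m_{w,t}(z) - m_{c,t}(z)|$ by an error absorbable into the right-hand sides of \eqref{averin2} and \eqref{averout2}.

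The main obstacle is the very small $t$ regime $0 \le t \le n^{-1/3 + \vartheta'/2 + \epsilon/2}$, where Theorem \ref{thm_local} does not apply. For such $t$, I would argue by a direct resolvent perturbation rooted at $t = 0$, where Assumption \ref{assm regularW} itself supplies the target estimate. Using the identity $\mathcal{G}_{1,t} - \mathcal{G}_{1,0} = -\mathcal{G}_{1,t} \bigl[ t X X^\top + \sqrt{t} (W X^\top + X W^\top) \bigr] \mathcal{G}_{1,0}$ and Gaussian integration by parts on the i.i.d.\ Gaussian entries of $X$, I would derive an approximate self-consistent equation for $m_{1,t}(z)$ whose deterministic solution is exactly $m_{c,t}$ (the same characterization producing \eqref{originaleqaution0}), with a stochastic remainder controlled by the $t=0$ local law. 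A continuous interpolation in $t$, together with Lemma \ref{lem_partialm} to bound derivatives of $m_{c,t}$, then transports the $t = 0$ estimate to all $t$ in this regime. The most delicate step is choosing the overlap between the small-$t$ and large-$t$ regimes so that the two estimates piece together into a single bound uniform in $0 \le t \le n^{-\delta}$ and $z \in \mathcal{D}_{\vartheta,c}$.
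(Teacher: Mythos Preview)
The paper does not prove Theorem \ref{thm_local2}: it is quoted verbatim as ``Theorem 2.10 of \cite{DY20202}'' and used as a black box, so there is no in-paper proof to compare your proposal against.

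That said, your sketch has a genuine gap. You propose to apply Lemmas \ref{lem edgemoving} and \ref{lem comparison1} with $(\rho_1,\rho_2)=(\rho_{w,0},\rho_c)$, but $\rho_{w,0}$ is the empirical measure $p^{-1}\sum_i \delta_{d_i}$ of $V=WW^\top$, not a continuous density; conditions \eqref{eq_closerho} and \eqref{sqrt12} require two absolutely continuous square-root densities matching to first order near a common edge, and an empirical measure satisfies neither. A Helffer--Sj\"ostrand inversion of the local law controls integrated quantities and Stieltjes transforms of $\rho_{w,0}-\rho_c$, but it does not yield a pointwise density comparison of the form \eqref{eq_closerho}. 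Consequently the edge-matching estimate $|\lambda_{+,t}-\lambda_{c,t}|\ll n^{-2/3}$ and the bound on $|m_{w,t}-m_{c,t}|$ cannot be read off from these lemmas as written; you would instead need a direct Stieltjes-transform comparison in the spirit of the argument around \eqref{eq boundedgediff}--\eqref{edge_diff}, which bypasses densities entirely.

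Your small-$t$ regime is also underspecified. A resolvent expansion in $\sqrt t X$ combined with Gaussian integration by parts does produce a self-consistent equation whose deterministic solution is $m_{c,t}$, but closing the argument uniformly down to $t=0$ and matching the sharp error $(n\eta)^{-1}$ (respectively the improved outside-spectrum bound) requires the full machinery of a local-law proof---a priori bounds, stability of the self-consistent equation near the edge, and a fluctuation-averaging step---none of which is automatic from the $t=0$ input alone. The actual proof in \cite{DY20202} proceeds by establishing the local law for the Gaussian-divisible ensemble directly via such a self-consistent analysis rather than by bootstrapping from Theorem \ref{thm_local}.
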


Again using Theorem \ref{thm_local2}, we can prove the following rigidity estimate for the eigenvalues of $\cal Q_{1,t}$ near the right edge $\lambda_{c,t}$. We define the quantiles $\gamma_j^c$ as in \eqref{gammaj} but with $\rho_{w,t}$ replaced by $\rho_{c,t}$. 
\begin{lemma}\label{lem rigidity2}
Suppose the local laws \eqref{averin2} and \eqref{averout2} hold. Then, for any $j$ such that $\lambda_{c,t} -  c_V/2 < \gamma_j^c \le \lambda_{c,t}$, we have
%for some constant $\tilde C>0$, the following events hold with high probability:
\begin{equation}\label{rigidity2}
\vert \lambda_j - \gamma_j^c \vert \prec  j^{-1/3}n^{-2/3} .
\end{equation}
\end{lemma}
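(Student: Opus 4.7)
The plan is to reduce the claim to the same standard rigidity argument used for Lemma \ref{lem_correct_rigi}, since the local laws \eqref{averin2} and \eqref{averout2} have exactly the same structural form as \eqref{averin} and \eqref{averout}, with $m_{w,t}, \lambda_{+,t}, \kappa$ replaced by $m_{c,t}, \lambda_{c,t}, \kappa_c$. In particular, by the same reasoning used to obtain the square root behavior \eqref{sqrtdensity}, Assumption \ref{assm regularW} together with the analysis of the rectangular free convolution in Appendix \ref{sec_summaryrectnagular} implies $\rho_{c,t}(x) \sim \sqrt{\lambda_{c,t}-x}$ for $\lambda_{c,t}-c_V/2 \le x \le \lambda_{c,t}$, so the quantiles $\gamma_j^c$ satisfy $\lambda_{c,t}-\gamma_j^c \sim (j/p)^{2/3}$ and $\gamma_j^c-\gamma_{j+1}^c \sim j^{-1/3} n^{-2/3}$.

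First I would translate the Stieltjes transform estimates \eqref{averin2}--\eqref{averout2} into a bound on the difference of counting functions. Let $N(E):=\#\{i:\lambda_i(\cal Q_{1,t})\ge E\}$ and $N_c(E):=p\int_E^\infty \rho_{c,t}(x)\dd x$. Using Helffer--Sj\"ostrand calculus with a smooth cutoff of the indicator $\mathbf 1_{[E,\infty)}$, one expresses $N(E)-N_c(E)$ as an integral of $\im[m_{1,t}(z)-m_{c,t}(z)]$ against a compactly supported weight in $z=E'+\ii\eta$. The contribution from $\eta$ above the scale $n^{-2/3+\vartheta}$ is controlled by \eqref{averin2} and \eqref{averout2}, the former giving an $(n\eta)^{-1}$ bound and the latter, crucially, the improved $((n\eta)^2\sqrt{\kappa_c+\eta})^{-1}$ contribution outside the spectrum; integrating in $E'$ yields $|N(E)-N_c(E)|\prec 1$ uniformly for $\lambda_{c,t}-c_V/2\le E\le \lambda_{c,t}+c_V/2$. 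This is exactly the argument carried out in detail in \cite{EKYY1,EYY,pillai2014}, and no new input is needed beyond the local laws already established.

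Next I would convert this counting function estimate into eigenvalue rigidity. Given $j$ with $\lambda_{c,t}-c_V/2<\gamma_j^c\le \lambda_{c,t}$, the definition of $\gamma_j^c$ gives $N_c(\gamma_j^c)=j-1$, and therefore $|N(\gamma_j^c)-(j-1)|\prec 1$. Combining this with the local spacing $\gamma_j^c-\gamma_{j+k}^c \sim k\cdot j^{-1/3}n^{-2/3}$ (for $k$ up to a small fraction of $j$), which follows from the square root behavior of $\rho_{c,t}$, one concludes $|\lambda_j-\gamma_j^c|\prec j^{-1/3}n^{-2/3}$ by a straightforward comparison of indices.

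The only mildly delicate point is ensuring that the Helffer--Sj\"ostrand integration is carried out with the correct $j$-dependent scale so as to exploit the improved outside-spectrum bound \eqref{averout2}; without this, one would only obtain the uniform bound $n^{-2/3+\epsilon}$ instead of the sharper $j^{-1/3}n^{-2/3}$. This is exactly the adaptation treated in \cite[Section 7]{EYY} and the corresponding sections of \cite{EKYY1,pillai2014}, so I would simply cite these references rather than reproduce the calculation, in parallel with the treatment of Lemma \ref{lem_correct_rigi}.
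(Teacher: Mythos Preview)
Your proposal is correct and matches the paper's approach exactly: the paper's proof of this lemma is a one-sentence reference to the standard Helffer--Sj\"ostrand argument in \cite{EKYY1,EYY,pillai2014}, identical to its proof of Lemma \ref{lem_correct_rigi}. Your write-up simply expands that citation into a brief outline of what the standard argument actually does, which is entirely appropriate.
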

\begin{proof}
The estimate \eqref{rigidity2} follows from the local laws \eqref{averin2} and \eqref{averout2} combined with a standard argument using Helffer-Sj\"ostrand calculus. The details are already given in \cite{EKYY1,EYY,pillai2014}. 
\end{proof}

\begin{remark}\label{rmk_iogeqonecase}
We now briefly discuss how to handle the general case with $i_0>1$. When $i_0>1$, the $i_0-1$ outliers will give rise to several small peaks of $\rho_{w,t}$ around the spikes $d_i$, $1\le i \le i_0-1$. We can exclude them and only consider the bulk component of $\rho_{w,t}$ with a right edge $\lambda_{+,t}$ that is close to $d_{i_0}$. Then, all the results in this section still hold for the $i_0>1$ case with $\lambda_+:=d_{i_0}$ except that $c_V$ needs to be chosen sufficiently small so that the spectral domains $\mathcal{D}_{\vartheta}$ and $\mathcal{D}_{\vartheta,c}$ are away from the spikes $d_i$, $1\le i \le i_0-1$, by a distance of order 1 and $j$ will be restricted to $j\ge i_0$ in Lemmas \ref{lem_correct_rigi} and \ref{lem rigidity2}. %some statements will be more cumbersome.  
\end{remark}

\section{Proof of Theorem \ref{thm_regularbm}}%{Analysis of the rectangular Dyson Brownian motion}
\label{sec_calculationDBM}

This section is devoted to the proof of Theorem \ref{thm_regularbm}. {For simplicity of presentation, we only provide the detailed proof for the $i_0=1$ case without outliers. The general case with $i_0>1$ will be discussed in Remark \ref{rmk_commentcomment} below.}

In the proof, we fix two time scales 
\begin{equation}\label{defn_t0t1}
t_0= {n^{\omega_0}}/{n^{1/3}}, \quad t_1= {n^{\omega_1}}/{n^{1/3}},
\end{equation}
for some constants $\omega_0$ and $\omega_1$ satisfying $1/3-\phi_*/2+\e/2 \le \omega_0 \le 1/3-\e/2$ and $0<\omega_1<\omega_0/100$. The reason for choosing these two scales is the same as the one in \cite{edgedbm}. That is, we first run the DBM for $t_0$ amount of time to regularize the global eigenvalue density, and then for the DBM from $t_0$ to $t_0+t_1$, we will show that the local statistics of the edge eigenvalues converge to the Tracy-Widom law. Since $t_1\ll t_0$, for the time period $t_0 \le t \le t_0+t_1$ the locations of the quantiles defined in \eqref{gammaj} remain approximately constant. 

The eigenvalue dynamics of $\cal Q_t=(W+\sqrt{t}X)(W+\sqrt{t}X)^\top$ with respect to $t$ is described by the \emph{rectangular Dyson Brownian motion} defined as follows. Let $B_i(t),$ $i=1,\cdots,p,$ be independent standard Brownian motions. For $t\ge 0$, we define the process $\{\lambda_i(t):1\le i \le p\}$ as the unique strong solution to the following system of SDEs \cite[Appendix C]{Bourgade2017}:
%Moreover, we consider the process $\lambda_{i}(t)$ by 
\begin{equation}\label{SDE rDBM}
d \lambda_i=2 \sqrt{\lambda_i}\frac{dB_i}{\sqrt{n}}+\left( \frac{1}{n} \sum_{j \neq i} \frac{\lambda_i+\lambda_j}{\lambda_i-\lambda_j}+1\right)dt, \quad 1\le i \le p,
\end{equation}
with initial data 
$$\lambda_i(0):=\lambda_i(\gamma_w \mathcal{Q}_{t_0}), \quad \gamma_w:=\left( \frac12 [4\lambda_{+,t_0}\zeta_{+,t_0} + (1-c_n)^2 t_0^2] c_n^2 t_0^2\Phi_{t_0}^{''}(\zeta_{+,t_0}) \right)^{-1/3}.$$ 
In other words, the initial data is chosen as the eigenvalues of the regularized matrix $\cal Q_{t_0}$, and $\gamma_w$ is chosen to match the edge eigenvalue gaps of $\cal Q_{t_0}$ with those of the Wigner matrices. Here we recall that the asymptotic density $\rho_{w,t}$ is given by \eqref{sqrtdensity2}, while the Wigner semicircle law has density $\pi^{-1}\sqrt{(2-x)_+} +\OO((2-x)_+)$ around $2$. The system of SDEs \eqref{SDE rDBM} for the rectangular DBM is defined in a way such that for any time $t>0$, the process $\{\lambda_i(t)\}$ has the same joint distribution as the eigenvalues of the matrix 
$$\gamma_w \cal Q_{t_0+t/\gamma_w}= (\sqrt{\gamma_w} W+\sqrt{\gamma_w t_0+t}X)(\sqrt{\gamma_w} W+\sqrt{\gamma_w t_0+t}X)^\top.$$ 
We shall denote the rectangular free convolution of the empirical spectral density of $\sqrt{\gamma_w}V$ with MP law at time $\gamma_w t_0 + t$ by $\rho_{\lambda,t}$, which gives the asymptotic ESD for $\gamma_w \cal Q_{t_0+t/\gamma_w}$. Moreover, we use $m_{\lambda,t}$ to denote the Stieltjes transform of $\rho_{\lambda,t}$. It is easy to see that the right  edge of $\rho_{\lambda,t}$ is given by
\begin{equation*}
E_{\lambda}(t):=\gamma_w \lambda_{+, t_0+t/\gamma_w},
\end{equation*}
where recall that $\lambda_{+,t}$ denotes the right edge of $\rho_{w,t}$ at time $t$. Note that the scaling factor $\gamma_w$ is fixed throughout the evolution, but the right edge evolves in time. 
%We further denote $b_{\lambda,t}:=1+c_ntm_{\lambda,t}$, $\zeta_{\lambda,t}:=b_{\lambda,t}^2z-b_{\lambda,t} t(1-c_n)$ and 
%\begin{equation}gin{equation}\label{eq_subcompansionw}
%\Phi_{\lambda,t}(\zeta):=\zeta(1-ct m_{\lambda,0}(\zeta))^2+(1-c)t(1-c_ntm_{\lambda,0}(\zeta)).
%\end{equation}

We would like to compare the edge eigenvalue statistics of the rectangular DBM $\{\lambda_i(t)\}$ with those of a carefully chosen deformed Wishart matrix. We define a $p \times p$ sample covariance matrix $\cal U \cal U^\top$, where $\cal U$ is a random matrix of the form $\cal U:=\Sigma^{1/2}\cal X$. {Here $\cal X$ is a $p\times n$ random matrix with i.i.d. Gaussian entries of mean zero and variance $n^{-1}$, and $\Sigma=\diag(\sigma_1,\cdots, \sigma_p)$ is a diagonal population covariance matrix. }
Recall that the asymptotic ESD of $\cal U\cal U^\top$, denoted as $\rho_{\mu,0}$, is given by the multiplicative free convolution of the Marchenko-Pastur law and the ESD of $\Sigma$, which is also referred to as the deformed Marchenko-Pastur law \cite{MP}. We choose $\Sigma$ such that $\rho_{\mu,0}$ matches $\rho_{\lambda,0}$ near the right edge $E_{\lambda}(0)$, that is, $\rho_{\mu,0}(x)$ satisfies that
\begin{equation}\label{same rho}\rho_{\mu,0}(x)=\pi^{-1}\sqrt{(E_{\lambda}(0)-x)_+}+\OO((E_{\lambda}(0)-x)_+),\end{equation}
for $x$ around $E_{\lambda}(0)$. Note that there are only two parameters to match, i.e. the right spectral edge and the curvature of the spectral density at the right edge, but there are a lot of degrees of freedom in $\Sigma$ for tuning to ensure that \eqref{same rho} holds. Now we define a rectangular DBM with initial data $\{\mu_i\}$ being the eigenvalues of $\cal U \cal U^\top$. More precisely, for $t\ge 0$ we define the process $\{\mu_i(t):1\le i \le p\}$ as the unique strong solution to the following system of SDEs:
\begin{equation}\label{eq_coupledsde}
d \mu_i=2 \sqrt{\mu_i}\frac{dB_i}{\sqrt{n}}+\left( \frac{1}{n} \sum_{j \neq i} \frac{\mu_i+\mu_j}{\mu_i-\mu_j}+1\right)dt, \quad 1\le i \le p,
\end{equation}
with initial data $\mu_i(0):=\mu_i( \mathcal{U}\cal U^\top ).$ %,\quad \gamma_{\mu}:= c_n^{1/2}(1+\sqrt{c_n})^{4/3},\quad a_\mu:=E_{\lambda}(0) - \gamma_\mu(\sqrt{1+c_n})^2.$$ 
%Here $\gamma_\mu$ is chosen to match the edge eigenvalue gaps of the Marchenko-Pastur law with those of the Wigner semicircle law, where we recall that the MP law has density approximately 
%$$\pi^{-1} c_n^{-3/4} (1+\sqrt{c_n})^{-2} \sqrt{(1+\sqrt{c_n})^2 - x} $$ around the right edge at $(1+\sqrt{c_n})^2$. The shift $ a_\mu$ is chosen to match the right edge of the MP law with the right edge $E_{\lambda}(0) $ of $\rho_{\lambda, 0}$ at $t=0$. Without loss of generality, we can assume that $a_\mu\ge 0$. 
%%\begin{equation}\label{wlogge}
%%E_{\lambda}(t)\ge  \gamma_\mu^2(\sqrt{1+c_n})^2.
%%\end{equation}
%%since otherwise we can consider a positive shift of $\cal Q_t$ instead. 
For any $t>0$ the process $\{\mu_i(t)\}$ has the same joint distribution as the eigenvalues of the matrix $(\cal U+\sqrt{t}X)(\cal U+\sqrt{t}X)^\top$, which is still a sample covariance matrix with population covariance $\Sigma + tI$. In particular, {by \cite{LS}} we know that the edge eigenvalues of $\{\mu_i(t)\}$ obey the Tracy-Widom distribution asymptotically. We will denote the rectangular free convolution of $\rho_{\mu,0}$ with MP law at time $ t$ by $\rho_{\mu,t}$, which gives the asymptotic ESD for $(\cal U+\sqrt{t}X)(\cal U+\sqrt{t}X)^\top$. Furthermore, we denote the Stieltjes transform of $\rho_{\mu,t}$ by $m_{\mu,t}$, and the right edge of $\rho_{\mu,t}$ by $E_{\mu}(t)$. Note that we have $E_\mu(0)=E_\lambda(0)$ by \eqref{same rho}.

% In the following proof, we shall denote the MP law for $(\gamma_\mu+t)XX^\top + a_\mu$ as 
%\begin{equation}\label{MPeq}\rho_{\mu,t}:=\frac{\sqrt{[x - a_\mu -(\gamma_\mu+t)(1-\sqrt{c_n})^2][(\gamma_\mu+t)(1+\sqrt{c_n})^2+a_\mu-x]}}{2\pi c_n (x-a_\mu) (\gamma_\mu+t)},\end{equation}
%whose right  edge is given by
%%Consequently the right edge of $\{\mu_i(t)\}$ is given by
%\begin{equation}gin{equation*}
%E_{\mu}(t):=(\gamma_\mu+t)(1+\sqrt{c_n})^2 + a_\mu = E_\lambda(0) + t(1+\sqrt{c_n})^2.
%\end{equation*}  
%Finally, we denote the Stieltjes transform of $\rho_{\mu,t}$ by $m_{\mu,t}$.

%\begin{equation}gin{remark}\label{rem_chooseU}
%Recall that the MP density with variance $\sigma^2$ and aspect ratio $y_n$ is 
%$$ \rho(x)=\frac{\sqrt{\left[\sigma^2(1+\sqrt{y_n})^2-x\right]\left[x-\sigma^2(1-\sqrt{y_n})^2\right]}}{2\pi \sigma^2 y_n x} =\frac{\sqrt{\left[\sigma^2(1+\sqrt{y_n})^2-x\right]}}{\pi \sigma^3 y_n^{1/4}(1+\sqrt{y_n})^2} + \OO\left(\left|\sigma^2(1+\sqrt{y_n})^2-x\right|\right)$$
%for $x\uparrow \sigma^2(1+\sqrt{y_n})^2$. ..... entries of $\cal U$ are i.i.d.\;Gaussian random variables with mean zero and variance $n^{-1}$. 
%\end{remark}

%We remark that the Brownian motions for $\{\lambda_i(t)\}$ and $\{\mu_i(t)\}$ are indentical.  
The main result of this section is the following comparison theorem. 
\begin{theorem}\label{thm_firstkey} 
Fix any integer $k \in \N$. Under the assumptions of Theorem \ref{thm_regularbm}, there exists a constant $\e>0$ such that 
\begin{equation}\label{comparison_eig}
\max_{1\le i \le k}\left| [\lambda_{i}(t_1)-E_{\lambda}(t_1)]-[\mu_i(t_1)-E_{\mu}(t_1)] \right| \leq  n^{-2/3-\e} \quad \text{with high probability.}
\end{equation}
\end{theorem}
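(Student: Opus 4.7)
The plan is to follow the matching-and-coupling strategy outlined in Section \ref{sec_proofstrategy}, which mirrors the approach developed for Wigner matrices in \cite{Bourgade2017,edgedbm} but now adapted to the rectangular setting. I will couple $\{\lambda_i(t)\}$ and $\{\mu_i(t)\}$ through the \emph{same} Brownian motions $B_i(t)$ and introduce the interpolating rectangular DBM
\[
dz_i(t,\alpha)=2\sqrt{z_i(t,\alpha)}\,\frac{dB_i}{\sqrt n}+\Bigl(\frac1n\sum_{j\ne i}\frac{z_i(t,\alpha)+z_j(t,\alpha)}{z_i(t,\alpha)-z_j(t,\alpha)}+1\Bigr)dt,\qquad z_i(0,\alpha)=\alpha\lambda_i(0)+(1-\alpha)\mu_i(0),
\]
so that $z_i(t,1)=\lambda_i(t)$ and $z_i(t,0)=\mu_i(t)$. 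Since \eqref{SDE rDBM} is singular at the origin, I will pass to the singular-value process $y_i(t,\alpha):=\sqrt{z_i(t,\alpha)}$, and to the shifted version $\widetilde y_i(t,\alpha):=y_i(t,\alpha)-\sqrt{E(t,\alpha)}$ where $E(t,\alpha):=\alpha E_\lambda(t)+(1-\alpha)E_\mu(t)$. The aim is then to bound $\partial_\alpha\widetilde y_i(t_1,\alpha)$ in $\ell^\infty$ uniformly in $\alpha\in[0,1]$, which after integration in $\alpha$ will yield \eqref{comparison_eig}.

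The second step is to introduce a short-range approximation $\widehat y_i(t,\alpha)$ to localize the problem near the right edge, since the regularity of the initial data (via Definition \ref{assumption_edgebehavior} and the edge-matching in \eqref{same rho}) is only controlled there. I will truncate the sum $\sum_{j\ne i}$ to $|i-j|\le \ell$ for an intermediate scale $\ell$ (chosen so that the induced error scale matches the Tracy--Widom window $n^{-2/3}$); the bulk contribution that is discarded will be shown to be negligible, producing $|\widehat y_i(t,\alpha)-\widetilde y_i(t,\alpha)|\ll n^{-2/3}$ for edge indices. The required inputs are: (i) the edge rigidity of the two initial ensembles, provided for $\{\lambda_i(0)\}$ by Lemma \ref{lem_correct_rigi} and for $\{\mu_i(0)\}$ by the analogous deformed Marchenko--Pastur rigidity (a consequence of Lemma \ref{lem rigidity2}); (ii) the matching Lemma \ref{lem comparison1} applied to $\rho_1=\rho_{w,t_0}$ and $\rho_2=\rho_{\mu,0}$, together with the edge-shift control of Lemma \ref{lem edgemoving}; and (iii) the square-root edge behavior from Lemma \ref{lem_asymdensitysquare} and the derivative bounds in Lemma \ref{lem_partialm}. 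Combining these, the initial datum $u_i(0):=\partial_\alpha\widehat y_i(0,\alpha)$ obeys $\|u(0)\|_{\ell^q}\prec n^{-2/3+\epsilon}$ for any fixed $q\ge 4$ and arbitrarily small $\epsilon>0$.

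The third and key step is that $u_i(t):=\partial_\alpha\widehat y_i(t,\alpha)$ satisfies a \emph{deterministic} linear parabolic equation on the lattice of indices: the martingale terms cancel upon differentiating in $\alpha$, and what remains is
\[
\partial_t u_i(t)=\sum_{j:\,0<|i-j|\le\ell}B_{ij}(t)\bigl(u_j(t)-u_i(t)\bigr),\qquad B_{ij}(t)\ge 0,
\]
with rates $B_{ij}(t)$ essentially of the free-rectangular-convolution type and controlled down to scale $\eta\sim n^{-2/3+\vartheta}$ by the local laws in Theorems \ref{thm_local}--\ref{thm_local2}. I will prove an energy estimate of the form
\[
\max_{1\le i\le k}|u_i(t)|\,\prec\,\frac{1}{n^{1/3}\,t}\,\|u(0)\|_{\ell^q}\qquad\text{for }t\in[n^{-1/3+\delta},t_1],
\]
by iterating the standard $\ell^q\to\ell^\infty$ decay argument (as in \cite{Bourgade2017,edgedbm}, Proposition~\ref{prop_energyestimate}), where the $n^{-1/3}/t$ factor reflects the fact that the ``edge relaxation time'' of the rectangular-DBM kernel is of order $n^{-1/3}$. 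Evaluating at $t=t_1=n^{\omega_1}/n^{1/3}$ gives $\max_i|u_i(t_1)|\prec n^{-2/3-\omega_1+2\epsilon}$, and integrating $\partial_\alpha\widehat y_i(t_1,\alpha)$ from $\alpha=0$ to $1$, then translating back from singular values to eigenvalues (using $\sqrt{\lambda_i}+\sqrt{\mu_i}\asymp 1$ near the edge) and removing the short-range approximation, yields \eqref{comparison_eig} with $\e=\omega_1/3$ after choosing $\epsilon$ small enough.

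The principal obstacle I anticipate is proving the edge energy estimate in the third step with the sharp $n^{-1/3}/t$ gain: the kernel $B_{ij}(t)$ is not the Wigner semicircle kernel but a rectangular free-convolution kernel whose finite-volume size on scale $\eta$ is governed by $\im m_{w,t}$ and its derivatives near $\lambda_{+,t}$, so Lemmas \ref{lem_asymdensitysquare}--\ref{lem_partialm} must be used to verify the diffusive bound on the effective heat kernel all the way down to $\eta\sim n^{-2/3+\vartheta}$. A secondary technical point, handled by Lemma \ref{lem edgemoving} and the matching Lemma \ref{lem comparison1}, is that the two right edges $E_\lambda(t)$ and $E_\mu(t)$ move at slightly different rates as $t$ varies, so the subtraction by $E(t,\alpha)$ has to be differentiated in $\alpha$ carefully to avoid losing the desired $n^{-2/3-\e}$ scale.
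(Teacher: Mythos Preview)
Your overall strategy matches the paper's: couple via common Brownian motions, interpolate the initial data, pass to singular values and shift by the edge, build a short-range approximation $\widehat y$, differentiate in $\alpha$, and apply the energy estimate of Proposition~\ref{prop_energyestimate}. The inputs you list (rigidity from Lemmas~\ref{lem_correct_rigi} and~\ref{lem rigidity2}, the matching Lemma~\ref{lem comparison1}, Lemmas~\ref{lem_asymdensitysquare}--\ref{lem_partialm}) are the right ones.

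The gap is in your third step. The equation for $u_i=\partial_\alpha\widehat y_i$ is \emph{not} the homogeneous parabolic equation you wrote down. You cannot simply ``truncate the sum to $|i-j|\le\ell$'' and discard the rest: the long-range drift is $O(1)$, so to keep $|\widehat y_i-\widetilde y_i|\ll n^{-2/3}$ you must \emph{replace} it by something, and any natural replacement is $\alpha$-dependent. Differentiating then yields $\partial_t u=\mathcal L u+\zeta^{(0)}$ with $\mathcal L=\mathcal B+\mathcal V$ (a nonpositive diagonal potential also appears) and a forcing $\zeta^{(0)}$. The paper resolves this by introducing a further scale $n^{\omega_a}$ with $\omega_\ell\ll\omega_a\ll\omega_0$ and, for $i\le n^{\omega_a}$, replacing the long-range sum by an integral against the \emph{fixed} density $\rho_t(\cdot,0)$ rather than $\rho_t(\cdot,\alpha)$ (equation~\eqref{wtySDE1}); this forces $\zeta^{(0)}_i=0$ for those $i$, but the resulting approximation error must be controlled via the matching estimates of Lemma~\ref{coro_shoruse2}, which rely on the delicate cancellation~\eqref{nontrivialcancel}. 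For $i>n^{\omega_a}$ the forcing $\zeta^{(0)}_i$ is only polynomially bounded, and a separate ingredient---the finite-speed-of-propagation estimate Lemma~\ref{lem_finitespeedcal}---is needed to show it never reaches the edge indices (Proposition~\ref{prop_key}). Your outline contains neither the $\omega_a$ scale nor any finite-speed argument; without them the Duhamel contribution of $\zeta^{(0)}$ swamps the target $n^{-2/3-\e}$. A secondary point: the paper shifts by the edge $E_+(t,\alpha)$ of a specifically constructed interpolating measure (so that rigidity holds for every $\alpha$, Lemma~\ref{lem_rigidty z}), not by the linear interpolation $\alpha E_\lambda(t)+(1-\alpha)E_\mu(t)$.
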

With Theorem \ref{thm_firstkey}, we can conclude Theorem \ref{thm_regularbm}. 
\begin{proof}[Proof of Theorem \ref{thm_regularbm}]
We take $t_0=t-t_1$ for a small enough constant $\omega_1$. Then, together with the fact that $\mu_i(t_1)-E_{\mu}(t_1)$ satisfies the Tracy-Widom fluctuation by \cite{bao2015,DY,elkaroui2007,LS,Regularity4}, the estimate\eqref{comparison_eig} implies \eqref{eq_maintheoremeq1}. 
\end{proof}

\begin{remark}\label{rmk_commentcomment}
We now make some remarks about the general case with $i_0>1$.
% We provide a remark on how to handle the case $i_0>1.$ 
Its proof is almost the same as that for the $i_0=1$ case, except that we need to apply some standard arguments in the study of DBM regarding the reindexing of the eigenvalues and the padding with dummy particles. More precisely, in equation \eqref{comparison_eig}, we should control
$$\left| [\lambda_{i+i_0-1}(t_1)-E_{\lambda}(t_1)]-[\mu_i(t_1)-E_{\mu}(t_1)] \right| \leq  n^{-2/3-\e}.$$
Then, in defining the two rectangular DBMs, we add to the initial data of the SDEs some dummy particles, which are away from the edge eigenvalues by a distance of order $N^C$ for a large constant $C>0$. These dummy particles have a negligible effect on the evolution of edge eigenvalues, and hence are irrelevant to our final results. But they allow us to take the difference $\lambda_{i+i_0-1}-\mu_i$ for all $1\le i \le p$. We refer the reader to equations (3.10)-(3.12) of \cite{edgedbm} for more details.
\end{remark}

% First, by Lemma \ref{lem_asymdensitysquare}, 
%
%
%
%According to \cite[Theorem 2.4]{LS} {\color{red} explain the normalization of $\mu_i(t)$}. By,....{\color{red} explain the normalization of $\lambda_i(t).$ } By construction, the edge of $\{\lambda_i(t)\}$ is given by 
%\begin{equation}gin{equation*}
%E_{\lambda}(t):= \gamma_{\lambda} E_+(t_0+t/\gamma_{\lambda}^2). 
%\end{equation*} 
%Similarly, the edge of the $\{\mu_i(t)\}$ is given by 
%\begin{equation}gin{equation*}
%E_{\mu}(t):=\gamma_\mu.
%\end{equation*}
% 

\subsection{Interpolating processes}

To estimate the difference $\lambda_i(t)-\mu_i(t)$, we study the following interpolating processes for $0 \leq \alpha \leq 1$:
\begin{equation}\label{eq_interpolationsde}
d z_i(t, \alpha)=2 \sqrt{z_i(t, \alpha)}\frac{\dd B_i}{\sqrt{n}}+\left(\frac{1}{n} \sum_{j \neq i} \frac{z_i(t,\alpha)+z_j(t,\alpha)}{z_i(t,\alpha)-z_j(t,\alpha)}+1\right) \dd t, \quad 1\le i \le p,
\end{equation}
with the interpolated initial data $z_i(0,\alpha):=\alpha \lambda_i(0)+(1-\alpha) \mu_i(0).$ Correspondingly, we denote the Stieltjes transform of the ESD of $\{z_i(t,\al)\}$ by
\begin{equation}\label{eq_defnzstiel}
\wt m_{t}(z,\al):=\frac{1}{p} \sum_{i=1}^p \frac{1}{z_i(t,\alpha)-z}. 
\end{equation}
%Denote $\rho_{\mu,t}$ as the asymptotic density of the ESD of $\{\mu_i(t)\}$ and $\rho_{\lambda,t}$ as that of  $\{\lambda_i(t)\}.$ Since both $\{\mu_i(t)\}$ and $\mathcal{U}_t$ satisfies the assumptions of Lemma \ref{lem_asymdensitysquare}, 
Note that by Lemma \ref{lem_asymdensitysquare}, due to the choice of  $\gamma_w$ and \eqref{same rho}, we have that
\begin{equation}\label{eq_edgecloseinitial}
\rho_{\lambda,0}(E_{\lambda}(0)-E)=\rho_{\mu,0}(E_{\mu}(0)-E)\left[1+\OO\left( \frac{|E|}{t_0^2}\right)\right], \quad 0 \leq E \leq \tau t_0^2,
\end{equation}
for a sufficiently small constant $\tau>0.$ Let $\gamma_{\mu,i}(t)$ and $\gamma_{\lambda,i}(t)$ be the quantiles of $\rho_{\mu,t}$ and $\rho_{\lambda,t}$ defined as
\begin{equation}\label{eq_definitionquantile}
\begin{split}
\gamma_{\mu,i}(t):=\sup_{x}\left\{\int_{x}^{+\infty} \rho_{\mu,t}(x)\dd x > \frac{i-1}{p}\right\},\quad
 \gamma_{\lambda,i}(t):=\sup_{x}\left\{\int_{x}^{+\infty} \rho_{\lambda,t}(x)\dd x > \frac{i-1}{p}\right\}.
 \end{split}
\end{equation} 
%\begin{equation}
%\frac{i}{p}=\int^\infty_{\gamma_{\mu,i}(t)} \rho_{\mu,t}(E) \dd E =\int^{\infty}_{\gamma_{\lambda,i}(t)} \rho_{\lambda,t}(E) \dd E,\quad 1\le i \le p.  
%\end{equation}
By Theorem \ref{thm_local} and \cite[Theorem 3.2]{bao2015}, both $|\wt m_{0}(z,0)-m_{\mu,0}(z)|$ and $|\wt m_{0}(z,1)-m_{\lambda,0}(z)|$ satisfy local laws as in \eqref{averin} and \eqref{averout}. %Assumption \ref{assm regularW}. 
Hence, by {Lemma \ref{lem_correct_rigi}}, there exists a small enough constant $c_0>0$  depending on $c_V$ such that for $k_0:=\lceil c_0 n\rceil$,% we have that %for some constant $c_*>0$ depending on $c_V$, we have%so that with high probability
\begin{equation}\label{eq_rigidityalphaequal1}
\sup_{0 \leq t \leq 10t_1}\left( |z_i(t,0)-\gamma_{\mu,i}(t)| +  |z_i(t,1)-\gamma_{\lambda,i}(t)|\right) \prec i^{-1/3} n^{-2/3},\quad 1\le i \le k_0.
\end{equation}
%for any $\epsilon>0.$ 
Here to get \eqref{eq_rigidityalphaequal1}, we used a standard stochastic continuity argument to pass from fixed times $t$ to all times. Roughly speaking, taking a sequence of fixed times $t_k= 10t_1 \cdot k/n^{C}$ for a large constant $C>0$, by {Lemma \ref{lem_correct_rigi}} and a simple union bound we get that 
\begin{equation}\label{eq_rigidityalphaequal1union}
\sup_{0\le k \le n^C}\left( |z_i(t_k,0)-\gamma_{\mu,i}(t_k)| +  |z_i(t_k,1)-\gamma_{\lambda,i}(t_k)|\right) \prec i^{-1/3} n^{-2/3}.
\end{equation}
Then, we can show that with high probability, the difference $|z_i(t,0)-z_i(t_k,0)| +  |z_i(t,1)-z_i(t_k,1)|$ is small enough for all $t_k \le t \le t_{k+1}$ using a simple continuity estimate. We refer the reader to Appendix B of \cite{dysonbulk} for more details.

Combining \eqref{eq_edgecloseinitial} and \eqref{eq_definitionquantile}, we can get the following simple control on the quantiles near the edge. 

\begin{lemma} For $i =\OO( n^{6 \omega_0/5})$, we have that %for some constant $C>0,$
\begin{equation}\label{eq_edgecontrolcontrol}
\left| \gamma_{\mu,i}(0) - \gamma_{\lambda,i}(0)) \right| \lesssim \frac{i^{4/3}}{n^{2 \omega_0} n^{2/3}}.
\end{equation} 
\end{lemma}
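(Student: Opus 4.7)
My plan is to reduce the quantile-difference estimate to a direct integration against the matching condition \eqref{eq_edgecloseinitial}. Introduce the shifted quantiles $E_i^\lambda := E_\lambda(0) - \gamma_{\lambda,i}(0)$ and $E_i^\mu := E_\mu(0) - \gamma_{\mu,i}(0)$. Since \eqref{same rho} was arranged so that $E_\mu(0) = E_\lambda(0)$, proving \eqref{eq_edgecontrolcontrol} is equivalent to bounding $|E_i^\lambda - E_i^\mu|$ by $i^{4/3} n^{-2/3-2\omega_0}$.

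First I would record the size of the quantiles themselves. Since $\rho_{\lambda,0}$ is a rectangular free convolution of an $\eta_*$-regular measure with the MP law at the (large) time $\gamma_w t_0$, Lemma \ref{lem_asymdensitysquare} gives the two-sided bound $\rho_{\lambda,0}(E_\lambda(0)-E) \sim \sqrt{E}$ on $[0, c_V]$, and the analogous statement holds for $\rho_{\mu,0}$ by \eqref{same rho} (since deformed MP densities are also square-root at the edge). Integrating,
\[
\int_0^{E_i^\lambda} \rho_{\lambda,0}(E_\lambda(0)-E)\,dE = \frac{i-1}{p} \asymp \frac{i}{n},
\]
which combined with the square-root behavior forces $E_i^\lambda \asymp (i/n)^{2/3}$, and similarly $E_i^\mu \asymp (i/n)^{2/3}$. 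Under the hypothesis $i = \OO(n^{6\omega_0/5})$ we have $(i/n)^{2/3} \lesssim n^{-2/3+4\omega_0/5} \ll t_0^2 = n^{-2/3+2\omega_0}$, so both $E_i^\lambda$ and $E_i^\mu$ lie safely in the regime $[0,\tau t_0^2]$ where the matching \eqref{eq_edgecloseinitial} applies.

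Next, using that both quantiles encode the same mass $(i-1)/p$ and substituting \eqref{eq_edgecloseinitial} into the defining integral for $\gamma_{\lambda,i}(0)$, I would write
\[
\int_{E_i^\mu}^{E_i^\lambda} \rho_{\mu,0}(E_\mu(0)-E)\,dE
\;=\; -\int_0^{E_i^\lambda} \rho_{\mu,0}(E_\mu(0)-E)\,\OO\!\left(\frac{E}{t_0^2}\right)\,dE.
\]
The right-hand side is bounded, via $\rho_{\mu,0}(E_\mu(0)-E)\lesssim \sqrt{E}$, by
\[
\frac{C}{t_0^2}\int_0^{E_i^\lambda} E^{3/2}\,dE \;\lesssim\; \frac{(E_i^\lambda)^{5/2}}{t_0^2} \;\lesssim\; \frac{(i/n)^{5/3}}{t_0^2}.
\]
For the left-hand side, the lower bound $\rho_{\mu,0}(E_\mu(0)-E) \gtrsim \sqrt{E}$ on the interval (which is valid since $|E_i^\lambda-E_i^\mu|$ will turn out to be small compared with $E_i^\mu$, so the square-root regime persists throughout $[E_i^\mu,E_i^\lambda]$) gives a lower bound $\gtrsim (i/n)^{1/3}\,|E_i^\lambda-E_i^\mu|$. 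Dividing yields
\[
|E_i^\lambda - E_i^\mu| \;\lesssim\; \frac{(i/n)^{4/3}}{t_0^2} \;=\; \frac{i^{4/3}}{n^{2\omega_0}\,n^{2/3}},
\]
which is \eqref{eq_edgecontrolcontrol}. The constraint $i \lesssim n^{6\omega_0/5}$ is exactly what is needed to make the resulting bound on $|E_i^\lambda-E_i^\mu|$ smaller than $E_i^\mu \asymp (i/n)^{2/3}$, closing the self-consistency check.

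The proof is essentially bookkeeping once the square-root quantile asymptotics and the matching \eqref{eq_edgecloseinitial} are in place; the only delicate point is the self-consistent verification that the perturbation $|E_i^\lambda - E_i^\mu|$ stays below $E_i^\mu$ so that the lower bound $\rho_{\mu,0}(E_\mu(0)-E)\gtrsim\sqrt E$ can be applied uniformly on the interval of integration. This is precisely why the statement is restricted to $i \lesssim n^{6\omega_0/5}$ rather than the wider range $i \lesssim n^{3\omega_0}$ that would still keep $E_i^\lambda$ inside the matching regime $[0,\tau t_0^2]$.
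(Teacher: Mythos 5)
Your proof is correct and takes essentially the same route as the paper: define the shifted quantiles, derive the two-sided bound $E_i^\lambda, E_i^\mu \asymp (i/n)^{2/3}$ from the square-root behavior, use mass preservation plus the matching \eqref{eq_edgecloseinitial} to express the gap integral $\int_{E_i^\mu}^{E_i^\lambda}\rho_{\mu,0}$ in terms of the $\OO(E/t_0^2)$ error, then bound the error integral above by $(E_i^\lambda)^{5/2}/t_0^2$ and the gap integral below by $(i/n)^{1/3}|E_i^\lambda - E_i^\mu|$.

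One remark on the closing discussion: the claim that the restriction $i \lesssim n^{6\omega_0/5}$ is ``exactly what is needed'' for the self-consistency check is not accurate. Both the self-consistency requirement $|E_i^\lambda - E_i^\mu| \ll E_i^\mu$ and the requirement that the quantiles lie in the matching window $[0,\tau t_0^2]$ reduce to the same condition $i^{2/3} \lesssim n^{2\omega_0}$, i.e.\ $i \lesssim n^{3\omega_0}$, which is the wider range you correctly identify in your last sentence. In fact, as in the paper's own proof, the self-consistency step is not needed at all: since $E_i^\lambda$ and $E_i^\mu$ are each independently comparable to $(i/n)^{2/3}$ (both come from densities with the square-root behavior of Lemma \ref{lem_asymdensitysquare} and \eqref{same rho}), the bound $\rho_{\mu,0}(E_\mu(0)-E)\gtrsim (i/n)^{1/3}$ already holds for every $E$ between them, regardless of the size of $|E_i^\lambda - E_i^\mu|$. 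The threshold $n^{6\omega_0/5}$ in the lemma is simply a convenient sufficient restriction for its later use with $i\lesssim \ell^3 n^{\delta_v}$, not an optimal one.
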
 
\begin{proof}
For simplicity, we denote $x:=E_{\mu}(0)-\gamma_{\mu,i}(0)$ and $y:=E_{\lambda}(0)-\gamma_{\lambda,i}(0).$ Without loss of generality, we assume that $x \leq y.$ Note that by the square root behaviors of $\rho_{\mu,0}$ and $\rho_{\lambda,0}$ near the right edges, it is easy to get that $x \sim y \sim i^{2/3} n^{-2/3}$ for $i\ge 2$. Now using (\ref{eq_edgecloseinitial}) and (\ref{eq_definitionquantile}), we obtain that
\begin{align*}
\int_{0}^{x} \left[\rho_{\mu,0}(E_{\mu}(0)-E)-\rho_{\lambda,0}(E_\lambda(0)-E)\right] \dd E =\int^{y}_{x} \rho_{\lambda,0}(E_\lambda(0)-E) \dd E ,
\end{align*}
which gives $|y^{3/2}-x^{3/2}| \lesssim {x^{5/2}}/{t_0^2}.$ From this estimate, we get that $|y-x|\lesssim x^2/t_0^2$, which concludes the proof together with the facts $x \sim i^{2/3}n^{-2/3}$ and $E_{\lambda}(0)=E_{\mu}(0)$. 
\end{proof}
%Note that we require such an upper bound for $i$ to guarantee that the bound of (\ref{eq_edgecontrolcontrol}) is much smaller than $n^{-2/3}.$ Next, we extend the result of (\ref{eq_rigidityalphaequal1}) for all $\alpha \in [0,1].$  

Next, we will construct a collection of measures that match the asymptotic densities of the interpolating ensembles and have well-behaved square root densities near the right edge. Our main goal is that for each $0\le \al \le 1$, we have a density which matches the distribution of $\{z_i(0,\al)\}$ approximately, and with which we can take a rectangular free convolution for any $0\le t \le t_1$. 

At $t=0$, define the eigenvalue counting functions near the edge $E_{\mu}(0)=E_{\lambda}(0)$ as  
\begin{equation*}
n_\mu(E)=\int_E^{E_{\mu}(0)} \rho_{\mu,0}(y) \dd y, \quad n_{\lambda}(E)=\int_E^{E_{\lambda}(0)} \rho_{\lambda,0}(y) \dd y.
\end{equation*}
Since $ \rho_{\mu,0}(y) >0$ for $E_{\mu}(0) - \tau\le y < E_{\mu}(0)$ and $ \rho_{\lambda,0}(y) >0$ for $E_\lambda(0) - \tau\le y < E_\lambda(0)$ for a small enough constant $\tau>0$, the functions $n_\mu$ and $n_\lambda$ are strictly increasing near the right edges. Hence, we can define the inverse functions (i.e. the continuous versions of quantiles) $\varphi_\mu(s)$ and $\varphi_\lambda(s)$ by the equations
\begin{equation*}
n_{\mu}(\varphi_\mu(s))=s, \quad n_{\lambda}(\varphi_{\lambda}(s))=s, \quad 0\le s \le c_*, 
\end{equation*}
where {$c_*\equiv c_*(n):=\lceil c_0 n\rceil/n$ for a small enough constant $c_0>0$}. Then, for $\alpha \in [0,1],$ we define %the interpolating quantiles as  
\begin{equation*}
\varphi(s,\alpha):=\alpha \varphi_\mu(s)+(1-\alpha) \varphi_\lambda(s),
\end{equation*}
which maps $[0,c_*]$ onto 
\begin{equation}\label{mathttDal}
\mathtt{D}_{\alpha}:= [\alpha \varphi_\mu(c_*)+(1-\alpha)\varphi_{\lambda}(c_*), E_{\lambda}(0)].
\end{equation} 
Now, for any $\al\in [0,1]$, we define the inverse function $n(E, \alpha):\mathtt{D}_{\alpha} \rightarrow [0, c_*]$ of $\varphi(s,\alpha)$ by the equation
\begin{equation*}
n(\varphi(s,\alpha),\alpha)=s.
\end{equation*}
%Hence, near the edge on the interval $\mathtt{D}_{\alpha},$ 
Then, we define the asymptotic density as
\begin{equation*}
\rho(E, \alpha):=\frac{\partial n(E, \alpha)}{\partial E}. 
\end{equation*} 
By inverse function theorem, we can calculate that
\begin{equation*}
\rho(E, \alpha)= \left[ \frac{\alpha}{\rho_{\mu,0}(\varphi_\mu(n(E,\alpha)))}+\frac{1-\alpha}{\rho_{\lambda,0}(\varphi_\lambda(n(E,\alpha)))} \right]^{-1}.
\end{equation*}
Combining it with (\ref{eq_edgecloseinitial}), we immediately find that 
\begin{equation}\label{eq_constructdensityclose}
\rho(E_{+}(0,\alpha)-E, \alpha)=\rho_{\mu,0}(E_{\mu}(0)-E)\left[1+\OO\left( \frac{|E|}{t_0^2}\right)\right], \quad 0 \leq E \leq \tau t_0^2, 
\end{equation} 
for a sufficiently small constant $\tau>0$, where $E_{+}(0,\alpha)$ is the right edge of $\rho(E, \alpha)$. We now construct a (random) measure $\mu(E, \alpha)$ as 
\begin{equation*}
\dd \mu(E, \alpha)=\rho(E, \alpha) \mathbf{1}_{\{E \in \mathtt{D}_{\alpha}\}} \dd E+p^{-1} \sum_{i>c_* n} \delta_{z_i(0,\alpha)}(\dd E). 
\end{equation*}  
This measure is defined in a way such that its Stieltjes transform is close to $\wt m_{0}(z,\al)$ in (\ref{eq_defnzstiel}). Moreover, the motivation behind this definition is as follows. We need a deterministic density that behaves well around the right edge in order to use the results in Section \ref{sec_preliminary_feelocal}. But we do not have any estimate on the density far away from the edge. Hence for the remaining eigenvalues that are away from the right edge by a distance of order 1, we just take $\delta$ functions. Although the sum of delta measures is random, its effect on deterministic quantities that we are interested in is negligible.
%\begin{equation}gin{remark}
%\cor {add remarks on the motivation.}
%\end{remark}

Let $\rho_t(E, \alpha)$ be the rectangular free convolution of $\dd \mu(E, \alpha)$ with the MP law at time $t$. Moreover, we denote its Stieltjes transform by $m_t(z, \alpha)$ and its right edge by $E_+(t, \alpha).$ 
%As before, we denote that $b_t(z,\al):=1+c_ntm_t(z,\al)$, $\zeta_t(z,\al):=zb^2_t(z,\al)-  t(1-c_n)b_t(z,\al) $ and
%\begin{equation}gin{equation}\label{eq_subcompansionw}
%\Phi_{t,\al}(\zeta):=\zeta(1-ct m_{0}(\zeta,\al))^2+(1-c)t(1-c_ntm_{0}(\zeta,\al)).
%\end{equation}
Some key properties of $\rho_t(E, \alpha)$ and $m_t(z, \alpha)$ have been given in Section \ref{sec_preliminary_feelocal}. In particular, we know that $\rho_t(E, \alpha)$ has a square root behavior near $E_+(t, \alpha).$  Although $\rho_t(E,\alpha)$ is random, with the results in Section \ref{sec_preliminary_feelocal} we can provide a deterministic control on it. 
\begin{lemma}\label{coro_shoruse}
 Let $\e, \tau>0$ be sufficiently small constants. For $0 \leq E \leq \tau n^{-2\epsilon} t_0^2$, we have that for any constant $D>0$,
\begin{equation}\label{eq_corcontrol1}
\rho_t(E_+(t,\alpha)-E,\al)=\rho_{\mu,t}(E_{\mu}(t)-E)\left[1+\OO(n^{\epsilon}t/t_0 + n^{-D})\right].
\end{equation}
Moreover, for a small constant $c_\tau>0$ we have that
\begin{equation}\label{eq_corcontrol4}
\max_{1\le i \le c_\tau n^{1-3\e}t_0^3}|\wt{\gamma}_i(t,\alpha)-\wt{\gamma}_i(t,0)| \leq \left(n^{\epsilon}\frac{t}{t_0}+n^{-D}\right) \frac{i^{2/3}}{n^{2/3}},
\end{equation}
where we introduced the short-hand notation $\wt{\gamma}_i(t,\alpha):={E_+(t, \alpha)}-{\gamma_i(t,\alpha)}.$
\end{lemma}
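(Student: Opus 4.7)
The plan is to invoke Lemma \ref{lem comparison1} with $\rho_1=\rho_{\mu,0}$ playing the role of the ``reference'' density and $\rho_2$ equal to the continuous part of the interpolating measure $\mu(\cdot,\alpha)$. The critical hypothesis (\ref{eq_closerho}) of that lemma, which requires two densities to match to relative precision $\OO(|x|/t_0^2)$ on a window of size $t_0^2$ below the right edge, is exactly the content of (\ref{eq_constructdensityclose}), which was engineered into the very definition of $\rho(E,\alpha)$ via interpolation of inverse cumulative distributions. The square-root behavior (\ref{sqrt12}) near the right edge holds for $\rho_{\mu,0}$ by the deformed Marchenko--Pastur density and for $\rho(\cdot,\alpha)$ as the harmonic interpolation of two square-root densities with matching edge.

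The technical nuisance is that $\mu(E,\alpha)$ is not purely a continuous density; it also carries the empirical point mass $p^{-1}\sum_{i>c_*n}\delta_{z_i(0,\alpha)}$ supported at distance $\Omega(1)$ below the right edge $E_+(0,\alpha)$. I would neutralize this in two steps. First, I would smooth each $\delta_{z_i(0,\alpha)}$ into a narrow Gaussian bump of width $n^{-D'}$ for some huge $D'$; since these bumps live a constant distance away from the edge, the corresponding rectangular free convolutions and their right-edge densities change by at most $n^{-D}$ at the scales of interest, so the regularized problem falls within the scope of Lemma \ref{lem comparison1}. Second, I would use the rigidity bound (\ref{eq_rigidityalphaequal1}) applied at $\alpha=0$ and at $\alpha=1$ (and linear interpolation) together with the smoothness of the kernel $(x-z)^{-1}$ away from the edge to verify that the bulk part of $\mu(\cdot,\alpha)$ differs from that of $\mu(\cdot,0)=\rho_{\mu,0}$ only in a way that contributes $\OO(n^{-D})$ to the Stieltjes transform and hence to the subordination function (\ref{eq_keyequation11}) in a neighborhood of $E_+(t,\alpha)$.

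Once Lemma \ref{lem comparison1} applies, its estimate (\ref{eq_densitycomparison}) specializes (with $\psi=E_\mu(0)=E_+(0,\alpha)$, $c_\psi=\tau$) to
\begin{equation*}
\rho_t(E_+(t,\alpha)-E,\alpha)=\rho_{\mu,t}(E_\mu(t)-E)\bigl[1+\OO(n^{\epsilon}t/t_0+n^{-D})\bigr],
\end{equation*}
valid uniformly on $0\le E\le \tau n^{-2\epsilon}t_0^2$, which is exactly (\ref{eq_corcontrol1}). To derive (\ref{eq_corcontrol4}), I would integrate this relation: by the definition of quantiles,
\begin{equation*}
\int_0^{\widetilde\gamma_i(t,\alpha)}\rho_t(E_+(t,\alpha)-E,\alpha)\,\dd E=\frac{i-1}{p},
\end{equation*}
and the same identity with $\alpha$ replaced by $0$. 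Subtracting and using the square-root behavior $\rho_t(E_+(t,\alpha)-E,\alpha)\sim\sqrt{E}$ from Lemma \ref{lem_asymdensitysquare}, together with $\widetilde\gamma_i(t,0)\sim i^{2/3}n^{-2/3}$, converts the multiplicative density comparison into the additive bound $|\widetilde\gamma_i(t,\alpha)-\widetilde\gamma_i(t,0)|\lesssim(n^{\epsilon}t/t_0+n^{-D})\widetilde\gamma_i(t,0)$. The range restriction $i\le c_\tau n^{1-3\epsilon}t_0^3$ ensures that $\widetilde\gamma_i(t,\alpha)$ lies safely inside $[0,\tau n^{-2\epsilon}t_0^2]$ so that (\ref{eq_corcontrol1}) can be invoked throughout the integration window.

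The main obstacle, as flagged above, is handling the discrete part of $\mu(\cdot,\alpha)$ rigorously: Lemma \ref{lem comparison1} is stated for absolutely continuous measures, whereas $\mu(\cdot,\alpha)$ is a mixture. The regularization-by-Gaussian-bumps argument together with rigidity (\ref{eq_rigidityalphaequal1}) should dispose of this point, but one must check that the regularization error is genuinely superpolynomial (hence absorbed into the $n^{-D}$ in the statement) and that the subordination function for the mollified measure remains uniformly close to the one for $\rho_{\mu,0}$ in a complex neighborhood of the right edge, so that the square-root edge analysis underlying Lemma \ref{lem comparison1} passes through unchanged.
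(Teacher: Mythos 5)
Your overall strategy is the same as the paper's: both invoke Lemma~\ref{lem comparison1} (specifically \eqref{eq_densitycomparison}) with the base-density matching \eqref{eq_constructdensityclose} serving as hypothesis \eqref{eq_closerho}, and both derive \eqref{eq_corcontrol4} by integrating the density comparison against the square-root edge profile, exactly the argument used for \eqref{eq_edgecontrolcontrol}. Your quantile-integration derivation is correct, and your mollification device for the atoms is a legitimate (if not strictly necessary) way to bring the lemma's stated ``having densities'' hypothesis into force.

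However, your second neutralization step contains two errors. First, the identity ``$\mu(\cdot,0)=\rho_{\mu,0}$'' is false: $\mu(\cdot,0)$ carries the empirical atoms $p^{-1}\sum_{i>c_*n}\delta_{z_i(0,0)}$ in the bulk, whereas $\rho_{\mu,0}$ is the deterministic deformed Marchenko--Pastur density (and $\rho_{\mu,t}$ in \eqref{eq_corcontrol1} is by definition the rectangular free convolution of $\rho_{\mu,0}$, not of $\mu(\cdot,0)$). Second, the claim that rigidity makes the bulk parts differ by $\OO(n^{-D})$ in the Stieltjes transform is not true: rigidity controls bulk eigenvalue locations only to order $n^{-1+\epsilon}$ (and, since $i_*=c_*n=k_0$, the rigidity window \eqref{eq_rigidityalphaequal1} does not even cover the indices $i>c_*n$ where the atoms sit), so the resulting Stieltjes-transform discrepancy away from the edge is polynomially, not superpolynomially, small. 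The reason this does not sink your argument is that this second step is redundant: Lemma~\ref{lem comparison1} imposes conditions \eqref{eq_closerho} and \eqref{sqrt12} only in an $\OO(1)$ window near the common right edge and says nothing about the bulk, precisely because, when both free convolutions are recentered at their own right edges $\lambda_{+,1}(t)$ and $\lambda_{+,2}(t)$, the bulk contribution to the subordination function gets absorbed into the edge shift and the recentered densities agree to precision $n^{\epsilon}t/t_0+n^{-D}$. You should remove the bulk-matching step and rest the argument on the first step alone.
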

\begin{proof}
The estimates (\ref{eq_corcontrol1}) follows directly from \eqref{eq_densitycomparison}. The estimate (\ref{eq_corcontrol4}) follows from (\ref{eq_corcontrol1}) using the same argument as in the proof of \eqref{eq_edgecontrolcontrol}. 
\end{proof}

With the eigenvalues rigidity \eqref{eq_rigidityalphaequal1} and the construction of $\dd \mu(E, \alpha)$, we can verify that $|m_{0}(z,\al)-\wt m_{0}(z,\al)|$ satisfies Assumption \ref{assm regularW}. Then, by {Lemma \ref{lem rigidity2}}, we have the following rigidity estimate of $\{z_i(t,\al)\}$. As before, we define the quantiles $\gamma_i(t,\alpha)$ by
\begin{equation*}
%\frac{i}{p}=\int_{\gamma_i(t,\alpha)}^{\infty} \rho_t(E, \alpha) \dd E.
\gamma_i(t,\alpha):=\sup_{x}\left\{\int_{x}^{+\infty} \rho_{t}(E,\al)\dd E > \frac{i-1}{p}\right\}.
\end{equation*}   
\begin{lemma}\label{lem_rigidty z} 
There exists a constant $c_* >0$ so that %the following estimate hold with high probability
\begin{equation}\label{rigiditywtz}
\sup_{0 \leq \alpha \leq 1} \sup_{0 \leq t \leq 10 t_1}\left| z_i(t,\alpha)-\gamma_i(t,\alpha) \right| \prec i^{-1/3}n^{-2/3} ,\quad 1\le i \le c_* n.
\end{equation}
%for $1 \leq i \leq i_*$ and any $\epsilon>0.$
\end{lemma}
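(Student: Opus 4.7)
The plan is to verify that the initial data $\{z_i(0,\alpha)\}$ of the SDE \eqref{eq_interpolationsde} is close to the deterministic reference density furnished by $\mu(E,\alpha)$ in the sense of Assumption \ref{assm regularW}, and then invoke Theorem \ref{thm_local2} and Lemma \ref{lem rigidity2} to transfer rigidity to all $t\in[0,10t_1]$ and $\alpha\in[0,1]$. Concretely, let $\wt m_0(z,\alpha)$ denote the empirical Stieltjes transform of $\{z_i(0,\alpha)\}$ and $m_0(z,\alpha)$ the Stieltjes transform of $\mu(E,\alpha)$. Since $\mu(E,\alpha)$ was constructed by gluing the deterministic density $\rho(E,\alpha)$ on $\mathtt D_\alpha$ (which has the square-root behavior \eqref{eq_constructdensityclose} near the right edge $E_+(0,\alpha)$ inherited from $\rho_{\mu,0}$) with the Dirac masses at $\{z_i(0,\alpha)\}_{i>c_* n}$, the bulk-far contributions cancel exactly and only the edge part needs comparison. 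Using the linear interpolation $z_i(0,\alpha)=\alpha\lambda_i(0)+(1-\alpha)\mu_i(0)$ together with the initial rigidity \eqref{eq_rigidityalphaequal1} for $\alpha\in\{0,1\}$, I can upper-bound $|\wt m_0(z,\alpha)-m_0(z,\alpha)|$ near the edge by the standard Helffer--Sj\"ostrand-type argument, giving the two estimates required in Assumption \ref{assm regularW} with $\rho_c(E)=\rho(E,\alpha)$.

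Once Assumption \ref{assm regularW} is verified for the initial data at each $\alpha$, I apply Theorem \ref{thm_local2} to the rectangular DBM \eqref{eq_interpolationsde}, with the role of the Gaussian perturbation played by the common Brownian motions $B_i$. This yields, for any fixed $\alpha\in[0,1]$, any fixed $t\in[0,10t_1]$, and any constant $\vartheta>0$, the local laws
\begin{equation*}
|\wt m_t(z,\alpha)-m_t(z,\alpha)|\prec \frac{1}{n\eta},\qquad z\in\mathcal D_{\vartheta,c}\cap\{E\le E_+(t,\alpha)\},
\end{equation*}
and the corresponding sharper estimate \eqref{averout2} outside the spectrum, where $m_t(z,\alpha)$ is the Stieltjes transform of the rectangular free convolution of $\mu(\cdot,\alpha)$ with MP at time $t$. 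Lemma \ref{lem rigidity2} then converts these local laws into the pointwise rigidity
\begin{equation*}
|z_i(t,\alpha)-\gamma_i(t,\alpha)|\prec i^{-1/3}n^{-2/3},\qquad 1\le i\le c_* n.
\end{equation*}

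To pass from fixed $(t,\alpha)$ to the supremum over $[0,10t_1]\times[0,1]$, I discretize the parameter set on a lattice of mesh size $n^{-C}$ for a large constant $C>0$, apply the above pointwise bound together with a union bound over $O(n^{2C})$ lattice points, and then interpolate via a stochastic continuity argument: continuity in $t$ follows from the SDE \eqref{eq_interpolationsde} using a standard a priori bound on the quadratic-variation term (this is the same argument used to upgrade \eqref{eq_rigidityalphaequal1union} to \eqref{eq_rigidityalphaequal1}, see Appendix B of \cite{dysonbulk}), while continuity in $\alpha$ follows from the linearity of the initial data in $\alpha$ combined with a Gronwall estimate on the $\alpha$-derivative of the coupled SDEs in \eqref{eq_interpolationsde}. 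Similar continuity arguments control $\gamma_i(t,\alpha)$ via Lemma \ref{coro_shoruse}.

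The main obstacle I anticipate is verifying Assumption \ref{assm regularW} uniformly in $\alpha$: the density $\rho(E,\alpha)$ is defined implicitly through the inverse map $\varphi(\cdot,\alpha)$ and the random tail piece $p^{-1}\sum_{i>c_* n}\delta_{z_i(0,\alpha)}$ of $\mu(E,\alpha)$ must not spoil the square-root behavior or the local law of $m_0(z,\alpha)$ near the edge. Fortunately, for $i>c_* n$ the quantiles $\gamma_{\mu,i}(0)$ and $\gamma_{\lambda,i}(0)$ stay at distance of order $1$ from $E_{+}(0,\alpha)$, so the random Dirac tail contributes only a smooth bounded analytic function of $z$ in a neighborhood of $E_+(0,\alpha)$; and the smooth part inherits the desired edge behavior from \eqref{eq_constructdensityclose} and the fact that $\rho_{\mu,0}$ itself has a genuine square-root edge. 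Once these facts are assembled, the application of Theorem \ref{thm_local2} and Lemma \ref{lem rigidity2} is routine.
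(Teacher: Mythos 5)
Your proposal is correct and follows essentially the same route as the paper's (very terse) proof: verify Assumption~\ref{assm regularW} for the interpolated initial data using \eqref{eq_rigidityalphaequal1} and the construction of $\mu(E,\alpha)$, invoke Theorem~\ref{thm_local2} and Lemma~\ref{lem rigidity2} to get pointwise rigidity, and then upgrade to the supremum via stochastic continuity. The paper only explicitly mentions continuity in $t$ and leaves the $\alpha$-uniformity implicit; your discretize-and-Gronwall treatment of the $\alpha$-direction is the natural way to fill that in (the crude Gronwall bound needs only a polynomial a priori estimate on $\partial_\alpha z_i$, which follows from level repulsion keeping the eigenvalue gaps bounded below by $n^{-C}$ on the good event).
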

\begin{proof}
This estimate follows from {Lemma \ref{lem rigidity2}} combined with a standard stochastic continuity argument in $t$. %, we can conclude the proof.
%The proof follows from a standard argument using local laws to transfer the closeness of Stieltjes transforms. For details, we refer to \cite{ERDOS20121435}. 
\end{proof}

Using \eqref{eq_defnpsiderviative0} and \eqref{eq_imasymptoics}, we can calculate that
\begin{align} 
\frac{\dd\sqrt{E_+(t,\al)}}{\dd t}&=\left[\frac{1-c_n}{2\zeta_t(E_+(t,\al),\al)}-\frac{c_n m_{t}(E_+(t,\al),\al)}{b_t(E_+(t,\al),\al)} \right]\sqrt{\zeta_t(E_+(t,\al),\al) }  - \frac{(1-c_n)^2t}{4 \zeta_t(E_+(t,\al),\al)\sqrt{E_+(t,\al)}} +\OO(t^2) \nonumber\\
& = \frac{1-c_n}{2\sqrt{\zeta_t(E_+(t,\al),\al)}}- c_n m_{t}(E_+(t,\al),\al) \sqrt{E_+(t,\al)- t(1-c_n)}   - \frac{(1-c_n)^2t}{4 [E_+(t,\al)]^{3/2} } +\OO(t^2) ,\label{eq_deriofsing}
\end{align}
where we used the notations
$$b_t(z,\al):=1+c_ntm_t(z,\al), \quad\zeta_t(z,\al):=zb^2_t(z,\al)-  t(1-c_n)b_t(z,\al) .$$ 
In the proof, we will also need to use the following function defined for $ E\in [-\tau, \tau]$ for a small enough constant $\tau>0$:
\begin{equation}
\begin{split}
 \Psi_t(E,\al)&:= \frac{1-c_n}{2\sqrt{\zeta_t(E_+(t,\al),\al)}}-\frac{1-c_n}{2\sqrt{ E_+(t,\al)-E}}  - \frac{(1-c_n)^2t}{4  [E_+(t,\al)]^{3/2} } \\
&-\re \left[c_n m_{t}(E_+(t,\al),\al) \sqrt{E_+(t,\al)-t(1-c_n)}-c_n m_{t}(E_+(t,\al)-E,\al) \sqrt{E_+(t,\al)-E}\right]  . \label{eq_defnpsiderviative2}
\end{split}
\end{equation}
 Next, we prove some matching estimates for the function $\Psi_t(E,\al)$ in Lemma \ref{coro_shoruse2}. The proof of this lemma explores a rather delicate cancellation in $\Psi_t(E,\al)$. % and hence is much more nontrivial than the one for the corresponding result, Lemma 3.4 of \cite{edgedbm}.
%Using \eqref{eq_edgecomparebound2}, we get that 
%\begin{equation}gin{equation}\label{eq_defnpsiderviative3}
%\begin{equation}gin{split}
%\Psi_t(E_+(t,\al),\al)-\Psi_t(E_+(t,0),0)= \frac{(1-c_n)}{2\sqrt{\zeta_t(x,\al)}}- c_n m_{t}(x,\al) \sqrt{x-t(1-c_n)} +\OO(t^2)
%\end{split}
%\end{equation}
%and
%\begin{equation}gin{equation}\label{eq_defnpsiderviativeu}
%\Psi_{u,t}(x):=\left[\frac{(1-c)}{2\re \zeta_{u,t}(x)}-\frac{c\re m_{\mu,t}(x)}{1+ct\re m_{\mu,t}(x)} \right]\sqrt{t^2(1-c)^2+4 x \re\zeta_{u,t}(x) } - \frac{(1-c)^2t}{2\re\zeta_{u,t}(x)}.
%\end{equation}
%Then we have the following estimates.

%We denote $\Psi_{t}(E, \alpha)$ and $\Psi_{u,t}(E)$ as those of $\{z_i(t)\}$ and $\{\mu_i(t)\}$ respectively. Denote 

%Although $\rho_t(E,\alpha)$ is random, with the results in Section \ref{sec comparison} we can provide a deterministic control on it. 
\begin{lemma}\label{coro_shoruse2}
 Let $\e,\tau>0$ be sufficiently small constants. For $0 \leq E \leq \tau n^{-2\epsilon} t_0^2$, we have that for any constant $D>0$,
%\begin{equation}gin{equation}\label{eq_corcontrol1}
%\rho_t(E_+(t,\alpha)-E,\al)=\rho_{\mu,t}(E_{\mu}(t)-E)(1+\OO(n^{\epsilon}t/t_0 + n^{-D})),
%\end{equation}
%and  
\begin{equation}\label{eq_corcontrol2}
|\Psi_t(E,\alpha)-\Psi_t(E,0)| \lesssim \left( \frac{n^{\epsilon}}{t_0}+\frac{n^{-D}}{t}\right)E + t^2.
\end{equation}
For $0\le E \le \tau n^{-2\epsilon}t t_0,$ we have that for any constant $D>0$,
\begin{equation}\label{eq_corcontrol3}
|\Psi_t(-E,\alpha)-\Psi_t(-E,0)|\lesssim \left( n^{\epsilon}\frac{t^{1/2}}{t_0^{1/2}} + n^{-D}\frac{t_0^{1/2}}{t^{1/2} }\right)E^{1/2} + t^2 .
\end{equation}
\end{lemma}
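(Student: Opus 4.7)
The plan is to decompose $\Psi_t(E,\alpha)-\Psi_t(E,0)$ into four natural pieces, corresponding to the four groups of terms in \eqref{eq_defnpsiderviative2}, and control each of them using Lemma~\ref{lem comparison1}, Lemma~\ref{lem edgemoving}, and the square-root behaviour of $\rho_t(\cdot,\alpha)$ and $m_t(\cdot,\alpha)$ given by Lemma~\ref{lem_asymdensitysquare}. The guiding observation is that the construction \eqref{eq_constructdensityclose} shows that the two densities $\rho(\cdot,\alpha)$ and $\rho(\cdot,0)$ (as functions on $[0,2\psi]$ with $\psi\sim 1$) satisfy the matching conditions \eqref{eq_closerho}--\eqref{sqrt12} with the time scale $t_0$. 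Consequently, Lemma~\ref{lem comparison1} applies with $\rho_1=\rho(\cdot,\alpha)$, $\rho_2=\rho(\cdot,0)$, and Lemma~\ref{lem edgemoving} yields
\[
\big|\zeta_t(E_+(t,\alpha),\alpha)-\zeta_t(E_+(t,0),0)\big|+\big|E_+(t,\alpha)-E_+(t,0)\big|\lesssim \frac{t^3}{t_0},
\]
together with $\zeta_t(E_+(t,\alpha),\alpha)-E_+(t,\alpha)\sim t^2$.

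First I would bound the contribution of the two $\alpha$-independent-looking terms in \eqref{eq_defnpsiderviative2}. The term $\frac{1-c_n}{2\sqrt{\zeta_t(E_+(t,\alpha),\alpha)}}$ differs from its $\alpha=0$ value by $\OO(t^3/t_0\cdot t^{-1})=\OO(t^2/t_0)$ by the edge-shift bound above together with $\zeta_t(E_+,\cdot)\sim 1$. The term $\frac{(1-c_n)^2 t}{4 [E_+(t,\alpha)]^{3/2}}$ changes by $\OO(t\cdot t^3/t_0)=\OO(t^4/t_0)$. Both are absorbed into the $t^2$ error.

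Next I would handle the square-root term $\frac{1-c_n}{2\sqrt{E_+(t,\alpha)-E}}$ together with the Stieltjes part $\re[c_n m_t(E_+(t,\alpha)-E,\alpha)\sqrt{E_+(t,\alpha)-E}]$. For these, the key point is that the joint expression depends smoothly on both $E_+(t,\alpha)$ and $m_t(\cdot,\alpha)$ at the relevant argument. I would write
\[
m_t(E_+(t,\alpha)-E,\alpha)-m_t(E_+(t,0)-E,0)=[m_t(E_+(t,\alpha)-E,\alpha)-m_t(E_+(t,\alpha)-E,0)]+[m_t(E_+(t,\alpha)-E,0)-m_t(E_+(t,0)-E,0)].
\]
The first bracket is exactly in the form bounded by \eqref{eq_realpartinside} (when the argument lies below the edge by at most $\tau n^{-2\epsilon}t_0^2$) or by \eqref{eq_realpartoutside} (when it lies above the edge by at most $\tau n^{-2\epsilon}t t_0$), after first subtracting off $m_t(E_+(t,\alpha),\alpha)$ and $m_t(E_+(t,0),0)$ so that Lemma~\ref{lem comparison1} applies directly; the resulting $E$-independent difference of the two Stieltjes transforms at the edges is controlled by Lemma~\ref{lem edgemoving} together with the derivative estimates \eqref{eq_bound1}--\eqref{eq_bound3}. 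The second bracket is estimated by $\int_0^{|E_+(t,\alpha)-E_+(t,0)|}|\partial_z m_t(\cdot,0)|\le Ct^3/t_0$ times an inverse power of $\sqrt{|E|+t^2}$ from Lemma~\ref{lem_partialm}, which after multiplication by $\sqrt{E_+(t,\alpha)-E}$ contributes at most $t^2$. The same decomposition applied to $\sqrt{E_+(t,\alpha)-E}-\sqrt{E_+(t,0)-E}$ gives a contribution $\lesssim (t^3/t_0)/\sqrt{|E|+t^2}$, again absorbed. Putting everything together yields \eqref{eq_corcontrol2} for $0\le E\le \tau n^{-2\epsilon}t_0^2$, and \eqref{eq_corcontrol3} for the exterior regime $E\to -E$ with $0\le E\le \tau n^{-2\epsilon}t t_0$.

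The main obstacle I anticipate is bookkeeping the cancellation between $\frac{1-c_n}{2\sqrt{E_+-E}}$ and $\re[c_n m_t\sqrt{E_+-E}]$ when $E$ is close to $0$: naive estimates of each piece produce divergent inverse square roots that must cancel through Lemma~\ref{lem comparison1}, whose output is precisely the difference of real parts of $m_{1,t}$ and $m_{2,t}$ \emph{after} subtracting the values at the respective edges. This is exactly why the lemma is stated with such subtractions, and why \eqref{eq_corcontrol2}--\eqref{eq_corcontrol3} contain a multiplicative factor of $E$ or $E^{1/2}$ on the right-hand side. Once this cancellation is correctly organized (by expanding $\sqrt{E_+(t,\alpha)-E}=\sqrt{E_+(t,\alpha)}-\frac{E}{2\sqrt{E_+(t,\alpha)}}+\OO(E^2)$ and using \eqref{eq_keyequation11} to identify the leading contribution of $\re m_t$), the remaining terms are of a size directly matching the right-hand sides of \eqref{eq_corcontrol2} and \eqref{eq_corcontrol3}.
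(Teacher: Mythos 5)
Your high-level template is the right one (use Lemma \ref{lem edgemoving} for the edge movements and Lemma \ref{lem comparison1} for the ``difference of edge-differences'' of $\re m_t$), and your observation that the square-root singularities near $E=0$ must cancel against each other is correct. But the proposal misses the single step that makes the whole lemma work, and without it you land on an error of order $t$ rather than $t^2$.

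Here is the concrete gap. Term-by-term estimation of the $\alpha$-difference of $\Psi_t(E,\alpha)$ fails for the two Stieltjes pieces. Write $A:=\re m_t(E_+(t,\alpha),\alpha)-\re m_t(E_+(t,0),0)$ and $B:=\re m_t(E_+(t,\alpha)-E,\alpha)-\re m_t(E_+(t,0)-E,0)$. Because the two interpolating densities $\rho_t(\cdot,\alpha)$ and $\rho_t(\cdot,0)$ coincide only near the right edge (and may differ by $\OO(1)$ in the bulk), the values $A$ and $B$ are individually only $\OO(1)$ --- \emph{not} small. Lemma \ref{lem comparison1} controls only their difference $A-B$. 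Now the $\alpha$-difference of the Stieltjes block of $\Psi_t$ is, up to corrections of size $t^3/t_0\ll t^2$,
\[
-c_n\bigl(A\sqrt{E_+(t,\alpha)-t(1-c_n)}-B\sqrt{E_+(t,\alpha)-E}\bigr)
= -c_n(A-B)\sqrt{E_+(t,\alpha)-E}-c_n A\bigl(\sqrt{E_+(t,\alpha)-t(1-c_n)}-\sqrt{E_+(t,\alpha)-E}\bigr).
\]
The first summand is bounded via \eqref{eq_realpartinside}/\eqref{eq_realpartoutside}, exactly as you propose. But the second summand is $\OO(1)\cdot\OO(E+t)$, and the $\OO(t)$ piece is far larger than the $t^2$ allowed by \eqref{eq_corcontrol2}. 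Lemma \ref{lem edgemoving} and the derivative estimates \eqref{eq_bound1}--\eqref{eq_bound3} cannot help you here: they bound edge \emph{locations} and derivatives of a \emph{single} Stieltjes transform, not the $\OO(1)$ difference $A$ between two Stieltjes transforms of genuinely different densities. Your decomposition of $m_t(E_+(t,\alpha)-E,\alpha)-m_t(E_+(t,0)-E,0)$ into two brackets does not avoid this term; it only reshuffles it.

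What the proof actually needs, before any $\alpha$-comparison, is the \emph{fixed-$\alpha$ algebraic cancellation}
\[
\Psi_t(E,\alpha)=\widetilde\Psi_t(E,\alpha)+\OO(t^2),\qquad
\widetilde\Psi_t(E,\alpha):=\frac{1-c_n}{2\sqrt{E_+(t,\alpha)}}-\frac{1-c_n}{2\sqrt{E_+(t,\alpha)-E}}
-c_n\re\bigl[m_t(E_+(t,\alpha),\alpha)\sqrt{E_+(t,\alpha)}-m_t(E_+(t,\alpha)-E,\alpha)\sqrt{E_+(t,\alpha)-E}\bigr].
\]
This is not an expansion in $E$, it is a cancellation in $t$: one plugs in $\zeta_t(E_+(t,\alpha),\alpha)=E_+(t,\alpha)b_t^2-t(1-c_n)b_t$ with $b_t=1+c_nt\,m_t(E_+(t,\alpha),\alpha)$ and checks that the $\OO(t)$ contributions from $\frac{1-c_n}{2\sqrt{\zeta_t}}-\frac{1-c_n}{2\sqrt{E_+}}$, from $m_t(E_+)\bigl[\sqrt{E_+-t(1-c_n)}-\sqrt{E_+}\bigr]$, and from $-\frac{(1-c_n)^2 t}{4E_+^{3/2}}$ cancel each other exactly at order $t$. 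Once this is done, in $\widetilde\Psi_t$ the offending prefactor becomes $\sqrt{E_+(t,\alpha)}-\sqrt{E_+(t,\alpha)-E}=\OO(E)$, so $A\cdot\OO(E)$ is benign; the remaining difference of edge-differences is then handled by Lemma \ref{lem comparison1} and the $t|m_t(E_+(t,0),0)-m_t(E_+(t,0)-E,0)|$ cross term by Lemma \ref{lem_partialm}, as you anticipate. The expansion $\sqrt{E_+(\alpha)-E}=\sqrt{E_+(\alpha)}-E/(2\sqrt{E_+(\alpha)})+\OO(E^2)$ and the role you attribute to \eqref{eq_keyequation11} are pointing in a different direction from this $t$-cancellation, and the proposal as written does not close the gap.
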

\begin{proof}
First, we claim that
\begin{equation}\label{nontrivialcancel} 
\Psi_t(E,\alpha) = \wt\Psi_t(E,\alpha)+\OO(t^2),
\end{equation}
where $\wt\Psi_t(E,\al)$ is defined by
\begin{equation}\nonumber
\begin{split}
\wt\Psi_t(E,\al)&:= \frac{1-c_n}{2\sqrt{E_+(t,\al)}}-\frac{1-c_n}{2\sqrt{ E_+(t,\al)-E}} \\
&-\re \left[c_n m_{t}(E_+(t,\al),\al) \sqrt{E_+(t,\al)}-c_n m_{t}(E_+(t,\al)-E,\al) \sqrt{E_+(t,\al)-E}\right]  .
\end{split}
\end{equation}
In fact, subtracting $\wt\Psi_t(x,\al)$ from $\Psi_t(x,\al)$ and using the definition of $\zeta_t(E_+(t,\al),\al)$, we get that
\begin{align*}
&\Psi_t(E,\alpha) - \wt\Psi_t(E,\alpha) =\frac{1-c_n}{2\sqrt{\zeta_t(E_+(t,\al),\al)}}-\frac{1-c_n}{2\sqrt{ E_+(t,\al)}} +\frac{(1-c_n)c_n t m_{t}(E_+(t,\al),\al)}{\sqrt{E_+(t,\al)}+\sqrt{E_+(t,\al)-t(1-c_n)}}  - \frac{(1-c_n)^2t}{4 [ E_+(t,\al)]^{3/2} } \\
&=\frac{(1-c_n) [E_+(t,\al) - b_t^2(E_+(t,\al),\al) \cdot E_+(t,\al) + (1-c_n)tb_t(E_+(t,\al),\al)]}{2\sqrt{\zeta_t(E_+(t,\al),\al)}\sqrt{ E_+(t,\al)} \left( \sqrt{ E_+(t,\al)}+\sqrt{\zeta_t(E_+(t,\al),\al)} \right)} +\frac{(1-c_n)c_n t m_{t}(E_+(t,\al),\al)}{2\sqrt{E_+(t,\al)} } \\
&\quad - \frac{(1-c_n)^2t}{4 [ E_+(t,\al)]^{3/2} } +\OO(t^2) \\
&=\frac{-2(1-c_n)c_n t m_{t}(E_+(t,\al),\al) \cdot E_+(t,\al)+ (1-c_n)^2t}{4 [E_+(t,\al)]^{3/2} }+\frac{(1-c_n)c_n t m_{t}(E_+(t,\al),\al)}{2\sqrt{E_+(t,\al)}}  - \frac{(1-c_n)^2t}{4  [E_+(t,\al)]^{3/2} } +\OO(t^2) \\
& =\OO(t^2).
\end{align*}
On the other hand, using \eqref{eq_edgecomparebound2} and the fact that $E_+(0,\al)=E_\lambda(0)$ for $0\le \al \le 1$, we get that  
\begin{equation}\label{E+al1}|E_+(t,\al)-E_\lambda(0)|=\OO(t) ,\quad 0\le \al \le 1.\end{equation}
Hence, we can estimate that 
\begin{align*}
 &\wt\Psi_t(E,\alpha) -  \wt\Psi_t(E,0) \\
 &=-c_n \re \left[m_{t}(E_+(t,\al),\al)-m_{t}(E_+(t,\al)-E,\al) \right] \sqrt{E_+(t,\al)} \\
 &\quad +c_n \re \left[m_{t}(E_+(t,0),0)-m_{t}(E_+(t,0)-E,0) \right] \sqrt{E_+(t,0)}+ \OO(E)\\
 &=c_n \re \left[(m_{t}(E_+(t,0),0)-m_{t}(E_+(t,0)-E,0))-(m_{t}(E_+(t,\al),\al)-m_{t}(E_+(t,\al)-E,\al)) \right] \sqrt{E_+(t,\al)} \\
 &\quad +\OO \left(t\left|m_{t}(E_+(t,0),0)-m_{t}(E_+(t,0)-E,0)\right|+E\right).
\end{align*}
By \eqref{eq_bound3}, we have that
$$t\left|m_{t}(E_+(t,0),0)-m_{t}(E_+(t,0)-E,0)\right| \lesssim (Et^{-1})\cdot t=E,\quad E\ge 0,$$
and by \eqref{eq_bound1} and \eqref{eq_bound2}, we have that
$$t\left|m_{t}(E_+(t,0),0)-m_{t}(E_+(t,0)-E,0)\right|  \lesssim  t\sqrt{|E|} \le t^2 +|E|,\quad E\le 0.$$
Using these two estimates and Lemma \ref{lem comparison1}, we can bound $\wt\Psi_t(E,\alpha) -  \wt\Psi_t(E,0)$ and conclude \eqref{eq_corcontrol2} and \eqref{eq_corcontrol3}.
\end{proof}

\begin{remark}\label{rem restart}
 Later we will only consider the dynamics after $t=n^{-C}$ for some large constant $C>0$, so that the $n^{-D}$ terms in \eqref{eq_corcontrol2} and \eqref{eq_corcontrol3} are negligible as long as $D$ is large enough. 
\end{remark}

Note that the interpolating measures $\dd \mu(E,0)$ (resp. $\dd \mu(E,1)$) only matches the asymptotic measure $\rho_{\mu,0}(E)\dd E$ (resp. $\rho_{\lambda,0}(E)\dd E$) for $E\in \mathtt D_0$ (resp. $E\in \mathtt D_1$). For the random part, we control its effect using the local laws. 
%However, by the construction of $\dd \mu(E,\al)$, it is easy to check that
%\begin{equation}\label{matchingmeasures}
%\rho(E_{\mu}(0)-E, 0)= \rho_{\mu,0}(E_{\mu}(0)-E)\left(1+\OO\left(|E|\right)\right), \quad E_{\mu}(0)-E\in \mathtt D_0,
%\end{equation}
%and
%\begin{equation}\label{matchingmeasures}
%\rho(E_\lambda(0)-E, 1)= \rho_{\lambda,0}(E_\lambda(0)-E)\left(1+\OO\left(|E|\right)\right), \quad E_\lambda(0) - E\in \mathtt D_1.
%\end{equation}
%where we recall \eqref{mathttDal}. 
With the eigenvalues rigidity \eqref{eq_rigidityalphaequal1}, we can check that $|m_0(z,0)-\wt m_{0}(z,0)|$ and $|m_0(z,1)-\wt m_1(z,1)|$ satisfy the two estimates in Assumption \ref{assm regularW}. (Recall that { $\widetilde{m}_0(z,\al)$} was defined in \eqref{eq_defnzstiel} and $m_0(z,\al)$ is the Stieltjes transform of $\dd\mu(E,\al)$.) Moreover by Theorem \ref{thm_local} and \cite[Theorem 3.2]{bao2015}, we also have that $|\wt m_{0}(z,0)-m_{\mu,0}(z)|$ and $|\wt m_{0}(z,1)-m_{\lambda,0}(z)|$ satisfy the two estimates in Assumption \ref{assm regularW}. Hence we have that  
\begin{equation}\label{matchingms}
|m_{0}(z,0)-m_{\mu,0}(z)|\prec \frac{1}{n\eta},\quad |E-E_{\mu}(0)|\le \frac34c_V,\   n^{-2/3+\vartheta}\le \eta \le 10,
\end{equation}
and 
\begin{equation}\label{matchingms2}
|m_{0}(z,1)-m_{\lambda,0}(z)|\prec \frac{1}{n\eta},\quad |E-E_\lambda(0)|\le \frac34c_V,\  n^{-2/3+\vartheta}\le \eta \le 10.
\end{equation}
With the above two estimates, we can control $\left| E_+(t,1)-E_\lambda(t) \right|$ and $\left| E_+(t,0)-E_{\mu}(t) \right|$ for $0\le t \le 10t_1$.
\begin{lemma}
We have that  
\begin{equation}\label{E+1}
\max_{0\le t \le 10t_1}\left| E_+(t,1)-E_\lambda(t) \right| \prec t^3+n^{-1/2}t ,
\end{equation}
and 
\begin{equation}\label{E+2}
\max_{0\le t \le 10t_1}\left| E_+(t,0)-E_{\mu}(t) \right| \prec t^3+n^{-1/2}t  .
\end{equation}
\end{lemma}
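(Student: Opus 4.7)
The plan is to mimic, almost verbatim, the edge comparison argument that was carried out in the proof of Theorem \ref{thm_twgram} between equations \eqref{eq boundedgediff} and \eqref{edge_diff}, but now with $(m_{w,0}, M_w)$ replaced by $(m_0(\cdot,1), m_{\lambda,0})$ (for \eqref{E+1}) and by $(m_0(\cdot,0), m_{\mu,0})$ (for \eqref{E+2}). The two key inputs are already in place: the matching of Stieltjes transforms \eqref{matchingms} and \eqref{matchingms2}, and the square-root/derivative control of the rectangular free convolution from Lemmas \ref{lem_asymdensitysquare} and \ref{lem_partialm}. I will only write down the argument for \eqref{E+1}; the argument for \eqref{E+2} is completely analogous after interchanging the roles of the $\alpha=1$ and $\alpha=0$ endpoints.

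First, I will use Lemma \ref{lem_partialedge} to characterize the edges. For each $\tau>0$, the pair $(E_+(t,1),\zeta_+^{(1)}(t))$ solves the system $F_t^{(1)}(z,\zeta)=0$, $\partial_\zeta F_t^{(1)}(z,\zeta)=0$, where $F_t^{(1)}$ is the function defined in \eqref{final_derivation} with $m_{w,0}$ replaced by $m_0(\cdot,1)$; similarly, $(E_\lambda(t),\zeta_+^{(\lambda)}(t))$ solves the analogous system with $m_{\lambda,0}$ in place of $m_{w,0}$. Subtracting the two systems and expanding in the small difference $\zeta_+^{(1)}(t)-\zeta_+^{(\lambda)}(t)$, as in the derivations of \eqref{eq boundedgediff} and \eqref{eq boundedgediff2}, I will obtain
\begin{align*}
|E_+(t,1)-E_\lambda(t)| &\lesssim |\zeta_+^{(1)}(t)-\zeta_+^{(\lambda)}(t)| + t\,\bigl|m_{\lambda,0}(\zeta_+^{(1)}(t))-m_{\lambda,0}(\zeta_+^{(\lambda)}(t))\bigr|
 + t\,\bigl|m_0(\zeta_+^{(1)}(t),1)-m_{\lambda,0}(\zeta_+^{(1)}(t))\bigr|,\\
|\zeta_+^{(1)}(t)-\zeta_+^{(\lambda)}(t)| &\lesssim t^{3}\,\bigl|\partial_\zeta m_0(\zeta_+^{(1)}(t),1)-\partial_\zeta m_{\lambda,0}(\zeta_+^{(1)}(t))\bigr|.
\end{align*}
The second bound already uses the vanishing of $\partial_\zeta F_t$ at the edge, exactly as in \eqref{eq boundedgediff2}. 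The first bound then reduces the whole problem to estimating $m_0(\cdot,1)-m_{\lambda,0}$ and its $\zeta$-derivative at $\zeta_+^{(1)}(t)$, a point on the real axis that, by Lemma \ref{lem rightedge}, lies at distance $\sim t^2$ above the right edges of the two generating densities.

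Next, I will transfer the real-axis evaluation to a complex scale where the local law \eqref{matchingms2} is directly usable. Setting $z_0:=\zeta_+^{(1)}(t)+\ii\eta_0$ with $\eta_0=n^{-2/3+\vartheta}$, the local law \eqref{matchingms2} gives $|m_0(z_0,1)-m_{\lambda,0}(z_0)|\prec (n\eta_0)^{-1}$, and the same bound for the derivative after a standard Cauchy-estimate argument; the passage from $z_0$ back to the real point $\zeta_+^{(1)}(t)$ is controlled by integrating the derivative bounds from Lemma \ref{lem_partialm}, which behave like $(\kappa+\eta)^{-1/2}$ away from the edge and are regularized by the $t^2$-gap $\zeta_+^{(1)}(t)-\lambda_{+,\lambda,0}$. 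This is the same chain of estimates producing \eqref{eq boundedgediff4}, with the Helffer--Sj\"ostrand/interval-decomposition step using the rigidity \eqref{eq_rigidityalphaequal1} for $\{z_i(0,1)\}_{i>c_* n}$ to compare the sum of delta masses in $\dd\mu(E,1)$ with the density $\rho_{\lambda,0}$ outside $\mathtt D_1$. Combining everything and optimizing over $\vartheta$ (noting that $t\le 10t_1\ll t_0$, so the $t^2$-gap dominates $\eta_0$) yields $|E_+(t,1)-E_\lambda(t)|\prec t\eta_0+(nt)^{-1}+t^3+n^{-1/2}t$, from which \eqref{E+1} follows.

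The main technical obstacle will be the bulk contribution, which is the only place where the randomness of $\dd\mu(E,1)$ enters. The two measures $\dd\mu(E,1)$ and $\rho_{\lambda,0}(E)\,\dd E$ are \emph{identical} on $\mathtt D_1$ by construction of $\rho(E,1)$, so the Stieltjes-transform difference reduces to the difference between the empirical measure $p^{-1}\sum_{i>c_* n}\delta_{z_i(0,1)}$ and the density $\rho_{\lambda,0}\mathbf 1_{[0,\varphi_\lambda(c_*)]}$. Extracting the sharp $(n\eta_0)^{-1}$ rate at the edge requires the rigidity of $\{z_i(0,1)\}$ uniformly in the bulk index, which is already available through \eqref{eq_rigidityalphaequal1} combined with the Helffer--Sj\"ostrand bookkeeping used in Appendix \ref{sec_prooftwgram} for the analogous step \eqref{eq boundedgediff4}. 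To keep track of the stochastic continuity in $t$, I will pass to a fine grid and extend uniformly as in \eqref{eq_rigidityalphaequal1union}, so that the $\prec$ notation is preserved in the supremum over $0\le t\le 10t_1$ appearing in \eqref{E+1}--\eqref{E+2}. Once the $\alpha=1$ bound is established, repeating the whole scheme with $m_0(\cdot,0)$ and $m_{\mu,0}$ in place of $m_0(\cdot,1)$ and $m_{\lambda,0}$, and using \eqref{matchingms}, delivers \eqref{E+2}.
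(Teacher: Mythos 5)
Your proposal transplants the argument that produced \eqref{edge_diff} in the proof of Theorem~\ref{thm_twgram}, but that argument was tailored to comparing an \emph{empirical} Stieltjes transform to its deterministic limit, where the eigenvalue/quantile mismatch near the edge is of order $j^{-1/3}n^{-2/3}$ and the Helffer--Sj\"ostrand--rigidity machinery produces the $(nt)^{-1}$ and $(n\eta_0)^{-2}$ terms. Here the situation is \emph{much better}: by construction $\rho(E,1)=\rho_{\lambda,0}(E)$ for \emph{every} $E\in\mathtt D_1$, i.e.\ $\dd\mu(E,1)$ and $\rho_{\lambda,0}(E)\dd E$ are not merely close on a scale $j^{-1/3}n^{-2/3}$ but identical on an interval of length $\OO(1)$ near the edge, and they differ only in the bulk, at distance $\gtrsim 1$ from $\zeta_{+,1}$. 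You note this fact yourself, but you do not exploit it. In particular, the derivative difference $|\partial_\zeta m_0(\zeta_{+,1},1)-\partial_\zeta m_{\lambda,0}(\zeta_{+,1})|$ is $\OO(1)$ (it is controlled by bulk contributions at $\OO(1)$ distance), so $|\zeta_{+,1}-\zeta_{+,\lambda}|\lesssim t^3$ directly; there is no $(nt)^{-1}$ term. Because you carry the $(nt)^{-1}$ term over from \eqref{edge_diff}, your final claimed bound $\prec t\eta_0+(nt)^{-1}+t^3+n^{-1/2}t$ does not yield \eqref{E+1}: with $t\le 10t_1=10n^{-1/3+\omega_1}$ and $\omega_1$ small, $(nt)^{-1}\gtrsim n^{-2/3-\omega_1}$ is much larger than $t^3+n^{-1/2}t\asymp n^{-5/6+\omega_1}$ (and blows up as $t\to 0$). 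So the approach as written does not close.

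The paper's proof avoids all of this. Because the densities match exactly on $\mathtt D_1$, condition \eqref{eq_closerho} holds with $t_0$ replaced by an $\OO(1)$ constant, and one simply invokes Lemma~\ref{lem edgemoving} with ``$t_0=1$'' to get $|\zeta_{+,1}-\zeta_{+,\lambda}|\le Ct^3$; no re-derivation of the Helffer--Sj\"ostrand step is needed. For the remaining term $t|m_0(\zeta_{+,1},1)-m_{\lambda,0}(\zeta_{+,1})|$, the paper moves to imaginary part $\eta=n^{-1/2}$ (not $n^{-2/3+\vartheta}$), where the bulk contribution is controlled by the trivial sum $\sum_{i>c_*n}n^{-1/2}/|z_i(0,1)-\zeta_{+,1}|^2\lesssim n^{-1/2}$ (all terms are at $\OO(1)$ distance, so no rigidity is needed), and the local law \eqref{matchingms2} gives $(n\eta)^{-1}=n^{-1/2}$ at this scale. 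This gives exactly $\prec n^{-1/2}$, hence $\prec t^3+n^{-1/2}t$. Note also that you would need an improved \emph{outside}-spectrum local law for $m_0(\cdot,1)-m_{\lambda,0}$, analogous to \eqref{aver_outw0}; only the inside version \eqref{matchingms2} is stated, and the paper's route avoids requiring the stronger estimate. The key idea you missed is precisely that the exact edge matching collapses the entire near-edge part of the argument to zero, so the only error comes from the bulk, where crude $\OO(1)$-distance bounds suffice.
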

\begin{proof}
%The proof follows from Corollary \ref{coro_shoruse} and Lemma \ref{lem_matchingmeasureedge}. 
Repeating the proof of %\eqref{eq_edgecomparebound} 
Lemma \ref{lem edgemoving} (as given in Lemma A.2 of \cite{DY20202}) with $t_0$ replaced by 1, we can get that 
$$ |\zeta_{+,1} - \zeta_{+,\lambda} | \le Ct^3,$$
where we abbreviate $\zeta_{+,1}\equiv\zeta_t(E_+(t,1),1)$ and $\zeta_{+,\lambda}\equiv \zeta_{\lambda, t}(E_\lambda(t))$. Then with equation \eqref{simplePhizeta},  we get that
\begin{equation}\label{addmatchingeq} \left| E_+(t,1)-E_\lambda(t) \right|  \lesssim |\zeta_{+,1}-\zeta_{+,\lambda}| + t|m_0(\zeta_{+,1},1)-m_{\lambda,0}(\zeta_{+,\lambda})|. \end{equation}
Recall that $\zeta_{+,1}-E_\lambda(0)\sim t^2$ and $\zeta_{+,\lambda}-E_\lambda(0)\sim t^2$ by \eqref{eq_edgebound}. Then using \eqref{eq_bound1} and \eqref{eq_bound2}, we get that
$ |m_{\lambda,0}'(\zeta)|\lesssim t^{-1}$ for $\zeta$ between $\zeta_{+,1}$ and $\zeta_{+,\lambda}$. Thus, we can bound \eqref{addmatchingeq} as
\begin{align} \left| E_+(t,1)-E_\lambda(t) \right| & \lesssim t^3 + t|m_0(\zeta_{+,1},1)-m_{\lambda,0}(\zeta_{+,1})|+t|m_{\lambda,0}(\zeta_{+,1})-m_{\lambda,0}(\zeta_{+,\lambda})| \nonumber\\
&\lesssim t^3 + t|m_0(\zeta_{+,1},1)-m_{\lambda,0}(\zeta_{+,1})|. \label{E+Elambda}
\end{align}
For the second part, since $\dd\mu(E,1)$ matches $\rho_{\lambda,0}(E)$ for $E\in \mathtt D_1$, we can bound that
\begin{align*}
&\left| \left[m_0(\zeta_{+,1},1)-m_{\lambda,0}(\zeta_{+,1})\right] - [m_0(\zeta_{+,1}+\ii n^{-1/2},1)-m_{\lambda,0}(\zeta_{+,1}+\ii n^{-1/2})]\right| \\
&\le \sum_{i>c_* n}\frac{n^{-1/2}}{|z_i(0,1) - \zeta_{+,1}||z_i(0,1) - \zeta_{+,1}-\ii n^{-1/2}|} \lesssim n^{-1/2}
\end{align*}
with high probability. On the other hand, we have that
$$ \left|m_0(\zeta_{+,1}+\ii n^{-1/2},1)-m_{\lambda,0}(\zeta_{+,1}+\ii n^{-1/2})\right|\prec n^{-1/2}$$ 
by \eqref{matchingms2}. Combining the above two estimates, we obtain that
$$ \left|m_0(\zeta_{+,1},1)-m_{\lambda,0}(\zeta_{+,1})\right|\prec n^{-1/2}.$$
Plugging it into \eqref{E+Elambda}, we conclude \eqref{E+1}. The estimate \eqref{E+2} can be proved in the same way.
\end{proof}

In later proof, we will also need to study the evolution of the singular values $y_i(t,\alpha):=\sqrt{z_i(t,\alpha)}$. It is easy to see that the asymptotic density for $y_{i}(t,\al)$ is given by 
$$f_t(E,\al):= 2E \rho_t(E^2,\al),\quad 0\le \al \le 1.$$
Similarly we can define $f_{\lambda,t}$ and $f_{\mu,t}$. Moreover, the quantiles of $f_t(E,\al)$ are exactly given by $\sqrt{\gamma_i(t,\al)}$. Now with Lemma \ref{coro_shoruse} and Lemma \ref{lem_rigidty z}, we can easily conclude the following lemma.
\begin{lemma}
We have the following rigidity estimate of singular values:
\begin{equation}\label{rigiditywhy}
\sup_{0 \leq \alpha \leq 1} \sup_{0 \leq t \leq 10 t_1}\left|y_i(t,\alpha)-\sqrt{\gamma_i(t,\alpha)} \right| \prec i^{-1/3}n^{-2/3} , \quad  1\le i \le c_* n .
\end{equation}
Let $\e,\tau>0$ be sufficiently small constants. For $0 \leq E \leq \tau n^{-2\epsilon} t_0^2$, we have that for any constant $D>0$,
\begin{equation}\label{eq_corcontrol1sing}
f_t\left(\sqrt{E_+(t,\alpha)}-E,\al\right)=f_{\mu,t}\left(\sqrt{E_\mu(t)}-E\right)\left[ 1+\OO(n^{\epsilon}t/t_0 + n^{-D})\right],
\end{equation}
and
\begin{equation}\label{eq_corcontrol4sing}
\max_{1\le i \le c_\tau n^{1-3\e}t_0^3}|\wh{\gamma}_i(t,\alpha)-\wh{\gamma}_i(t,0)| \leq \left(n^{\epsilon}\frac{t}{t_0}+n^{-D}\right) \frac{i^{2/3}}{n^{2/3}},
\end{equation}
where we introduced the short-hand notation $\wh{\gamma}_i(t,\alpha):=\sqrt{E_+(t, \alpha)}-\sqrt{\gamma_i(t,\alpha)}.$
\end{lemma}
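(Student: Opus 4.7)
The overall strategy is to transfer the eigenvalue-level estimates from Lemma~\ref{lem_rigidty z} and Lemma~\ref{coro_shoruse} to the singular-value level via the square root map $z\mapsto\sqrt{z}$ and the relation $f_t(E,\al)=2E\rho_t(E^2,\al)$. The key observation is that on the spectral ranges relevant to all three claims, the spectrum stays uniformly bounded away from $0$, so the square root is a smooth bijection with bounded derivative, and the eigenvalue estimates pass through with only cosmetic changes.

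For the rigidity \eqref{rigiditywhy}, I would use the identity
\[
y_i(t,\al)-\sqrt{\gamma_i(t,\al)}=\frac{z_i(t,\al)-\gamma_i(t,\al)}{\sqrt{z_i(t,\al)}+\sqrt{\gamma_i(t,\al)}}.
\]
By Definition~\ref{assumption_edgebehavior} we have $\lambda_+\ge 2c_V>0$, and a direct consequence of the construction of $\rho_t(\cdot,\al)$ (plus the rigidity \eqref{rigiditywtz} of the eigenvalue process) is that $\gamma_i(t,\al)$ and $z_i(t,\al)$ are both bounded below by an order-one positive constant for $1\le i\le c_* n$, provided $c_*$ is small enough. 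Hence the denominator is of order $1$ with high probability, and dividing the bound $|z_i(t,\al)-\gamma_i(t,\al)|\prec i^{-1/3}n^{-2/3}$ through yields \eqref{rigiditywhy}.

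For the density matching \eqref{eq_corcontrol1sing}, I would perform the change of variable $(\sqrt{E_+(t,\al)}-E)^2=E_+(t,\al)-\tilde E$ with $\tilde E:=2\sqrt{E_+(t,\al)}\,E-E^2$, so that
\[
f_t(\sqrt{E_+(t,\al)}-E,\al)=2(\sqrt{E_+(t,\al)}-E)\,\rho_t(E_+(t,\al)-\tilde E,\al).
\]
For $0\le E\le \tau n^{-2\e}t_0^2$ the shifted argument $\tilde E$ remains within the range where \eqref{eq_corcontrol1} is applicable (up to adjusting $\tau$), giving $\rho_t(E_+(t,\al)-\tilde E,\al)=\rho_{\mu,t}(E_\mu(t)-\tilde E)[1+\OO(n^\e t/t_0+n^{-D})]$. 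It then remains to replace $\tilde E$ by $\tilde E_\mu:=2\sqrt{E_\mu(t)}\,E-E^2$ in the argument of $\rho_{\mu,t}$, and replace the prefactor $\sqrt{E_+(t,\al)}-E$ by $\sqrt{E_\mu(t)}-E$. Both changes produce relative errors controlled by $|\sqrt{E_+(t,\al)}-\sqrt{E_\mu(t)}|$, which I would bound by the triangle inequality together with \eqref{E+2} (for $\al=0$) and an adaptation of the argument behind \eqref{eq_corcontrol4} at the right edge (to pass from $\al$ to $0$). The square-root behavior of $\rho_{\mu,t}$ near $E_\mu(t)$ provided by \eqref{sqrtdensity} then absorbs the shift $|\tilde E-\tilde E_\mu|$ in the density via the estimate $|\rho_{\mu,t}(E_\mu(t)-\tilde E)-\rho_{\mu,t}(E_\mu(t)-\tilde E_\mu)|\lesssim |\tilde E-\tilde E_\mu|/\sqrt{\tilde E\vee\tilde E_\mu}$.

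For \eqref{eq_corcontrol4sing}, I use that the quantiles of $f_t(\cdot,\al)$ are exactly $\sqrt{\gamma_i(t,\al)}$, so $\wh\gamma_i(t,\al)$ is the $i$-th quantile of $f_t(\cdot,\al)$ counted from its right edge. Applying the just-established \eqref{eq_corcontrol1sing} at both parameters $\al$ and $0$ shows that $f_t(\sqrt{E_+(t,\al)}-E,\al)$ and $f_t(\sqrt{E_+(t,0)}-E,0)$ agree with $f_{\mu,t}(\sqrt{E_\mu(t)}-E)$ up to a common relative error $\OO(n^\e t/t_0+n^{-D})$, hence with each other up to the same error. Integrating each density from $0$ to its respective quantile position (both cumulative integrals equal $(i-1)/p$) and inverting via the square-root lower bound $f_{\mu,t}(\sqrt{E_\mu(t)}-E)\gtrsim E^{1/2}$ (so that the cumulative mass up to distance $x$ below the edge is comparable to $x^{3/2}$) converts the density closeness into the quantile bound $|\wh\gamma_i(t,\al)-\wh\gamma_i(t,0)|\le (n^\e t/t_0+n^{-D})\,i^{2/3}/n^{2/3}$ in the stated range of $i$. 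The main technical obstacle is the bookkeeping in the density matching: the change of variable introduces the discrepancy between $\tilde E$ and $\tilde E_\mu$, and this must be reconciled using an estimate on $|E_+(t,\al)-E_\mu(t)|$ that is itself within the target error. Once this is handled, \eqref{eq_corcontrol4sing} is a routine consequence of \eqref{eq_corcontrol1sing}.
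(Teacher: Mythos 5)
Your proposal is correct and follows the same route the paper intends (transferring the $\rho_t$-level estimates through the square root map, using that the spectrum near the right edge is bounded away from $0$), but one citation is misplaced. For the edge comparison $\left|\sqrt{E_+(t,\al)}-\sqrt{E_\mu(t)}\right|=\OO(t)$ that controls the shift $|\tilde E - \tilde E_\mu|$ and the prefactor ratio, you invoke \eqref{E+2} together with ``an adaptation of the argument behind \eqref{eq_corcontrol4}''; but \eqref{eq_corcontrol4} is a quantile statement, not an edge statement, and adapting it would be circular here. The paper's intended ingredient is simply \eqref{E+al1}, which gives $|E_+(t,\al)-E_\lambda(0)|=\OO(t)$ for all $\al\in[0,1]$; combining this with $E_\mu(0)=E_\lambda(0)$ and the Lipschitz edge motion $|E_\mu(t)-E_\mu(0)|\lesssim t$ (a direct consequence of \eqref{eq_edgecomparebound2}) yields $|E_+(t,\al)-E_\mu(t)|=\OO(t)$ in one step, which is all you need since $t\ll t/t_0$ on the relevant time scales. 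With this substitution, the rest of your argument — the identity $y_i-\sqrt{\gamma_i}=(z_i-\gamma_i)/(\sqrt{z_i}+\sqrt{\gamma_i})$ for the rigidity, the change of variable $\tilde E=2\sqrt{E_+(t,\al)}E-E^2$ for the density matching, and the cumulative-mass integration with the square-root lower bound for the quantile estimate — is exactly how the paper's terse proof is meant to be unpacked.
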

\begin{proof}
The rigidity result \eqref{rigiditywhy} follows directly from Lemma \ref{lem_rigidty z}, \eqref{eq_corcontrol1sing} follows from \eqref{eq_corcontrol1} together with \eqref{E+al1}, and \eqref{eq_corcontrol4sing} can be proved easily using \eqref{eq_corcontrol1sing}.
\end{proof}

\subsection{Short-range approximation}\label{subsec_short}

As in \cite{edgedbm}, we will build a short-range approximation for the interpolating processes $\{z_i(t,\al)\},$ which is based on the simple intuition that the eigenvalues that are far away from the edge have negligible effect on the edge eigenvalues. It turns out that it is more convenient to study the SDEs for singular values $y_i(t,\alpha)$. By Ito's formula, we get that for $1\le i \le p$,
\begin{equation}\label{eq_singsde}
\begin{split}
 \dd y_i(t,\alpha)&=\frac{\dd B_i}{\sqrt{n}}+\frac{1}{2y_i(t,\alpha)} \left( \frac{1}{n} \sum_{j \neq i} \frac{y^2_i(t,\alpha)+y^2_j(t,\alpha)}{y^2_i(t,\alpha)-y^2_j(t,\alpha)}+\frac{n-1}{n}  \right)  \dd t \\
&=\frac{\dd B_i}{\sqrt{n}}+\frac{1}{2n} \sum_{j \neq i}\left( \frac{\dd t}{y_i(t,\alpha)-y_j(t,\alpha)}+  \frac{\dd t}{y_i(t,\alpha)+y_j(t,\alpha)}\right)+\frac{n-p}{2ny_i(t,\alpha)}  \dd t .
\end{split}
\end{equation}
Note that the diffusion term now has a constant coefficient. %of the SDEs (\ref{eq_interpolationsde}) depends on  $t$ and $\alpha,$
%Such a modification is essential. On one hand, we only modify the drift part in the short-range approximation requires. On the other hand, we will consider $\partial_\alpha y_i(t, \alpha)$ for the finite speed of propagation estimate and such a modification will rule out the diffusion part.  
%We remark that for symmetric Dyson Brownian motion of Wigner type random matrices \cite{BEYY2016,Bourgade2017, dysonbulk, edgedbm,Landon_bulk2015}, the diffusion coefficients in the corresponding SDEs are already constant and hence they do not need to use this extra step. 
For convenience, we introduce the shifted processes
\begin{equation}\label{eq_connection}
\widetilde{z}_i(t,\alpha):={E_+(t,\alpha)}-z_i(t,\alpha),\quad 
\widetilde{y}_i(t,\alpha):=\sqrt{E_+(t,\alpha)}-y_i(t,\alpha).
\end{equation}
Clearly, we have that $\widetilde{z}_i(t,\alpha) \sim \widetilde{y}_i(t,\alpha).$ 
%\begin{equation}gin{equation*}
%\widetilde{z}_i(t,\alpha):=E_+(t,\alpha)-z_i(t,\alpha).
%\end{equation*} 
We see that $\widetilde{y}_i(t,\alpha)$ obeys the SDE
\begin{equation}
\begin{split}
 \dd \wt y_i(t,\alpha)=& -\frac{\dd B_i}{\sqrt{n}}+ \frac{1}{2n} \sum_{j \neq i} \frac{\dd t}{\wt y_i(t,\alpha)-\wt y_j(t,\alpha)} +\frac{\dd \sqrt{E_+(t,\alpha)}}{\dd t}\dd t\\
&-\frac{1}{2n} \sum_{j \neq i} \frac{\dd t}{2\sqrt{E_+(t,\al)}-\wt y_i(t,\alpha)-\wt y_j(t,\alpha)} -\frac{n-p}{2n[\sqrt{E_+(t,\al)}-\wt y_i(t,\alpha)]}  \dd t ,\label{eq_singsde2}
\end{split}
\end{equation}
where $\partial_t \sqrt{E_+(t,\alpha)} $ is given by (\ref{eq_defnpsiderviative0}).

%\begin{equation}gin{align*}
%\cor \dd \widetilde{y}_i(t,\alpha)=-\frac{\dd B_i}{\sqrt{n}}+\frac{1}{2y_i(t,\alpha)}\left(\frac{1}{n} \sum_{j \neq i} \frac{2 E_+(t,\alpha)-\widetilde{z}_i(t,\alpha)-\widetilde{z}_j(t,\alpha)}{\widetilde{z}_i(t,\alpha)-\widetilde{z}_j(t,\alpha)}-\frac{n-1}{n} \right) \dd t  +\frac{\Psi_t(E_+(t,\alpha),\alpha)}{2 \sqrt{E_+(t,\alpha)}}\dd t. 
%\end{align*}

%\begin{equation}gin{equation}\label{eq_connection}
%\widetilde{z}_i(t,\alpha)-\widetilde{z}_j(t,\alpha)=(\widetilde{y}_i(t,\alpha)-\widetilde{y}_j(t,\alpha))(2 \sqrt{E_+(t,\alpha)}-\widetilde{y}_i(t,\alpha)-\widetilde{y}_j(t,\alpha)). 
%\end{equation}
%\begin{equation}gin{align*}
%\dd \widetilde{y}_i(t,\alpha)=-\frac{\dd B_i}{2\sqrt{n}}-&\frac{1}{y_i(t,\alpha)}\left(\frac{1}{n} \sum_{j \neq i} \frac{z_i(t,\alpha)+z_j(t,\alpha)}{(\widetilde{y}_j(t,\alpha)-\widetilde{y}_i(t,\alpha))(y_j(t,\alpha)+y_i(t,\alpha))}+\frac{p}{n}-\frac{1}{8} \right) \dd t \\
%& +\frac{1}{2 \sqrt{E_+(t,\alpha)}}\Psi_t(E_+(t,\alpha),\alpha)\dd t. 
%\end{align*}
%\begin{equation}gin{equation*}
%\dd \widetilde{z}_i(t,\alpha)=-\sqrt{E_+(t,\alpha)-\widetilde{z}_i(t,\alpha)}\frac{\dd B_i}{\sqrt{n}}+\left(\frac{1}{n} \sum_{l \neq k} \frac{\widetilde{z}_k(t,\alpha)+\widetilde{z}_l(t,\alpha)-2E_+(t,\alpha)}{\widetilde{z}_l(t,\alpha)-\widetilde{z}_k(t,\alpha)}-\frac{p}{n} \right) \dd t+\Psi_t(E_+(t,\alpha),\alpha)\dd t. 
%\end{equation*}

We now define a "short-range" set of indices $\mathcal{A} \subset [ 1,p ] \times  [ 1,p ].$ Let $\mathcal{A}$ be a symmetric set of indices in the sense that $(i,j) \in \mathcal{A}$ if and only if $(j,i) \in \mathcal{A}$, and choose a parameter $\ell:=n^{\omega_{\ell}}$, where 
$\omega_{\ell}>0$ is a constant that will be specified later. Then we define
\begin{equation}\label{eq_defnmathcalA}
\mathcal{A}:=\left\{(i,j): |i-j| \leq \ell(10 \ell^2+i^{2/3}+j^{2/3})\right\} \bigcup \left\{(i,j): i,j>i_*/{2}\right\}
\end{equation}
for $i_*:=c_*n,$
where $c_*$ is the constant as appeared in Lemma \ref{lem_rigidty z}. It is easy to check that for each $i,$ the set $\{j: (i,j) \in \mathcal{A}\}$ consists of consecutive integers. For convenience, we introduce the following short-hand notations {
\begin{equation*}
\sum_j^{\mathcal{A}, (i)}:=\sum_{\substack{j:(i,j) \in \mathcal{A}, j \neq i} }, \quad \sum_{j}^{\mathcal{A}^c, (i)}:=\sum_{\substack{j:(i,j) \notin \mathcal{A}, j \neq i}}.  
\end{equation*}  
}
For each $i,$ we denote $[ i_-, i_+]:=\{j: (i,j)\in \cal A\}$ and 
\begin{equation*}
\mathcal{I}_i(t,\alpha):=[{\wt{\gamma}_{i-}(t,\alpha)}, {\wt{\gamma}_{i+}(t,\alpha)}],\quad \wh{\mathcal{I}}_i(t,\alpha):=[{\wh{\gamma}_{i-}(t,\alpha)}, {\wh{\gamma}_{i+}(t,\alpha)}]
\end{equation*}
where we recall that 
%denote \
%\begin{equation}\label{whgammai}
%\wh \gamma_i(t,\al):=\sqrt{E_+(t,\alpha)}-\sqrt{\gamma_i(t,\alpha)}. 
%\end{equation}
%Note that we have 
$ \wt\gamma_i(t,\al)$ and $\wh\gamma_i(t,\al)$ are defined below \eqref{eq_corcontrol4} and \eqref{eq_corcontrol4sing}, respectively. Finally, we denote 
\begin{equation*}
\mathcal{J}(t,\alpha):=[-\wt c_V,{\wh{\gamma}_{3i_*/4}}(t,\alpha)],
\end{equation*}
where $\wt c_V>0$ is a small constant depending only on $c_V$. 
%Finally, we will need an additional parameter $n^{\omega_a},$ 

Let $\omega_a>0$ be a constant that will be specified later.  The short-range approximation to $\widetilde{y}$ is a process $\widehat{y}$ defined as the solution to the following SDEs for $t\ge n^{-C_0}$ with the same initial data (recall Remark \ref{rem restart})
\begin{equation*}
\widehat{y}_i(t=n^{-C_0},\alpha)=\widetilde{y}_i(t=n^{-C_0},\alpha),\quad 0\le \al\le 1,
\end{equation*}
where $C_0$ is an absolute constant (for example, $C_0=100$ will be more than enough). 
For $1 \leq i \leq n^{\omega_a}$, the SDEs are 
\begin{align}
\dd \widehat{y}_i(t,\alpha)=&  -\frac{\dd B_i}{ \sqrt{n}}+\frac{1}{ 2n } \sum_{j}^{\mathcal{A}, (i)}  \frac{1}{\widehat{y}_i(t,\alpha)-\widehat{y}_j(t,\alpha)} \dd t -\frac{n-p}{2n\sqrt{ E_+(t,0)} }  \dd t  +\frac{\dd \sqrt{E_+(t,0)}}{\dd t}\dd t  \nonumber\\
&- \left[c_n\int_{\cal I_i^c(t,0)} \frac{\sqrt{E_+(t,0)}\rho_t(E_+(t,0)-E,0)}{ E - {E_+(t,0)}+(\sqrt{E_+(t,0)}-\wh{y}_i(t,\alpha))^2 }\dd E\right] \dd t; \label{wtySDE1}
\end{align}
for $n^{\omega_a} < i \leq i_*/2,$ the SDEs are 
\begin{align}
 \dd \widehat{y}_i(t,\alpha)=&  -\frac{\dd B_i}{ \sqrt{n}}+ \frac{1}{ 2n } \sum_{j}^{\mathcal{A}, (i)}  \frac{\dd t }{\widehat{y}_i(t,\alpha)-\widehat{y}_j(t,\alpha)}+\frac{1}{2n} \sum_{j \geq 3i_*4}  \frac{\dd t}{\wt{y}_i(t,\alpha)-\wt{y}_j(t,\alpha)} \nonumber\\
&+\frac{c_n}{2} \int\limits_{\wh{\mathcal{I}}_i^c(t,\alpha)\cap \mathcal{J}(t,\alpha)} \frac{f_t(\sqrt{E_+(t,\alpha)}-E,\alpha) }{\widehat{y}_i(t,\alpha)-E} \dd E\dd t  +\frac{\dd \sqrt{E_+(t,\alpha)}}{\dd t}\dd t\nonumber\\
&-\frac{n-p}{2n[\sqrt{E_+(t,\al)}-\wt y_i(t,\alpha)]}  \dd t  -\frac{1}{2n} \sum_{j \neq i} \frac{\dd t}{2\sqrt{E_+(t,\al)}-\wt y_i(t,\alpha)-\wt y_j(t,\alpha)} ; \label{wtySDE2}
\end{align} 
for $i_*/2 < i \leq p$, the SDEs are
\begin{align}
\dd \widehat{y}_i(t,\alpha)=&  -\frac{\dd B_i}{ \sqrt{n}}+ \frac{1}{ 2n } \sum_{j}^{\mathcal{A}, (i)}  \frac{\dd t }{\widehat{y}_i(t,\alpha)-\widehat{y}_j(t,\alpha)}  +\frac{1}{2n} \sum_{j}^{\cal A^c, (i)}  \frac{\dd t}{\wt{y}_i(t,\alpha)-\wt{y}_j(t,\alpha)} +\frac{\dd \sqrt{E_+(t,\alpha)}}{\dd t}\dd t \nonumber\\
& -\frac{n-p }{2n[\sqrt{E_+(t,\al)}-\wt y_i(t,\alpha)]}  \dd t -\frac{1}{2n} \sum_{j \neq i} \frac{\dd t }{2\sqrt{E_+(t,\al)}-\wt y_i(t,\alpha)-\wt y_j(t,\alpha)}. \label{wtySDE3}
\end{align} 
Corresponding to \eqref{eq_connection}, we denote
\begin{equation}\label{eq_defnzhat}
	\wh{z}_i(t,\alpha):={E_+(t,\alpha)}[\sqrt{E_+(t,\alpha)}-\wh{y}_i(t,\alpha)]^2.
\end{equation}

We now choose the hierarchy of the scale parameters in the following quantities: 
\begin{equation*}
t_0=n^{-1/3+\omega_0}, \quad t_1=n^{-1/3+\omega_1}, \quad \ell=n^{\omega_{\ell}}, \quad \text{and} \quad n^{\omega_a}.
\end{equation*}
In fact, we will choose the constants $\omega_0$, $\omega_1$, $\omega_{\ell}$ and $\omega_a$ such that 
\begin{equation}\label{hierachypara}
0< \omega_1 \le \mathfrak C^{-1} \omega_{\ell} \le \mathfrak C^{-2}\omega_a \le \mathfrak C^{-3}\omega_0 \le \mathfrak C^{-1}
\end{equation}
for some constant $\mathfrak C>0$ that is as large as needed. Here the purpose of the scale $\ell$ is to cut off the effect of the initial data far away from the right edge, since $\widetilde{y}_i(0,\al=1)$ and $\widetilde{y}_i(0,\al=0)$ only match for small $i$. 
Moreover, by choosing scale $\omega_a \ll \omega_0$, we can make use of the matching estimates in Lemma  \ref{coro_shoruse2} to show that the drifting terms in the SDEs with $1\le i \le n^{\omega_a}$ are approximately $\al$ independent.

%the density $\rho_t(E_+(t,\alpha)-E,\alpha)$ is approximately $\al$ independent (after taking into account of other drift terms) for the SDEs with $1\le i \le \omega_a$, so that it can be replaced with an $\alpha$-independent quantity.

Next, we show that $\widetilde{y}_i(t,\alpha)$ are good approximations to $\widehat{y}_i(t,\alpha)$. 
%The motivation of the construction of $\widehat{y}_i(t, \alpha)$ is the following. 
%In what follows, we make some specific choices and assume that 
%\begin{equation}gin{equation}\label{eq_specificchoicesparameter}
%\omega_1<\omega_{\ell}/10, \ \omega_{\ell}<\omega_a/20, \ \omega_a<\omega_0/20. 
%\end{equation}
Before that, we recall the semigroup approach for first order parabolic PDE. Let $\Omega$ be a real Banach space with a given norm and $\mathcal{L}(\Omega)$ be the Banach algebra of all linear continuous mappings. We say a family of operators $\{T(t): t \geq 0\}$ in $\mathcal{L}(\Omega)$ is a semigroup if 
\begin{equation*}
T(0)=\id, \quad \text{and}\quad  T(t+s)=T(t)T(s) \ \ \text{for all} \ t,s \geq 0. 
\end{equation*}
For a detailed discussion of semigroups of operators, we refer the readers to \cite{Bensoussan2007}.
  
\begin{definition}\label{defn_semigroup}
For any operator $\mathcal{W}\in \cal L(\mathbb{R}^p),$ we denote $\mathcal{U}^{\mathcal{W}}$ as the semigroup associated with $\mathcal{W},$ i.e., $\mathcal{W}$ is the infinitesimal generator of $\mathcal{U}^{\mathcal{W}}.$  Moreover, we denote $\mathcal{U}^{\mathcal{W}}(s,t)$ as the semigroup from $s$ to $t$, that is, $\mathcal{U}^{\mathcal{W}}(s,s)=\id$ and
\begin{equation*}
\partial_t \mathcal{U}^{\mathcal{W}}(s,t)=\mathcal{W}(t) \mathcal{U}^{\mathcal{W}}(s,t),\quad \text{for any $t \geq s$.}
\end{equation*} 
\end{definition}

%For the following proof, we introduce the notation $\mathfrak{c}_i\equiv \mathfrak{c}_i(t,\al):=\sqrt{E_+(t,\alpha)}+ y_i(t,\alpha)$. Note that we can write $\widetilde{z}_i(t,\alpha)=\mathfrak{c}_i\widetilde{y}_i(t,\alpha). $
For the rest of this subsection, we prove the following short-range approximation estimate. 

\begin{lemma}\label{lem_approximationcontrol} 
With high probability, we have that for any constant $\epsilon>0$,
\begin{equation}\label{eq_shortmain}
\sup_{0 \leq \alpha \leq 1} \sup_{n^{-C_0} \leq t \leq 10t_1} \max_{1\le i\le p} |\widetilde{y}_i(t,\alpha)-\wh{y}_i(t,\alpha) | \leq n^{-2/3 + \e+ \omega_1 - 2\omega_{\ell}} . 
\end{equation} 
\end{lemma}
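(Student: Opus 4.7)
The plan is to study the difference $v_i(t,\alpha) := \widetilde{y}_i(t,\alpha) - \widehat{y}_i(t,\alpha)$, which vanishes at the initial time $t = n^{-C_0}$ and satisfies a \emph{deterministic} equation (the Brownian terms cancel by construction). Subtracting the short-range SDEs \eqref{wtySDE1}--\eqref{wtySDE3} from the full SDE \eqref{eq_singsde2}, we obtain an equation of the schematic form
\begin{equation*}
\partial_t v_i(t,\alpha) = (\mathcal{B}(t) v)_i + F_i(t,\alpha),
\end{equation*}
where $(\mathcal{B}(t) w)_i := \frac{1}{2n}\sum_j^{\mathcal{A},(i)} \frac{w_j - w_i}{(\widetilde{y}_i - \widetilde{y}_j)(\widehat{y}_i - \widehat{y}_j)}$ is a parabolic short-range operator generated by the interaction kernel, and $F_i$ is an explicit forcing term that I will analyze below. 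The main idea is to bound $|F_i|$ pointwise and then invoke an $\ell^{\infty} \to \ell^{\infty}$ contraction estimate for the semigroup $\mathcal{U}^{\mathcal{B}}$ (in the sense of Definition \ref{defn_semigroup}), which is essentially a discrete maximum principle for this short-range operator.

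Next, I would identify the forcing term regime by regime. For $n^{\omega_a} < i \le i_*/2$, the forcing consists solely of the short-range cut-off error
\begin{equation*}
F_i^{(1)} = \frac{1}{2n}\sum_j^{\mathcal{A}^c,(i)} \frac{1}{\widetilde{y}_i - \widetilde{y}_j} - \frac{c_n}{2}\int_{\widehat{\mathcal{I}}_i^c(t,\alpha) \cap \mathcal{J}(t,\alpha)} \frac{f_t(\sqrt{E_+(t,\alpha)} - E,\alpha)}{\widehat{y}_i(t,\alpha) - E}\,\mathrm{d}E,
\end{equation*}
which I expect to control as $|F_i^{(1)}| \prec n^{-1+\epsilon}\ell^{-2}(i\wedge (p-i))^{-1/3}n^{-1/3}$ by the rigidity estimate \eqref{rigiditywhy}, exactly as in \cite{edgedbm}: the rigidity lets us compare the discrete sum with its continuous counterpart, and the length $\ell(i^{2/3}+j^{2/3})$ gap built into $\mathcal{A}$ provides a denominator of size $\gtrsim \ell^{2} n^{-2/3}$ in the kernel. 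For $i_*/2 < i \le p$, the same argument applies. The range $1\le i \le n^{\omega_a}$ is more delicate because the short-range SDE \eqref{wtySDE1} uses the drift associated with $\alpha = 0$ rather than $\alpha$. Here $F_i$ contains an additional piece
\begin{equation*}
F_i^{(2)} = \partial_t\sqrt{E_+(t,\alpha)} - \partial_t\sqrt{E_+(t,0)} - \bigl[\text{integral with } \alpha\bigr] + \bigl[\text{integral with } 0\bigr] + \text{lower-order terms},
\end{equation*}
and this is precisely the quantity controlled by $\Psi_t(E,\alpha) - \Psi_t(E,0)$ in \eqref{eq_defnpsiderviative2}. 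Invoking the matching estimates \eqref{eq_corcontrol2}--\eqref{eq_corcontrol3} from Lemma \ref{coro_shoruse2}, together with the quantile matching \eqref{eq_corcontrol4sing} from Lemma \ref{coro_shoruse}, yields $|F_i^{(2)}| \prec n^{\epsilon} t/t_0$ on the relevant scales, which is much smaller than $n^{-2/3-c}$ for some small $c>0$ due to the hierarchy \eqref{hierachypara}.

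Finally, with pointwise bounds on $F_i$ at hand, I would integrate the Duhamel representation
\begin{equation*}
v_i(t,\alpha) = \int_{n^{-C_0}}^{t} \bigl(\mathcal{U}^{\mathcal{B}}(s,t) F(\cdot,s,\alpha)\bigr)_i \,\mathrm{d}s
\end{equation*}
and use the fact that the short-range propagator is $\ell^{\infty}$-contractive (essentially because $\mathcal{B}$ is a weighted discrete Laplacian, so $\|\mathcal{U}^{\mathcal{B}}(s,t)\|_{\ell^{\infty}\to\ell^{\infty}} \le 1$). Combined with $t \le 10 t_1 = 10 n^{-1/3+\omega_1}$ and the bound $|F_i| \prec n^{-1+\epsilon}\ell^{-2}n^{-1/3} = n^{-4/3+\epsilon-2\omega_{\ell}}$, this gives $|v_i(t,\alpha)| \prec t_1 \cdot n^{-4/3+\epsilon-2\omega_{\ell}} = n^{-2/3+\epsilon+\omega_1-2\omega_{\ell}}$, which is the claimed estimate \eqref{eq_shortmain}. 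A standard stochastic continuity argument promotes the pointwise-in-$(t,\alpha)$ estimate to the uniform estimate over $(t,\alpha)$.

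The main obstacle, as I see it, is the careful matching in the small-index regime $1 \le i \le n^{\omega_a}$: one must show that the $\alpha$-dependence of the drift from the semicircular-like integral kernel is effectively frozen at $\alpha = 0$, and this rests delicately on the cancellations exploited in \eqref{nontrivialcancel} and on the fact that \eqref{eq_corcontrol2}--\eqref{eq_corcontrol3} actually quantify the $\alpha$-uniformity of $\Psi_t$ near the spectral edge. Choosing the hierarchy \eqref{hierachypara} with $\mathfrak{C}$ large enough then absorbs all the error exponents into the single gain $\omega_1 - 2\omega_{\ell}$ appearing in \eqref{eq_shortmain}.
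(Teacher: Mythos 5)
Your overall strategy is correct and closely mirrors the paper: subtract the short-range SDEs from the full SDEs, obtain a deterministic parabolic equation for $v_i = \widetilde{y}_i - \widehat{y}_i$, bound the forcing regime-by-regime (using rigidity for the mid-range indices and the $\Psi_t$-matching estimates \eqref{eq_corcontrol2}--\eqref{eq_corcontrol3} for $i \le n^{\omega_a}$), and integrate the Duhamel representation over a time window of length $t_1 = n^{-1/3+\omega_1}$. However, there is a genuine gap in how you package the forcing.

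Writing $\partial_t v = \mathcal{B}v + F$ with $\mathcal{B}$ only the short-range jump operator hides a term proportional to $v_i$ inside $F_i$: the integral in \eqref{wtySDE2} is evaluated at $\widehat{y}_i$, and when you subtract the full drift evaluated at $\widetilde{y}_i$, the difference between ``integral at $\widehat{y}_i$'' and ``integral at $\widetilde{y}_i$'' produces a diagonal multiplier $\mathcal{V}_1(i)v_i$ whose coefficient is of size $n^{1/3-\omega_\ell}$. This piece is \emph{not} a discretization error controlled by rigidity alone; it scales like $n^{1/3-\omega_\ell}\|v\|_\infty$, and without a priori control on $\|v\|_\infty$ this is as large as $n^{-1/3-\omega_\ell}$, which overwhelms the target forcing scale $n^{-1/3-2\omega_\ell+\epsilon}$. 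The paper resolves this by explicitly splitting $\mathcal{L} = \mathcal{B}_1 + \mathcal{V}_1$, observing that $\mathcal{V}_1 \le 0$ (dissipative), and then $\mathcal{U}^{\mathcal{B}_1+\mathcal{V}_1}$ remains an $\ell^{\infty}$-contraction; the genuine forcing $\zeta$ is then written with $\widetilde{y}_i$ in both the sum and the integral, so it is a pure discretization error bounded by the rigidity of $\widetilde{y}_i$ and the $\mathcal{A}$-gap. Alternatively, one could keep your decomposition and run a Gronwall argument using $n^{1/3-\omega_\ell}\cdot t_1 = n^{\omega_1-\omega_\ell} \ll 1$; but either way you must either recognize the sign of $\mathcal{V}_1$ or make a bootstrap assumption $\sup_i |v_i| \le n^{-2/3}$ (the paper's \eqref{rough remain}) and close it by a continuity argument. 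Your proposal does neither, and in particular the appeal to rigidity ``as in \cite{edgedbm}'' is circular, since rigidity for $\widehat{y}_i$ (Corollary \ref{coro_rigityestimate}) is itself a corollary of the lemma being proved.

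There is also an arithmetic slip: you claim $|F_i| \prec n^{-4/3+\epsilon-2\omega_\ell}$ and that $t_1 \cdot n^{-4/3+\epsilon-2\omega_\ell}$ equals $n^{-2/3+\epsilon+\omega_1-2\omega_\ell}$, but $n^{-1/3+\omega_1}\cdot n^{-4/3+\epsilon-2\omega_\ell} = n^{-5/3+\omega_1+\epsilon-2\omega_\ell}$. The correct forcing scale (the paper's \eqref{zetai22} and \eqref{zetahard1}) is $n^{-1/3-2\omega_\ell+\epsilon}$, which indeed integrates to the claimed $n^{-2/3+\omega_1-2\omega_\ell+\epsilon}$.
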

\begin{proof}
We abbreviate $v_i:=\widetilde{y}_i-\widehat{y}_i.$ Subtracting the SDEs for $\widetilde{y}_i$ and $\widehat{y}_i,$ we obtain the following inhomogeneous PDE for $v$:
\begin{equation*}
\partial_t v=(\mathcal{B}_1+\mathcal{V}_1)v+\zeta,
\end{equation*}
where $\mathcal{B}_1$ is a linear operator defined by 
\begin{equation*}
(\mathcal{B}_1 v)_i=-\frac{1}{2n} \sum_j^{\mathcal{A}, (i)} \frac{ v_i - v_j}{(\widetilde{y}_i-\widetilde{y}_j)(\widehat{y}_i-\widehat{y}_j)},
\end{equation*}
%and $\mathfrak{C}_{ij}$ is defined as
%\begin{equation}gin{equation*}
%\mathfrak{C}_{ij}=2E_+(t, \alpha)-\widetilde{z}_i(t,\alpha)-\widetilde{z}_j(t,\alpha). 
%\end{equation*}
%Here we recall (\ref{eq_defnzhat}). 
%Note that $\mathfrak{C}_{ij} \sim \mathfrak{c}_i \sim \mathfrak{c}_j \sim 1.$ Moreover, we note that
%\begin{equation}gin{equation*}
%\mathfrak{c}_i v_i-\mathfrak{c}_j v_j=\mathfrak{c}_i(v_i-v_j)+v_j(\widetilde{y}_i-\widetilde{y}_j).
%\end{equation*}
and $\cal V_1$ is a diagonal operator defined as follows: $\cal V_1(i)=0$ for $i> i_*/2$; for $1\le i \le n^{\omega_a}$,  
\begin{align*}
  \cal V_1(i)v_i&:= c_n \sqrt{E_+(t,0)}   \int_{\cal I_i^c(t,0)} \frac{\rho_t(E_+(t,0)-E,0)}{ E - {E_+(t,0)}+(\sqrt{E_+(t,0)}-\wh{y}_i(t,\alpha))^2 }\dd E \\
 &\quad - c_n \sqrt{E_+(t,0)} \int_{\cal I_i^c(t,0)} \frac{\rho_t(E_+(t,0)-E,0)}{ E - {E_+(t,0)}+(\sqrt{E_+(t,0)}-\wt{y}_i(t,\alpha))^2 }\dd E \\
 &=- v_i \int_{\cal I_i^c(t,0)} \frac{c_n \sqrt{E_+(t,0)}  [2\sqrt{E_+(t,0)}-\wt{y}_i(t,\alpha)-\wh{y}_i(t,\alpha)]\rho_t(E_+(t,0)-E,0)}{ [E - {E_+(t,0)}+(\sqrt{E_+(t,0)}-\wh{y}_i(t,\alpha))^2] [ E - {E_+(t,0)}+(\sqrt{E_+(t,0)}-\wt{y}_i(t,\alpha))^2]}\dd E ;
\end{align*}
for $n^{\omega_a} < i \le i_*/2$, 
$$ \cal V_1(i)=-\frac{c_n}{2} \int_{\wh{\mathcal{I}}_i^c(t,\alpha)\cap \mathcal{J}(t,\alpha)} \frac{f_t( \sqrt{E_+(t,\alpha)}-E,\alpha) }{(\wt{y}_i(t,\alpha)-E)(\widehat{y}_i(t,\alpha)-E)} \dd E.$$
The term $\zeta$ contains the remaining errors, and we will control its $\ell^\infty$ norm later. %is not important---we only need to provide some bounds on its $l^\infty$ norm. 

For the following proof, we assume a rough bound on $\wh y_i(t,\al)$:
\begin{equation}\label{rough remain}
\sup_{0 \leq \alpha \leq 1} \max_{1\le i\le i_*/2} |\wt y_i(t,\al)-\wh y_i(t,\al)| \le n^{-2/3}, \quad \text{for}\ \ \ n^{-C_0} \leq t \leq 10t_1.
\end{equation}
%for some small enough constant $\delta>0$. 
Later, we will remove it with a simple continuity argument. Since $\cal V_1(i)\le 0$, the operator $\cal V_1$ is negative. Then, the semigroup of $\mathcal{B}_1+\mathcal{V}_1$ is a contraction on every $\ell^q(\{ 1,\cdots,p \})$ space. To see this, for $u(s)=\cal U^{\mathcal{B}_1+\mathcal{V}_1}(0,s)u_0$ and $q\ge 1$, we have that 
%for $p\ge 1$,
\begin{align*}
\partial_t \sum_i |u_i(s)|^q &= \sum_i |u_i(s)|^{q-1}\sgn(u_i(s))\left[\left(\mathcal{B}_1 u(s)\right)_i+\mathcal{V}_1(i)u_i(s)\right]  \le \sum_i |u_i(s)|^{q-1}\sgn(u_i(s))\left(\mathcal{B}_1 u(s)\right)_i \\
&= - \frac{1}{4n}\sum_{i,j\in \cal A} \frac{\left[|u_i(s)|^{q-1}\sgn(u_i(s))- |u_j(s)|^{q-1}\sgn(u_j(s))\right] \left[u_i(s) - u_j(s)\right]}{(\widetilde{z}_i-\widetilde{z}_j)(\widehat{z}_i-\widehat{z}_j)}\le 0.
\end{align*}
On the space $l^\infty(\{ 1,\cdots,p \})$, we just need to use $\|u\|_\infty =\lim_{q\to \infty}\|u\|_q$. 
%The operator $\mathcal{V}_1$ is diagonal in the sense that $(\mathcal{V}_1 v)_i=\mathcal{V}_1(i) v_i,$ where $\mathcal{V}_1(i)=0$ for $i \geq i_*/2$ and when $1 \leq i \leq n^{\omega_a},$
%\begin{equation}gin{equation*}
%\mathcal{V}_1(i)=-\frac{1}{y_i(t,\alpha)}\int_{\mathcal{I}_i^c(0,t)} \frac{\mathfrak{c}_i (2E_+(t,\alpha)-\widetilde{z}_i(t,\alpha)-E)}{(\widehat{z}_i(t,\alpha)-E)(\widetilde{z}_i(t,\alpha)-E)} \rho_t(E_+(t,0)-E,0) \dd E,  
%\end{equation*}
%and for $n^{\omega_a} \leq i \leq i_*/2,$
%\begin{equation}gin{equation*}
%\mathcal{V}_1(i)=-\frac{1}{y_i(t,\alpha)}\int_{\mathcal{I}_i^c(t,\alpha) \cap \mathcal{J}(t,\alpha)} \frac{\mathfrak{c}_i (2E_+(t,\alpha)-\widetilde{z}_i(t,\alpha)-E)}{(\widehat{z}_i(t,\alpha)-E)(\widetilde{z}_i(t,\alpha)-E)} \rho_t(E_+(t,\alpha)-E,\alpha) \dd E.
%\end{equation*}
%Recall Definition \ref{defn_semigroup}. 
By Duhamel's principle, we have 
\begin{equation*}
v(t)=\int_{n^{-C_0}}^t \mathcal{U}^{\mathcal{B}_1+\mathcal{V}_1}(s,t)\zeta(s)\dd s,
\end{equation*}  
which gives that
\begin{equation}\label{eq_duhamel1}
\norm{v(t)}_{\infty} \leq \int_{n^{-C_0}}^t  \norm{\zeta(s)}_{\infty} \dd s. 
\end{equation}
Next, we provide the bounds on $\norm{\zeta(s)}_{\infty}.$ 
%By the construction of $\mathcal{B}_1, \mathcal{V}_1$ and the SDEs of $\widetilde{y}_i$ and $\widehat{y}_i,$ we find that $\zeta_i=0$ when $i \geq i_*/2,$ and 
%Comparing \eqref{eq_singsde2} and \eqref{wtySDE1} and using the definitions of $\mathcal{B}_1$ and $ \mathcal{V}_1$, we get that 
Fist, we have that $\zeta_i(t)=0$ for $i\ge i_*/2$. %$\zeta_i(t)=\OO\left(  \|v(t)\|_\infty \right);$ for $1 \leq i \leq n^{\omega_a}$, 
%\begin{equation}gin{align}
%&\zeta_i(t)=\OO\left(  \|v(t)\|_\infty \right). \label{zetai1}
%\end{align} 
Second, under \eqref{rough remain}, for $n^{\omega_a} < i \leq i_*/2,$ we have that
\begin{align}\label{zetai2}
\zeta_i(t) & =\frac{1}{2n}\sum^{\mathcal{A}^c, (i)}_{j \leq 3i_*/4} \frac{1}{\widetilde{y}_i(t,\alpha)-\widetilde{y}_j(t,\alpha)} - \frac{c_n}{2} \int_{\wh{\mathcal{I}}_i^c(t,\alpha)\cap \mathcal{J}(t,\alpha)} \frac{f_t(\sqrt{E_+(t,\alpha)}-E,\alpha) }{\wt{y}_i(t,\alpha)-E} \dd E .
\end{align}
%We now bound $\zeta_i$ for $n^{\omega_a} < i \leq i_*/2$ using the rigidity estimate \eqref{rigiditywhy}. 
Decomposing the integral in \eqref{zetai2} according to the quantiles of $f_t$ as $\sum_j  \int^{\sqrt{\gamma_j}}_{\sqrt{\gamma_{j+1}}}$ and using \eqref{rigiditywhy}, we obtain that for any constant $\e>0$,
%Since $i_* \sim p$ and $y_i(t,\alpha) \sim 1,$ we have that for some constant $C>0,$ 
%\begin{equation}gin{align*}
%\left| \zeta_i \right| \leq  C \frac{1}{n}\left| \sum_{0<j \leq 3i_*/4}^{\mathcal{A}^c,(i)} \frac{1}{\widetilde{z}_i(t,\alpha)-\widehat{\gamma}_i(t,\alpha)}-\frac{1}{\widetilde{z}_i(t,\alpha)-\widetilde{z}_j(t,\alpha)} \right|+Cn^{-2},
%\end{align*}
%where the error part $Cn^{-2}$ is from the fact that  $\rho_t(\cdot,\alpha)$ is smooth and we also use the fact that $\widetilde{z}_i(t,\alpha)=o(1)$ and $E_+(t,\alpha) \sim 1$. Therefore, by Lemma \ref{lem_rigidty z}, we further find that for some constant $C>0,$ with high probability, for some arbitrarily small constant $\epsilon'>0,$
\begin{equation}\label{eq_zetailarge}
\left| \zeta_i \right| \le  \frac{n^\e}{n^{5/3}} \sum_{j\le 3i_*/4}^{\mathcal{A}^c, (i)} \frac{1}{(\widehat{\gamma}_i-\widehat{\gamma}_j)^2  j^{1/3}}  \leq \frac{C n^{\epsilon}}{n^{1/3}} \sum_{j\le 3i_*/4}^{\mathcal{A}^c,(i)} \frac{i^{2/3}+j^{2/3}}{(i-j)^2 j^{1/3}} 
\end{equation}
with high probability, where for the second inequality we used 
$$|\widehat{\gamma}_i-\widehat{\gamma}_j|\sim |i^{2/3}-j^{2/3}|n^{-2/3} \gtrsim |i-j| (i+j)^{-1/3}n^{-2/3},\quad  \text{for}\ \ (i,j)\notin \cal A.$$ 
%the definitions of $\widehat{\gamma}_i, \widehat{\gamma}_j,$ the square root behavior of $\rho_t$ and the fact $\gamma_i \sim i^{2/3} n^{-2/3}.$ 
Using the inequalities (3.67) and (3.68) of \cite{edgedbm}, we can bound \eqref{eq_zetailarge} by
%\begin{equation}gin{equation*}
%\sum_{j>i}^{\mathcal{A}^c, (i)} \frac{i^{2/3}+j^{2/3}}{(i-j)^2 j^{1/3}} \leq C \left( \frac{i^{1/3}}{\ell(\ell+i^{2/3})}+\frac{1}{(\ell(\ell^2+i^{2/3}))^{2/3}} \right),
%\end{equation*}
%and 
%\begin{equation}gin{equation*}
%\sum_{j<i}^{\mathcal{A}^c, (i)} \leq \frac{C i^{1/3}}{\ell(\ell+i^{2/3})}.
%\end{equation*}
%This implies that 
%\begin{equation}gin{equation*}
%\frac{n^{\epsilon'}}{n^{1/3}} \sum_{0<j<3 i_*/4} \frac{i^{2/3}+j^{2/3}}{(i-j)^2 j^{1/3}} \leq \frac{n^{\epsilon'}}{n^{1/3} n^{7\omega_{\ell}}},
%\end{equation*}
%where use the the assumption (\ref{eq_specificchoicesparameter}) and  $i \geq n^{\omega_a}$. In summary, we have shown that with high probability 
\begin{equation}\label{zetai22}
|\zeta_i| \leq C\frac{n^{\epsilon}}{n^{1/3} n^{2 \omega_{\ell}}} \quad \text{with high probability.}
\end{equation}

%\begin{equation}gin{align*}
% \cal V_1(i)v_i&:= \sqrt{E_+(t,0)}   \int_{\cal I_i^c(t,\al)} \frac{\rho_t(E_+(t,0)-E,0)}{ E - {E_+(t,0)}+(\sqrt{E_+(t,0)}-\wh{y}_i(t,\alpha))^2 }\dd E \\
% &- \sqrt{E_+(t,0)} \int_{\cal I_i^c(t,\al)} \frac{\rho_t(E_+(t,0)-E,0)}{ E - {E_+(t,0)}+(\sqrt{E_+(t,0)}-\wt{y}_i(t,\alpha))^2 }\dd E \\
% &=- v_i \int_{\cal I_i^c(t,\al)} \frac{\sqrt{E_+(t,0)}  (2\sqrt{E_+(t,0)}-\wt{y}_i(t,\alpha)-\wh{y}_i(t,\alpha))\rho_t(E_+(t,0)-E,0)}{ [E - {E_+(t,0)}+(\sqrt{E_+(t,0)}-\wh{y}_i(t,\alpha))^2] [ E - {E_+(t,0)}+(\sqrt{E_+(t,0)}-\wt{y}_i(t,\alpha))^2]}\dd E .
%\end{align*}

For $1 \leq i \leq n^{\omega_a}$, through a lengthy but straightforward calculation, we find that
\begin{align*}
& \zeta_i =:A_1+A_2+A_3+A_4+A_5+A_6,
\end{align*}
where $A_i$, $1\le i\le 6$, are defined as (recall \eqref{eq_connection})
\begin{align*}
A_1:=& - \frac{\sqrt{E_+(t,\al) - \wt z_i(t,\al)} }{n}\sum_{j}^{\cal A^c,(i)} \frac{1}{z_i(t,\al) - z_j(t,\al)} +c_n\sqrt{E_+(t,\al) - \wt z_i(t,\al)}\int_{\cal I_i^c(t,\al)} \frac{\rho_t(E_+(t,\al)-E,\al)}{E-\wt z_i (t,\al) }\dd E,\\
A_2:=& c_n\sqrt{E_+(t,0) - \wt z_i(t,\al)}\int_{\cal I_i^c(t,0)} \frac{\rho_t(E_+(t,0)-E,0)}{E-\wt z_i (t,\al)}\dd E -c_n\sqrt{E_+(t,\al) - \wt z_i(t,\al)}\int_{\cal I_i^c(t,\al)} \frac{\rho_t(E_+(t,\al)-E,\al)}{E-\wt z_i (t,\al) }\dd E ,\\
A_3:=& c_n\left(\sqrt{E_+(t,0)}-\sqrt{E_+(t,0) - \wt z_i(t,\al)}\right)\int_{\cal I_i^c(t,0)} \frac{\rho_t(E_+(t,0)-E,0)}{E-\wt z_i (t,\al)}\dd E  ,\\
A_4:=&c_n\sqrt{E_+(t,0)} \left[ \int_{\cal I_i^c(t,0)} \frac{\rho_t(E_+(t,0)-E,0)}{ E - {E_+(t,0)}+(\sqrt{E_+(t,0)}-\wt{y}_i(t,\alpha))^2 }\dd E-\int_{\cal I_i^c(t,0)} \frac{\rho_t(E_+(t,0)-E,0)}{E-\wt z_i (t,\al)}\dd E\right],\\
A_5:=&-\frac{n-p}{2n\sqrt{E_+(t,\al)-\wt z_i(t,\al)}}   + \frac{n-p}{2n\sqrt{E_+(t,0)-\wt z_i(t,\alpha)}} +\frac{\dd\sqrt{ E_+(t,\al) }}{\dd t}- \frac{\dd\sqrt{ E_+(t,0) }}{\dd t},\\
A_6:=&-\frac{n-p}{2n\sqrt{E_+(t,0)-\wt z_i(t,\alpha)}}+\frac{n-p}{2n \sqrt{E_+(t,0)} }  -\frac{1}{2n} \sum_{j}^{\cal A, (i)} \frac{1}{2\sqrt{E_+(t,\al)}-\wt y_i(t,\alpha)-\wt y_j(t,\alpha)}.
\end{align*}

First, for term $A_6$, we notice that for $1\le i \le n^{\omega_a}$ there are at most $\OO(n^{2\omega_a/3 + \omega_{\ell}})$ many indices $j$ such that $(i,j)\in \cal A$. On the other hand, by \eqref{rigiditywtz} and \eqref{rough remain}, we have $|\wt z_i(t,\alpha)| \le n^{-2/3+2\omega_a/3 + \e}$ with high probability. Hence, we can bound that
\begin{equation}\label{boundA6}
|A_6|\le n^{-2/3+\omega_a},\quad \text{with high probability}.
\end{equation}
Next, using $\rho_t(E_+(t,0)-E,0)=\OO(\sqrt{E})$, we can bound that
$$\int_{\cal I_i^c(t,0)} \frac{\rho_t(E_+(t,0)-E,0)}{|E-\wt z_i (t,\al)|}\dd E=\OO(1),$$
which immediately gives 
\begin{equation}\label{boundA3}
|A_3|\lesssim |\wt z_{i}(t,\al)|\le n^{-2/3+\omega_a} ,\quad \text{with high probability}.
\end{equation}
For $A_4$, we have 
\begin{align}\label{intA4}
 |A_4|&\lesssim \int_{\cal I_i^c(t,0)}\frac{|\wt y_i(t,\al)| |\sqrt{E_+(t,0)}-\sqrt{E_+(t,\al)}|\rho_t(E_+(t,0)-E,0)}{ [E - {E_+(t,0)}+(\sqrt{E_+(t,0)}-\wt{y}_i(t,\alpha))^2][E-\wt z_i (t,\al)] }\dd E.
\end{align}
Note that for $E\in \cal I_i^c(t,\al)$, we have
$$
|E-\wt z_i (t,\al)|   \gtrsim n^{-2/3+2\omega_{\ell}} + i^{2/3}n^{-2/3},
$$
and
$$ |E - {E_+(t,0)}+(\sqrt{E_+(t,0)}-\wt{y}_i(t,\alpha))^2| \gtrsim n^{-2/3+2\omega_{\ell}}+ i^{2/3}n^{-2/3} .
$$ 
Thus, we can bound the integral on the right-hand side of \eqref{intA4} by
$$ \int_{\cal I_i^c(t,0)}\frac{ \rho_t(E_+(t,0)-E,0)\dd E}{ [E - {E_+(t,0)}+(\sqrt{E_+(t,0)}-\wt{y}_i(t,\alpha))^2][E - \wt z_i(t,\al)] }\lesssim n^{1/3-\omega_{\ell}} .$$
Together with $|\sqrt{E_+(t,0)}-\sqrt{E_+(t,\al)}|\lesssim t$ by \eqref{E+al1} and the rigidity estimate \eqref{rigiditywhy} for $|\wt y_i(t,\al)|$, we get that for any constant $\e>0$,
\begin{align}
 |A_4|&\lesssim t|\wt y_i(t,\al)|n^{1/3-\omega_{\ell}} \lesssim n^{-1/3+\omega_1}\left( \frac{i^{2/3}}{n^{2/3}} +\frac{n^\e}{i^{1/3}n^{2/3}}\right) n^{1/3-\omega_{\ell}} \le n^{-2/3+\omega_a} ,\label{boundA4}
\end{align}
with high probability. The term $A_1$ can be handled in exactly the same way as $B_1$ in (3.71) of \cite{edgedbm} and we get that for any constant $\e>0$,
\begin{align}
 |A_1|&\le  n^{-1/3-2\omega_{\ell}+\e}+n^{-1/2+\e} , \quad \text{with high probability.}\label{boundA1}
\end{align}
Finally, using the definitions of $m_t(\wt z_i(t,\al),0)$ and $m_t(\wt z_i(t,\al),\al)$ we can write $A_2+A_5$ as
\begin{equation}\label{B1+B2}
\begin{split}
 A_2+A_5 & =\frac{\dd\sqrt{ E_+(t,\al) }}{\dd t}+ c_n\sqrt{E_+(t,\al) - \wt z_i(t,\al)} \re m_t(E_+(t,\al) - \wt z_i(t,\al),\al) \\
& - \frac{\dd\sqrt{ E_+(t,0) }}{\dd t} - c_n \sqrt{E_+(t,0) - \wt z_i(t,\al)} \re m_t(E_+(t,0) - \wt z_i(t,\al),0)   \\
&-\frac{1-c_n}{2\sqrt{E_+(t,\al)-\wt z_i(t,\al)}}   + \frac{1-c_n}{2n\sqrt{E_+(t,0)-\wt z_i(t,\alpha)}}\\
&+c_n\sqrt{E_+(t,\al) - \wt z_i(t,\al)}\int_{\cal I_i(t,\al)} \frac{\rho_t(E_+(t,\al)-E,\al)}{E-\wt z_i (t,\al) }\dd E \\
& - c_n\sqrt{E_+(t,0) - \wt z_i(t,\al)}\int_{\cal I_i(t,0)} \frac{\rho_t(E_+(t,0)-E,0)}{E-\wt z_i (t,\al)}\dd E = B_1+B_2+ \OO(t^2) ,
\end{split}
\end{equation}
where %we denote
 \begin{align*}
B_1&:=c_n \sqrt{E_+(t,\al) - \wt z_i(t,\al)}\int_{\cal I_i(t,\al)} \frac{\rho_t(E_+(t,\al)-E,\al)}{E-\wt z_i (t,\al) }\dd E   - c_n\sqrt{E_+(t,0) - \wt z_i(t,\al)}\int_{\cal I_i(t,0)} \frac{\rho_t(E_+(t,0)-E,0)}{E-\wt z_i (t,\al)}\dd E, \\
B_2&:= \Psi_t(\wt z_{i}(t,\al),\al)-  \Psi_t(\wt z_{i}(t,\al),0) .
\end{align*}
In the second step of \eqref{B1+B2}, we used \eqref{eq_deriofsing} and \eqref{eq_defnpsiderviative2}. Now using Lemma \ref{coro_shoruse2}, we can bound that for any constant $\e>0$,
\begin{equation}\label{boundB2}
|B_2| \le \frac{n^{\e} t^{1/2}}{t_0^{1/2}}|\wt z_i(t,\al)|^{1/2}+\frac{n^{\e}|\wt z_i(t,\al)|}{t_0}  + \OO(t^2) \lesssim n^{-1/3 +\e+ \omega_a/3 + \omega_1/2-\omega_0/2}
\end{equation}
with high probability, where in the second step we used $|\wt z_i(t,\al)|\lesssim n^{-2/3 + 2\omega_a/3}$ by \eqref{rigiditywtz} because the largest index $i_+$ is at most $\OO(n^{\omega_a})$. 

 It remains to bound $B_1$:
\begin{align*}
B_1&=B_{11}+B_{12},
\end{align*}
where 
\begin{align*}
B_{11}&:=c_n\sqrt{E_+(t,\al) - \wt z_i(t,\al)}\left[\int_{\cal I_i(t,\al)} \frac{\rho_t(E_+(t,\al)-E,\al)}{E-\wt z_i (t,\al) }\dd E-\int_{\cal I_i(t,0)} \frac{\rho_t(E_+(t,0)-E,0)}{E-\wt z_i (t,\al)}\dd E\right] ,\\
B_{12}&:=c_n\left[ \sqrt{E_+(t,\al) - \wt z_i(t,\al)}- \sqrt{E_+(t,0) - \wt z_i(t,\al)}\right]\int_{\cal I_i(t,0)} \frac{\rho_t(E_+(t,0)-E,0)}{E-\wt z_i (t,\al)}\dd E.
\end{align*}
  The term $B_{11}$ can be bounded in the same way as (3.82) of \cite{edgedbm}, which gives that for any constant $\e>0$,
\begin{align}
 |B_{11}|&\le  n^{-1/3 + \e + 2\omega_a/3+\omega_1 -\omega_0},\quad \text{with high probability. }\label{boundB11}
\end{align}
For $B_{12}$, we need to obtain a bound on 
\begin{align}\label{detereq0}
\int_{\cal I_i(t,0)} \frac{\rho_t(E_+(t,0)-E,0)}{E-\wt z_i (t,\al)}\dd E.
\end{align}
Note that this is a principal value, so we need to deal with the logarithmic singularity at $\wt z_i (t,\al)$. 
%Next, we control the term $B_{31}.$ We mention that by Lemma \ref{lem_rigidty z}, the interval of $\mathcal{I}_i(t,\alpha)$ contains a sequence of partitions of smaller values of order close to $n^{-2/3},$ which is an upper bound for $\widetilde{z}_i(t,\alpha)$ when $i$ is smaller. We divide our proofs according to the magnitude of $i.$ 
First assume that $i \geq n^{\delta}$ for some $\delta<\omega_{\ell}/10$. Then with \eqref{rigiditywtz} and \eqref{E+al1}, it is easy to check that $\widetilde{z}_i(t,\alpha)$ is away from the boundary of $\mathcal{I}_i(t,0)$ at least  by a distance $n^{-2}$ with high probability, and that $|\widetilde{z}_i(t,\alpha)| \geq n^{-2}$ with high probability. Then we can bound
\begin{align*}
 \left|\int_{\cal I_i(t,0)} \frac{\rho_t(E_+(t,0)-E,0)}{E-\wt z_i (t,\al)}\dd E\right|  & \leq  \int_{\mathcal{I}_i(t,0) , |E-\widetilde{z}_i(\alpha, t)|>n^{-50}} \frac{\sqrt{|E|}}{|\widetilde{z}_i(t,\alpha)-E|} \dd E \\
&+\left| \int_{|E-\widetilde{z}_i(t,\alpha)| \le n^{-50}} \frac{\rho_t(E_+(t,0)-E,0)-\rho_t(E_+(t,0)-\wt z_{i}(t,\al),0)}{\widetilde{z}_i(t,\alpha)-E} \dd E \right|=:D_1+D_2.
\end{align*} 
For the term $D_1$, using $ \sqrt{|E|}=\OO(n^{-1/3+\omega_a/3})$ for $E\in \cal I_{i}(0,t)$, we obtain that
$$D_1\lesssim n^{-1/3+\omega_a/3}\int_{\mathcal{I}_i(t,0) , |E-\widetilde{z}_i(\alpha, t)|>n^{-50}} \frac{\dd E}{|\widetilde{z}_i(t,\alpha)-E|} \lesssim n^{-1/3+\omega_a/3}\log n.$$
For the term $D_2$, using Lemma \ref{lem_partialm} we obtain that
\begin{align*}
|\rho_t(E_+(t,0)-E,0)-\rho_t(E_+(t,0)-\wt z_{i}(t,\al),0)|& \lesssim \frac{|E-\wt z_{i}(t,\al)|}{\min(t,|E_+(t,0)-\wt z_{i}(t,\al)|^{1/2})} \le n |E-\wt z_{i}(t,\al)|,
\end{align*}
which gives that
$$D_2\lesssim n  \int_{|\widetilde{z}_i(t,\alpha)-E| < n^{-50}} \dd E  \le 2 n^{-49}.$$
%where we use the fact $2E_+(t,\alpha)-\widetilde{z}_i(t,\alpha)-E>0$ for $E \in \mathcal{I}_i(t,\alpha)$ when $1 \leq i \leq n^{\omega_a}.$ Note that the first term can be controlled by $Cn^{\epsilon+\omega_1+2\omega_a/3}/(n^{1/3+\omega_0}),$ where we use the definition of $\mathcal{I}_i(t,\alpha).$ For the second term, since $\rho_t(E_+(t,\alpha),\alpha)=0$ for $0 \leq \alpha \leq 1$ and  
%$|\rho_t'(E_+(t,\alpha)-E,\alpha)| \leq C |\widetilde{z}_i(t,\alpha)|^{-1/2} \leq C n$ by (\ref{eq_partialbound}), we obtain that 
%\begin{equation}gin{equation*}
% \left| \int_{|\widetilde{z}_i(t,\alpha)-E| < n^{-50}} \frac{\rho_t(E_+(t,\alpha)-E,\alpha)-\rho_t(E_+(t,0)-E,0)}{\widetilde{z}_i(t,\alpha)-E} \dd E \right|  \leq Cn^{\epsilon+\omega_1+2\omega_a/3}/(n^{1/3+\omega_0}). 
%\end{equation*}
%In conclusion, when $i>n^{\delta},$ we have that with high probability, for some constant $C>0,$ 
%\begin{equation}gin{equation}\label{eq_b31case1}
%|B_{31}| \leq C \frac{n^{\epsilon+\omega_1+2\omega_a/3}}{n^{1/3+\omega_0}}.
%\end{equation}
Next, we consider the case with $i < n^{\delta}.$ It suffices to assume that $|\widetilde{z}_i(t,\alpha)| \leq n^{-100}$, because otherwise we can obtain an estimate in the same way as the case $i \ge n^{\delta}.$ Then we decompose \eqref{detereq0} as
\begin{align*}
 \int_{\cal I_i(t,0)} \frac{\rho_t(E_+(t,0)-E,0)}{E-\wt z_i (t,\al)}\dd E &= \int_{\cal I_i(t,0), E\ge n^{-50}} \frac{\rho_t(E_+(t,0)-E,0)}{E-\wt z_i (t,\al)}\dd E + \int_{3\wt z_i(t,\al)/2\le E < n^{-50}} \frac{\rho_t(E_+(t,0)-E,0)}{E-\wt z_i (t,\al)}\dd E\\
& + \int_{\wt z_i(t,\al)\le E < 3\wt z_i(t,\al)/2} \frac{\rho_t(E_+(t,0)-E,0)}{E-\wt z_i (t,\al)}\dd E + \int_{0\le E < \wt z_i(t,\al)/2} \frac{\rho_t(E_+(t,0)-E,0)}{E-\wt z_i (t,\al)}\dd E\\
&=:F_1+F_2+F_3+F_4.
\end{align*}
The term $F_1$ can be estimated in the same way as $D_1$. For term $F_2$, we used that $\rho_t(E_+(t,0)-E,0)=\OO(\sqrt{E}) $ to bound the integral as $|F_2|\lesssim n^{-25} $. If $\wt z_i (t,\al)\le 0$, then we have $F_3=F_4=0$. Otherwise, for $F_4$ we have 
$$|F_4| \le \int_{0\le E < \wt z_i(t,\al)/2} E^{-1/2}\dd E \lesssim |\wt z_i(t,\al)|^{1/2}\le n^{-50},$$
and for $F_3$ we have
\begin{align*}
|F_3|&\le \int_{\wt z_i(t,\al)\le E < 3\wt z_i(t,\al)/2} \frac{|\rho_t(E_+(t,0)-E,0)-\rho_t(E_+(t,0)-\wt z_i(t,\al),0)|}{|E-\wt z_i (t,\al)|}\dd E \lesssim |\wt z_i(t,\al)|^{1/2}\le n^{-50},
\end{align*}
where in the second step we used that $|\rho_t(E_+(t,0)-E,0)-\rho_t(E_+(t,0)-\wt z_i(t,\al),0)|\le |\wt z_i(t,\al)|^{-1/2}|E-\wt z_i(t,\al)|$. Combining the above estimates, we get that for any constant $\e>0$,
$$ \left|\int_{\cal I_i(t,0)} \frac{\rho_t(E_+(t,0)-E,0)}{E-\wt z_i (t,\al)}\dd E\right|\le n^{-1/3+\omega_a +\e},\quad \text{with high probability},$$
which further implies that 
\begin{equation}\label{boundB22}
|B_{12}| \lesssim n^{-1/3+\omega_a +\e}t =n^{-2/3+\e+\omega_a +\omega_1}.
\end{equation} 

In sum, combining \eqref{boundA6}, \eqref{boundA3}, \eqref{boundA4}, \eqref{boundA1}, \eqref{boundB2}, \eqref{boundB11} and \eqref{boundB22} and using the hierarchy of parameters \eqref{hierachypara}, we obtain that for $1\le i \le n^{\omega_a}$,
\begin{align}\label{zetahard1}
|\zeta_i(t)| \le  n^{-1/3-2\omega_{\ell}+\e} \quad \text{with high probability},
\end{align}
 for any constant $\e>0$.
Then, combining \eqref{zetai22} and \eqref{zetahard1}, we obtain that for any constant $\e>0$,
\begin{align}\label{zetahard2}
\|\zeta(t)\|_{\infty} \le n^{-1/3-2\omega_{\ell}+\e} \quad \text{with high probability},
\end{align}
 %with high probability  
 uniformly in all $n^{-C_0}\le t\le t_1$ under the assumption \eqref{rough remain}. Plugging it into \eqref{eq_duhamel1}, we get
\begin{equation*}
\norm{v(t)}_{\infty} \leq tn^{-1/3-2\omega_{\ell}+\e} =n^{-2/3+\e+\omega_1-2\omega_{\ell}},%+ \int_{n^{-C_0}}^t  \norm{v(s)}_{\infty} \dd s. 
\end{equation*}
%With Gr{\"o}nwall's inequality, we 
which concludes \eqref{eq_shortmain} under \eqref{rough remain}. Note that the right hand side of \eqref{eq_shortmain} is much smaller than $n^{-2/3}$ on the right-hand side of \eqref{rough remain}. Then using a simple continuity argument we can remove the assumption \eqref{rough remain}. In fact, the continuity argument is deterministic in nature because $v$ satisfies a system of deterministic equations conditioning on the trajectories of $\{\wt y_i(t,\al)\}$ and $\{\wh y_i(t,\al)\}$. In fact, we can pick a high probability event $\Xi$, on which the rigidity \eqref{rigiditywtz} and the local law, Theorem \ref{thm_local2}, hold for all $n^{-C_0}\le t\le t_1$. Then, we can perform the continuity argument on $\Xi$. 
\end{proof}

Before concluding this section, we record the following rigidity estimates. 
\begin{corollary}\label{coro_rigityestimate} 
Let $i \leq n^{3 \omega_{\ell}+\delta}$ for a constant $0<\delta<\omega_{\ell}-\omega_1.$ Then 
%for any constant $\e>0$, 
we have 
\begin{equation*}
\sup_{0 \leq t \leq 10t_1}\left|  \widehat{y}_i(t,\alpha)- \widehat{\gamma}_i(t,\alpha) \right| \prec i^{-1/3} n^{-2/3} . 
\end{equation*}
%with high probability.
\end{corollary}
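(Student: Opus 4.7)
The plan is to combine the short-range approximation bound in Lemma \ref{lem_approximationcontrol} with the rigidity estimate \eqref{rigiditywhy} for the singular value process $\{y_i(t,\alpha)\}$ via a triangle inequality. Recall that $\widetilde{y}_i(t,\alpha) = \sqrt{E_+(t,\alpha)} - y_i(t,\alpha)$ and $\widehat{\gamma}_i(t,\alpha) = \sqrt{E_+(t,\alpha)} - \sqrt{\gamma_i(t,\alpha)}$, so \eqref{rigiditywhy} immediately gives
\begin{equation*}
\sup_{0\le \alpha\le 1}\sup_{0\le t\le 10t_1}\bigl|\widetilde{y}_i(t,\alpha) - \widehat{\gamma}_i(t,\alpha)\bigr| \prec i^{-1/3}n^{-2/3}, \quad 1\le i \le c_* n .
\end{equation*}

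Next, Lemma \ref{lem_approximationcontrol} provides, for any constant $\epsilon>0$,
\begin{equation*}
\sup_{0\le \alpha\le 1}\sup_{n^{-C_0}\le t\le 10t_1}\max_{1\le i \le p}\bigl|\widetilde{y}_i(t,\alpha) - \widehat{y}_i(t,\alpha)\bigr| \le n^{-2/3 + \epsilon + \omega_1 - 2\omega_\ell} \quad \text{with high probability.}
\end{equation*}
Thus, by the triangle inequality, it suffices to verify that for $i \le n^{3\omega_\ell + \delta}$ with $0 < \delta < \omega_\ell - \omega_1$, the short-range error $n^{-2/3+\epsilon+\omega_1 - 2\omega_\ell}$ is absorbed into $i^{-1/3}n^{-2/3}$ up to factors of $n^{\epsilon'}$ for arbitrarily small $\epsilon'>0$. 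This amounts to checking $n^{\epsilon + \omega_1 - 2\omega_\ell} \cdot i^{1/3} \le n^{\epsilon'}$, i.e., $i^{1/3} \le n^{2\omega_\ell - \omega_1 + \epsilon' - \epsilon}$. Since $i \le n^{3\omega_\ell + \delta}$ gives $i^{1/3}\le n^{\omega_\ell + \delta/3}$, and the condition $\delta < \omega_\ell - \omega_1$ yields $\omega_\ell + \delta/3 < 2\omega_\ell - \omega_1$, we can choose $\epsilon$ small enough and $\epsilon'$ appropriately so that the inequality holds. The time range $0 \le t \le n^{-C_0}$ can then be handled trivially using a short-time stochastic continuity argument (noting that $\widehat{y}_i(n^{-C_0},\alpha) = \widetilde{y}_i(n^{-C_0},\alpha)$ by construction), so the bound extends from $[n^{-C_0},10t_1]$ to $[0,10t_1]$.

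There is no real obstacle here: this is genuinely a corollary, and the main technical content has already been established. All that one must do is carefully track the exponents in the hierarchy \eqref{hierachypara} to confirm that the short-range error is dominated by the rigidity scale in the permitted index range. The only mildly delicate point is the use of stochastic domination to combine one high-probability deterministic bound with a $\prec$-type bound; we do this by fixing $\epsilon>0$ arbitrarily small on the short-range side and then invoking Definition \ref{stoch_domination} to conclude $\prec i^{-1/3}n^{-2/3}$ uniformly on the relevant range of $(t,\alpha,i)$.
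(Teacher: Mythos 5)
Your proposal is correct and takes exactly the same route as the paper: the paper's entire proof is the sentence ``This is an immediate consequence of Lemma \ref{lem_approximationcontrol} and \eqref{rigiditywhy},'' i.e., triangle inequality using the rigidity of $\widetilde{y}_i$ and the short-range approximation error. You have merely made explicit the exponent bookkeeping ($n^{\omega_1 - 2\omega_\ell} \le n^{-\omega_\ell - \delta/3 - c}$ for some $c>0$ given $\delta < \omega_\ell - \omega_1$) that the paper leaves implicit.
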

\begin{proof}
This is an immediate consequence of Lemma \ref{lem_approximationcontrol} and  \eqref{rigiditywhy}.  
\end{proof}

\subsection{Proof of Theorem \ref{thm_firstkey}}

%\subsection{Derivation of parabolic equation and long range cut-off}\label{sec_differentionalpha}

Our goal is to bound $|\widehat{y}_i(t_1,\al=1)-\widehat{y}_i(t_1,\al=0)|$. For this purpose, we shall study the partial derivative $u_i(t,\alpha):=\partial_{\alpha} \widehat{y}_i(t,\alpha).$  With \eqref{wtySDE1}--\eqref{wtySDE3}, we find that $u=(u_i(t,\alpha):1\le i \le p)$ satisfies the PDE
\begin{equation}\label{eq_ui}
\partial_t u=\mathcal{L} u+\zeta^{(0)}.
\end{equation}
Here the operator $\mathcal{L}$ is defined as $\mathcal{L}=\mathcal{B}+\mathcal{V},$ where $\cal B$ is defined by
\begin{equation}\label{defn BB}
(\mathcal{B} u)_i= - \frac{1}{ 2n} \sum_{j}^{\mathcal{A}, (i)}  \frac{u_i - u_j}{[\widehat{y}_i(t,\alpha)-\widehat{y}_j(t,\alpha)]^2} . 
\end{equation}
$\cal V$ is a diagonal operator with $(\mathcal{V} u)_i=\mathcal{V}_i u_i$, where $\cal V_i$'s are defined as
%\begin{equation}gin{equation*}
%\mathcal{V}_i=-\frac{1}{y_i(t,\alpha)} \int_{\mathcal{I}^c_i(0,t)} \frac{2 E_+(t,\alpha)-\widetilde{z}_i(t,\alpha)-E}{(\widehat{z}_i(t,\alpha)-E)^2} \rho_t(E_+(t,0)-E,0) \dd E, 
%\end{equation*}
\begin{equation}\label{calV1}\mathcal{V}_i:=- 2c_n\sqrt{E_+(t,0)}\int_{\cal I_i^c(t,0)} \frac{[\sqrt{E_+(t,0)}-\wh{y}_i(t,\alpha)] \rho_t(E_+(t,0)-E,0)}{ [E - {E_+(t,0)}+(\sqrt{E_+(t,0)}-\wh{y}_i(t,\alpha))^2]^2 }\dd E  \end{equation}
for $1 \leq i \leq n^{\omega_a}$,
\begin{equation}\label{calV2} \cal V_i:=-\frac{c_n}{2} \int_{\wh{\mathcal{I}}_i^c(t,\alpha)\cap \mathcal{J}(t,\alpha)} \frac{f_t(\sqrt{E_+(t,\alpha)}-E,\alpha) }{[\widehat{y}_i(t,\alpha)-E]^2} \dd E \end{equation}
for $n^{\omega_a} < i \leq i_*/2$,
%\begin{equation}gin{equation*}
%\mathcal{V}_i=-\frac{1}{y_i(t,\alpha)} \int_{\mathcal{I}^c_i(t,\alpha) \cap \mathcal{J}(t,\alpha)} \frac{2 E_+(t,\alpha)-\widetilde{z}_i(t,\alpha)-E}{(\widehat{z}_i(t,\alpha)-E)^2} \rho_t(E_+(t,\alpha)-E,\alpha) \dd E,
%\end{equation*}
and $\mathcal{V}_i=0$ for $i_*/2 < i \leq p.$ With the same discussion as the one below \eqref{rough remain}, we know that the semigroup of $\cal L$ is a contraction on every $\ell^q(\{1,\cdots, p\})$ space. The random forcing term $\zeta^{(0)}$ comes from the $\partial_\al$ derivatives of all the other terms, and we notice that $\zeta^{(0)}_i=0$ when $1\le i\le n^{\omega_a}$. For $i>n^{\omega_a}$, it is easy to check that for some constant $C>0$,   
% when $1 \leq i \leq n^{\omega_a},$ the random forcing term is defined as 
%\begin{equation}gin{align*}
%\zeta^{(0)}_i & =\frac{(8p-n)\partial_{\alpha} y_i}{8n y_i(t,\alpha)^2}+\frac{\partial_\alpha y_i}{ y_i(t, \alpha) n} \sum_{j}^{\mathcal{A}, (i)}  \frac{2 E_+(t,\alpha)-\widetilde{z}_i(t,\alpha)-\widetilde{z}_j(t,\alpha)}{\widehat{z}_j(t,\alpha)-\widehat{z}_i(t,\alpha)} -\frac{\partial_\alpha y_i}{ y_i(t, \alpha) n} \sum_{j}^{\mathcal{A}, (i)}  \frac{\partial_{\alpha} z_i+\partial_{\alpha} z_j}{\widehat{z}_j(t,\alpha)-\widehat{z}_i(t,\alpha)} \\
%&-\frac{\partial_{\alpha} y_i}{y_i(t,\alpha)^2} \int_{\mathcal{I}_i^c(0,t)} \frac{2 E_+(t,\alpha)-\widetilde{z}_i(t,\alpha)-E}{\widehat{z}_i(t,\alpha)-E} \rho_t(E_+(t,0)-E,0) \dd E \dd t -\frac{1}{4 E_+(t,\alpha)^{3/2}} \Psi_t(E_+(t,0),0)\dd t
%\end{align*}
%It is easy to see that $\partial_{\alpha} y_i \sim 1.$ Similarly, we can get $\zeta^{(0)}_i, i \geq n^{\omega_a}.$ Moreover, it is easy to see that 
\begin{equation}\label{roughzeta}
\max_{i > n^{\omega_a}} |\zeta^{(0)}_i| \leq  n^C \quad \text{ with high probability.}
\end{equation}

Next, we define a long range cut-off of $u$. Fix a small constant $\delta_v>0$ and let $v$ be the solution to the following homogeneous equation 
\begin{equation}\label{eq_vi}
\partial_t v=\mathcal{L} v, \quad v_i(n^{-C_0})=u_i(n^{-C_0}) \mathbf{1}_{\{1 \leq i \leq \ell^3 n^{\delta_v}\}}. 
\end{equation}
%Then a key observation is that the One of the key ingredients for proving Theorem \ref{thm_firstkey} is 
Then, we have the following proposition, which essentially states that the $u_i$'s with indices far away from the edge have a negligible effect on the solution. 

\begin{proposition}\label{prop_key} 
%For $u_i, v_i$ defined in (\ref{eq_ui}) and (\ref{eq_vi}), respectively, 
With high probability, we have 
\begin{equation*}
\sup_{n^{-C_0} \leq t \leq 10t_1} \sup_{1 \leq i \leq \ell^3} |u_i(t,\al)-v_i(t,\al)| \leq n^{-100}. 
\end{equation*}
\end{proposition}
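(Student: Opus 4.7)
The plan is to combine Duhamel's formula with a finite-speed-of-propagation estimate for the semigroup generated by $\mathcal{L}$, exploiting the short-range nature of $\mathcal{A}$. Setting $w := u - v$, the function $w$ solves $\partial_t w = \mathcal{L} w + \zeta^{(0)}$ with initial data $w_j(n^{-C_0}) = u_j(n^{-C_0}) \mathbf{1}_{\{j > \ell^3 n^{\delta_v}\}}$, so that Duhamel's principle gives
\begin{equation*}
w_i(t) = \sum_j [\mathcal{U}^{\mathcal{L}}(n^{-C_0}, t)]_{ij} \, w_j(n^{-C_0}) + \int_{n^{-C_0}}^t \sum_j [\mathcal{U}^{\mathcal{L}}(s, t)]_{ij} \, \zeta^{(0)}_j(s) \, \dd s.
\end{equation*}
Here $w_j(n^{-C_0})$ is supported on $j > \ell^3 n^{\delta_v}$, and $\zeta^{(0)}_j$ vanishes for $j \le n^{\omega_a}$; under the hierarchy \eqref{hierachypara}, by taking $\mathfrak{C}$ large enough we may ensure $n^{\omega_a} \ge 2\ell^3 n^{\delta_v}$. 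Hence for any target $i \le \ell^3$, both contributions involve only source indices $j$ separated from $i$ by at least $\ell^3 n^{\delta_v}/2$.

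The technical core is to prove a finite-speed-of-propagation estimate for the kernel: with high probability, uniformly in $n^{-C_0} \le s \le t \le 10t_1$, $1 \le i \le \ell^3$, and $j \ge \ell^3 n^{\delta_v}/2$,
\begin{equation*}
\bigl|[\mathcal{U}^{\mathcal{L}}(s,t)]_{ij}\bigr| \le n^{-D}
\end{equation*}
for any large constant $D>0$. Granting this, the first Duhamel term is bounded by $p\cdot n^{-D}\cdot \max_j |w_j(n^{-C_0})| \le n^{C-D}$ thanks to the rigidity bound \eqref{rigiditywhy}, and the second by $t_1\cdot p\cdot n^{-D}\cdot \max_j |\zeta^{(0)}_j| \le n^{C-D}$ thanks to the rough bound \eqref{roughzeta}; choosing $D$ large enough then yields $|w_i(t)|\le n^{-100}$. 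To establish the kernel decay, I would use that $[\mathcal{L}]_{ij}$ vanishes outside $\mathcal{A}$, so that for indices $i,j \le \ell^3 n^{\delta_v}$ a single application of $\mathcal{L}$ moves information by at most $O(\ell^3 n^{2\delta_v/3})$, and hence at least $\Omega(n^{\delta_v/3})$ applications are required to connect $j \ge \ell^3 n^{\delta_v}/2$ to $i \le \ell^3$. Combining Corollary \ref{coro_rigityestimate}, which controls the jump weights $(\widehat{y}_i-\widehat{y}_j)^{-2}$, with a Poisson-type concentration estimate for the associated random walk on $\{1,\dots,p\}$, in the spirit of \cite{edgedbm}, together with the non-positivity of $\mathcal{V}$ in \eqref{calV1}--\eqref{calV2} (which makes the semigroup a contraction on each $\ell^q$), delivers the claimed super-polynomial decay.

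The main obstacle is the finite-speed-of-propagation estimate itself. Although $\mathcal{A}$ has a short-range flavor, it admits edges of length up to $\ell\cdot i^{2/3}$ for larger $i$, so a crude step-count bound alone is insufficient; one must couple it with a sharp estimate on the typical jump length. The precise requirement $\omega_1 \ll \omega_\ell$ imposed in the hierarchy \eqref{hierachypara} is driven by exactly this: it guarantees that the expected propagation distance of the underlying random walk over time $t_1$ remains well below $\ell^3 n^{\delta_v}$ on the edge scale, so that Poisson concentration yields the needed super-polynomial (rather than merely polynomial) decay.
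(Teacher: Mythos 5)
Your proof is correct and takes the same route as the paper: Duhamel's principle applied to $w=u-v$, combined with the finite-speed-of-propagation bound and the a priori polynomial control on $\zeta^{(0)}$ and the initial data, which is exactly how the paper argues (citing its Lemma \ref{lem_finitespeedcal}). One small inaccuracy: to bound $\max_j |w_j(n^{-C_0})|=\max_j|\partial_\alpha\widehat y_j(n^{-C_0},\alpha)|$ you need a crude polynomial bound on the $\alpha$-derivative of the eigenvalues rather than the rigidity estimate \eqref{rigiditywhy} itself (which controls $\widehat y_j$, not $\partial_\alpha\widehat y_j$), and your heuristic for the kernel decay (step counting plus Poisson concentration for a random walk) differs from the paper's actual proof of Lemma \ref{lem_finitespeedcal}, which proceeds via an exponentially weighted energy $F(t)=\sum_k\phi_k^2 f_k^2$ with $\phi_k=\exp[\nu\psi(\widehat y_k-\widehat\gamma_q)]$ and a stopping-time argument; but since the Proposition's proof only cites the lemma, this is immaterial to the correctness of your argument for the Proposition.
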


One can see that Proposition \ref{prop_key} is an immediate consequence of the following finite speed of propagation estimate, whose proof is postponed to Section \ref{sec_finitespeedcal}.  

\begin{lemma}\label{lem_finitespeedcal} For any small constant $\delta>0,$ we have that for $a \geq \ell^3 n^{2\delta}$ and $b \leq \ell^3 n^{\delta}$,  \begin{equation*}
\sup_{n^{-C_0}  \leq s \leq t \leq 10t_1} \left[\mathcal{U}_{ab}^{\mathcal{L}}(s,t)+\mathcal{U}_{ba}^{\mathcal{L}}(s,t) \right] \leq n^{-D} \quad \text{with high probability},
\end{equation*}  
for any large constant $D>0$.
\end{lemma}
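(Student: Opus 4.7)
The plan is a weighted-energy argument that exploits the short-range structure of $\mathcal{A}$ in \eqref{eq_defnmathcalA} together with the dissipativity of $\mathcal{L}$. Fix the source index $a \geq \ell^3 n^{2\delta}$ and set $f_i(t) := \mathcal{U}^{\mathcal{L}}_{ai}(s,t)$, which solves $\partial_t f = \mathcal{L} f$ with $f_i(s) = \delta_{ia}$. Since $\mathcal{B}$ from \eqref{defn BB} is a symmetric discrete Laplacian on the graph $\mathcal{A}$ and $\mathcal{V}_i \leq 0$ by \eqref{calV1}--\eqref{calV2}, the operator $\mathcal{L}$ generates a contraction on every $\ell^q$ space. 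The goal is to show $f_b(t) \prec n^{-D}$ for any target $b \leq \ell^3 n^\delta$ and any $D>0$; the symmetric estimate for $\mathcal{U}_{ba}^{\mathcal{L}}(s,t)$ follows by the same argument with the two indices interchanged.

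The key quantitative input is that an $\mathcal{A}$-edge incident to an index $j$ has length at most $\ell(10\ell^2 + j^{2/3} + j_\pm^{2/3}) \lesssim \ell^3 + \ell\,j^{2/3}$. Consequently any $\mathcal{A}$-path from $b$ up to a neighborhood of $a$ must traverse at least $K \gtrsim n^{\delta/2}$ edges (roughly, each hop near scale $j \sim \ell^3 n^{3\delta/2}$ moves by at most $\ell^3 n^\delta$, whereas the total gap $a-b \gtrsim \ell^3 n^{2\delta}$ has to be covered). I would therefore introduce an exponential weight $\phi_i := \exp(\nu\, d(i))$, where $\nu>0$ is a small fixed constant and $d(i)$ is a $1$-Lipschitz function on the graph $\mathcal{A}$, i.e.\ $|d(i)-d(j)| \leq 1$ whenever $(i,j)\in \mathcal{A}$, satisfying $d(a) = 0$ and $d(b) \geq K$. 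Setting $g(t) := \sum_i \phi_i f_i(t)^2$, one has $g(s) = 1$ and $f_b(t)^2 \leq g(t)/\phi_b \leq e^{-\nu K} g(t)$.

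By symmetry of $\mathcal{B}$ and the sign of $\mathcal{V}$,
\begin{align*}
\partial_t g(t) \leq -\frac{1}{2n}\sum_{(i,j)\in \mathcal{A},\, i\neq j}\frac{(\phi_i f_i - \phi_j f_j)(f_i - f_j)}{(\widehat{y}_i - \widehat{y}_j)^2}.
\end{align*}
Writing $(\phi_i f_i - \phi_j f_j)(f_i - f_j) = \phi_i(f_i-f_j)^2 + (\phi_i-\phi_j) f_j(f_i-f_j)$, symmetrizing, and applying Cauchy--Schwarz to the cross term, the positive Dirichlet form absorbs half the leakage and one is left with
\begin{align*}
\partial_t g(t) \leq \frac{C}{n} \sum_{(i,j)\in \mathcal{A}} \frac{(\phi_i - \phi_j)^2}{(\widehat{y}_i - \widehat{y}_j)^2}\bigl(f_i^2 + f_j^2\bigr).
\end{align*}
The Lipschitz property of $d$ gives $|\phi_i - \phi_j| \leq \nu\, e^\nu \max(\phi_i, \phi_j)$ on every edge of $\mathcal{A}$, and Corollary \ref{coro_rigityestimate} combined with Lemma \ref{lem_approximationcontrol} lets one replace $\widehat{y}_i - \widehat{y}_j$ by the deterministic square-root gap $(j^{2/3}-i^{2/3})n^{-2/3}$ on a high-probability event. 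A telescoping computation over the scales of $i$ then yields $n^{-1}\sum_j^{\mathcal{A},(i)} (\widehat{y}_i-\widehat{y}_j)^{-2} \lesssim 1$ uniformly in $i$, so $\partial_t g \leq C\nu^2 g$. Grönwall gives $g(t) \leq e^{C\nu^2 t_1}$, whence
\begin{align*}
f_b(t)^2 \leq \exp\bigl(-\nu n^{\delta/2} + C\nu^2 t_1\bigr) \leq n^{-D}
\end{align*}
for any fixed $D$, by choosing $\nu$ a small enough absolute constant.

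The main obstacle is the uniform Dirichlet-form bound $n^{-1} \sum_j^{\mathcal{A},(i)} (\widehat{y}_i - \widehat{y}_j)^{-2} \lesssim 1$: the definition \eqref{eq_defnmathcalA} of $\mathcal{A}$ is calibrated precisely so that the edge-adapted range $|i-j|\lesssim \ell(i^{2/3}+j^{2/3})$ balances the square-root eigenvalue gaps and each dyadic scale contributes only $\OO(1)$, but this has to be combined with the bulk-regime contribution from $\{i,j>i_*/2\}$ and verified uniformly in the random configurations permitted by rigidity, which requires careful bookkeeping. A secondary subtlety is constructing $d(i)$ so that it is simultaneously Lipschitz on $\mathcal{A}$ and grows linearly with the graph distance from $a$; this is standard but must be handled given the scale-dependent hop length of $\mathcal{A}$.
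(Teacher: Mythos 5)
Your proposal hinges on the Dirichlet--form bound
\begin{equation*}
\frac{1}{n}\sum_{j}^{\mathcal{A},(i)} \frac{1}{(\widehat{y}_i-\widehat{y}_j)^2} \lesssim 1 \quad \text{uniformly in } i,
\end{equation*}
and this is false. Already the single nearest-neighbor term at the edge is large: for $i=1$ rigidity gives $\widehat{y}_1-\widehat{y}_2 \sim n^{-2/3}$, so that one term contributes $n^{-1}\cdot n^{4/3}=n^{1/3}$. Deeper in the spectrum the situation is worse still, since by the second part of \eqref{eq_defnmathcalA} all pairs with $i,j>i_*/2$ belong to $\mathcal{A}$ and the nearest-neighbor gaps there are of order $n^{-1}$. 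Consequently your Gr\"onwall constant is not $\OO(\nu^2)$ but at least $\nu^2 n^{1/3+\epsilon}$; over the time window $t_1=n^{-1/3+\omega_1}$ the accumulated factor is $\exp(C\nu^2 n^{\omega_1+\epsilon})$, which diverges for constant $\nu$. Taking $\nu\ll n^{-\omega_1/2}$ tames it, but then the exponent $\nu K\sim \nu n^{2\delta/3}$ need not beat a power of $\log n$ for arbitrary small $\delta>0$, which the lemma requires.

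The underlying issue is a mismatch between the weight and the kernel of $\mathcal{B}$. Because $\mathcal{B}_{ij}\sim n^{-1}(\widehat{y}_i-\widehat{y}_j)^{-2}$ blows up at small gaps, a useful exponential weight must be Lipschitz in the \emph{positions} $\widehat{y}_k$, not in the abstract graph distance on $\mathcal{A}$: only then does $(\phi_i-\phi_j)^2$ supply a compensating factor of $(\widehat{y}_i-\widehat{y}_j)^2$. This is exactly why the paper takes $\phi_k=\exp[\nu\psi(\widehat{y}_k(t,\alpha)-\widehat{\gamma}_q(t,\alpha))]$ with $\psi$ $1$-Lipschitz, so that $\phi_i/\phi_j+\phi_j/\phi_i-2\lesssim \nu^2(\widehat{y}_i-\widehat{y}_j)^2$ and $\mathcal{B}_{ij}(\phi_i/\phi_j+\phi_j/\phi_i-2)\lesssim\nu^2/n$ per edge, a genuinely bounded contribution, with $\nu$ ultimately taken of size $n^{2/3-2\omega_\ell-\delta/3}$ (not a constant). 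The price of the position-adapted weight is that $\phi_k$ becomes a process, so It\^{o}'s formula spits out additional martingale and drift corrections \eqref{eq_dftthird}, \eqref{eq_dftfourth}, which must be controlled with the SDEs for $\widehat{y}_i$, the rigidity estimates, and the computations \eqref{eq_controldeffouthone}, \eqref{eq_controldftfourthree}; and because the argument is anchored at one fixed initial time $s$, the paper needs the separate mass-conservation result, Lemma \ref{lem_speedlemmatwo}, together with a semigroup split to upgrade to a simultaneous supremum over $s$. Your static weight would have neatly bypassed both of those complications, had the Dirichlet bound been true; since it is not, a weight adapted to the random locations appears unavoidable.
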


\begin{remark}
In fact, we have $\mathcal{U}_{ab}^{\mathcal{L}}(s,t) \ge 0$ and $\mathcal{U}_{ba}^{\mathcal{L}}(s,t)\ge 0$ by maximum principle. More precisely, define $v_i(t)= \exp ( - \int_0^t \cal V_i(s)\dd s)u_i(t)$. Then $v=(v_i:1\le i \le p)$ satisfies the equation $\partial_t v= \cal B v.$ If $v_i(s)\ge 0$ for all $i$ at time $s$, we claim that $v_i(t)\ge 0$ for all $i$ at any time $t\ge s$. To see this, at any time $ t' \in [s, t]$, suppose $v_j(t')=\min\{v_i(t'):1\le i \le p\}$ is the smallest entry of $v(t')$. Then with \eqref{defn BB}, we can check that $\partial_t v_j(t')= (\cal B v(t'))_j \ge 0$, i.e. the smallest entry of $v$ will always increase. Hence the entries of $v$ can never be negative at any time $t\ge s$.
\end{remark}

\begin{proof}[Proof of Proposition \ref{prop_key}]
Fix a $n^{-C_0} \le t \le 10 t_1$, by Duhamel's principle we have that
%abbreviate $\cal U^{\cal L}:=\mathcal{U}^\mathcal{L}(n^{-C_0},t)$. then 
\begin{align*}
u(t,\al)-v(t,\al)= \mathcal{U}^\mathcal{L}(n^{-C_0},t)[u(n^{-C_0},\al)-v(n^{-C_0},\al)] + \int_{n^{-C_0}}^t  \mathcal{U}^\mathcal{L}(s,t)\zeta^{(0)}(s)\dd s.
\end{align*}
Since $u_i(n^{-C_0},\al)-v_i(n^{-C_0},\al)=0$ for $i \le \ell^3 n^{\delta_v}$ and $\zeta_i^{(0)}(s)=0$ for $i\le n^{\omega_a}$, we can conclude the proof using Lemma \ref{lem_finitespeedcal} and  \eqref{roughzeta}.
%{\color{red}[Please fill in this part]}
\end{proof}

Another key ingredient is the following energy estimate. We postpone its proof until we complete the proof of Theorem \ref{thm_firstkey}. Here we have fixed the starting time point to be $n^{-C_0}$, but the same conclusion holds for any other starting time by the semigroup property. 
\begin{proposition}\label{prop_energyestimate} 
For any small constant $\delta_1>0$, consider a vector $w \in \mathbb{R}^p$ with $w_i=0$ for $i \geq \ell^3 n^{\delta_1}.$ Then, for any constants $\epsilon,\eta>0$ and fixed $q\ge 1$, there exists a constant $C_q>0$ independent of $\epsilon$ and $\eta$ such that for all $2n^{-C_0} \leq t \leq 2t_1,$
\begin{equation}\label{eq generalq}
\norm{\mathcal{U}^{\mathcal{L}}(n^{-C_0},t) w}_{\infty} \leq C(q,\eta) \left( \frac{n^{C_q \eta+\epsilon}}{n^{1/3}t} \right)^{3(1-6\eta)/q} \norm{w}_q. 
\end{equation} 
\end{proposition}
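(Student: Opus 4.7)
My plan is to prove the estimate via a Nash-type energy iteration for the semigroup $\cal U^{\cal L}$, following the general strategy developed for analogous edge-DBM problems (e.g., in \cite{edgedbm}). The starting point is a monotonicity identity. Set $u(s) := \cal U^{\cal L}(n^{-C_0}, s) w$ and compute, for any $q \ge 2$,
\[
\partial_s \|u(s)\|_q^q = q \sum_i |u_i|^{q-2} u_i \big[ (\cal B u)_i + \cal V_i u_i \big].
\]
Since $\cal V_i \le 0$ by the defining formulas \eqref{calV1}--\eqref{calV2}, the potential contribution is non-positive and can be discarded. For $\cal B$, symmetrization gives the Dirichlet-form bound
\[
\partial_s \|u(s)\|_q^q \;\le\; -\frac{c_q}{n} \sum_{(i,j)\in\cal A} \frac{(u_i - u_j)(|u_i|^{q-2}u_i - |u_j|^{q-2}u_j)}{(\widehat y_i - \widehat y_j)^2} \;\le\; -\frac{c_q}{n} \sum_{(i,j)\in\cal A} \frac{(|u_i|^{q/2} - |u_j|^{q/2})^2}{(\widehat y_i - \widehat y_j)^2},
\]
where the last step uses the elementary inequality $(a-b)(a|a|^{q-2} - b|b|^{q-2}) \gtrsim (|a|^{q/2}\sgn a - |b|^{q/2}\sgn b)^2$.

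Next I would establish a discrete Sobolev (Nash) inequality suited to the inhomogeneous spacings. On the high-probability event where the rigidity Corollary~\ref{coro_rigityestimate} holds, for indices with $1 \le i,j \le \ell^3 n^{\delta_1}$ one has the gap estimates $\widehat y_i - \widehat y_j \asymp (i-j)/[(i+j)^{1/3} n^{2/3}]$, so the Dirichlet form behaves like an inhomogeneous one-dimensional fractional energy with an edge density $\sim x^{1/2}$. Writing $f_i := |u_i|^{q/2}$, the target inequality is of the Nash form
\[
\Big(\frac{1}{n}\sum_{(i,j)\in\cal A}\frac{(f_i-f_j)^2}{(\widehat y_i-\widehat y_j)^2}\Big) \;\gtrsim\; n^{-\epsilon}\, n^{2/3}\,\frac{\|f\|_2^{2+4/3}}{\|f\|_{4/3}^{4/3}},
\]
which one derives by partitioning the indices into dyadic blocks near the edge (where the natural spectral dimension produced by the square-root density is $3/2$) and applying the standard one-dimensional Nash inequality on each block before resumming. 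The exponent $2/3$ reflects the edge scale $n^{-2/3}$ and is the ultimate source of the decay rate $(n^{1/3} t)^{-3/q}$ in \eqref{eq generalq}.

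Combining the two steps yields a differential inequality of the schematic shape
\[
\partial_s \|u\|_q^q \;\le\; -c \, n^{-\epsilon}\, n^{1/3} \,\frac{\|u\|_q^{q(1+2/3)}}{\|u\|_{q/3}^{2q/3}},
\]
which integrates to an $L^q \to L^q$ bound of the form $\|u(t)\|_q \le (n^{1/3}t)^{-3/q}\|w\|_{q/3}$ times lower-order losses. I would then run a Moser-type iteration along a geometric sequence $q_k = q \cdot 3^k$, summing the losses into a total factor $n^{C_q\eta}$ and a parameter $\eta$ that counts the iteration depth, to upgrade $L^{q}$ control to $L^\infty$ control; the localization $\eta$ absorbs the accumulated boundary terms from the short-range cutoff $\cal A$ at each dyadic scale.

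The main obstacle I anticipate is establishing the Nash inequality in the inhomogeneous form above, uniformly in $\al$, $t$, and on the short-range set $\cal A$: one has to track how the weights $(\widehat y_i - \widehat y_j)^{-2}$ degenerate both at the edge $i = 1$ and at the cutoff boundary of $\cal A$, and show that the boundary contributions from truncation are controllable by the interior Dirichlet form. This is where the rigidity input from Corollary~\ref{coro_rigityestimate} and the specific geometry of $\cal A$ defined in \eqref{eq_defnmathcalA} (which was engineered so that each block contains $\gtrsim \ell$ indices of comparable scale) are essential. A secondary technical point is that the iteration introduces a small loss at each step, which is why the bound contains the factor $n^{C_q\eta}$ and the shaved exponent $3(1-6\eta)/q$ rather than the naive $3/q$.
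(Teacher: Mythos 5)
Your proposal takes a genuinely different route from the paper, though both share the same Nash-type energy input. The paper proves (by reference to \cite{edgedbm, Bourgade2014EdgeUO, Erdos2015}) an $L^1 \to L^2$ ultracontractivity bound with exponent $\frac32(1-6\eta)$ for both $\cal U^{\cal L}$ and its transpose (Lemma~\ref{lem_holder}), then obtains $L^2 \to L^\infty$ by a duality computation: for a test vector $v$ with $\|v\|_1 = 1$ one splits $v = \cal X_2 v + (1-\cal X_2)v$ with a localizer $\cal X_2 = \mathbf 1_{\{i \le \ell^3 n^{\delta_2}\}}$; the far part is annihilated by the finite speed of propagation estimate (Lemma~\ref{lem_finitespeedcal}), and the near part is handled by the transpose $L^1\to L^2$ bound and Cauchy--Schwarz. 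Composing over two half-intervals gives $L^1 \to L^\infty$ with exponent $3(1-6\eta)$, and Riesz--Thorin interpolation against the trivial $L^\infty$ contraction yields the general-$q$ estimate in \eqref{eq generalq}. Your Moser iteration would, in principle, deliver the same exponents and has the virtue of not requiring control of the adjoint propagator; but it needs the weighted Nash inequality reproved at every scale $q_k = q\cdot 3^k$, with the losses summed, whereas the paper applies the $L^1 \to L^2$ bound exactly twice.

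The genuine gap in your sketch is the absence of any finite speed of propagation input. You assert a Nash inequality $\cal E(f) \gtrsim n^{-\epsilon} n^{2/3}\|f\|_2^{2+4/3}/\|f\|_{4/3}^{4/3}$ derived from dyadic blocks near the edge; but such an inequality cannot hold for general $f \in \R^p$, since for indices far from the edge the gaps $|\wh y_i - \wh y_j|$ are order one and the Dirichlet form over $\cal A$ gives essentially no control there. The Nash inequality, and hence each Moser iterate, must be restricted to functions supported near the edge, and keeping the iterates localized is exactly what Lemma~\ref{lem_finitespeedcal} provides (via the exponential-weight stopping-time argument of Lemma~\ref{lem_speedlemmaone}, which is a separate probabilistic estimate, not a consequence of the Dirichlet form alone). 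You describe the boundary contributions from truncation as ``controllable by the interior Dirichlet form,'' which misattributes the mechanism; the paper needs the finite speed estimate as an independent input, and your iteration would need the same input at each step. As a secondary matter, the prefactor in your stated Nash inequality is inconsistent with your subsequent differential inequality ($n^{2/3}$ versus $n^{1/3}$), and the index $q/3$ in the differential inequality should be $2q/3$ if the Nash denominator really carries $\|f\|_{4/3}$; these are bookkeeping slips rather than conceptual ones.
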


With all the above preparations, we are now ready to give the proof of Theorem \ref{thm_firstkey}.

\begin{proof}[Proof of Theorem \ref{thm_firstkey}]
Fix any $1\le i \le \fa$, by \eqref{E+1} and \eqref{E+2} we have that with high probability,
\begin{align}
 \left|[\lambda_{i}(t_1)-E_{\lambda}(t_1)]-[\mu_i(t_1)-E_{\mu}(t_1)]\right| &\le \left| \widetilde{z}_i(t_1,1)-\widetilde{z}_i(t_1,0) \right| + |E_+(t_1,1)-E_{\lambda}(t_1)|+ |E_+(t_1,0)-E_{\mu}(t_1)|  \nonumber\\
&\le \left| \widetilde{z}_i(t_1,1)-\widetilde{z}_i(t_1,0) \right| + n^{-2/3-\tau} \nonumber
\end{align}
 for some small constant $\tau>0$. Recalling \eqref{eq_connection}, we have that %for any constant $\e>0$,
\begin{align}
\left| \widetilde{z}_i(t_1,1)-\widetilde{z}_i(t_1,0) \right| & \le |\wt y_i(t_1,1)-\wt y_i(t_1,0)|\left( 2\sqrt{E_+(t_1,0)} -y_{i}(t_1,0) \right) \nonumber \\
&\quad +|\wt y_i(t_1,1)| \left|2 \sqrt{E_+(t_1,1)} -y_{i}(t_1,1)- 2\sqrt{E_+(t_1,0)} + y_{i}(t_1,0) \right| \nonumber\\
&\lesssim |\wt y_i(t_1,1)-\wt y_i(t_1,0)|+ \OO_\prec\left(n^{-2/3} t_1\right),\nonumber
\end{align}
where in the second step we used \eqref{eq_edgecomparebound2} and the rigidity estimate \eqref{rigiditywhy}. Together with Lemma \ref{lem_approximationcontrol}, we obtain that with high probability,
\begin{equation}\label{closematch1}
\left|[\lambda_{i}(t_1)-E_{\lambda}(t_1)]-[\mu_i(t_1)-E_{\mu}(t_1)]\right|  \lesssim |\wh y_i(t_1,1)-\wh y_i(t_1,0)|+n^{-2/3-\tau}
\end{equation}
for some small constant $\tau>0$. 
%For notational simplicity, we focus on the case when $i=1.$ By (\ref{eq_connection}), we have that
%\begin{equation}gin{equation*}
%\widetilde{z}_1(t,1)-\widetilde{z}_1(t,0)=\mathfrak{c}_1(0) \widetilde{y}_1(t,0)-\mathfrak{c}_1(1) \widetilde{y}_1(t,1)=-\widetilde{y}_1(t,0)\frac{\widetilde{z}_1(t,0)-\widetilde{z}_1(t,1)}{\sqrt{y_1(t,0)}+\sqrt{y_1(t,1)}}+\mathfrak{c}_1(1)(\widetilde{y}_1(t,0)-\widetilde{y}_1(t,1)).
%\end{equation*}
%Consequently, we have 
%\begin{equation}gin{equation*}
%\left(\widetilde{z}_1(t,1)-\widetilde{z}_1(t,0) \right)\left(1-\frac{\widetilde{y}_1(t,0)}{\sqrt{y_1(t,0)}+\sqrt{y_1(t,1)}} \right)=\mathfrak{c}_1(1)(\widetilde{y}_1(t,0)-\widetilde{y}_1(t,1)).
%\end{equation*}
%Since $\mathfrak{c}_1(1) \sim y_1(t,0) \sim y_1(t,1) \sim 1$ and $\widetilde{y}_1(t,0) \sim n^{-2/3},$  we obtain that for some constant $C>0$ and small constant $\tau>0,$ with high probability, 
%\begin{equation}gin{equation*}
%\left|(\lambda_{1}(t_1)-E_{\lambda}(t))-(\mu_1(t_1)-E_{\mu}(t_1))\right| \leq  C |\widetilde{y}_1(t,0)-\widetilde{y}_1(t,1)| \leq C |\widehat{y}_1(t,0)-\widehat{y}_1(t,1)|+C n^{-2/3-\tau},
%\end{equation*} 
%where we use Lemma \ref{lem_approximationcontrol}. 
Now, we write that 
\begin{equation*}
\widehat{y}_i(t_1,1)-\widehat{y}_i(t_1,0)=\int_0^1 u_i(t_1,\alpha) \dd \alpha  .
\end{equation*}
Applying Proposition \ref{prop_key} (together with a simple stochastic continuity argument to pass to all $0\le \al\le 1$), we get that 
\begin{equation}\label{closematch2}
\left| \widehat{y}_i(t_1,0)-\widehat{y}_i(t_1,1)\right| \leq n^{-50}+\left|\int_0^1 v_i(t_1,\alpha) \dd \alpha\right|,
\end{equation}
with high probability. By \eqref{eq_rigidityalphaequal1} and (\ref{eq_edgecontrolcontrol}), we have that at $t=0$,
$$|z_j(t=0,0)- z_j(t=0,1)|\prec n^{-2/3-\omega_0} + j^{-1/3}n^{-2/3},\quad 1\le j \le \ell^3n^{\delta_v},$$
for a small enough constant $\delta_v>0$. Moreover, at $t= n^{-C_0}$ the eigenvalues are perturbed at most by $n^{-C_0/2}$, so we can calculate that 
\begin{equation*}
\norm{v(n^{-C_0},\al)}_4 \prec n^{-2/3-\omega_0} (\ell^3n^{\delta_v})^{1/4}+ n^{-2/3}\le 2n^{-2/3},\quad 0\le \al \le 1 .
\end{equation*}
Finally, using Proposition \ref{prop_energyestimate} with $q=4$, we find that %for any constant $\e>0$,
\begin{equation*}
\left| \int_0^1 v_i(t_1,\alpha) \dd \alpha \right| \prec  n^{-2/3-\omega_1/2 }.
\end{equation*}
Inserting it into \eqref{closematch2} and further into \eqref{closematch1}, we conclude the proof.
\end{proof}

%\subsection{Proof of Proposition \ref{prop_energyestimate} }

The proof of Proposition \ref{prop_energyestimate} is almost the same as the one for Lemma 3.11 in \cite{edgedbm}, so we only give an outline of it. 
\begin{proof}[Proof of Proposition \ref{prop_energyestimate}]
 The proof relies on Lemma \ref{lem_finitespeedcal} and the estimates in the following lemma. %H{\" o}der's continuity control. %We first state the results of the H{\H o}der's continuity control. 

\begin{lemma}\label{lem_holder}
%Let $0 \leq s \leq t \leq t_1$ and 
 Fix a constant $0<\delta_1<\omega_{\ell}-\omega_1$.
%\begin{equation}gin{equation*}
%0<\delta_1<\omega_{\ell}-\omega_1. 
%\end{equation*}
Let $w\in \R^p$ be a vector such that $w_i=0$ for $i \geq \ell^3 n^{\delta_1}.$ For any constants $\eta,\epsilon>0$, there is a constant $C>0$ independent of $\epsilon$ and $\eta$, and a constant $c_\eta>0$ such that the following estimates hold with high probability for all $n^{-C_0} \leq s \leq t \leq 5t_1$:
\begin{equation}\label{eq_holder1}
\norm{\mathcal{U}^{\mathcal{L}}(s,t) w}_2 \leq \left( \frac{n^{C\eta+\epsilon}}{c_\eta n^{1/3}(t-s) }\right)^{\frac32(1-6\eta)} \norm{w}_1,
\end{equation}
and 
\begin{equation}\label{eq_holder2}
\norm{(\mathcal{U}^{\mathcal{L}}(s,t))^\top w}_2 \leq \left( \frac{n^{C\eta+\epsilon}}{c_\eta n^{1/3}(t-s) }\right)^{\frac32(1-6\eta)} \norm{w}_1.
\end{equation}
\end{lemma}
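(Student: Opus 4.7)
The plan is to adapt the Nash-type decay argument from the edge DBM analysis in \cite{edgedbm, Bourgade2017} to our rectangular setting. Because $\mathcal{B}$ is symmetric with zero row sums and $\mathcal{V}$ is diagonal, the transpose $\mathcal{L}^\top$ has the same structure as $\mathcal{L}$; hence \eqref{eq_holder2} follows from an analogous argument applied to the adjoint semigroup, and I focus on \eqref{eq_holder1}. Setting $u(\tau) := \mathcal{U}^\mathcal{L}(s,\tau)w$, I begin from the $\ell^2$ energy identity
\[
\partial_\tau \|u(\tau)\|_2^2 \;=\; -D(u(\tau)) + 2\sum_i \mathcal{V}_i\, u_i(\tau)^2, \qquad D(u) := \tfrac{1}{2n}\sum_{(i,j)\in\mathcal{A},\, i\ne j}\frac{(u_i-u_j)^2}{(\hat{y}_i-\hat{y}_j)^2},
\]
in which the second term is nonpositive since $\mathcal{V}_i \le 0$ by \eqref{calV1}--\eqref{calV2}. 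The same zero-row-sum structure of $\mathcal{B}$ combined with the nonpositivity of $\mathcal{V}$ also yields an $\ell^1$-contraction $\|u(\tau)\|_1 \le \|w\|_1$ throughout $[s,t]$, obtained by splitting $u = u_+ - u_-$ and using positivity preservation of the semigroup together with $\mathcal{L}^\top \mathbf 1 = \mathcal V\mathbf 1 \le 0$.

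Next, I apply Lemma \ref{lem_finitespeedcal} to reduce to the edge window: since $w$ is supported on $\{i \le \ell^3 n^{\delta_1}\}$ with $\delta_1 < \omega_\ell - \omega_1$, up to an $n^{-D}$ error for any $D$ I may treat $u(\tau)$ as supported on $\{i \le M\}$ with $M := \ell^3 n^{2\delta_1}$ throughout the evolution; to avoid compounding of the truncation error against the decay of $\|u\|_2$, the cutoff will be iterated on a dyadic time partition, and compatibility with Lemma \ref{lem_finitespeedcal} is ensured by the hierarchy \eqref{hierachypara}. Within this window, Corollary \ref{coro_rigityestimate} together with $\hat{\gamma}_i \sim i^{2/3} n^{-2/3}$ yields $|\hat{y}_i - \hat{y}_j| \asymp |i^{2/3} - j^{2/3}|\, n^{-2/3}$ with high probability.

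The heart of the proof is a discrete Nash-type inequality on the edge window, of the schematic form
\[
\|v\|_2^{2(1+\sigma)} \;\le\; C\, n^{C\eta + \epsilon}\, n^{-1/3}\, D(v)\, \|v\|_1^{2\sigma}, \qquad \sigma \;:=\; \frac{1}{3(1-6\eta)},
\]
valid for $v$ supported on $\{1,\ldots,M\}$. Its derivation proceeds by passing to the label $\xi = i^{1/3}$ (which regularizes the edge spacing, since $\hat{\gamma}_i \sim \xi^2 n^{-2/3}$), applying a one-dimensional Sobolev/Nash embedding to the pushed-forward functional, and converting back; the factor $n^{C\eta}$ absorbs the $\prec$-slack from Corollary \ref{coro_rigityestimate}, while the explicit dependence $\sigma = 1/(3(1-6\eta))$ reflects a multiscale handling of dyadic shells of indices on which the spacing is polynomial with an $n^\eta$ error. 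Combined with the $\ell^1$ contraction, this yields
\[
\partial_\tau \|u(\tau)\|_2^2 \;\le\; -\, c_\eta\, n^{1/3 - C\eta - \epsilon}\,\frac{\|u(\tau)\|_2^{2(1+\sigma)}}{\|w\|_1^{2\sigma}},
\]
which integrates, via the elementary fact that $f' \le -A f^{1+\sigma}$ implies $f(t)^{-\sigma} \ge \sigma A (t-s)$, to the bound \eqref{eq_holder1} after taking square roots.

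The principal obstacle is establishing the Nash inequality with losses exactly of the form $n^{C\eta}$: this requires a careful multiscale treatment of the range $\{1,\ldots,M\}$, since the spacing $|\hat{y}_i - \hat{y}_{i+1}| \sim i^{-1/3} n^{-2/3}$ degenerates as $i \to 0$ and the Sobolev constant must be tracked uniformly across dyadic shells on which rigidity holds only up to an $n^\eta$ slack. This is also where the explicit $\eta$-dependent correction in $\sigma$ originates; sharpening it would require strengthening the rigidity in Corollary \ref{coro_rigityestimate}, which does not appear feasible with the tools developed so far in this paper.
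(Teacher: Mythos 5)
Your proposal follows precisely the Nash-type $\ell^1\to\ell^2$ decay scheme that the paper itself invokes: the paper's ``proof'' of this lemma is a one-paragraph reference to \cite{edgedbm}, \cite{Bourgade2014EdgeUO} and \cite{Erdos2015}, noting only that the $\mathcal{V}_i$'s in \eqref{calV1}--\eqref{calV2} obey the same bounds as in \cite{edgedbm} and omitting further details. Your outline (energy identity, $\ell^1$ contraction, finite speed localization to the edge window, discrete Nash inequality with $\sigma=1/(3(1-6\eta))$, then Gronwall) reproduces that referenced argument, and the exponent bookkeeping leading to $(n^{C\eta+\epsilon}/(c_\eta n^{1/3}(t-s)))^{\frac32(1-6\eta)}$ checks out.
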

\begin{proof}
The proof is very similar to the ones for \cite[Lemma 3.13]{edgedbm}, \cite[Proposition 10.4]{Bourgade2014EdgeUO} and \cite[Section 10]{Erdos2015}. More precisely, our operator $\cal L$ is almost the same as the operator $\cal L$ in \cite[Lemma 3.13]{edgedbm}, where the only difference is the form of $\cal V$. However, the $\cal V_i$'s in \eqref{calV1} and \eqref{calV2} satisfy exactly the same estimates as the $\cal V_i$'s in \cite{edgedbm}. So we omit the details of the proof.  
\end{proof}

Now, we complete the proof of Proposition \ref{prop_energyestimate}. Fix constants 
$0<\delta_1<\delta_2<\omega_{\ell}-\omega_1.$ We define the indicator function $\mathcal{X}_2(i)=\mathbf{1}_{\{1 \leq i \leq \ell^3 n^{\delta_2}\}}$ and let $\mathcal{X}_2$ be the associated digonal operator. For any $v\in \R^p$ with $\norm{v}_1=1$, we decompose that 
\begin{equation*}
\langle \mathcal{U}^\mathcal{L}w,v \rangle=\langle w,  (\mathcal{U}^\mathcal{L})^\top v\rangle=\langle w, (\mathcal{U}^\mathcal{L})^\top \mathcal{X}_2 v \rangle+\langle  w, (\mathcal{U}^\mathcal{L})^\top (1-\mathcal{X}_2) v \rangle, 
\end{equation*}
where we have abbreviated $\cal U^{\cal L}\equiv \mathcal{U}^\mathcal{L}(n^{-C_0},t)$. For the second term, with Lemma \ref{lem_finitespeedcal}, we obtain that 
\begin{equation*}
\left| \langle  w, (\mathcal{U}^\mathcal{L})^\top (1-\mathcal{X}_2) v \rangle \right| \leq n^{-100} \norm{w}_1\norm{v}_1 \le n^{-99}\|w\|_2\norm{v}_1
\end{equation*}
with high probability. 
%where we use the fact that $(1-\mathcal{X}_2)(i)=\mathbf{1}_{\{i>\ell^3 n^{\delta_2}\}}.$ 
For the first term, with Lemma \ref{lem_holder} and Cauchy-Schwarz inequality, we get that for any constant $\eta>0,$
\begin{align*}
\langle w, (\mathcal{U}^\mathcal{L})^\top \mathcal{X}_2 v \rangle \leq \norm{w}_2 \norm{(\mathcal{U}^\mathcal{L})^\top \mathcal{X}_2 v}_2 \leq \norm{w}_2\left( \frac{n^{C\eta+\epsilon}}{c_\eta n^{1/3}(t-n^{-C_0}) }\right)^{\frac32(1-6\eta)}   \norm{v}_1.
\end{align*}
%By setting $v=e_i,i=1,2,\cdots,$ 
By $\ell^1$--$\ell^\infty$ duality and using $t\ge 2n^{-C_0}$, we find that 
\begin{equation*}
\norm{\mathcal{U}^\mathcal{L} w}_{\infty} \le  C(\eta) \left( \frac{n^{C\eta+\epsilon}}{n^{1/3}t  }\right)^{\frac32(1-6\eta)}   \norm{w}_2 .
\end{equation*}
Consequently, by the semigroup property, we find that
\begin{equation*}
\begin{split}
 \norm{\mathcal{U}^\mathcal{L}(n^{-C_0},t)w}_{\infty} & = \norm{\mathcal{U}^\mathcal{L}(2t/3,t)\mathcal{U}^\mathcal{L}(n^{-C_0},2t/3)w}_{\infty}\\
& \le C(\eta) \left( \frac{n^{C\eta+\epsilon}}{ n^{1/3}t  }\right)^{\frac32(1-6\eta)}   \norm{\mathcal{U}^\mathcal{L}(n^{-C_0},2t/3)w}_{2} \leq  C(\eta) \left( \frac{n^{C\eta+\epsilon}}{ n^{1/3}t  }\right)^{3(1-6\eta)}  \norm{w}_1  ,
\end{split}
\end{equation*}
where we used Lemma \ref{lem_holder} again in the last step. Finally, the estimate \eqref{eq generalq} for general $q$ follows from the standard interpolation argument.  
\end{proof}

\subsection{Proof of Lemma \ref{lem_finitespeedcal}}\label{sec_finitespeedcal}
 
Finally, in this section, we prove the finite speed of propagation estimate, Lemma \ref{lem_finitespeedcal}. 
For simplicity of notations, we shift the time such that the starting time point is $t=0$.
%\subsection{Proof of Lemma \ref{lem_finitespeedcal}}
%In this subsection, we prove Lemma \ref{lem_finitespeedcal}. Indeed, we will prove the following proposition.  
%\begin{equation}gin{proposition}\label{lem_finitespeedepsilon} Let $\delta, \epsilon>0$ and $a \leq \frac{1}{2}\ell^3 n^{\delta}$ and $b \geq \ell^3 n^{\delta+\epsilon}.$ Then  for any large constant $D>0,$ with high probability, we have 
%\begin{equation}gin{equation*}
%\sup_{0 \leq s \leq t \leq 10t_1} \left( \mathcal{U}_{ab}^{\mathcal{L}}(s,t)+\mathcal{U}_{ba}^{\mathcal{L}}(s,t) \right) \leq n^{-D}. 
%\end{equation*}
%\end{proposition}
%
%Since $\epsilon>0$ is arbitrarily small, it is clear that Lemma \ref{lem_finitespeedcal} follows from Proposition \ref{lem_finitespeedepsilon}. The rest of this section is devoted to proving Proposition \ref{lem_finitespeedepsilon}. 
We first prove a result for fixed $s$. 

%It relies on the following two estimates. We postpone their proof to the next subsection. 
\begin{lemma}\label{lem_speedlemmaone} Fix a small constant $0<\delta<\omega_{\ell}-\omega_1.$ For any $a \geq \ell^3 n^{\delta},$ $b \leq  \ell^3 n^{\delta}/2$ and fixed $0\le s\le 10t_1$, we have that for any large constant $D>0$,
\begin{equation*}
\sup_{t: s \leq t \leq 10t_1} \left[ \mathcal{U}_{ab}^{\mathcal{L}}(s,t)+\mathcal{U}_{ba}^{\mathcal{L}}(s,t) \right] \leq n^{-D},\quad \text{with high probability.}
\end{equation*} 
\end{lemma}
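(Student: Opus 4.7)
The plan is to prove a finite speed of propagation estimate that exploits the short-range structure of $\mathcal{A}$: whenever $(i,j)\in\mathcal{A}$ with $i,j \le 2\ell^3 n^\delta$, the defining inequality forces $|i-j| \le C\ell^3$, so a single diffusive step in $\mathcal{B}$ moves at most by $\sim \ell^3$ indices, and crossing the gap $\ell^3 n^\delta/2$ between $b$ and $a$ would require $\gtrsim n^\delta$ such steps---far more than the short time scale $t_1 = n^{-1/3+\omega_1}$ allows, thanks to the hierarchy $\omega_1 \ll \omega_\ell$ in \eqref{hierachypara}. I will treat $\mathcal{U}^{\mathcal{L}}_{ab}(s,t)$ in detail; the bound for $\mathcal{U}^{\mathcal{L}}_{ba}(s,t)$ follows from the identical argument applied to the column $j\mapsto\mathcal{U}^{\mathcal{L}}_{ja}(s,t)$, which satisfies the same equation $\partial_t \xi = \mathcal{L}\xi$ with initial condition $\xi_j(s) = \delta_{ja}$ (here one uses that $\mathcal{L}$ is symmetric in the standard inner product, since $\mathcal{B}$ is symmetric and $\mathcal{V}$ is diagonal).

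Set $\psi_i(t) := \mathcal{U}^{\mathcal{L}}_{ib}(s,t)$ and introduce the exponential weight
\[
g_i := \exp\bigl(K^{-1}\min((i-b)_+, L)\bigr), \qquad K := \ell^3, \quad L := \ell^3 n^\delta,
\]
so that $g_b = 1$, $g_a \ge e^{n^\delta/2}$, and $|g_i^2 - g_j^2| \le C(g_i^2 + g_j^2)|i-j|/K$ whenever $(i,j)\in\mathcal{A}$. Consider the energy $h(t) := \sum_i g_i^2 \psi_i^2(t)$ with $h(s)=1$. Differentiating, symmetrizing the $\mathcal{B}$-contribution using the symmetry of $\mathcal{A}$, and decomposing $g_i^2 \psi_i - g_j^2 \psi_j = g_i^2(\psi_i-\psi_j) + (g_i^2-g_j^2)\psi_j$, I would apply Cauchy--Schwarz with a small parameter to absorb one piece into the non-positive Dirichlet form $-\frac{1}{2n}\sum_{(i,j)\in\mathcal{A}} g_i^2(\psi_i-\psi_j)^2/(\hat{y}_i-\hat{y}_j)^2$; together with $\mathcal{V}_i \le 0$, this leaves
\[
\partial_t h \le \frac{C}{nK^2}\sum_{(i,j)\in\mathcal{A}} \frac{(i-j)^2(g_i^2+g_j^2)\psi_j^2}{(\hat{y}_i-\hat{y}_j)^2}.
\]

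Now the rigidity estimate from Corollary \ref{coro_rigityestimate}, complemented by Lemma \ref{lem_rigidty z} for bulk indices, yields $|\hat{y}_i-\hat{y}_j| \gtrsim |i-j| n^{-2/3}(i+j+1)^{-1/3}$ with high probability for the pairs contributing to the sum, while the defining inequality of $\mathcal{A}$ bounds $|\{i:(i,j)\in\mathcal{A}\}| \le C(\ell^3 + \ell L^{2/3}) = O(\ell^3 n^{2\delta/3})$ for $j \le L$. Combined with $(i+j+1)^{2/3}\le CL^{2/3}=C\ell^2 n^{2\delta/3}$, this gives $\partial_t h \le C n^{1/3-\omega_\ell + (4/3)\delta}\, h$, and Gronwall over $[s, s+10t_1]$ produces
\[
h(t) \le \exp\bigl(C n^{\omega_1-\omega_\ell+(4/3)\delta}\bigr) \le e^{o(1)},
\]
for any $\delta \in (0, \tfrac{3}{4}(\omega_\ell-\omega_1))$. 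Since $\psi_a(t)^2 \le g_a^{-2} h(t) \le e^{-n^\delta + o(1)}$, the kernel $\mathcal{U}^{\mathcal{L}}_{ab}(s,t) = \psi_a(t)$ is super-polynomially small with high probability, which concludes the argument.

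The main obstacle is the singular coefficient $(\hat{y}_i-\hat{y}_j)^{-2}$, which is largest for consecutive edge eigenvalues; pointwise rigidity lower bounds may fail for individual adjacent pairs, but the sum $\sum_{(i,j)\in\mathcal{A}}(i-j)^2/(\hat{y}_i-\hat{y}_j)^2$ only needs to be controlled in aggregate, and the factor $(i-j)^2$ tames precisely those pairs where the gaps can be smallest; the numerological check $\omega_1 + (4/3)\delta < \omega_\ell$ under \eqref{hierachypara} is the crucial step. A secondary point is promoting the fixed-$t$ high-probability estimate to one uniform in $t\in[s,10t_1]$, which is standard using a polynomially fine mesh and a stochastic continuity argument as in the proof of \eqref{eq_rigidityalphaequal1}.
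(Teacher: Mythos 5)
Your overall strategy---an exponential weight plus energy estimate plus Gronwall---is the right one, and your deduction of the $\mathcal{U}^{\mathcal{L}}_{ba}$ bound from the $\mathcal{U}^{\mathcal{L}}_{ab}$ bound via the symmetry of $\mathcal{L}(t)$ at each fixed $t$ is fine. But there is a fatal flaw in the choice of weight, and a secondary arithmetic issue.

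The fatal flaw is that an \emph{index-based} exponential weight cannot tame the small-gap singularity in the Dirichlet form. After symmetrizing and Cauchy--Schwarzing, the residual term you need to control is (schematically) $\sum_{(i,j)\in\mathcal{A}} (g_i^2-g_j^2)^2\psi_j^2/(g_i^2(\widehat y_i - \widehat y_j)^2)$. With your weight, $|\log g_i - \log g_j|\lesssim|i-j|/K$, so this is $\sim\sum g_j^2\psi_j^2\,(i-j)^2/(K^2(\widehat y_i-\widehat y_j)^2)$. For adjacent pairs $|i-j|=1$, the $(i-j)^2$ factor contributes nothing, while $(\widehat y_i - \widehat y_{i+1})^2$ can be arbitrarily small: rigidity (Corollary~\ref{coro_rigityestimate}, Lemma~\ref{lem_rigidty z}) only provides upper bounds on $|\widehat y_i - \widehat\gamma_i|$, and gives \emph{no} lower bound on eigenvalue gaps. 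Your parenthetical remark that ``the factor $(i-j)^2$ tames precisely those pairs where the gaps can be smallest'' is exactly backwards---it helps when $|i-j|$ is large, and does nothing for consecutive indices. The paper avoids this precisely by using a \emph{position-based} weight $\phi_k = \exp[\nu\psi(\widehat y_k - \widehat\gamma_q)]$ with $\psi$ 1-Lipschitz: there $|\log\phi_i - \log\phi_j|\lesssim\nu|\widehat y_i - \widehat y_j|$, so the cross-term $\mathcal{B}_{ij}(\phi_i/\phi_j + \phi_j/\phi_i - 2) \lesssim \nu^2/n$ has the denominator $(\widehat y_i-\widehat y_j)^2$ cancel exactly (see~\eqref{eq_dftsecond} and~\eqref{eq_controldft2dnterm}). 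This cancellation is the heart of the finite-speed argument and is missing from your weight. The price the paper pays is that $\phi_k$ is stochastic, so It\^o's formula produces a martingale term~\eqref{eq_dftthird} that is controlled by Burkholder--Davis--Gundy; this is more bookkeeping but essential. The lesson is that a deterministic, index-indexed weight is not a usable substitute.

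A secondary problem, even granting the weight, is the choice $K = \ell^3$. For $(i,j)\in\mathcal{A}$ with $j\le L=\ell^3 n^\delta$, the definition~\eqref{eq_defnmathcalA} allows $|i-j|$ up to order $\ell^3 n^{2\delta/3}$, so $|i-j|/K\sim n^{2\delta/3}$ is unbounded and the linearization $|g_i^2-g_j^2|\lesssim(g_i^2+g_j^2)|i-j|/K$ simply fails. One would need $K\gtrsim\ell^3 n^{2\delta/3}$ (which still yields $g_a \gtrsim e^{n^{\delta/3}}$, so the endpoint conclusion would survive)---but this is moot given the more fundamental issue above.
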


We postpone its proof until we complete the proof of Lemma \ref{lem_finitespeedcal}. We need to use the following lemma in order to extend the result in Lemma \ref{lem_speedlemmaone} to all $0\le s \le t \le 10t_1$ simultaneously.

\begin{lemma}\label{lem_speedlemmatwo} Let $u\in \R^p$ be a solution of $\partial_t u=\mathcal{L} u$ with $u_i(0) \geq 0$ for $1\le i \le p$. Then, for $0 \leq t \leq 10t_1,$ we have 
\begin{equation*}
\frac{1}{2} \sum_{i} u_i(0) \leq \sum_{i} u_i(t) \leq \sum_i u_i(0). 
\end{equation*}
\end{lemma}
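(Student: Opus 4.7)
The proof splits naturally into an upper bound and a lower bound for $\sum_i u_i(t)$, and both rely on positivity preservation of the semigroup generated by $\mathcal{L} = \mathcal{B} + \mathcal{V}$. To establish positivity, factor out the diagonal part by setting $v_i(t) := \exp\bigl(-\int_0^t \mathcal{V}_i(s)\,\dd s\bigr) u_i(t)$, so that $\partial_t v = \mathcal{B} v$ with $v(0) = u(0) \geq 0$ componentwise. By \eqref{defn BB}, $\mathcal{B}$ has nonnegative off-diagonal entries (with respect to the symmetric short-range set $\mathcal{A}$) and vanishing row sums, hence satisfies a discrete minimum principle: the smallest entry of $v(t)$ is non-decreasing in $t$. (This is exactly the argument given in the remark after Lemma \ref{lem_finitespeedcal}.) Consequently $v(t), u(t) \geq 0$ throughout $[0, 10t_1]$.

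For the upper bound, differentiate the total mass
\begin{equation*}
\frac{\dd}{\dd t} \sum_i u_i(t) = \sum_i (\mathcal{B} u)_i + \sum_i \mathcal{V}_i u_i(t).
\end{equation*}
The first sum vanishes by the symmetry of $\mathcal{A}$: the double sum $\sum_i \sum_j^{\mathcal{A},(i)} (u_i - u_j)/[\widehat{y}_i - \widehat{y}_j]^2$ is antisymmetric under $i \leftrightarrow j$ and therefore equals its own negative. The second sum is nonpositive because $\mathcal{V}_i \leq 0$ for every $i$, as is evident from \eqref{calV1}--\eqref{calV2}, and $u_i(t) \geq 0$. Integrating in $t$ yields $\sum_i u_i(t) \leq \sum_i u_i(0)$.

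For the lower bound, the key quantitative input is the uniform estimate $\max_i |\mathcal{V}_i| \leq C n^{1/3}/\ell$, valid with high probability throughout $[0, 10 t_1]$. For $i > i_*/2$ this is trivial since $\mathcal{V}_i = 0$. For $n^{\omega_a} < i \leq i_*/2$, one discretizes the integral in \eqref{calV2} using the quantile intervals $[\sqrt{\gamma_{j+1}}, \sqrt{\gamma_j}]$: the square-root behavior of $f_t(\sqrt{E_+(t,\alpha)} - E, \alpha)$ near the edge combines with the comparable quantile spacing $1/[n f_t(\sqrt{E_+} - \sqrt{\gamma_j}, \alpha)]$ so that each $j$-th interval contributes at most $C/[n(\widehat{\gamma}_i - \widehat{\gamma}_j)^2]$, after using Corollary \ref{coro_rigityestimate} to identify $\widehat{y}_i$ with $\sqrt{\gamma_i}$ up to an error of order $i^{-1/3}n^{-2/3}$. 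The definition of $\mathcal{A}$ in \eqref{eq_defnmathcalA} forces $|j - i| \geq c\ell(i^{2/3} + j^{2/3})$ for the excluded indices, whence $|\widehat{\gamma}_i - \widehat{\gamma}_j| \gtrsim |j - i|(i + j)^{-1/3} n^{-2/3}$, and summing the resulting series over $j$ gives $|\mathcal{V}_i| \lesssim n^{1/3}/\ell$. The range $1 \leq i \leq n^{\omega_a}$ in \eqref{calV1} is handled analogously, using the square-root behavior of $\rho_t(E_+(t,0) - E, 0)$ together with the rigidity bound on $\widehat{y}_i$.

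Combining the above with $u_i(t) \geq 0$ gives
\begin{equation*}
\frac{\dd}{\dd t} \sum_i u_i(t) \geq - \max_i |\mathcal{V}_i| \cdot \sum_i u_i(t) \geq -\frac{C n^{1/3}}{\ell} \sum_i u_i(t),
\end{equation*}
and Gronwall's inequality yields $\sum_i u_i(t) \geq \exp(-C n^{1/3} t/\ell) \sum_i u_i(0)$. For $t \leq 10 t_1 = 10 n^{-1/3+\omega_1}$, the exponent is at most $10 C n^{\omega_1 - \omega_\ell}$, which tends to zero by the hierarchy \eqref{hierachypara} since $\omega_1 \leq \mathfrak{C}^{-1} \omega_\ell$ with $\mathfrak{C}$ large. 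Thus $\exp(-C n^{1/3} t/\ell) \geq 1/2$ for all sufficiently large $n$, which gives the lower bound. The main technical obstacle is the uniform bound on $|\mathcal{V}_i|$: it requires a careful alignment of three scales (the short-range cutoff $\ell i^{2/3}$, the quantile spacing $\sim 1/(n\rho_t)$, and the square-root density near the edge), together with rigidity to transfer estimates between $\widehat{y}_i$ and the classical location $\sqrt{\gamma_i}$.
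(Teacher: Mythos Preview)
Your proof is correct and follows essentially the same approach as the paper: both establish $\sum_i (\mathcal{B}u)_i=0$ by antisymmetry, reduce to $\partial_t \sum_i u_i = \sum_i \mathcal{V}_i u_i$, bound $\max_i|\mathcal{V}_i|\lesssim n^{1/3-\omega_\ell}$ via the square-root density and the short-range gap, and conclude by Gronwall together with $\omega_1<\omega_\ell$. The only cosmetic differences are that you make the positivity step explicit (via the minimum-principle remark) and bound the $\mathcal{V}_i$-integral for $n^{\omega_a}<i\le i_*/2$ by discretizing into quantile intervals, whereas the paper estimates the integral directly from a lower bound on the denominator; both routes give the same $n^{1/3}/\ell$ bound.
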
 
\begin{proof}%[Proof of Lemma \ref{lem_speedlemmatwo}] 
%Then we prove Lemma \ref{lem_speedlemmatwo}, which follows from a standard Gronwall argument. It is similar to \cite[Lemma 4.2]{edgedbm}.  
Summing over $i$ and using $\sum_i (\cal Bu)_i=0$, we get that 
%Recall that 
%\begin{equation}gin{equation*}
%(\mathcal{B} u)_i=\frac{1}{2n} \sum_{j}^{\mathcal{A},(i)} \frac{u_j-u_i}{(\widehat{y}_i(t,\alpha)-\widehat{y}_j(t,\alpha))^2}.
%\end{equation*}
%We immediately see that 
\begin{equation*}
\partial_t \sum_i u_i =\sum_i \mathcal{V}_i u_i.
\end{equation*}
We now bound \eqref{calV1} and \eqref{calV2}. Using \eqref{rigiditywhy} and Lemma \ref{lem_approximationcontrol}, we have that with high probability,
$$E - {E_+(t,0)}+(\sqrt{E_+(t,0)}-\wh{y}_i(t,\alpha))^2\gtrsim E + n^{-2/3+2\omega_{\ell}},\quad 1\le i \le n^{\omega_a},\ \ E\in \cal I_i^c(t,0).$$
Together with the estimate $\rho_t(E_+(t,0)-E,0)\sim \sqrt{E}$, we get that for $1\le i \le n^{\omega_a}$, 
\begin{align*}
0\le -\mathcal{V}_i \lesssim  \int_{\cal I_i^c(t,0)} \frac{\sqrt{E}}{ |E + n^{-2/3+2\omega_{\ell}}|^2 }\dd E \lesssim n^{1/3-\omega_{\ell}}.\end{align*}
We can get the same bound for \eqref{calV2}. 
%Recall the definition of $\mathcal{V}_i.$ With a discussion similar to (\ref{eq_controldensityfractional}), we see that for some constant $C>0,$ with high probability, 
%\begin{equation}gin{equation*}
%-\mathcal{V}_i \leq C n^{1/3-\omega_{\ell}}.
%\end{equation*}
Then, applying Gronwall's inequality to
\begin{equation*}
-\left( C n^{1/3-\omega_{\ell}} \right) \sum_i u_i \le \partial_t \sum_i u_i \leq 0,  
\end{equation*}
we can conclude the proof.
%Therefore, our proof follows from Gronwall's inequality and the assumption that $t \leq 10t_1$ and $\omega_1<\omega_{\ell}.$ 
\end{proof}

%We next show how Lemmas \ref{lem_speedlemmaone} and \ref{lem_speedlemmatwo} imply Proposition \ref{lem_finitespeedepsilon}. 

Now, we can complete the proof of Lemma \ref{lem_finitespeedcal}.

\begin{proof}[Proof of Lemma \ref{lem_finitespeedcal}]
Fix any constant $0< \e <\delta$, $a \geq \ell^3 n^{2\delta}$ and $b \leq \ell^3 n^{\delta}$.  By the semigroup property, we have
\begin{equation}\label{finaleqfinite}
\mathcal{U}_{bi}^{\mathcal{L}}(n^{-C_0},t)=\sum_j \mathcal{U}_{bj}^{\mathcal{L}}(s,t) \mathcal{U}_{ji}^{\mathcal{L}}(n^{-C_0},s)  \geq \mathcal{U}_{ba}^{\mathcal{L}}(s,t) \mathcal{U}_{ai}^{\mathcal{L}}(n^{-C_0},s).
\end{equation}
By Lemma \ref{lem_speedlemmatwo}, we find that $\sum_i \mathcal{U}^{\mathcal{L}}_{ai}(n^{-C_0},s) \geq 1/2.$ Moreover, by Lemma \ref{lem_speedlemmaone} we have that $\mathcal{U}_{ai}^\mathcal{L}(n^{-C_0},s) \leq n^{-100}$ for any $i \leq \ell^3 n^{\delta+\epsilon}.$ This implies that there exists an $i_* \geq \ell^3 n^{\delta+\e}$ such that $\mathcal{U}^\mathcal{L}_{ai_*}(n^{-C_0},s) \geq (4n)^{-1}.$ However, by Lemma \ref{lem_speedlemmaone} we have that $\mathcal{U}_{bi_*}^{\mathcal{L}}(0,t) \leq n^{-D}$ for any large constant $D>0.$ Thus picking $i=i_*$ in \eqref{finaleqfinite}, we get that
$\mathcal{U}_{ba}^{\mathcal{L}}(s,t) \leq n^{-D+2}.$
This finishes the proof for the estimate on $\mathcal{U}_{ba}^{\mathcal{L}}(s,t)$. The estimate on $\mathcal{U}_{ab}^{\mathcal{L}}(s,t)$ can be proved in a similar way.   
\end{proof}

%\subsubsection{Proof of Lemmas \ref{lem_speedlemmaone} and \ref{lem_speedlemmatwo}}

It remains to prove Lemma \ref{lem_speedlemmaone}. The strategy was first developed in \cite{Bourgade2017}, and later used in \cite{dysonbulk,edgedbm} to study the symmetric DBM for Wigner type matrices. Our proof is similar to the ones for \cite[Lemma 4.2]{dysonbulk} and \cite[Lemma 4.1]{edgedbm}, so we will not write down all the details. %only give an outline of the proof. 
\begin{proof}[Proof of Lemma \ref{lem_speedlemmaone}] 
We focus on the case $s=0$ and the general case can be dealt with similarly using a simple time shift. 
Let $\psi$ be a smooth function satisfying the following properties: (i) $\psi(x)=-x$ for $ |x| \leq \ell^2 n^{-2/3+2\delta/3},$ (ii)
$\psi'(x)=0$ for $|x| > 2\ell^2 n^{-2/3+2\delta/3},$ (iii) $\psi$ is decreasing, (iv) $|\psi(x)-\psi(y)| \leq |x-y|$ and $|\psi'(x)| \leq 1$, and (v) $|\psi^{''}(x)| \leq C \ell^{-2} n^{2/3-2 \delta/3}$ for some constant $C>0$. 
Similar to \cite[Lemma 4.1]{edgedbm}, we now consider a solution of 
\begin{equation*}
\partial_t f=\mathcal{L} f,\quad \text{with }\ f_i(0)=\delta_{q_*},
\end{equation*} 
%with initial data satisfies that $f_i(0)=\delta_{q_*},$ 
for any $q_* \geq q:=\ell^3 n^{\delta}.$ %Equivalently,$f_i(t)=\mathcal{U}^{\mathcal{L}}_{iq}(0,t).$ 
Let $\nu>0$ be a fixed constant and define the functions
\begin{equation*}
\phi_k:=\exp\left[\nu \psi(\widehat{y}_k(t,\alpha)-\widehat{\gamma}_q(t,\alpha))\right],\quad v_k:=\phi_k f_k, \quad F(t):=\sum_k v_k^2.
\end{equation*}
%where we use the short-hand notation that $\zeta_i(t,\alpha)=\widehat{\gamma}_i(t,\alpha).$
For our proof, we will choose a specific $\nu$ later. %such that on one hand, $\phi_k$ is large enough, and on the other hand, $F(t)$ is bounded. 
%Moreover, we set $v_k=\phi_k f_k.$ 
By Ito's formula, we find that $F$ satisfies the SDE 
\begin{align}
\dd F & =-\sum_{(i,j) \in \mathcal{A}} \mathcal{B}_{ij}(v_i-v_j)^2 \dd t + 2 \sum_i  {\mathcal{V}}_i v_i^2 \dd t \label{eq_dftfirst}\\
& +\sum_{(i,j) \in \mathcal{A}} \mathcal{B}_{ij} v_i v_j \left(\frac{\phi_i}{\phi_j}+\frac{\phi_j}{\phi_i} -2\right) \dd t \label{eq_dftsecond} \\
&+ 2\nu \sum_{i} v_i^2 \psi'(\widehat{y}_i-\wh\gamma_q)\dd (\widehat{y}_i-\wh\gamma_q) \label{eq_dftthird} \\
&+ \sum_{i} v_i^2 \left( \frac{\nu^2}{n}[\psi'(\widehat{y}_i-\wh\gamma_q)]^2+\frac{\nu}{n} \psi^{''}(\widehat{y}_i-\widehat\gamma_q) \right) \dd t \label{eq_dftfourth},
\end{align}
where we denoted 
\begin{equation*}
\mathcal{B}_{ij}=\frac{1}{2n} \frac{1}{(\widehat{y}_i(t,\alpha)-\widehat{y}_j(t,\alpha))^2} .
\end{equation*}  
Now, we choose a proper stopping time. Let $\tau_1$ be the stopping time such that for $t<\tau_1$, Lemmas \ref{lem_rigidty z} and \ref{lem_approximationcontrol} hold true for a sufficiently small constant $0<\epsilon<\delta/100.$ Note that $\tau_1 \ge 10 t_1$ with high probability. Let $\tau_2$ be the first time such that $F \geq 10.$ Then, we define the stopping time 
\begin{equation*}
\tau:=\min\{\tau_1, \tau_2, 10t_1\}.
\end{equation*}  
For the rest of the proof, we only consider times with $t<\tau$. We will show that with a suitable choice of $\nu,$ we actually have $\tau=10 t_1$ with high probability.  
%We construct the function $F(t)$ as follow
%\begin{equation}gin{equation*}
%F(t):=\sum_k v_k^2 \mathbf{1}_{\{\tau>0\}}.  
%\end{equation*} 

%First, we would like to show that $F(t)$ is bounded using Gronwall's inequality. 
We now deal with each term in \eqref{eq_dftfirst}-\eqref{eq_dftfourth}. First, (\ref{eq_dftfirst}) is a dissipative term, so it only decreases the size of $F(t)$. 
%so we focus on (\ref{eq_dftsecond})--(\ref{eq_dftfourth}). 
%The term \eqref(\ref{eq_dftsecond}) can be handle as (4.8) of \cite{edgedbm}, and we can obtain that
By Corollary \ref{coro_rigityestimate}, we see that $\psi'(\widehat{y}_i-\wh\gamma_q)=0$ when $i>C' \ell^3 n^{\delta}$ for a large enough constant $C'>0$. Moreover, if $i \leq C' \ell^3 n^{\delta}$ and $(i,j) \in \mathcal{A},$ then $j \leq C \ell^3 n^{\delta}$ for some constant $C>0$ depending on $C'$. Thus the nonzero terms in (\ref{eq_dftsecond}) must satisfy that $i,j \leq C \ell^3 n^{\delta}$ for a large enough constant $C>0$. Then, by Corollary \ref{coro_rigityestimate}, for $i ,j \leq C \ell^3 n^{\delta}$ satisfying $(i,j) \in \mathcal{A}$, we have
\begin{equation*}
\left|\widehat{y}_i-\widehat{y}_j \right| \lesssim   \ell^2 n^{-2/3+\delta/3} . 
\end{equation*}
Now, with the Taylor expansion of $e^{-x}+e^x-2$, we get that if 
\begin{equation}\label{eq_conditionnu}
 {\nu \ell^2 n^{-2/3+\delta/3}}  \leq C_1
\end{equation} 
for some constant $C_1>0,$  then 
\begin{equation}\label{eq_controldft2dnterm}
(\ref{eq_dftsecond}) \lesssim \frac{\nu^2}{n}  \sum_{  (i,j) \in \mathcal{A}}(v_i^2+v^2_j) \mathbf{1}_{\{\phi_j \neq \phi_i\}}\dd t \leq \frac{\nu^2 \ell^3 n^{2 \delta/3}}{n} F(t)\dd t.
\end{equation}
%where in the first inequality we use the rigidity estimate and the fact $|\psi'|$ is bounded. 
The term (\ref{eq_dftfourth}) can be easily bounded as %Similarly, for some constant $C>0,$ we can bound
\begin{equation}\label{eq_controldftfourth}
(\ref{eq_dftfourth}) \lesssim\left( \frac{\nu^2}{n}+\frac{\nu \ell^{-2}}{ n^{1/3+2\delta/3}} \right) F(t)\dd t. 
\end{equation}
It remains to control (\ref{eq_dftthird}). Since $\psi'(\widehat{y}_i-\zeta_q) \neq 0$ only when $i \leq C \ell^3 n^{\delta} \ll n^{\omega_a}$, thus $\wh y_i$ satisfies the SDE \eqref{wtySDE1}, which gives that
%Moreover, by our derivation,  we find that $\partial_t \sqrt{\gamma_i(t,\alpha)}$  still satisfies {\color{red}[please add the equation number of $\Psi_t.$]}  Therefore, under this condition, we have that 
\begin{align}
 \dd  \left[\widehat{y}_i(t,\alpha)-\wh\gamma_q(t,\alpha)\right]= & -\frac{\dd B_i}{ \sqrt{n}} +\frac{1}{ 2n } \sum_{j}^{\mathcal{A}, (i)}  \frac{1}{\widehat{y}_i(t,\alpha)-\widehat{y}_j(t,\alpha)} \dd t  +\left( \frac{\dd \sqrt{E_+(t,0)}}{\dd t} -\frac{\dd\wh\gamma_q(t,\alpha)}{\dd t}\right)\dd t -\frac{n-p}{2nE_+(t,0)}  \dd t \nonumber \\
&- \left[c_n\int_{\cal I_i^c(t,0)} \frac{\sqrt{E_+(t,0)}\rho_t(E_+(t,0)-E,0)}{ E - {E_+(t,0)}+(\sqrt{E_+(t,0)}-\wh{y}_i(t,\alpha))^2 }\dd E\right] \dd t.  \label{eq_lastterm} 
\end{align}
By Burkholder-Davis-Gundy inequality and Markov's inequality, we find that for any constant $\e>0$,
\begin{equation}\label{eq_controldeffouthone}
\sup_{0 \leq t \leq \tau} \nu \left|\int_0^t \sum_i v_i^2 \psi'(\widehat{y}_i-\wh\gamma_q) \frac{\dd B_i}{\sqrt{n}} \right| \leq  n^{\epsilon}\nu \left( \frac{ n^{\omega_1}}{n^{4/3}} \right)^{1/2}
\end{equation} 
with high probability.
%Further, since $E_+(t,0) \sim 1,$ with high probability, we have that for some constant $C>0,$
%\begin{equation}gin{equation}\label{eq_controldeffouthtwo}
%\sup_{0 \leq t \leq \tau}  \sum_i v_i^2 \frac{ \nu |(n-p) \psi'(\widehat{y}_i(t,\alpha)-\zeta_q)| }{nE_+(t,0)} \leq C \nu F(t). 
%\end{equation}
%Similarly, we can show that
%\begin{equation}gin{equation*}
%\sup_{0 \leq t \leq \tau} \frac{\nu }{2n} \sum_i v_i^2 |\psi'(\widehat{y}_i(t,\alpha)-\zeta_q)|  \sum_{j \neq i} \frac{1}{2E_+(t,0)-\wh y_i(t,\alpha)-\wh y_j(t,\alpha)} \leq C \nu F(t). 
%\end{equation*}  
Moreover, with the same arguments for (4.17) of \cite{edgedbm}, we obtain that
%\begin{equation}gin{align*}
%\frac{\nu}{n} \sum_{(i,j) \in \mathcal{A}} \frac{v_i^2 \psi'(\widehat{y}_i-\wh\gamma_i)}{\widehat{y}_i-\widehat{y}_j} %&=\frac{\nu}{2n} \sum_{(i,j) \in \mathcal{A}} \frac{\psi'(\widehat{y}_i-\zeta_q)(v_i^2-v_j^2)}{\widehat{y}_i-\widehat{y}_j}+\frac{\nu}{2n} \sum_{(i,j) \in \mathcal{A}} v_i^2 \frac{\psi'(\widehat{y}_i-\zeta_q)-\psi'(\widehat{y}_j-\zeta_q)}{\widehat{y}_i-\widehat{y}_j} \\&:=A_1+A_2.
%\end{align*}
%By Lemma \ref{lem_rigidty z}, we find that with high probability, for some constant $C>0,$
%\begin{equation}gin{equation*}
%|A_2| \leq C \frac{\nu}{\ell^2 n^{1/3+2\delta/3}} \sum_i v_i^2 \sum_{j}^{\mathcal{A},(i)} \mathbf{1}_{\{\psi'(\widehat{y}_i-\zeta_q) \neq \psi'(\widehat{y}_j-\zeta_q)\}} \leq C \nu n^{-1/3+\omega_{\ell}} F(t),
%\end{equation*}
%where we recall the definition of $\mathcal{A}.$ For $A_1,$ it is easy to see that 
%\begin{equation}gin{equation*}
%|A_1| \leq  \frac{C}{100n} \sum_{(i,j) \in \mathcal{A}} \frac{(v_i-v_j)^2}{(\widehat{y}_i-\widehat{y}_j)^2} + \frac{C \nu^2}{2n} \sum_{(i,j) \in \mathcal{A}} \psi'(\widehat{y}_i-\zeta_q)^2(v_i^2+v_j^2):=A_{11}+A_{12}.
%\end{equation*}
%Note that $A_{11}$ will be absorbed into (\ref{eq_dftfirst}) and $A_{12}$ can be estimated using the definition of $\mathcal{A}$ as
%\begin{equation}gin{equation*}
%A_{12} \leq C \frac{\nu^2\ell^3 n^{2\delta/3}}{n} F(t).
%\end{equation*}
%Therefore, with high probability, we find that 
\begin{equation}\label{eq_controldftfourthree}
\frac{\nu}{n} \sum_{(i,j) \in \mathcal{A}} \frac{v_i^2 \psi'(\widehat{y}_i-\wh\gamma_q)}{\widehat{y}_i-\widehat{y}_j} \leq \sum_{(i,j) \in \mathcal{A}} \frac{\mathcal{B}_{ij}}{100} (v_i-v_j)^2+C \left(\frac{\nu n^{\omega_{\ell}}}{n^{1/3}}+\frac{\nu^2 n^{3 \omega_{\ell}+2 \delta/3}}{n} \right) F(t)
\end{equation}
for large enough constant $C>0$. The main difference from the argument in \cite{edgedbm} is about the term 
\begin{align}
& \frac{\dd \sqrt{E_+(t,0)}}{\dd t} -\frac{\dd\wh\gamma_q(t,\alpha)}{\dd t} -\frac{n-p}{2n\sqrt{E_+(t,0)}}   - c_n \int_{\cal I_i^c(t,0)} \frac{\sqrt{E_+(t,0)} \rho_t(E_+(t,0)-E,0)}{ E - {E_+(t,0)}+(\sqrt{E_+(t,0)}-\wh{y}_i(t,\alpha))^2 }\dd E     \nonumber \\
&= c_n \sqrt{E_+(t,0)} \left[m_t \left(\left(\sqrt{E_+(t,0)}-\wh{y}_i(t,\alpha)\right)^2,0 \right) -m_{t}\left(E_+(t,0),0\right) \right] \nonumber\\
&+c_n\int_{\cal I_i(t,0)} \frac{\sqrt{E_+(t,0)} \rho_t(E_+(t,0)-E,0)}{ E - {E_+(t,0)}+(\sqrt{E_+(t,0)}-\wh{y}_i(t,\alpha))^2 }\dd E  -\frac{\dd\widehat\gamma_q(t,\alpha)}{\dd t}  + \OO(t),  \label{detererr}
\end{align}
where we used \eqref{eq_deriofsing} and $\zeta_t(E_+(t,0),0)=E_+(t,0)+\OO(t)$ in the derivation. 
%Recalling that $m_t(\cdot, 0)$ is the Stieltjes transform of the MP law \eqref{MPeq}, so 
By the square root behavior of $m_t$ around the right edge, we have that %with high probability,
$$|m_t ((\sqrt{E_+(t,0)}-\wh{y}_i(t,\alpha))^2,0 ) -m_{t}(E_+(t,0),0) | \lesssim \sqrt{|\wh{y}_i(t,\alpha)|} \lesssim n^{-1/3+\omega_{\ell} + \delta/3}$$
with high probability, where we used \eqref{rigiditywhy} in the last step. For the second term on the right-hand side of \eqref{detererr}, it can be bounded in the same way as \eqref{detereq0} and we can get that
$$\left|\int_{\cal I_i(t,0)} \frac{\sqrt{E_+(t,0)} \rho_t(E_+(t,0)-E,0)}{ E - {E_+(t,0)}+(\sqrt{E_+(t,0)}-\wh{y}_i(t,\alpha))^2 }\dd E\right| \lesssim n^{-1/3+\omega_{\ell} + \delta/3}, $$
with high probability. Finally, we know that $\widehat\gamma_q(t,\alpha)$ satisfies 
$$\int_0^{\widehat\gamma_q(t,\alpha)} \wt \rho(t, E)\dd E= \frac{q}{p},\quad \wt \rho(t, E):= f_t(\sqrt{E_+(t,\al)}-E,\al).$$
Taking the derivative of this equation, we get
$$ \frac{\dd\widehat\gamma_q(t,\alpha)}{\dd t}=\frac{-1}{\wt \rho(t, \wh\gamma_q(t,\al))} \int_0^{\widehat\gamma_q(t,\alpha)} \partial_t \wt \rho(t, E)\dd E.$$
It is trivial to check that $\partial_t \wt \rho(t, E)=\OO(1)$, and we have $\wt \rho(t, \wh\gamma_q)\sim \sqrt{\wh\gamma_q(t,\al)}$ by \eqref{sqrtdensity}. Thus, we obtain from the above equation that 
$$\left| \frac{\dd\widehat\gamma_q(t,\alpha)}{\dd t}\right|\lesssim \sqrt{\wh\gamma_q(t,\al)}\lesssim n^{-1/3+\omega_{\ell} + \delta/3} .$$
Combining the above estimates, we get 
\begin{equation}\label{eq_controllast}
|(\ref{detererr})|=\OO(n^{-1/3+\omega_{\ell} + \delta/3}).
\end{equation}
%Regarding the term (\ref{eq_lastterm}), it is not hard to see that with high probability, it is bounded from above. 
Now, combining (\ref{eq_controldft2dnterm}), (\ref{eq_controldftfourth}),(\ref{eq_controldeffouthone}), (\ref{eq_controldftfourthree}) and \eqref{eq_controllast}, we find that if $\nu$ satisfies the condition of (\ref{eq_conditionnu}), then with high probability,
\begin{equation*}
\partial_t F(t) \leq C \left(\frac{\nu^2   n^{3\omega_l+2 \delta/3}}{n}+\frac{\nu n^{\omega_{\ell}+\delta/3}}{n^{1/3}} \right)F(t). 
\end{equation*}
Then, by Gronwall's inequality, we get that
$$\sup_{0\le s\le \tau} F(s) \le F(0)+ C \left(\frac{\nu^2  n^{3\omega_l+2 \delta/3 + \omega_1}}{n^{4/3}}+\frac{\nu n^{\omega_{\ell}+\omega_1+\delta/3}}{n^{2/3}} \right)$$
with high probability. Hence, choosing $\nu= n^{2/3-2\omega_l-\delta/3}$, we obtain by continuity that $\tau=10t_1$ with high probability, i.e., 
$$\sup_{0\le s\le 10t_1} F(s) \le 10,\quad \text{ with high probability}.$$ 
%we conclude that for some constant $C>0,$ with high probability
%\begin{equation}gin{equation*}
%\sup_{0 \leq s \leq \tau } F(s) \leq C F(0).
%\end{equation*}
%Second, for small $\epsilon_1<\delta/10,$ we choose 
%\begin{equation}gin{equation*}
%\nu=\frac{n^{2/3+\epsilon_1}}{n^{2 \omega_{\ell}+2 \delta/3}},
%\end{equation*}
%which satisfies (\ref{eq_conditionnu}). 
%By continuity, we find that $\tau=10t_1$ with high probability and consequently 
%\begin{equation}gin{equation*}
%\sup_{0 \leq s \leq 10t_1} F(s) \leq 10,
%\end{equation*}
%holds with high probability. 
Now, notice that if $i \leq \ell^3 n^{\delta}/2,$ we have that %for some constant $\e_1>0$, 
\begin{equation*}
\nu |\widehat{y}_i(t,\alpha)-\wh\gamma_q(t,\al)| \gtrsim n^{\delta/3},\quad \text{with high probability}.
\end{equation*} 
Then, by the definition of $F(t)$ and Markov's inequality, we obtain that $\mathcal{U}^{\mathcal{L}}_{iq_*}(0,t) \leq n^{-D}$ for any large constant $D>0$ if $i \leq \ell^3 n^{\delta}/2$ and $q_*\ge \ell^3 n^{\delta}$. The proof for $\mathcal{U}^{\mathcal{L}}_{q_* i}$ is the same by setting $\psi\to -\psi.$ %We omit the details here.   
\end{proof}

\end{document}